\newtheorem{theorem}{Theorem}[section]
\newtheorem{proposition}{Proposition}[section]
\newtheorem{remark}{Remark}[section]
\newtheorem{lemma}{Lemma}[section]
\numberwithin{equation}{section}
\def\d{\mathrm{d}}
\def\no{\nonumber}
\def\R{\mathbb{R}}
\def\T{\mathbb{T}}
\def\eps{\varepsilon}
\def\div{\mathrm{div}}
\def\u{\mathrm{u}}
\def\dr{\mathrm{d}}
\def\A{\mathrm{A}}
\def\B{\mathrm{B}}
\def\l{\left\langle}
\def\r{\right\rangle}
\def\J{\mathcal{J}}
\def\D{\mathrm{D}}
\newcounter{wronumber}\setcounter{wronumber}{1}
\begin{document}
\title[zero inertia limit of Ericksen-Leslie's model]
			{The zero inertia limit of Ericksen-Leslie's model for liquid crystals}

\author[Ning Jiang]{Ning Jiang}
\address[Ning Jiang]{\newline School of Mathematics and Statistics, Wuhan University, Wuhan, 430072, P. R. China}
\email{njiang@whu.edu.cn}

\author[Yi-Long Luo]{Yi-Long Luo}
\address[Yi-Long Luo]
		{\newline School of Mathematics and Statistics, Wuhan University, Wuhan, 430072, P. R. China}
\email{yl-luo@whu.edu.cn}

%\thanks{ \today}

\maketitle

\begin{abstract}
  In this paper we study the zero inertia limit that is from the hyperbolic to parabolic Ericksen-Leslie's liquid crystal flow. By introducing an initial layer and constructing an energy norm and energy dissipation functional depending on the solutions of the limiting system, we derive a global in time uniform energy bound to the remainder system under the small size of the initial data. \\

  \noindent\textsc{Keywords.} Zero inertia limit; Ericksen-Leslie model; Uniform energy bounds; Initial layer \\

  \noindent\textsc{AMS subject classifications.}  35L05; 35L30; 35L81; 37D50; 80A20
\end{abstract}

%\vspace*{10pt}

%\phantomsection
%\addcontentsline{toc}{section}{\contentsname}

\tableofcontents

%%%%%%%%%%%%%%%%%%%%%%%%%%%%%%%%%%%%%%（正文）%%%%%%%%%%%%%%%%%%%%%%%%%%%%%
%%%%%%%%%%%%%%%%%%%%%%%%%%%%%%%%%%%%%%%%%%%%%%%%%%%%%%%%%%%%%%%%%%%%%%%%%%%

\section{Introduction and main theorem}

The hydrodynamic theory of incompressible liquid crystals was established by Ericksen \cite{Ericksen-1961-TSR, Ericksen-1987-RM, Ericksen-1990-ARMA} and Leslie \cite{Leslie-1968-ARMA, Leslie-1979} in the 1960's (see also Section 5.1 of \cite{Lin-Liu-2001} ). In this paper, we study the following hyperbolic Ericksen-Leslie's incompressible liquid crystal flow in $(t,x) \in \R^+ \times \T^3$
\begin{equation}\label{HLQ}
  \left\{
    \begin{array}{c}
      \partial_t \u^\eps + \u^\eps \cdot \nabla \u^\eps - \tfrac{1}{2} \mu_4 \Delta \u^\eps + \nabla p^\eps = - \div ( \nabla \dr^\eps \odot \nabla \dr^\eps ) + \div \bm{\sigma}^\eps \,, \\[3mm]
      \div \u^\eps = 0 \,, \\[3mm]
      \eps \D^2_{\u^\eps} \dr^\eps = \Delta \dr^\eps + \bm{\gamma}^\eps \dr^\eps + \lambda_1 ( \D_{\u^\eps} \dr^\eps + \B^\eps \dr^\eps ) + \lambda_2 \A^\eps \dr^\eps \,, \\[3mm]
      |\dr^\eps| = 1 \,,
    \end{array}
  \right.
\end{equation}
where the Lagrangian multiplier $\bm{\gamma}^\eps$ (for the geometric constraint $|\dr^\eps|=1$) is
\begin{equation}\label{gamma-eps}
  \bm{\gamma}^\eps = - \eps | \D_{\u^\eps} \dr^\eps |^2 + |\nabla \dr^\eps|^2 - \lambda_2 \A^\eps : \dr^\eps \otimes \dr^\eps \,,
\end{equation}
and the extra stress $\bm{\sigma}^\eps$ is
\begin{equation}\label{sigma}
  \begin{aligned}
    \bm{\sigma}^\eps \equiv \bm{\sigma} (\u^\eps , \dr^\eps) = & \mu_1 ( \A^\eps : \dr^\eps \otimes \dr^\eps ) \dr^\eps \otimes \dr^\eps + \mu_2 ( \D_{\u^\eps} \dr^\eps + \B^\eps \dr^\eps ) \otimes \dr^\eps \\
    + & \mu_3 \dr^\eps \otimes ( \D_{\u^\eps} \dr^\eps + \B^\eps \dr^\eps ) + \mu_5 (\A^\eps \dr^\eps) \otimes \dr^\eps + \mu_6 \dr^\eps \otimes (\A^\eps \dr^\eps) \,.
  \end{aligned}
\end{equation}
Here $\u^\eps (t,x) \in \R^3$ is the bulk velocity, $\dr^\eps (t,x) \in \mathbb{S}^2$ is the unit director fields of the liquid molecules, $p^\eps (t,x) \in \R$ is the pressure. The notation $\D_{\u^\eps} \dr^\eps$ denotes the first order material derivative of $\dr^\eps$ with respect to the velocity $\u^\eps$ by
\begin{equation}
  \D_{\u^\eps} \dr^\eps = \partial_t \dr^\eps + \u^\eps \cdot \nabla \dr^\eps \,,
\end{equation}
and  $\D^2_{\u^\eps} \dr^\eps$ represents the second order material derivative,
\begin{equation}
  \D^2_{\u^\eps} \dr^\eps = \D_{\u^\eps} ( \D_{\u^\eps} \dr^\eps ) \,.
\end{equation}
 In this paper we work on the periodic spatial domain $\T^3 = \R^3 / \mathbb{L}^3$, where $\mathbb{L}^3 \subset \R^3$ is any $3$-dimensional lattice. The notations $\A^\eps = \frac{1}{2} ( \nabla \u^\eps + \nabla \u^\eps{}^\top )$ and $\B^\eps = \frac{1}{2} ( \nabla \u^\eps - \nabla \u^\eps{}^\top )$ represent the rate of strain tensor, skew-symmetric part of the strain rate of by fluid velocity, respectively. More precisely, the entries of $\A^\eps$ and $\B^\eps$ are given as
\begin{equation}
  \A^\eps_{ij} = \tfrac{1}{2} ( \partial_j \u^\eps_i + \partial_i \u^\eps_j ) \,, \ \B^\eps_{ij} = \tfrac{1}{2} ( \partial_j \u^\eps_i - \partial_i \u^\eps_j )
\end{equation}
for $1 \leq i,j \leq 3$. One notices $\B^\eps_{ij} = - \B^\eps_{ji}$. The components of the vector $\B^\eps \dr^\eps$ and $\A^\eps \dr^\eps$ are $(\B^\eps \dr^\eps)_i = \B^\eps_{ki} \dr^\eps_k$ and $(\A^\eps \dr^\eps)_i = \A^\eps_{ki} \dr^\eps_k$, respectively. The entries of the matrix $\nabla \dr^\eps \odot \nabla \dr^\eps$ are $(\nabla \dr^\eps \odot \nabla \dr^\eps)_{ij} = \partial_i \dr^\eps_k \partial_j \dr^\eps_k$ and the symbol $a \otimes b$ means $(a \otimes b)_{ij} = a_i b_j$ for $1 \leq i, j \leq 3$. For any two matrix $M, N \in \R^{3 \times 3}$, we denote by $M:N = M_{ij} N_{ij}$. Furthermore, the symbol $\div M$ means a vector field in $\R^3$ with the components $(\div M)_i = \partial_j M_{ij}$ for $1 \leq  i \leq 3$. We emphasize that the Einstein summation convention is used throughout this paper.

The parameter $\eps > 0$ is the inertia constant, which is usually small in the physical experiments. $\mu_4 > 0$ is the viscosity of the flow. The material coefficients $\lambda_1 \leq 0$ and $\lambda_2 \in \R$ reflect the molecular shape and the slippery part between the fluid and the particles. The coefficients $\mu_i (i = 1,2,3,5,6 )$, which may depend on material and temperature, are usually called Leslie coefficients, and are related to certain local correlations in the fluid. Moreover, the previous coefficients have the relations $\mu_4 > 0$ and
\begin{equation}\label{Coefficients-Relations}
\lambda_1=\mu_2-\mu_3\,, \quad\lambda_2 = \mu_5-\mu_6\,,\quad \mu_2+\mu_3 = \mu_6-\mu_5\,.
\end{equation}
The first two relations are necessary conditions in order to satisfy the equation of motion identically, while the third relation is called {\em Parodi's relation}, which is derived from Onsager reciprocal relations expressing the equality of certain relations between flows and forces in thermodynamic systems out of equilibrium.

For the system \eqref{HLQ} we take the initial data independent of $\eps$, i.e.,
\begin{equation}\label{IC-HyperLQ}
  \begin{aligned}
    \u^\eps (0,x) = \u^{in} (x) \in \R^3 \,, \ \dr^\eps (0,x) = \dr^{in} (x) \in \mathbb{S}^2 \,, ( \D_{\u^\eps} \dr^\eps ) (0,x) = \tilde{\dr}^{in} (x) \in \R^3
  \end{aligned}
\end{equation}
with the compatibilities
\begin{equation}\label{IC-compatibility}
  \begin{aligned}
    \div \u^{in} (x) = 0 \,, \ \dr^{in} (x) \cdot \tilde{\dr}^{in} (x) = 0 \,.
  \end{aligned}
\end{equation}

\subsection{Initial layer vs well-prepared initial data}
Formally letting $\u^\eps \rightarrow \u_0$ and $\dr^\eps \rightarrow \dr_0$ as $\eps \rightarrow 0$ in the hyperbolic liquid crystal system \eqref{HLQ} deduces to the parabolic liquid crystal model
\begin{equation}\label{PLQ}
  \left\{
    \begin{array}{c}
      \partial_t \u_0 + \u_0 \cdot \nabla \u_0 - \tfrac{1}{2} \mu_4 \Delta \u_0 + \nabla p_0 = - \div ( \nabla \dr_0 \odot \nabla \dr_0 ) + \div \bm{\sigma}_0 \,, \\[2mm]
      \div \u_0 = 0 \,, \\[2mm]
      -\lambda_1 (\D_{\u_0} \dr_0 + \B_0 \dr_0 ) = \Delta \dr_0 + \bm{\gamma}_0 \dr_0 + \lambda_2 \A_0 \dr_0 \,, \\[2mm]
      |\dr_0| = 1 \,,
    \end{array}
  \right.
\end{equation}
where the Lagrangian multiplier $\bm{\gamma}_0$ is
\begin{equation}
  \begin{aligned}
    \bm{\gamma}_0 = |\nabla \dr_0|^2 - \lambda_2 \A_0 : \dr_0 \otimes \dr_0 \,,
  \end{aligned}
\end{equation}
and the extra stress $\bm{\sigma}_0$ is
\begin{equation}
  \begin{aligned}
    \bm{\sigma}_0 \equiv & \bm{\sigma} (\u_0 , \dr_0) \,,
  \end{aligned}
\end{equation}
i.e. replacing $(\u^\eps , \dr^\eps)$ by $(\u_0 , \dr_0)$ in \eqref{sigma}. Furthermore, the initial data of the limit system \eqref{PLQ} will naturally be
\begin{equation}\label{IC-ParaLQ}
  \begin{aligned}
    \u_0 (0,x) = \u^{in} (x) \in \R^3 \,, \ \dr_0 (0,x) = \dr^{in} (x) \in \mathbb{S}^2 \,.
  \end{aligned}
\end{equation}

The limit considered in this paper is a limit from a hyperbolic-type system for $\eps > 0$ to a parabolic system for $\eps = 0$. One notices that $\dr^\eps$-equation in \eqref{HLQ} is a system of wave equations with two initial conditions, while the $\dr_0$-equation in \eqref{PLQ} is a  parabolic system with only a single initial condition. In general, the solution $( \u_0, \dr_0)$ to the limit system \eqref{PLQ}-\eqref{IC-ParaLQ} does not satisfy the third initial condition in \eqref{IC-HyperLQ}. To overcome this disparity, we will take two ways: 1) introduce an {\em initial layer} in times; 2) give a well-prepared initial condition.

{\bf Initial layer.} This disparity between the initial conditions of the hyperbolic type system \eqref{HLQ} and of the parabolic type system \eqref{PLQ} indicates that one should expect an {\em initial layer} in time, appearing in the limit process $\eps \rightarrow 0$. Specifically, this disparity, denoted by $\D^{in} (x)$, between the initial conditions \eqref{IC-HyperLQ} and \eqref{IC-ParaLQ} is defined as
\begin{equation}\label{Differ-IC}
\begin{aligned}
\D^{in} : = \tilde{\dr}^{in} - \D_{\u_0} \dr_0 |_{t=0} = \tilde{\dr}^{in} + \B^{in} \dr^{in} + \tfrac{1}{\lambda_1} ( \Delta \dr^{in} + \bm{\gamma}_0^{in} \dr^{in} + \lambda_2 \A^{in} \dr^{in} ) \,.
\end{aligned}
\end{equation}
Here $\A^{in} = \tfrac{1}{2} ( \nabla \u^{in} + (\nabla \u^{in})^\top )$, $\B^{in} = \tfrac{1}{2} ( \nabla \u^{in} - (\nabla \u^{in})^\top )$ and $\bm{\gamma}_0^{in} = |\nabla \dr^{in}|^2 - \lambda_2 \A^{in} : \dr^{in} \otimes \dr^{in}$.

We will justify rigorously this limit by employing the Hilbert expansion method in which the leading term is given by solutions to the limit system \eqref{PLQ}. In this approach, the solution to the limit system \eqref{PLQ} is known beforehand. Then a special class of solutions to the original system \eqref{HLQ} can be constructed around the limit system. A key of this approach is to construct a correct ansatz of the solutions to the original system. \eqref{HLQ}. Besides the limit system \eqref{PLQ} and the remainder term $(\u_R^\eps, \dr_R^\eps)$, an initial layer $ \eps^\beta \dr_I ( \tfrac{t}{\eps^\beta}, x ) $ to adsorb the disparity $\D^{in} (x)$ should be included in the ansatz. More precisely, we take the following ansatz of the solution $( \u^\eps , \dr^\eps)$ to the system \eqref{HLQ}
\begin{equation}\label{Ansatz}
\begin{aligned}
\u^\eps (t,x) = \u_0 (t,x) + \sqrt{\eps} \u_R^\eps (t,x) \,, \ \dr^\eps (t,x) = \dr_0 (t,x) + \eps^\beta \dr_I ( \tfrac{t}{\eps^\beta}, x ) + \sqrt{\eps} \dr_R^\eps (t,x)
\end{aligned}
\end{equation}
for a fixed $\beta > 0$ to be determined, where $(\u_0 , \dr_0)$ is the solution to \eqref{PLQ} with initial data \eqref{IC-ParaLQ}.

It is easy to derive by plugging the expansions \eqref{Ansatz} into the system \eqref{HLQ} that the leading relation is
\begin{equation*}
   \begin{aligned}
     \eps^{1-\beta} \partial^2_{\tau \tau} \dr_I - \lambda_1 \partial_\tau \dr_I = \eps^\beta \Delta \dr_I \,,
   \end{aligned}
\end{equation*}
where $\tau = \frac{t}{\eps^\beta}$. Then we can design the initial layer satisfying the following linear damped wave system on $(\tau, x) \in \R^+ \times \T^3$ (called {\em initial layer system}):
\begin{equation}\label{Initial-layer-equ}
  \left\{
    \begin{array}{l}
      \partial^2_{\tau \tau} \dr_I + \tfrac{-\lambda_1}{\eps^{1-\beta}} \partial_\tau \dr_I = \eps^{2 \beta - 1} \Delta \dr_I \,, \\ [2mm]
      \dr_I (\infty , x) = \lim\limits_{\tau \rightarrow \infty} \dr_I (\tau, x) = 0 \,, \\ [2mm]
      \partial_\tau \dr_I (0,x) = \D^{in} (x) \,.
    \end{array}
  \right.
\end{equation}
We emphasize that if $\beta > \tfrac{1}{2}$, the Laplacian term $ \eps^{2 \beta - 1 } \Delta \dr_I $ is also a higher order term as $\eps \rightarrow 0$., which means that it can be ignored. For instance, in Jiang-Luo-Tang-Zarnescu's work \cite{Jiang-Luo-Tang-Zarnescu-2019-CMS} to justify this limit corresponding to the background velocity $\u^\eps \equiv 0$, the value of $\beta$ is taken as $\beta=1$, and ignored the term $\eps \Delta \dr_I$. For completeness and generality (for example, if we consider the case $\lambda_1=0$ in a forthcoming separate paper, the term  $\eps^{2 \beta - 1} \Delta \dr_I$ must kept, and $\beta$ should be taken as $1/2$), we keep this term in the initial layer structure here. Then, we can easily solve \eqref{Initial-layer-equ}:
\begin{equation}\label{Initial-layer-sol}
  \begin{aligned}
    \dr_I (\tau, x) = 2 \eps^{1-\beta} \Big( \lambda_1 - \sqrt{\lambda_1^2 + 4 \eps \Delta} \Big)^{-1} \exp \Big( \tfrac{\lambda_1 - \sqrt{\lambda_1^2 + 4 \eps \Delta}}{2 \eps^{1-\beta} } \tau \Big) \D^{in} (x)\,.
  \end{aligned}
\end{equation}
 Here the operator $ \Big( \lambda_1 - \sqrt{\lambda_1^2 + 4 \eps \Delta} \Big)^{-1} \exp \Big( \tfrac{\lambda_1 - \sqrt{\lambda_1^2 + 4 \eps \Delta}}{2 \eps^{1-\beta} } \tau \Big) $ is understood in the sense of Fourier multiplier. As a consequence, the initial layer $ \dr_I $ designed in \eqref{Initial-layer-sol} can be explicitly presented as
\begin{equation}\label{Initial-Layer}
  \begin{aligned}
    \eps^\beta \dr_I ( \tfrac{t}{\eps^\beta} , x ) = \eps \D_I^\eps (t,x) : = 2 \eps \Big( \lambda_1 - \sqrt{\lambda_1^2 + 4 \eps \Delta} \Big)^{-1} \exp \Big( \tfrac{\lambda_1 - \sqrt{\lambda_1^2 + 4 \eps \Delta}}{2 \eps } t \Big) \D^{in} (x) \,.
  \end{aligned}
\end{equation}
One observes that $\eps^\beta \dr_I ( \tfrac{t}{\eps^\beta} , x )$ is independent of $\beta > 0$. In other words, this initial layer is an intrinsic structure resulted from the disparity $\D^{in} (x)$ of the initial conditions between the original system \eqref{HLQ} and the limit equations \eqref{PLQ}. Since the real part $\Re e \big( \lambda_1 - \sqrt{\lambda_1^2 - 4 \eps |k|^2} \big) < 0$ for all $k \in \mathbb{Z}^3$, both the initial layer $ \D_I^\eps (t,x) $ and its time derivative $\partial_t \D_I^\eps (t,x)$ exponentially decay to zero as $\eps \rightarrow 0$ for every $(t,x) \in \R^+ \times \T^3$. This means that the disparity $\D^{in} (x)$ just affects the whole evolution process in a very short beginning time.

We now write the ansatz \eqref{Ansatz} as the form
\begin{equation}\label{Ansatz-1}
\begin{aligned}
\u^\eps (t,x) = \u_0 (t,x) + \sqrt{\eps} \u_R^\eps (t,x) \,, \ \dr^\eps (t,x) = \dr_0 (t,x) + \eps \D_I^\eps (t,x) + \sqrt{\eps} \dr_R^\eps (t,x) \,.
\end{aligned}
\end{equation}
Then, plugging the ansatz \eqref{Ansatz-1} into the original system \eqref{HLQ} implies that the remainder $(\u_R^\eps , \dr_R^\eps)$ satisfies the following system
\begin{equation}\label{Remainder-u-d}
  \left\{
    \begin{array}{c}
      \partial_t \u_R^\eps - \tfrac{1}{2} \mu_4 \Delta \u_R^\eps + \nabla p_R^\eps = \mu_1 \div \big[ (\A_R^\eps : \dr_0 \otimes \dr_0) \dr_0 \otimes \dr_0 \big] \\
      \qquad\qquad + \mathcal{K}_{\u} + \div ( \mathcal{C}_{\u} + \mathcal{T}_{\u} + \sqrt{\eps} \mathcal{R}_{\u} ) + \eps \div \mathcal{Q}_{\u}(\D_I) \,,\\[2mm]
      \div \u_R^\eps = 0 \,, \\[2mm]
      \D^2_{\u_0 + \sqrt{\eps} \u_R^\eps} \dr_R^\eps + \tfrac{- \lambda_1}{\eps} \D_{\u_0 + \sqrt{\eps} \u_R^\eps} \dr_R^\eps - \tfrac{1}{\eps} \Delta \dr_R^\eps + \partial_t ( \u_R^\eps \cdot \nabla \dr_0 + \sqrt{\eps} \u_R^\eps \cdot \nabla \D_I^\eps ) \\
      = \tfrac{1}{\eps} \mathcal{C}_{\dr} + \tfrac{1}{\eps} \mathcal{S}^1_{\dr} + \tfrac{1}{\sqrt{\eps}} \mathcal{S}^2_{\dr} + \mathcal{R}_{\dr} + \mathcal{Q}_{\dr}(\D_I)
    \end{array}
  \right.
\end{equation}
with the constraint
\begin{equation}\label{Constraint-1}
  \begin{aligned}
    2 \dr_0 \cdot ( \dr_R^\eps + \sqrt{\eps} \D_I^\eps ) + \sqrt{\eps} | \dr_R^\eps + \sqrt{\eps} \D_I^\eps |^2 = 0 \,,
  \end{aligned}
\end{equation}
where the tensor $\mathcal{C}_{\u}$ is
\begin{equation}\label{C-u}
  \begin{aligned}
    \mathcal{C}_{\u} = & \mu_2 ( \D_{\u_0 + \sqrt{\eps} \u_R^\eps} \dr_R^\eps + \B_R^\eps \dr_0) \otimes \dr_0 + \mu_3 \dr_0 \otimes ( \D_{\u_0 + \sqrt{\eps} \u_R^\eps} \dr_R^\eps + \B_R^\eps \dr_0 ) \\
    & + \mu_5 (\A_R^\eps \dr_0) \otimes \dr_0 + \mu_6 \dr_0 \otimes ( \A_R^\eps \dr_0  ) \,,
  \end{aligned}
\end{equation}
the vector field $\mathcal{C}_{\dr}$ is
\begin{equation}\label{C-d}
  \mathcal{C}_{\dr} = \lambda_1 \B_R^\eps \dr_0 + \lambda_2 \A_R^\eps \dr_0 \,,
\end{equation}
and the tedious terms $\mathcal{T}_{\u}$, $\mathcal{K}_{\u}$, $\mathcal{S}^1_{\dr}$, $\mathcal{S}^2_{\dr}$,  $\mathcal{R}_{\dr}$, $\mathcal{R}_{\u}$, $\mathcal{Q}_{\u} (\D_I)$ and $\mathcal{Q}_{\dr} (\D_I) $ are defined as the forms \eqref{Tu}, \eqref{Lu}, \eqref{Sd-1}, \eqref{Sd-2}, \eqref{Rd}, \eqref{Ru}, \eqref{Q-u} and \eqref{Q-d} in Appendix \ref{Appendix} respectively.

Next we consider the initial conditions of the remainder system \eqref{Remainder-u-d}. Since our goal is to seek a solution to \eqref{HLQ} with the form \eqref{Ansatz-1}, the initial data of $(\u_R^\eps, \dr_R^\eps)$ should subject to
\begin{equation}\label{IC-relations}
  \left\{
    \begin{array}{l}
      \sqrt{\eps} \u_R^\eps (0,x) = \u^\eps (0,x) - \u_0 (0,x) \,, \\
      \sqrt{\eps} \dr_R^\eps (0,x) = \dr^\eps (0,x) - \dr_0 (0,x) - \eps \D_I^\eps (0,x) \,, \\
      \sqrt{\eps} ( \D_{\u_0 + \sqrt{\eps} \u_R^\eps} \dr_R^\eps ) (0, x) = ( \D_{\u^\eps} \dr^\eps ) (0,x) - ( \D_{\u_0} \dr_0 ) (0,x) - \sqrt{\eps} ( \u_R^\eps \cdot \nabla \dr_0 ) (0,x) \\
      \qquad \qquad \qquad \qquad \qquad - \eps \partial_t \D_I^\eps ( 0, x ) - \eps ( \u_0 \cdot \nabla \D_I^\eps ) (0,x) - \sqrt{\eps}^3 ( \u_R^\eps \cdot \nabla \D_I^\eps ) (0,x) \,.
    \end{array}
  \right.
\end{equation}
Recalling that the initial conditions of the original system \eqref{HLQ} and the limit system \eqref{PLQ} satisfy
\begin{equation}
  \begin{aligned}
    \u^\eps (0,x) = \u_0 (0,x) = \u^{in} (x) \,, \ \dr^\eps (0,x) = \dr_0 (0,x) = \dr^{in} (x) \,, (\D_{\u^\eps} \dr^\eps) (0,x) = \tilde{\dr}^{in} (x)
  \end{aligned}
\end{equation}
and the initial data of the initial layer $\dr_I$ in \eqref{Initial-layer-equ} is imposed on
\begin{equation}
  \begin{aligned}
    \eps \partial_t \D_I^\eps (0,x) = \partial_\tau \dr_I (0,x) = \D^{in} (x) \,,
  \end{aligned}
\end{equation}
where $\D^{in}(x)$ is the disparity defined in \eqref{Differ-IC}, we derive from the initial relations \eqref{IC-relations} that the initial data of the remainder system \eqref{Remainder-u-d} should be
\begin{equation}\label{IC-Remainder}
  \left\{
    \begin{array}{l}
      \u_R^\eps (0, x) = 0 \,, \\ [2mm]
      \dr_R^\eps (0,x) = - \sqrt{\eps} \D_I^\eps (0,x) = - \sqrt{\eps} \widetilde{\D}^{in}_\eps (x) \,, \\ [2mm]
      ( \D_{\u_0 + \sqrt{\eps} \u_R^\eps } \dr_R^\eps ) (0,x) = - \sqrt{\eps} ( \u_0 \cdot \nabla \D_I^\eps ) (0,x) = - \sqrt{\eps} ( \u^{in} \cdot \nabla \widetilde{\D}^{in}_\eps ) (x) \,,
    \end{array}
  \right.
\end{equation}
where the vector field $\widetilde{\D}^{in}_\eps (x)$ is defined as
\begin{equation}
   \begin{aligned}
     \widetilde{\D}^{in}_\eps (x) = 2 \Big( \lambda_1 - \sqrt{\lambda_1^2 + 4 \eps \Delta} \Big)^{-1} \D^{in} (x) \,.
   \end{aligned}
\end{equation}
We emphasize that for any fixed $s \geq 0$, the norm bound
\begin{equation}
  \begin{aligned}
    \| \widetilde{\D}^{in}_\eps \|^2_{H^s} \leq \tfrac{4}{\lambda_1^2} \| \D^{in} \|^2_{H^s}
  \end{aligned}
\end{equation}
holds uniformly in $\eps > 0$, since the uniform lower bound $ \big| \lambda_1 - \sqrt{\lambda_1^2 - 4 \eps |k|^2} \big| \geq - \lambda_1 > 0 $ for all $k \in \mathbb{Z}^3$ reduces to
\begin{equation}
  \begin{aligned}
    \| \widetilde{\D}^{in}_\eps \|^2_{H^s} = & \sum_{k \in \mathbb{Z}^3} ( 1 + |k|^2 )^s \tfrac{4}{ \big| \lambda_1 - \sqrt{\lambda_1^2 - 4 \eps |k|^2} \big|^2 } \big| \int_{\T^3} \D^{in} (x) e^{-i x \cdot k} \d x \big|^2 \\
    \leq & \tfrac{4}{\lambda_1^2} \sum_{k \in \mathbb{Z}^3} ( 1 + |k|^2 )^s \big| \int_{\T^3} \D^{in} (x) e^{-i x \cdot k} \d x \big|^2 = \tfrac{4}{\lambda_1^2} \| \D^{in} \|^2_{H^s} \,.
  \end{aligned}
\end{equation}
Here the $H^s$-norm will be defined in the later.

{\bf Well-prepared initial data.} As mentioned before, one of our goal in this paper is to deal with the disparity $\D^{in} (x)$ given in \eqref{Differ-IC} resulted from the initial conditions \eqref{IC-HyperLQ}. Besides introducing a so-called initial layer in time to overcome this disparity, we can also impose the original system \eqref{HLQ} on the so-called {\em well-prepared initial data}. To be more precise, we can skilfully select the initial values \eqref{IC-HyperLQ} of the system \eqref{HLQ} such that the disparity $\D^{in} (x)$ vanishes, hence
\begin{equation}\label{IC-Well-prepared}
  \begin{aligned}
    \tilde{\dr}^{in} (x) = ( \D_{\u_0} \dr_0 ) (0,x) = - ( \B^{in} \dr^{in} ) (x) - \tfrac{1}{\lambda_1} ( \Delta \dr^{in} + \bm{\gamma}_0^{in} \dr^{in} + \lambda_2 \A^{in} \dr^{in} ) (x) \,.
  \end{aligned}
\end{equation}
Actually, if the disparity $\D^{in} (x) = 0$, the initial layer $\eps \D_I^\eps (t,x)$ defined in \eqref{Initial-Layer} will automatically be zero. Consequently, we shall take ansatz that the system \eqref{HLQ} imposed on the well-prepared initial data \eqref{IC-HyperLQ}-\eqref{IC-Well-prepared} has a solution $(\u^\eps, \dr^\eps)$ with the form
\begin{equation}\label{Ansatz-2}
  \left\{
    \begin{array}{l}
      \u^\eps (t,x) = \u_0 (t,x) + \sqrt{\eps} \u_R^\eps (t,x) \,, \\ [2mm]
      \dr^\eps (t,x) = \dr_0 (t,x) + \sqrt{\eps} \dr_R^\eps (t, x) \,.
    \end{array}
  \right.
\end{equation}
By substituting the ansatz \eqref{Ansatz-2} into the original hyperbolic type system \eqref{HLQ}, one easily derives that the remainder $(\u_R^\eps , \dr_R^\eps)$ subjects to the system with the similar structure of \eqref{Remainder-u-d}, just removing the terms $\sqrt{\eps} \partial_t (\u_R^\eps \cdot \nabla \D_I^\eps)$, $\eps \div \mathcal{Q}_{\u} (\D_I)$ and $\mathcal{Q}_{\dr} (\D_I)$ appearing in the system \eqref{Remainder-u-d}. More precisely, the remainder $(\u_R^\eps , \dr_R^\eps)$ in \eqref{Ansatz-2} satisfies
\begin{equation}\label{Remainder-u-d-2}
\left\{
\begin{array}{c}
\partial_t \u_R^\eps - \tfrac{1}{2} \mu_4 \Delta \u_R^\eps + \nabla p_R^\eps = \mu_1 \div \big[ (\A_R^\eps : \dr_0 \otimes \dr_0) \dr_0 \otimes \dr_0 \big] \\
\qquad\qquad + \mathcal{K}_{\u} + \div ( \mathcal{C}_{\u} + \mathcal{T}_{\u} + \sqrt{\eps} \mathcal{R}_{\u} ) \,,\\[1mm]
\div \u_R^\eps = 0 \,, \\[1mm]
\D^2_{\u_0 + \sqrt{\eps} \u_R^\eps} \dr_R^\eps + \tfrac{- \lambda_1}{\eps} \D_{\u_0 + \sqrt{\eps} \u_R^\eps} \dr_R^\eps - \tfrac{1}{\eps} \Delta \dr_R^\eps + \partial_t ( \u_R^\eps \cdot \nabla \dr_0 ) \\
= \tfrac{1}{\eps} \mathcal{C}_{\dr} + \tfrac{1}{\eps} \mathcal{S}^1_{\dr} + \tfrac{1}{\sqrt{\eps}} \mathcal{S}^2_{\dr} + \mathcal{R}_{\dr}
\end{array}
\right.
\end{equation}
with the constraint
\begin{equation}\label{Constraint-2}
\begin{aligned}
2 \dr_0 \cdot \dr_R^\eps + \sqrt{\eps} | \dr_R^\eps |^2 = 0 \,,
\end{aligned}
\end{equation}
where the tensor term $\mathcal{C}_{\u}$ and the vector term $ \mathcal{C}_{\dr} $ are defined in \eqref{C-u} and \eqref{C-d} respectively, and the accurate expressions of the tedious terms $\mathcal{T}_{\u}$, $\mathcal{K}_{\u}$, $\mathcal{S}^1_{\dr}$, $\mathcal{S}^2_{\dr}$,  $\mathcal{R}_{\dr}$ and $\mathcal{R}_{\u}$ are all given in Appendix \ref{Appendix}. Furthermore, based on the initial relations \eqref{IC-relations}, we know that the initial conditions of the remainder system \eqref{Remainder-u-d-2} should be imposed on
\begin{equation}\label{IC-Rmd-2}
  \begin{aligned}
    \u_R^\eps (0,x) = 0 \,, \ \dr_R^\eps (0,x) = 0 \,, \ ( \D_{\u_0 + \sqrt{\eps} \u_R^\eps } \dr_R^\eps ) (0,x) = 0 \,.
  \end{aligned}
\end{equation}

\subsection{Main results.} To state our main results, we collect here the notations we will use throughout this paper. We denote by $A \thicksim B$ if there are two constants $C_1 , C_2 > 0$, independent of $\eps > 0$, such that $C_1 A \leq B \leq C_2 A$. For convenience, we also denote by
$$ L^p = L^p (\T^3) $$
for all $p \in [ 1, \infty ]$, which endows with the norm $ \| f \|_{L^p} = \left( \int_{\T^3} |f(x)|^p \d x \right)^\frac{1}{p} $ for $p \in [ 1 , \infty )$ and $\| f \|_{L^\infty} = \underset{x \in \T^3}{\textrm{ess sup}} \, |f(x)|$. For $p = 2$, we use the notation $ \langle \cdot \,, \cdot \rangle $ to represent the inner product on the Hilbert space $L^2$.

For any multi-index $ m = ( m_1, m_2, m_3 )$ in $\mathbb{N}^3$, we denote the $m^{th}$ partial derivative by
\begin{equation*}
\partial^m = \partial^{m_1}_{x_1} \partial^{m_2}_{x_2} \partial^{m_3}_{x_3} \,.
\end{equation*}
If each component of $m \in \mathbb{N}^3$ is not greater than that of $\tilde{m}$'s, we denote by $m \leq \tilde{m}$. The symbol $m < \tilde{m}$ means $m \leq \tilde{m}$ and $|m| < | \tilde{m} |$, where $|m|= m_1 + m_2 + m_3$. We define the Sobolev space $H^N = H^N (\T^3)$ by the norm
\begin{align*}
\| f \|_{H^N} = \bigg(  \sum_{|m| \leq N} \| \partial^m f \|^2_{L^2} \bigg)^\frac{1}{2} < \infty \,,
\end{align*}
or the equivalent norm
\begin{equation}
  \begin{aligned}
    \| f \|_{H^N} = \Big( \sum_{k \in \mathbb{Z}^3} ( 1 + |k|^2 )^s | \widehat{f} (k) |^2 \Big)^\frac{1}{2} < + \infty \,,
  \end{aligned}
\end{equation}
where the symbol $\widehat{f} (k)$ is the Fourier transform of $f(x)$ on $x \in \T^3$, hence,
\begin{equation*}
  \begin{aligned}
    \widehat{f} (k) = \int_{\T^3} f(x) e^{i x \cdot k} \d x
  \end{aligned}
\end{equation*}
fro all $k \in \mathbb{Z}^3$. For any integer $N \geq 2$, we define a number $S_{\!_N} \in \mathbb{N}$ as
\begin{equation}\label{SN-integer}
\begin{aligned}
S_{\!_N} = \min \{ k \in \mathbb{N} ; 2 k \geq N + 2  \} \,.
\end{aligned}
\end{equation}
Actually, if $N$ is even, $S_{\!_N} = \tfrac{1}{2} N + 1$ and if $N$ is odd, $S_{\!_N} = \tfrac{1}{2} (N+3)$.

Now we state our main theorem:
\begin{theorem}\label{Thm-main}
	Let $N \geq 2$ be an integer and $ ( \u^{in} (x), \dr^{in} (x), \tilde{\dr}^{in} (x) ) \in \R^3 \times \mathbb{S}^2 \times \R^3 $ satisfy the compatibility conditions \eqref{IC-compatibility} and $  \u^{in} \,, \tilde{\dr}^{in} \,, \nabla \dr^{in} \in H^{2 S_{\!_N}} $. If the Leslie's coefficients satisfy
	\begin{equation}\label{Assumption-coefficients}
	\begin{aligned}
	\mu_4 > 0 \,, \ \lambda_1 < 0 \,, \ \mu_1 \geq 0 \,, \ \mu_5 + \mu_6 + \tfrac{\lambda_2^2}{\lambda_1} \geq 0
	\end{aligned}
	\end{equation}
	and there exist small $\eps_0, \xi_0 \in ( 0, 1 ]$, depending on the Leslie's coefficients and $N$, such that
	\begin{equation}\label{Initial-Energy}
	  \begin{aligned}
	    E^{in} \overset{\Delta}{=} \| \u^{in} \|^2_{H^{2 S_{\!_N}}} + \| \tilde{\dr}^{in} \|^2_{H^{2 S_{\!_N}}} + \| \nabla \dr^{in} \|^2_{H^{2 S_{\!_N}}} \leq \xi_0
	  \end{aligned}
	\end{equation}
	for all $ \eps \in ( 0, \eps_0 ]$, then the system \eqref{HLQ} with the initial conditions \eqref{IC-HyperLQ} admits a unique solution $(\u^\eps , \dr^\eps)$ satisfying
	\begin{equation}
	  \begin{aligned}
	    \u^\eps , \nabla \dr^\eps , \D_{\u^\eps} \dr^\eps \in L^\infty ( \R^+ ; H^N ) \,, \ \nabla \u^\eps \in L^2 ( \R^+ ; H^N ) \,.
	  \end{aligned}
	\end{equation}
	Moreover, the solution $(\u^\eps , \dr^\eps)$ is of the form
	\begin{equation}
	  \left\{
	    \begin{array}{l}
	      \u^\eps (t,x) = \u_0 (t,x) + \sqrt{\eps} \u_R^\eps (t,x) \,, \\[2mm]	
	      \dr^\eps (t,x) = \dr_0 (t,x) + \eps \D_I^\eps (t,x) + \sqrt{\eps} \dr_R^\eps (t,x) \,,
	    \end{array}
	  \right.
	\end{equation}
	where $(\u_0, \dr_0)$ is the solution to the incompressible parabolic Ericksen-Leslie's liquid crystal model \eqref{PLQ} with the initial data \eqref{IC-ParaLQ}, the initial layer $\eps \D_I^\eps (t,x)$ is defined in \eqref{Initial-Layer}, and $ ( \u_R^\eps , \dr_R^\eps ) $ obeys the remainder system \eqref{Remainder-u-d} with the initial condition \eqref{IC-Remainder}. Furthermore, $ ( \u_R^\eps , \dr_R^\eps ) $ satisfies the uniform energy bound
	\begin{equation}\label{Energ-Bnds-Rmd}
	  \begin{aligned}
	    \Big( \tfrac{1}{\eps} \| \u_R^\eps \|^2_{H^N} + \| \D_{\u_0 + \sqrt{\eps} \u_R^\eps } \dr_R^\eps \|^2_{H^N} + \tfrac{1}{\eps} \| \dr_R^\eps \|^2_{H^{N+1}} \Big) (t) + \tfrac{1}{\eps} \int_0^t \| \nabla \u_R^\eps \|^2_{H^N} (\tau) \d \tau \leq C \xi_0
	  \end{aligned}
	\end{equation}
	for all $t \geq 0$, $\eps \in ( 0, \eps_0 ]$ and for some constant $C > 0$, independent of $\eps$ and $t$.
\end{theorem}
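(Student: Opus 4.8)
\emph{Strategy.} I would establish Theorem \ref{Thm-main} through the classical three-step program: local existence for the remainder system \eqref{Remainder-u-d}, a global-in-time \emph{uniform-in-$\eps$} a priori estimate, and a continuation argument promoting the local solution to a global one. For fixed $\eps>0$ the system \eqref{Remainder-u-d} is a coupled parabolic equation for $\u_R^\eps$ and a damped wave equation for $\dr_R^\eps$ with smooth $(\u_0,\dr_0)$-dependent coefficients, so local well-posedness in $H^N$ follows from a standard energy/fixed-point argument and is not the difficulty. Guided by the target bound \eqref{Energ-Bnds-Rmd}, I would introduce the energy functional
\[
\mathcal{E}_N(t) = \tfrac{1}{\eps}\|\u_R^\eps\|_{H^N}^2 + \|\D_{\u_0+\sqrt{\eps}\u_R^\eps}\dr_R^\eps\|_{H^N}^2 + \tfrac{1}{\eps}\|\dr_R^\eps\|_{H^{N+1}}^2
\]
and the dissipation functional
\[
\mathcal{D}_N(t) = \tfrac{1}{\eps}\|\nabla\u_R^\eps\|_{H^N}^2 + \tfrac{-\lambda_1}{\eps}\|\D_{\u_0+\sqrt{\eps}\u_R^\eps}\dr_R^\eps\|_{H^N}^2,
\]
the second piece of which is not recorded in \eqref{Energ-Bnds-Rmd} but is indispensable internally. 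The goal of the core step is a differential inequality
\[
\tfrac{\d}{\d t}\mathcal{E}_N + c\,\mathcal{D}_N \leq C\big(\mathcal{E}_N^{1/2}+\mathcal{E}_N\big)\big(\mathcal{E}_N+\mathcal{D}_N\big) + \mathcal{F}(t),
\]
where $c>0$ and the source $\mathcal{F}(t)$ is bounded by fixed nonnegative powers of $\eps$ times Sobolev norms of $(\u_0,\dr_0)$ and of the initial layer $\D_I^\eps$. Since $(\u_0,\dr_0)$ solves \eqref{PLQ} with data small in $H^{2S_{\!_N}}$, these norms are themselves small and time-integrable, which is precisely why the $H^{2S_{\!_N}}$-regularity and the smallness \eqref{Initial-Energy} are imposed, with $S_{\!_N}$ as in \eqref{SN-integer} supplying enough derivatives to control every commutator.

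\emph{The energy estimate and the key cancellation.} For each multi-index $m$ with $|m|\leq N$ I would apply $\partial^m$ to \eqref{Remainder-u-d}, test the momentum equation against $\tfrac{1}{\eps}\partial^m\u_R^\eps$ and the director equation against $\partial^m\big(\D_{\u_0+\sqrt{\eps}\u_R^\eps}\dr_R^\eps\big)$, and sum. The decisive mechanism is that the genuinely singular $\tfrac{1}{\eps}$ couplings between the two equations cancel: the terms $\div\mathcal{C}_{\u}$ in \eqref{C-u}, which couple $\nabla\u_R^\eps$ to $\D_{\u_0+\sqrt{\eps}\u_R^\eps}\dr_R^\eps+\B_R^\eps\dr_0$ and to $\A_R^\eps\dr_0$, pair exactly against $\tfrac{1}{\eps}\mathcal{C}_{\dr}=\tfrac{1}{\eps}(\lambda_1\B_R^\eps\dr_0+\lambda_2\A_R^\eps\dr_0)$ from \eqref{C-d}. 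By the coefficient relations \eqref{Coefficients-Relations}, including Parodi's relation, the surviving quadratic terms assemble into the Leslie dissipation quadratic form in $(\A_R^\eps\dr_0,\D_{\u_0+\sqrt{\eps}\u_R^\eps}\dr_R^\eps)$, whose non-negativity is guaranteed exactly by the hypotheses $\lambda_1<0$ and $\mu_5+\mu_6+\tfrac{\lambda_2^2}{\lambda_1}\geq0$ in \eqref{Assumption-coefficients}; together with the $\mu_4>0$ viscosity term and the non-positive $\mu_1$-term (using $\mu_1\geq0$), this produces the coercive dissipation $c\,\mathcal{D}_N$. This is the remainder-level analogue of the physical energy dissipation law of Ericksen--Leslie flows and is the backbone of the whole estimate.

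\emph{Remaining contributions.} Three further sources of terms must be absorbed. First, because $\D_{\u_0+\sqrt{\eps}\u_R^\eps}$ is not a fixed differential operator, neither $\partial^m$ nor $\partial_t$ commutes with it; the resulting commutators are multilinear in $\nabla(\u_0+\sqrt{\eps}\u_R^\eps)$ and lower derivatives of $\dr_R^\eps$, and are handled by Sobolev product/Moser estimates against $\mathcal{D}_N$ and the small norms of $\u_0$. Second, the geometric constraint \eqref{Constraint-1} shows that the component of $\dr_R^\eps$ parallel to $\dr_0$ is quadratically small, $\dr_0\cdot\dr_R^\eps=-\tfrac{\sqrt{\eps}}{2}|\dr_R^\eps+\sqrt{\eps}\D_I^\eps|^2-\sqrt{\eps}\,\dr_0\cdot\D_I^\eps$; tracking this identity is what keeps the $\tfrac{1}{\eps}\|\dr_R^\eps\|_{H^{N+1}}^2$ part of $\mathcal{E}_N$ consistent with the Lagrange-multiplier terms. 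Third, the explicit source terms $\tfrac{1}{\eps}\mathcal{S}^1_{\dr}$, $\tfrac{1}{\sqrt{\eps}}\mathcal{S}^2_{\dr}$, $\mathcal{R}_{\dr}$, $\mathcal{K}_{\u}$ and $\div(\mathcal{T}_{\u}+\sqrt{\eps}\mathcal{R}_{\u}+\eps\mathcal{Q}_{\u})$ carry negative powers of $\eps$ and must be shown, after inserting the constraint and the exponential decay of $\D_I^\eps$ recorded below \eqref{Initial-Layer}, to contain compensating positive powers, so that $\mathcal{F}(t)$ is genuinely free of negative powers of $\eps$ and small.

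\emph{Closing and main obstacle.} With the differential inequality in hand I would run a continuity argument: setting $T^\ast=\sup\{T\geq0:\mathcal{E}_N(t)\leq K\xi_0\ \text{on}\ [0,T]\}$ and choosing $K$ and then $\xi_0,\eps_0$ small, the a priori inequality integrates to $\mathcal{E}_N(t)+c\int_0^t\mathcal{D}_N\leq\tfrac{K}{2}\xi_0$, which strictly improves the bootstrap assumption and forces $T^\ast=\infty$; the initial value $\mathcal{E}_N(0)\lesssim\xi_0$ comes from \eqref{IC-Remainder} and the uniform bound on $\widetilde{\D}^{in}_\eps$. Undoing the ansatz \eqref{Ansatz-1} then yields the solution of \eqref{HLQ} in the stated spaces, with \eqref{Energ-Bnds-Rmd} being exactly the uniform bound on $\mathcal{E}_N+\int_0^t\mathcal{D}_N$. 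The main obstacle throughout is the bookkeeping of the $\tfrac{1}{\eps}$ and $\tfrac{1}{\sqrt{\eps}}$ singular terms: ensuring that every such term either cancels through the structural identities \eqref{Coefficients-Relations}, is absorbed into the coercive $\mathcal{D}_N$ via \eqref{Assumption-coefficients}, or gains a compensating power of $\eps$ from the constraint \eqref{Constraint-1} and the initial-layer decay, so that no net singularity survives in the closed estimate.
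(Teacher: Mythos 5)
Your overall architecture (Hilbert expansion, $H^N$ energy method built on the cancellation between $\mathcal{C}_{\u}$ and $\mathcal{C}_{\dr}$, bootstrap/continuity argument) matches the paper, but there is a genuine gap at exactly the point the paper identifies as the crux: the term $\partial_t ( \u_R^\eps \cdot \nabla \dr_0 )$ (and its initial-layer analogue $\sqrt{\eps}\,\partial_t ( \u_R^\eps \cdot \nabla \D_I^\eps )$) in the $\dr_R^\eps$-equation of \eqref{Remainder-u-d}. This is \emph{not} a commutator of the material derivative, so your plan of absorbing all non-cancelled terms ``by Sobolev product/Moser estimates against $\mathcal{D}_N$'' fails for it: testing against $\partial^m \D_{\u_0+\sqrt{\eps}\u_R^\eps}\dr_R^\eps$ produces $\l \partial_t\partial^m(\u_R^\eps\cdot\nabla\dr_0), \partial^m \D_{\u_0+\sqrt{\eps}\u_R^\eps}\dr_R^\eps \r$, and $\partial_t \u_R^\eps$ has, via the momentum equation, the regularity of $\Delta \u_R^\eps$ — one derivative more than the $\|\nabla\u_R^\eps\|_{H^N}$ your dissipation controls, with no compensating power of $\eps$. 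The paper's resolution is structural, not an estimate: integrate by parts in $t$, use the $\dr_R^\eps$-equation once more to rewrite $\D^2_{\u_0+\sqrt{\eps}\u_R^\eps}\dr_R^\eps$, and enlarge the energy to include $\|\u_R^\eps\cdot\nabla\dr_0\|^2_{H^N} + 2\sum_{|m|\leq N}\l \partial^m(\u_R^\eps\cdot\nabla\dr_0), \partial^m\D_{\u_0+\sqrt{\eps}\u_R^\eps}\dr_R^\eps\r$, i.e.\ an energy functional depending on the limit solution, whose positivity for small $\eps$ is a separate lemma (Lemma \ref{Lmm-Energies}). Your $\mathcal{E}_N$ omits these cross terms entirely, so your differential inequality cannot close as written.

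Two secondary inaccuracies compound this. First, your dissipation functional is not what the cancellation actually yields: after pairing $\div\mathcal{C}_{\u}$ with $\tfrac{1}{\eps}\mathcal{C}_{\dr}$, the damping $\tfrac{-\lambda_1}{\eps}\|\D_{\u_0+\sqrt{\eps}\u_R^\eps}\dr_R^\eps\|^2_{H^N}$ is consumed in completing the square, leaving the \emph{relative} form $\tfrac{-\lambda_1}{\eps}\|\partial^m\D_{\u_0+\sqrt{\eps}\u_R^\eps}\dr_R^\eps + (\partial^m\B_R^\eps)\dr_0 + \tfrac{\lambda_2}{\lambda_1}(\partial^m\A_R^\eps)\dr_0\|^2_{L^2}$ plus $\tfrac{1}{\eps}(\mu_5+\mu_6+\tfrac{\lambda_2^2}{\lambda_1})\|(\partial^m\A_R^\eps)\dr_0\|^2_{L^2}$, rather than a standalone $\tfrac{-\lambda_1}{\eps}\|\D_{\u_0+\sqrt{\eps}\u_R^\eps}\dr_R^\eps\|^2_{H^N}$; moreover the dissipation $\tfrac{1}{\eps}\|\nabla\dr_R^\eps\|^2_{H^N}$, which you need to control terms like $\tfrac{1}{\eps}\l\nabla\partial^m\dr_R^\eps,\nabla\partial^m(\u_R^\eps\cdot\nabla\dr_0)\r$, is absent from your $\mathcal{D}_N$ and is obtained in the paper only by additionally testing the director equation against $\delta\,\partial^m\dr_R^\eps$. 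Second, your claim that the source $\mathcal{F}(t)$ carries ``fixed nonnegative powers of $\eps$'' is not how the singular linear terms close: $\tfrac{1}{\eps}\mathcal{S}^1_{\dr}$ and $\tfrac{1}{\eps}\mathcal{C}_{\dr}$ are bounded by (small limit norms)$\times\mathscr{D}_{N,\eps}$, not by positive $\eps$-powers, and the quadratic terms in $\u_0,\dr_0$ are absorbed by adding $\theta_0$ times the limit-system inequality \eqref{u0-d0-differential-ineq}, i.e.\ by the combined Lyapunov functional $\mathscr{E}_{N,\eps}+\theta_0\mathscr{E}_{S_{\!_N},0}$ — a device your sketch does not contain and cannot be replaced by time-integrability of $\mathcal{F}$ alone.
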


\begin{remark}
	The small number $\xi_0$ is firstly smaller than $\beta_{S_{\!_N}, 0}$ mentioned in Proposition \ref{Prop-WP-PLQ} proved by Wang-Zhang-Zhang in \cite{Wang-Zhang-Zhang-2013-ARMA}, so that the limit system \eqref{PLQ}-\eqref{IC-ParaLQ} admits a unique global in time classical solution $(\u_0 , \dr_0)$ under the constraint of small size $\xi_0$ to the initial data. Thus, the vectors $\u_0$ and $\dr_0$ appeared in the remainder system \eqref{Remainder-u-d} can be regarded as the known coefficients.
\end{remark}

\begin{remark}
	Under the assumptions of Theorem \ref{Thm-main}, if the well-prepared initial data is further assumed, i.e., \eqref{IC-Well-prepared} holds, then the hyperbolic Ericksen-Leslie's liquid crystal system \eqref{HLQ}-\eqref{IC-HyperLQ} has a unique global classical solution $(\u^\eps , \dr^\eps)$ with the form
	\begin{equation}
	  \left\{
	    \begin{array}{l}
	      \u^\eps (t,x) = \u_0 (t,x) + \sqrt{\eps} \u_R^\eps (t,x) \,, \\[2mm]	
	      \dr^\eps (t,x) = \dr_0 (t,x) + \sqrt{\eps} \dr_R^\eps (t,x) \,,
	    \end{array}
	  \right.
	\end{equation}
	and the other conclusions are the same as stated in Theorem \ref{Thm-main}.
\end{remark}

\begin{remark}
	The uniform bound \eqref{Energ-Bnds-Rmd} implies that
	\begin{equation}
	   \| \D_{\u_0 + \sqrt{\eps} \u_R^\eps } \dr_R^\eps \|^2_{L^\infty(\R^+;H^N}) + \tfrac{1}{\eps} \| \dr_R^\eps \|^2_{L^\infty( \R^+; H^{N+1})} \leq C \xi_0 \,,
	\end{equation}
	which shows us the convergence rate $\sqrt{\eps}$ on the orientation $\dr^\eps$ we obtain is optimal. This optimality can be also seen in Remark 1.1 of Jiang-Luo-Tang-Zarnescu's work \cite{Jiang-Luo-Tang-Zarnescu-2019-CMS}. We also derive from the uniform bound \eqref{Energ-Bnds-Rmd} that
	\begin{equation}
	  \begin{aligned}
	    \tfrac{1}{\eps} \| \u_R^\eps \|^2_{L^\infty (\R^+; H^N)} + \tfrac{1}{\eps} \| \nabla \u_R^\eps \|^2_{L^2(\R^+;H^N)} \leq C \xi_0 \,.
	  \end{aligned}
	\end{equation}
	Let $\widetilde{\u}_R^\eps = \tfrac{1}{\sqrt{\eps}} \u_R^\eps$. Then the expansion of $\u^\eps$ can be rewritten as $ \u^\eps (t,x) = \u_0 (t,x) + \eps \widetilde{\u}_R^\eps (t,x) $ and the uniform bound $ \| \widetilde{\u}_R^\eps \|^2_{L^\infty (\R^+; H^N)} + \| \nabla \widetilde{\u}_R^\eps \|^2_{L^2(\R^+;H^N)} \leq C \xi_0 $ holds, which means that the convergence rate of the velocity $\u^\eps $ is $\eps$.
\end{remark}

\subsection{Ideals and novelties} Generally speaking, there are three aspects of the disparity between the original system and the limit equations on the limit problem. First, the form of the limit equations is obviously different from that of the original system. Second, the initial conditions of original system and limit equations are not the same type, which will result to the initial layers. Finally, the boundary conditions will also lead to the disparity, which can be covered by the boundary layers.

In the current paper, we study the limit problem on $\T^3$ in the regime of classical solutions, which does not involve the boundary conditions. It is a very difficult problem to derive the energy bounds of $(\u^\eps , \dr^\eps)$ uniform in small $\eps > 0$ to the original hyperbolic Ericksen-Leslie's liquid crystal model \eqref{HLQ}. So, we take the Hilbert expansion method to rigorously justify the limit from hyperbolic Ericksen-Leslie's liquid crystal model to the parabolic case, in which the remainder term $( \u_R^\eps , \dr_R^\eps )$ is utilized to deal with the difference between the forms of two systems. Because the $\dr^\eps$-equation in \eqref{HLQ} is a wave type equation and the corresponding limit $\dr_0$-equation in \eqref{PLQ} is a parabolic type equation, the $\dr^\eps$-equation of original system need impose on two initial conditions but the $\dr_0$-equation of limit system only need impose on one initial data. One of the methods to overcome this disparity is to introduce an initial layer $\eps \D_I^\eps (t,x)$ (defined in \eqref{Initial-Layer}), so that the disparity $\D^{in} (x)$ is absorbed, for details see the analysis before. Thus we give the formal expansion \eqref{Ansatz-1}, namely
$$ \u^\eps (t,x) = \u_0 (t,x) + \sqrt{\eps} \u_R^\eps (t,x) \,, \ \dr^\eps (t,x) = \dr_0 (t,x) + \eps \D_I^\eps (t,x) + \sqrt{\eps} \dr_R^\eps (t,x) \,. $$
Another method to overcome the disparity of the initial conditions is to give the well-prepared initial data such that $\D^{in} (x) = 0$, which immediately implies that the initial layer $\D_I^\eps (t,x)$ vanishes. In this case, the expansion form will be \eqref{Ansatz-2}, hence
$$ \u^\eps (t,x) = \u_0 (t,x) + \sqrt{\eps} \u_R^\eps (t,x) \,, \ \dr^\eps (t,x) = \dr_0 (t,x) + \sqrt{\eps} \dr_R^\eps (t,x) \,. $$

The main goal of this work is to derive the global energy bound of the remainder $( \u_R^\eps , \dr_R^\eps )$ uniform in small $\eps > 0$ under the small size of the initial data $(\u^{in}, \dr^{in}, \tilde{\dr}^{in})$. Since the initial layer $\D_I^\eps (t,x)$ and its time derivative $\partial_t \D_I^\eps (t,x)$ are infinitely small as $\eps \ll 1$, we can merely consider the remainder $(\u_R^\eps , \dr_R^\eps)$ in the expansion \eqref{Ansatz-2} with respect to the case of well-prepared initial data, which satisfying the system \eqref{Remainder-u-d-2}. The major structure of \eqref{Remainder-u-d-2} reads
\begin{equation}\label{Rmd-simple}
  \left\{
    \begin{array}{c}
      \partial_t \u_R^\eps - \tfrac{1}{2} \mu_4 \Delta \u_R^\eps + \nabla p_R^\eps - \div \mathcal{C}_{\u} = \textrm{some other terms} \,, \\ [2mm]
      \div \u_R^\eps = 0 \,, \\ [2mm]
      \D_{\u_0 + \sqrt{\eps} \u_R^\eps }^2 \dr_R^\eps - \tfrac{\lambda_1}{\eps} \D_{\u_0 + \sqrt{\eps} \u_R^\eps } \dr_R^\eps - \tfrac{1}{\eps} \Delta \dr_R^\eps + \partial_t ( \u_R^\eps \cdot \nabla \dr_0 ) - \tfrac{1}{\eps} \mathcal{C}_{\dr} = \textrm{some other terms} \,,
    \end{array}
  \right.
\end{equation}
where $\lambda_1 < 0$. The {\bf key point} in this work is how to control the term $\partial_t (\u_R^\eps \cdot \nabla \dr_0)$ in the energy estimates to the remainder system \eqref{Remainder-u-d-2}. We will design a energy functional, which sensitively depends on the limit vector field $(\u_0 , \dr_0)$, to deal with this term. More precisely, we multiply by $\u_R^\eps$ and $\D_{\u_0 + \sqrt{\eps} \u_R^\eps } \dr_R^\eps$ in the first and third equations of \eqref{Rmd-simple} respectively and integrate by parts over $x \in \T^3$. Then, combining the cancellation \eqref{Cancellations} of the case $m=0$ in Lemma \ref{Lmm-Cancellation}, hence
\begin{equation*}
   \begin{aligned}
     - \l \div \mathcal{C}_{\u} , \u_R^\eps \r - \l \mathcal{C}_{\dr} , \D_{\u_0 + \sqrt{\eps} \u_R^\eps } \dr_R^\eps \r = & - \lambda_1 \| \D_{\u_0 + \sqrt{\eps} \u_R^\eps } \dr_R^\eps + \B_R^\eps \dr_0 + \tfrac{\lambda_2}{\lambda_1} \A_R^\eps \dr_0 \|_{L^2} \\
     + & \lambda_1 \| \D_{\u_0 + \sqrt{\eps} \u_R^\eps } \dr_R^\eps \|^2_{L^2} + ( \mu_5 + \mu_6 + \tfrac{\lambda_2^2}{\lambda_1} ) \| \A_R^\eps \dr_0 \|^2_{L^2} \,,
   \end{aligned}
\end{equation*}
we obtain the main part of the $L^2$-energy equality
\begin{equation}
  \begin{aligned}
    & \tfrac{1}{2} \tfrac{\d}{\d t} \big( \underset{The \ energy}{ \underbrace{ \tfrac{1}{\eps} \| \u_R^\eps \|^2_{L^2} + \| \D_{\u_0 + \sqrt{\eps} \u_R^\eps } \dr_R^\eps \|^2_{L^2} + \tfrac{1}{\eps} \| \nabla \dr_R^\eps \|^2_{L^2} } } \big) + \l \partial_t ( \u_R^\eps \cdot \nabla \dr_0 ) , \D_{\u_0 + \sqrt{\eps} \u_R^\eps } \dr_R^\eps \r  \\
    & + \underset{The \ energy \ dissipative \ rate}{ \underbrace{ \tfrac{1}{2 \eps} \mu_4 \| \nabla \u_R^\eps \|^2_{L^2} - \tfrac{\lambda_1}{\eps} \| \D_{\u_0 + \sqrt{\eps} \u_R^\eps } \dr_R^\eps + \B_R^\eps \dr_0 + \tfrac{\lambda_2}{\lambda_1} \A_R^\eps \dr_0 \|^2_{L^2} + \tfrac{1}{\eps} (\mu_5 + \mu_6 + \tfrac{\lambda_2^2}{\lambda_1}) \| \A_R^\eps \dr_0 \|_{L^2} } } \\
    & = \cdots \cdots
  \end{aligned}
\end{equation}
Under the coefficient conditions \eqref{Assumption-coefficients}, the energy dissipative rate is positive. If we regard the term $\partial_t (\u_R^\eps \cdot \nabla \dr_0)$ as a source term, the quantity $ \l \partial_t ( \u_R^\eps \cdot \nabla \dr_0 ) , \D_{\u_0 + \sqrt{\eps} \u_R^\eps } \dr_R^\eps \r $ should be dominated by the energy and the energy dissipative rate defined in the above equality. However, it is impossible, since the regularity of $\partial_t ( \u_R^\eps \cdot \nabla \dr_0 )$ is equivalent to $\tfrac{1}{2} \mu_4 \Delta \u_R^\eps$ by using the first $\u_R^\eps$-equation of \eqref{Rmd-simple} and the highest order regularity of the energy dissipative rate is $\nabla \u_R^\eps$. In order to overcome this difficulty, we try to design this quantity as a part of the energy. More precisely, we take the following {\em important } deformation
\begin{equation}
  \begin{aligned}
    & \l \partial_t ( \u_R^\eps \cdot \nabla \dr_0 ) , \D_{\u_0 + \sqrt{\eps} \u_R^\eps } \dr_R^\eps \r \\
    = & \tfrac{\d}{\d t} \l \u_R^\eps \cdot \nabla \dr_0 , \D_{\u_0 + \sqrt{\eps} \u_R^\eps } \dr_R^\eps \r  - \l \u_R^\eps \cdot \nabla \dr_0 , \partial_t \D_{\u_0 + \sqrt{\eps} \u_R^\eps } \dr_R^\eps \r \\
    = & \tfrac{\d}{\d t} \l \u_R^\eps \cdot \nabla \dr_0 , \D_{\u_0 + \sqrt{\eps} \u_R^\eps } \dr_R^\eps \r  - \l \u_R^\eps \cdot \nabla \dr_0 , \D_{\u_0 + \sqrt{\eps} \u_R^\eps }^2 \dr_R^\eps \r \\
    & + \l \u_R^\eps \cdot \nabla \dr_0 , ( \u_0 + \sqrt{\eps} \u_R^\eps ) \cdot \nabla \D_{\u_0 + \sqrt{\eps} \u_R^\eps } \dr_R^\eps \r \\
    = & \tfrac{1}{2} \tfrac{\d}{\d t} \big( \| \u_R^\eps \cdot \nabla \dr_0 \|^2_{L^2} + 2 \l \u_R^\eps \cdot \nabla \dr_0 , \D_{\u_0 + \sqrt{\eps} \u_R^\eps } \dr_R^\eps \r \big) \\
    & \underset{P_1}{ \underbrace{ - \l ( \u_0 + \sqrt{\eps} \u_R^\eps ) \cdot \nabla ( \u_R^\eps \cdot \nabla \dr_0 ) , \D_{\u_0 + \sqrt{\eps} \u_R^\eps } \dr_R^\eps \r + \tfrac{1}{\eps} \l \u_R^\eps \cdot \nabla \dr_0 , \mathcal{C}_{\dr} \r }} \\
    & \underset{P_1}{ \underbrace{ - \tfrac{1}{\eps} \l \nabla (\u_R^\eps \cdot \nabla \dr_0) , \nabla \dr_R^\eps \r + \tfrac{\lambda_1}{\eps} \l \u_R^\eps \cdot \nabla \dr_0 , \D_{\u_0 + \sqrt{\eps} \u_R^\eps } \dr_R^\eps \r }} \\
    & + \cdots \cdots
  \end{aligned}
\end{equation}
where the last equality is derived from the $\dr_R^\eps$-equation of \eqref{Rmd-simple}. Then, we obtain the relation
\begin{equation}
\begin{aligned}
& \tfrac{1}{2} \tfrac{\d}{\d t} \Big( \underset{The \ new \ energy}{ \underbrace{ \tfrac{1}{\eps} \| \u_R^\eps \|^2_{L^2} + \| \D_{\u_0 + \sqrt{\eps} \u_R^\eps } \dr_R^\eps \|^2_{L^2} + \tfrac{1}{\eps} \| \nabla \dr_R^\eps \|^2_{L^2} + \| \u_R^\eps \cdot \nabla \dr_0 \|^2_{L^2} + 2 \l \u_R^\eps \cdot \nabla \dr_0 , \D_{\u_0 + \sqrt{\eps} \u_R^\eps } \dr_R^\eps \r }} \Big) \\
& + \underset{The \ energy \ dissipative \ rate}{ \underbrace{ \tfrac{1}{2 \eps} \mu_4 \| \nabla \u_R^\eps \|^2_{L^2} - \tfrac{\lambda_1}{\eps} \| \D_{\u_0 + \sqrt{\eps} \u_R^\eps } \dr_R^\eps + \B_R^\eps \dr_0 + \tfrac{\lambda_2}{\lambda_1} \A_R^\eps \dr_0 \|^2_{L^2} + \tfrac{1}{\eps} (\mu_5 + \mu_6 + \tfrac{\lambda_2^2}{\lambda_1}) \| \A_R^\eps \dr_0 \|_{L^2} } } \\
& = P_1 + P_2 + \cdots \cdots
\end{aligned}
\end{equation}
where the quantities $P_1$ and $P_2$ can be controlled by the energy and energy dissipative rate. Although the new energy is not positive for all $\eps > 0$, it will be a definitely positive sign with sufficiently small $\eps > 0$ under the fixed coefficient $\nabla \dr_0$. Consequently, we have designed a complicated energy functional, which sensitively depends on the solutions to the limit system, to deal with the trouble quantity $ \l \partial_t ( \u_R^\eps \cdot \nabla \dr_0 ) , \D_{\u_0 + \sqrt{\eps} \u_R^\eps } \dr_R^\eps \r $.

The advantage of the expansion \eqref{Ansatz-2} of the solutions $(\u^\eps, \dr^\eps)$ to the system \eqref{HLQ} is such that the remainder equations \eqref{Remainder-u-d-2} of $(\u_R^\eps , \dr_R^\eps)$ have weaker nonlinearities than the original system \eqref{HLQ}, despite the system \eqref{Remainder-u-d-2} is still nonlinear and singular (with singular terms of the type $\frac{1}{\eps}$). To be more precise, after utilizing the expansion \eqref{Ansatz-2}, the nonlinearity and singularity are separated. For instance, the term $ ( - \eps |\D_{\u^\eps} \dr^\eps|^2 + |\nabla \dr^\eps|^2 - \lambda_2 \A^\eps : \dr^\eps \otimes \dr^\eps ) \dr^\eps $ in the original system \eqref{HLQ} is replaced by either linear terms ( the unknown $\dr^\eps$ and $\u^\eps$ are superseded by the known $\u_0$ and $\dr_0$ ) or a nonlinear term with the same form but with some higher order power of $\eps$ in front. So, it will be easier to get the energy bound, uniform in small $\eps > 0$, of the remainder system \eqref{Remainder-u-d-2}-\eqref{IC-Rmd-2}.

\subsection{Historical remarks}

In this subsection, we review some history of the mathematical analytic works on the liquid crystals, in particular Ericksen-Leslie's system. The static analogue of the parabolic Ericksen-Leslie's system \eqref{PLQ} is the so-called Oseen-Frank model, whose mathematical study was initialed from Hardt-Kinderlehrer-Lin \cite{Hardt-Kinderlehrer-Lin-CMP1986}. Since then there have been many works in this direction. In particular, the existence and regularity or partial regularity of the approximation (usually Ginzburg-Landau approximation as in \cite{Lin-Liu-CPAM1995}) dynamical Ericksen-Leslie's system was started by the work of Lin and Liu in \cite{Lin-Liu-CPAM1995}, \cite{Lin-Liu-DCDS1996} and \cite{Lin-Liu-ARMA2000}. For the simplest system preserving the basic energy law which can be obtained by neglecting the Leslie stress and by specifying some elastic constants,  in 2-D case, the existence of global weak solutions with at most a finite number of singular times were proved by Lin-Lin-Wang \cite{Lin-Lin-Wang-ARMA2010}. Recently, Lin and Wang proved global existence of weak solution for 3-D case with the initial director field $\dr^{in} (x)$ lying in the hemisphere in \cite{Lin-Wang-CPAM2016}. For the more general parabolic Ericksen-Leslie's system,  local well-posedness is proved by Wang-Zhang-Zhang in \cite{Wang-Zhang-Zhang-2013-ARMA}, and in \cite{Huang-Lin-Wang-CMP2014} existence of global solutions and regularity in $\mathbb{R}^2$ was established by Huang-Lin-Wang.  For more complete review of the works for the parabolic Ericksen-Leslie's system, please see the reference listed above.

For the hyperbolic Ericksen-Leslie's system \eqref{HLQ}, much less is known. For the most simplified model, i.e. taking the bulk velocity field $\u=0$, neglecting the Leslie's coefficients, and the spatial dimension is $1$, the system \eqref{HLQ} can be reduced to a so-called nonlinear variational wave equation which is already highly nontrivial. Zhang and Zheng studied systematically the dissipative and energy conservative solutions in series work starting from late 90's (\cite{Zhang-Zheng-ARMA2010, Zhang-Zheng-CPAM2012, Zhang-Zheng-CPDE2001}).

Recently, there started some works on the original hyperbolic Ericksen-Leslie's system \eqref{HLQ} for multi-dimentional case. the authors of the current paper studied in \cite{Jiang-Luo-2019-SIAM} the well-posedness in the context of classical solutions of \eqref{HLQ}. More precisely, in \cite{Jiang-Luo-2019-SIAM} under some natural constraints on the Leslie coefficients which ensure the basic energy law is dissipative, it was proved the local-in-time existence and uniqueness of the classical solution to the system \eqref{HLQ} with finite initial energy. Furthermore, with an additional assumption on the coefficients which provides a damping effect, i.e. $\lambda_1 < 0$, and the smallness of the initial energy, the unique global classical solution was established. Here we remark that the assumption $\lambda_1 < 0$ plays a crucial role in the global-in-time well-posedness. Later, Cai-Wang \cite{CW} made progress for the simplied Ericksen-Leslie system, namely, the case with  $\mu_i = 0, i=1\,,\cdots\,,6$, $i\neq 4$ in \eqref{sigma}. They proved the global regularity of \eqref{HLQ} near the constant equilibrium by employing the vector field method. More recently, in \cite{Huang-Jiang-Luo-Zhao}, the authors of the current papers with Huang and Zhao, considered the more general case: still $\mu_2=\mu_3=0$, but $0 \neq \mu_5=\mu_6 >-\mu_4$, and $0 \neq \mu_1 > -2(\mu_4+\mu_5)$, and proved results similar to \cite{CW}.

Regarding to the inertia limit, i.e. $\eps\rightarrow 0$, for a given bulk velocity and well-prepared initial data, together with Tang, we justified this limit in \cite{Jiang-Luo-Tang-2019}. For the case without the bulk velocity and general initial data, by constructing an initial layer, we with Tang and Zarnescu, justified this limit in \cite{Jiang-Luo-Tang-Zarnescu-2019-CMS}. In this sense, the current paper, proved this inertia limit for the much more general case with bulk velocity field, under the assumption $\lambda_1 <0$. The case $\lambda_1=\lambda_2=0$ will be analytically more subtle, for which the limiting system is the harmonic map to $\mathbf{S}^2$, and furthermore, the initial layer will be a wave equation which preserve the energy. This work is under preparation, together with Huang and Zhao.

The organization of this paper is as follows: in the next section, we first derive the canceled relations between $\mathcal{C}_{\u}$ and $\mathcal{C}_{\dr}$ contained in the remainder system of $(\u_R^\eps, \dr_R^\eps)$, which will play an essential role in estimating the energy of the remainder $(\u_R^\eps, \dr_R^\eps)$. Then we shown that the constraints \eqref{Constraint-1} and \eqref{Constraint-2} will hold at any time provided they initially hold, see Lemma \ref{Lmm-Constraints}. In Section \ref{Sec:Uniform-Ener}, we estimate the uniform energy bound on small $\eps > 0$ of the remainder system of $(\u_R^\eps, \dr_R^\eps)$. Then, based on the uniform energy estimates in the previous section, Theorem \ref{Thm-main} of the current paper is proved in Section \ref{Sec:Proof-Main-Thm}. Finally, we accurately present the all tedious term of the remainder system \eqref{Remainder-u-d} (also \eqref{Remainder-u-d-2}) in Appendix \ref{Appendix}.

\section{Some basic cancellations and constraints}

In this section, we will first derive some basic cancellations on the remainder equations \eqref{Remainder-u-d} (or \eqref{Remainder-u-d-2}), which will play an essential role in deriving the global in time energy estimates uniformly in $\eps > 0$ to the remainder system \eqref{Remainder-u-d} (or \eqref{Remainder-u-d-2}) with small initial data. We then prove that the constraints \eqref{Constraint-1} or \eqref{Constraint-2}, which come from the geometric constraint $|\dr^\eps| = 1$ in the original system \eqref{HLQ}, will hold for all time $t \geq 0$ provided they initially hold.

First, we will give the following lemma, which displays the cancellations between the terms $ \mathcal{C}_{\u} $ and $\mathcal{C}_{\dr}$.

\begin{lemma}\label{Lmm-Cancellation}
	Under the relations \eqref{Coefficients-Relations}, for all multi-indexes $m \in \mathbb{N}^3$, one has
	\begin{equation}\label{Cancellations}
	  \begin{aligned}
	    & \l \div \partial^m \mathcal{C}_{\u} , \partial^m \u_R^\eps \r + \l \partial^m \mathcal{C}_{\dr} , \partial^m \D_{\u_0 + \sqrt{\eps} \u_R^\eps} \dr_R^\eps \r \\
	    = & \lambda_1 \left\| \partial^m \D_{\u_0 + \sqrt{\eps} \u_R^\eps} \dr_R^\eps + ( \partial^m \B_R^\eps ) \dr_0 + \tfrac{\lambda_2}{\lambda_1} ( \partial^m \A_R^\eps ) \dr_0 \right\|^2_{L^2} - \lambda_1 \left\| \partial^m \D_{\u_0 + \sqrt{\eps} \u_R^\eps} \dr_R^\eps \right\|^2_{L^2} \\
	    & - ( \mu_5 + \mu_6 + \tfrac{\lambda_2^2}{\lambda_1} ) \left\| (\partial^m \A_R^\eps) \dr_0 \right\|^2_{L^2}  + \mathcal{G}_m \,,
	  \end{aligned}
	\end{equation}
	where the quantity $\mathcal{G}_m$ is defined as follows:
	if $m = 0$,
	\begin{equation}
	  \mathcal{G}_m = 0 \,,
	\end{equation}
	and if $m \neq 0$,
	\begin{equation}\label{Gm}
	  \begin{aligned}
	    \mathcal{G}_m = & \sum_{m' < m} C_m^{m'} \Big[ \l \lambda_1 ( \partial^{m'} \B_R^\eps ) \partial^{m-m'} \dr_0 + \lambda_2 ( \partial^{m'} \A_R^\eps ) \partial^{m-m'} \dr_0 , \partial^m \D_{\u_0 + \sqrt{\eps} \u_R^\eps} \dr_R^\eps \r \\
	    - & \l \mu_2 \partial^{m-m'} ( \dr_{0,j} \dr_{0,k} ) \partial^{m'} \B_{R, ki}^\eps + \mu_3 \partial^{m-m'} ( \dr_{0,i} \dr_{0,k} ) \partial^{m'} \B_{R,kj}^\eps , \partial^m \partial_j \u_{R,i}^\eps \r \\
	    - & \l \mu_5 \partial^{m-m'} ( \dr_{0,j} \dr_{0,k} ) \partial^{m'} \A_{R,ki}^\eps + \mu_6 \partial^{m-m'} ( \dr_{0,i} \dr_{0,k} )  \partial^{m'} \A_{R,kj}^\eps , \partial^m \partial_j \u_{R,i}^\eps \r \\
	    - & \l \mu_2 \partial^{m-m'} \dr_{0,j} \partial^{m'} ( \D_{\u_0 + \sqrt{\eps} \u_R^\eps} \dr_R^\eps )_i + \mu_3 \partial^{m-m'} \dr_{0,i} \partial^{m'} ( \D_{\u_0 + \sqrt{\eps} \u_R^\eps} \dr_R^\eps )_j , \partial^m \partial_j \u_{R, i}^\eps \r \Big] \,.
	  \end{aligned}
	\end{equation}
\end{lemma}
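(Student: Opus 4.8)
The plan is to prove the identity \eqref{Cancellations} by a direct integration by parts followed by a tensor contraction, settling the case $m=0$ first and then reducing the general case to it through the Leibniz rule.

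First I would treat $m=0$. Writing $w:=\D_{\u_0+\sqrt{\eps}\u_R^\eps}\dr_R^\eps$ for brevity, I integrate by parts on $\T^3$ to obtain $\l\div\mathcal{C}_{\u},\u_R^\eps\r=-\l\mathcal{C}_{\u},\nabla\u_R^\eps\r$, and split $\nabla\u_R^\eps=\A_R^\eps+\B_R^\eps$ into its symmetric and antisymmetric parts. Each summand of $\mathcal{C}_{\u}$ in \eqref{C-u} has the form $a\otimes\dr_0$ or $\dr_0\otimes a$ with $a\in\{w+\B_R^\eps\dr_0,\ \A_R^\eps\dr_0\}$, so every contraction collapses to a dot product: since $(\A_R^\eps)^\top=\A_R^\eps$ one has $(a\otimes\dr_0):\A_R^\eps=(\dr_0\otimes a):\A_R^\eps=a\cdot(\A_R^\eps\dr_0)$, while $(\B_R^\eps)^\top=-\B_R^\eps$ gives $(a\otimes\dr_0):\B_R^\eps=-a\cdot(\B_R^\eps\dr_0)$ and $(\dr_0\otimes a):\B_R^\eps=a\cdot(\B_R^\eps\dr_0)$.

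Substituting these, and combining with $\l\mathcal{C}_{\dr},w\r=\lambda_1(\B_R^\eps\dr_0)\cdot w+\lambda_2(\A_R^\eps\dr_0)\cdot w$, the left side becomes a quadratic expression in the three vectors $w$, $\A_R^\eps\dr_0$, $\B_R^\eps\dr_0$. The three coefficient relations \eqref{Coefficients-Relations} are precisely what makes this quadratic form close into the claimed shape: $\lambda_1=\mu_2-\mu_3$ and $\lambda_2=\mu_5-\mu_6$ produce the antisymmetric cross terms, while Parodi's relation $\mu_2+\mu_3=\mu_6-\mu_5$ (that is, $\mu_2+\mu_3=-\lambda_2$) turns the symmetric pairing $(\mu_2+\mu_3)\,w\cdot(\A_R^\eps\dr_0)$ into a $-\lambda_2$ term, so that completing the square yields $\lambda_1\|w+\B_R^\eps\dr_0+\tfrac{\lambda_2}{\lambda_1}\A_R^\eps\dr_0\|_{L^2}^2-\lambda_1\|w\|_{L^2}^2$, with the leftover $\A_R^\eps\dr_0$ terms collecting into the coefficient $-(\mu_5+\mu_6+\tfrac{\lambda_2^2}{\lambda_1})$. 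This gives $\mathcal{G}_0=0$.

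For general $m$ I apply $\partial^m$ to $\mathcal{C}_{\u}$ and $\mathcal{C}_{\dr}$ and expand by Leibniz, regarding each term as a product of one remainder atom ($w$, $\A_R^\eps$, or $\B_R^\eps$) with one or two factors of $\dr_0$. The top-order term, in which all of $\partial^m$ lands on the remainder atom and none on $\dr_0$, has exactly the algebraic structure of the $m=0$ expression with $w,\A_R^\eps,\B_R^\eps$ replaced by $\partial^m w,\partial^m\A_R^\eps,\partial^m\B_R^\eps$ and $\dr_0$ left undifferentiated; the contraction computation above then applies verbatim and produces the three quadratic terms on the right of \eqref{Cancellations}. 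The remaining pieces carry $m'<m$ derivatives on the remainder atom and $m-m'$ on the $\dr_0$ factors: collecting those of $\partial^m\mathcal{C}_{\dr}$ paired with $\partial^m w$, together with those of $\partial^m\mathcal{C}_{\u}$ paired (after integration by parts) with $\partial^m\partial_j\u_{R,i}^\eps$, reproduces line by line the definition \eqref{Gm} of $\mathcal{G}_m$. The main obstacle is bookkeeping rather than conceptual: I must track index placement so that the antisymmetry of $\B_R^\eps$ delivers the right signs, and I must isolate the top-order term cleanly, noting that the $\dr_0$ hidden inside $\B_R^\eps\dr_0$ and $\A_R^\eps\dr_0$ counts among the $\dr_0$ factors that may absorb derivatives — which is exactly why the double-$\dr_0$ terms $\partial^{m-m'}(\dr_{0,i}\dr_{0,k})$ appear in \eqref{Gm}. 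Once this separation is made, the identity reduces to the $m=0$ algebra and needs no further estimates.
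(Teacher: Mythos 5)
Your proposal is correct and takes essentially the same approach as the paper's proof: integration by parts, the splitting $\nabla \partial^m \u_R^\eps = \partial^m \A_R^\eps + \partial^m \B_R^\eps$ with the symmetric/skew-symmetric contraction identities, the relations \eqref{Coefficients-Relations} (Parodi's relation converting $\mu_2+\mu_3$ into $-\lambda_2$), and completion of the square in $\lambda_1$, with all sub-top-order Leibniz terms collected into $\mathcal{G}_m$ exactly as the paper's $U_{12}+U_{22}+U_{32}+U_4$. Your presentational choice of settling $m=0$ first and transplanting the algebra to the top-order term for general $m$ is only a cosmetic reorganization of the same argument.
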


\begin{proof}[Proof of Lemma \ref{Lmm-Cancellation}]
	Recalling the definition of $\mathcal{C}_{\u}$ in \eqref{C-u}, one deduces
	\begin{equation}
	  \begin{aligned}
	    \l \div \partial^m \mathcal{C}_{\u} , \partial^m \u_R^\eps \r = & \ \underset{U_1}{\underbrace{ \l \partial_j \partial^m ( \mu_2 \dr_{0,j} \B_{R, ki}^\eps \dr_{0,k} + \mu_3 \dr_{0,i} \B_{R,kj}^\eps \dr_{0,k} ) , \partial^m \u_{R,i}^\eps \r }} \\
	    & \ + \underset{U_2}{\underbrace{ \l \partial_j \partial^m ( \mu_5 \dr_{0,j} \dr_{0,k} \A_{R,ki}^\eps + \mu_6 \dr_{0,i} \dr_{0,k} \A_{R,kj}^\eps ) , \partial^m \u_{R,i}^\eps \r }} \\
	    & \ + \underset{U_3}{\underbrace{\l \partial_j \partial^m \big[ \mu_2 \dr_{0,j} ( \D_{\u_0 + \sqrt{\eps} \u_R^\eps} \dr_R^\eps )_i + \mu_3 \dr_{0,i} ( \D_{\u_0 + \sqrt{\eps} \u_R^\eps} \dr_R^\eps )_j \big] , \partial^m \u_{R,i}^\eps \r}} \,.
	  \end{aligned}
	\end{equation}
	Then we will directly calculate the terms $U_1$, $U_2$ and $U_3$ for the case $m \neq 0$. The case $m = 0$ can be similarly justified. For the term $U_1$, we derive from the integration by parts over $x \in \T^3$ that
	\begin{equation}
	  \begin{aligned}
	    & U_1 = \ \underset{U_{11}}{\underbrace{- \l \mu_2 \dr_{0,j} \partial^m \B_{R,ki}^\eps \dr_{0,k} + \mu_3 \dr_{0,i} \partial^m \B_{R,kj}^\eps \dr_{0,k} , \partial^m \partial_j \u_{R, i}^\eps \r }} \\
	    & \ \underset{U_{12}}{\underbrace{ - \sum_{m' < m} C_m^{m'} \l \mu_2 \partial^{m-m'} ( \dr_{0,j} \dr_{0,k} ) \partial^{m'} \B_{R, ki}^\eps + \mu_3 \partial^{m-m'} ( \dr_{0,i} \dr_{0,k} ) \partial^{m'} \B_{R,kj}^\eps , \partial^m \partial_j \u_{R,i}^\eps \r }} \,.
	  \end{aligned}
	\end{equation}
	Since $\lambda_1 = \mu_2 - \mu_3$, $\lambda_2 = \mu_5 - \mu_6$ and $\mu_6 - \mu_5 = \mu_2 + \mu_3$, we have
	\begin{equation}
	  \begin{aligned}
	    U_{11} = & - \big\langle (\mu_2 - \mu_3) \dr_{0,j} (\partial^m \B_{R,ki}^\eps ) \dr_{0,k} \\
	    & + \mu_3 ( \dr_{0,j} ( \partial^m \B_{R,ki}^\eps ) \dr_{0,k} + \dr_{0,i} ( \partial^m \B_{R,kj}^\eps ) \dr_{0,k} ) , \partial^m \A_{R,ij}^\eps + \partial^m \B_{R,ij}^\eps \big\rangle \\
	    = & - \lambda_1 \l \dr_{0,j} ( \partial^m \B_{R,ki}^\eps ) \dr_{0,k} , \partial^m \A_{R,ij}^\eps + \partial^m \B_{R,ij}^\eps \r \\
	    & - \mu_3 \l \dr_{0,j} ( \partial^m \B_{R,ki}^\eps ) \dr_{0,k} + \dr_{0,i} ( \partial^m \B_{R,kj}^\eps ) \dr_{0,k} , \partial^m \A_{R,ij}^\eps \r \\
	    = & \lambda_1 \| ( \partial^m \B_R^\eps ) \dr_0 \|^2_{L^2} - ( \lambda_1 + 2 \mu_3 ) \l (\partial^m \B_{R,ki}^\eps ) \dr_{0,k} , ( \partial^m \A_{R,ij}^\eps ) \dr_{0,j} \r \\
	    = & \lambda_1 \| ( \partial^m \B_R^\eps ) \dr_0 \|^2_{L^2} + \lambda_2 \l (\partial^m \B_R^\eps) \dr_0 , (\partial^m \A_R^\eps) \dr_0 \r \,,
	  \end{aligned}
	\end{equation}
	where we make use of the relation $\B_{R,ij}^\eps = - \B_{R,ji}^\eps$.
	
	For the term $U_2$, integrating by parts over $x \in \T^3$ implies
	\begin{equation}
	  \begin{aligned}
	    & U_2 = \underset{U_{21}}{\underbrace{ - \l \mu_5 \dr_{0,j} \dr_{0,k} (  \partial^m \A_{R,ki}^\eps ) + \mu_6 \dr_{0,i} \dr_{0,k} ( \partial^m \A_{R,kj}^\eps ) , \partial^m \partial_j \u_{R, i}^\eps \r }} \\
	    & \underset{U_{22}}{\underbrace{ - \sum_{m' < m} C_m^{m'} \l \mu_5 \partial^{m-m'} ( \dr_{0,j} \dr_{0,k} ) \partial^{m'} \A_{R,ki}^\eps + \mu_6 \partial^{m-m'} ( \dr_{0,i} \dr_{0,k} )  \partial^{m'} \A_{R,kj}^\eps , \partial^m \partial_j \u_{R,i}^\eps \r }} \,.
	  \end{aligned}
	\end{equation}
	By the similar calculations on $U_{11}$, we compute the term $U_{21}$
	\begin{equation}
	  \begin{aligned}
	    U_{21} = & - \big\langle (\mu_5 - \mu_6) \dr_{0,j} \dr_{0,k} ( \partial^m \A_{R,ki}^\eps ) \\
	    & + \mu_6 ( \dr_{0,j} \dr_{0,k} ( \partial^m \A_{R,ki}^\eps ) + \dr_{0,i} \dr_{0,k} ( \partial^m \A_{R,kj}^\eps ) ) , \partial^m \A_{ij}^\eps + \partial^m \B_{ij}^\eps \big\rangle \\
	    = & - \lambda_2 \| ( \partial^m \A_R^\eps ) \dr_0 \|^2_{L^2} + \lambda_2 \l ( \partial^m \A_R^\eps ) \dr_0 , ( \partial^m \B_R^\eps ) \dr_0 \r - 2 \mu_6 \| ( \partial^m \A_R^\eps ) \dr_0 \|^2_{L^2} \\
	    = & - (\mu_5 + \mu_6) \| ( \partial^m \A_R^\eps ) \dr_0 \|^2_{L^2} + \lambda_2 \l ( \partial^m \A_R^\eps ) \dr_0 , ( \partial^m \B_R^\eps ) \dr_0 \r \,.
	  \end{aligned}
	\end{equation}
	
	For the term $U_3$, we deduce from integrating by parts over $x \in \T^3$ that
	\begin{equation}
	  \begin{aligned}
	    & U_3 = \underset{U_{31}}{\underbrace{ - \l \mu_2 \dr_{0,j} \partial^m ( \D_{\u_0 + \sqrt{\eps} \u_R^\eps} \dr_R^\eps )_i + \mu_3 \dr_{0,i} \partial^m ( \D_{\u_0 + \sqrt{\eps} \u_R^\eps} \dr_R^\eps )_j , \partial^m \partial_j \u_{R, i}^\eps \r }}  \\
	    & \underset{U_{32}}{\underbrace{ - \sum_{m' < m} C_m^{m'} \l \mu_2 \partial^{m-m'} \dr_{0,j} \partial^{m'} ( \D_{\u_0 + \sqrt{\eps} \u_R^\eps} \dr_R^\eps )_i + \mu_3 \partial^{m-m'} \dr_{0,i} \partial^{m'} ( \D_{\u_0 + \sqrt{\eps} \u_R^\eps} \dr_R^\eps )_j , \partial^m \partial_j \u_{R, i}^\eps \r }} \,.
	  \end{aligned}
	\end{equation}
	It is easily yielded that by the analogous arguments in computations of $U_{11}$ or $U_{21}$
	\begin{equation}
	  \begin{aligned}
	    U_{31} = & - \big\langle (\mu_2 - \mu_3) \dr_{0,j} \partial^m ( \D_{\u_0 + \sqrt{\eps} \u_R^\eps} \dr_R^\eps )_i \\
	    & + \mu_3 ( \dr_{0,j} \partial^m ( \D_{\u_0 + \sqrt{\eps} \u_R^\eps} \dr_R^\eps )_i + \dr_{0,i} \partial^m ( \D_{\u_0 + \sqrt{\eps} \u_R^\eps} \dr_R^\eps )_j ) , \partial^m \A_{R,ij}^\eps + \partial^m \B_{R, ij}^\eps \big\rangle \\
	    = & - \lambda_1 \l \partial^m D_{\u_0 + \sqrt{\eps} \u_R^\eps} \dr_R^\eps , (\partial^m \A_R^\eps ) \dr_0 \r  - 2 \mu_3 \l \partial^m D_{\u_0 + \sqrt{\eps} \u_R^\eps} \dr_R^\eps , (\partial^m \A_R^\eps ) \dr_0 \r \\
	    & + \lambda_1 \l \partial^m D_{\u_0 + \sqrt{\eps} \u_R^\eps} \dr_R^\eps , (\partial^m \B_R^\eps ) \dr_0 \r \\
	    = & \lambda_1 \l \partial^m D_{\u_0 + \sqrt{\eps} \u_R^\eps} \dr_R^\eps , (\partial^m \B_R^\eps ) \dr_0 \r + \lambda_2 \l \partial^m D_{\u_0 + \sqrt{\eps} \u_R^\eps} \dr_R^\eps , (\partial^m \A_R^\eps ) \dr_0 \r \,.
	  \end{aligned}
	\end{equation}
	We thereby obtain
	\begin{equation}\label{C-u-inner}
	  \begin{aligned}
	    \l \div \partial^m \mathcal{C}_{\u} , \partial^m \u_R^\eps \r = & \lambda_1 \| ( \partial^m \B_R^\eps ) \dr_0 \|^2_{L^2} + \lambda_2 \l \partial^m \D_{\u_0 + \sqrt{\eps} \u_R^\eps} \dr_R^\eps + 2 ( \partial^m \B_R^\eps ) \dr_0 , ( \partial^m \A_R^\eps ) \dr_0 \r \\
	    & - (\mu_5 + \mu_6) \| ( \partial^m \A_R^\eps ) \dr_0 \|^2_{L^2} + \lambda_1 \l \partial^m \D_{\u_0 + \sqrt{\eps} \u_R^\eps } \dr_R^\eps , ( \partial^m \B_R^\eps ) \dr_0 \r \\
	    & + U_{12} + U_{22} + U_{32} \,.
	  \end{aligned}
	\end{equation}
	
	We finally compute the quantity $ \l \partial^m \mathcal{C}_{\dr} , \partial^m \D_{\u_0 + \sqrt{\eps} \u_R^\eps} \dr_R^\eps \r $. Recalling the definition of $\mathcal{C}_{\dr}$ in \eqref{C-d}, we deduce
	\begin{equation}\label{C-d-inner}
	  \begin{aligned}
	    & \l \partial^m \mathcal{C}_{\dr} , \partial^m \D_{\u_0 + \sqrt{\eps} \u_R^\eps} \dr_R^\eps \r \\
	    = &  \lambda_1 \l ( \partial^m \B_R^\eps ) \dr_0 , \partial^m \D_{\u_0 + \sqrt{\eps} \u_R^\eps} \dr_R^\eps \r + \lambda_2 \l ( \partial^m \A_R^\eps ) \dr_0 , \partial^m \D_{\u_0 + \sqrt{\eps} \u_R^\eps} \dr_R^\eps \r \\
	    + & \underset{U_4}{\underbrace{ \sum_{m' < m} C_m^{m'} \l \lambda_1 ( \partial^{m'} \B_R^\eps ) \partial^{m-m'} \dr_0 + \lambda_2 ( \partial^{m'} \A_R^\eps ) \partial^{m-m'} \dr_0 , \partial^m \D_{\u_0 + \sqrt{\eps} \u_R^\eps} \dr_R^\eps \r }} \,.
	  \end{aligned}
	\end{equation}
	Adding the equality \eqref{C-u-inner} to \eqref{C-d-inner} tells us
	\begin{equation}
	  \begin{aligned}
	    & \l \div \partial^m \mathcal{C}_{\u} , \partial^m \u_R^\eps \r + \l \partial^m \mathcal{C}_{\dr} , \partial^m \D_{\u_0 + \sqrt{\eps} \u_R^\eps} \dr_R^\eps \r \\
	    = & \lambda_1 \| ( \partial^m \B_R^\eps ) \dr_0 \|^2_{L^2} + 2 \lambda_2 \l \partial^m \D_{\u_0 + \sqrt{\eps} \u_R^\eps} \dr_R^\eps + ( \partial^m \B_R^\eps ) \dr_0 , ( \partial^m \A_R^\eps ) \dr_0 \r \\
	    & - (\mu_5 + \mu_6) \| ( \partial^m \A_R^\eps ) \dr_0 \|^2_{L^2} + 2 \lambda_1 \l \partial^m \D_{\u_0 + \sqrt{\eps} \u_R^\eps} \dr_R^\eps , ( \partial^m \B_R^\eps ) \dr_0 \r \\
	    & + U_{12} + U_{22} + U_{32} + U_4 \\
	    = & \lambda_1 \left\| \partial^m \D_{\u_0 + \sqrt{\eps} \u_R^\eps} \dr_R^\eps + ( \partial^m \B_R^\eps ) \dr_0 + \tfrac{\lambda_2}{\lambda_1} ( \partial^m \A_R^\eps ) \dr_0 \right\|^2_{L^2} - \lambda_1 \left\| \partial^m \D_{\u_0 + \sqrt{\eps} \u_R^\eps} \dr_R^\eps \right\|^2_{L^2} \\
	    & - ( \mu_5 + \mu_6 + \tfrac{\lambda_2^2}{\lambda_1} ) \left\| (\partial^m \A_R^\eps) \dr_0 \right\|^2_{L^2}  + + U_{12} + U_{22} + U_{32} + U_4 \,,
	  \end{aligned}
	\end{equation}
	and then the proof of Lemma \ref{Lmm-Cancellation} is completed.
\end{proof}

Then, inspired by Lemma 4.1 of \cite{Jiang-Luo-2019-SIAM}, the work of the same two authors of this paper, we derive the following lemma to show how the constraints \eqref{Constraint-1} or \eqref{Constraint-2} hold under the corresponding initial constraints.

\begin{lemma}\label{Lmm-Constraints}
	Assume $(\u_0, \dr_0)$ and $(\u_R^\eps, \dr_R^\eps)$ are two sufficiently smooth solutions to the limit system \eqref{PLQ}-\eqref{IC-ParaLQ} and the remainder system \eqref{Remainder-u-d}-\eqref{IC-Remainder}, respectively. Let $\D_I^\eps (t,x)$ is the initial layer defined in \eqref{Initial-Layer}. If the initial constraint
	\begin{equation}\label{IC-constraint-1}
	  \begin{aligned}
	    2 ( \dr_0 \cdot ( \dr_R^\eps + \sqrt{\eps} \D_I^\eps ) ) (0,x) + \sqrt{\eps} |\dr_R^\eps + \sqrt{\eps} \D_I^\eps|^2 (0,x) = 0
	  \end{aligned}
	\end{equation}
	holds, then the constraint \eqref{Constraint-1}, i.e.,
	\begin{equation}
	  \begin{aligned}
	    2 \dr_0 \cdot ( \dr_R^\eps + \sqrt{\eps} \D_I^\eps ) + \sqrt{\eps} |\dr_R^\eps + \sqrt{\eps} \D_I^\eps|^2 = 0
	  \end{aligned}
	\end{equation}
	holds for all $t \geq 0$.
\end{lemma}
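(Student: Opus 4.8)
The plan is to show that the reconstructed director $\dr^\eps := \dr_0 + \eps \D_I^\eps + \sqrt{\eps}\,\dr_R^\eps$ keeps unit length, which is exactly the content of \eqref{Constraint-1}. Writing $\u^\eps := \u_0 + \sqrt{\eps}\,\u_R^\eps$ and setting $f := |\dr^\eps|^2 - 1$, I first observe that multiplying the left-hand side of \eqref{Constraint-1} by $\sqrt{\eps}$ and using $|\dr_0| = 1$ gives $\sqrt{\eps}\,\big(2\dr_0\cdot(\dr_R^\eps+\sqrt{\eps}\D_I^\eps)+\sqrt{\eps}|\dr_R^\eps+\sqrt{\eps}\D_I^\eps|^2\big) = |\dr^\eps|^2 - |\dr_0|^2 = f$, so the constraint \eqref{Constraint-1} is equivalent to $f\equiv 0$. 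Since the remainder system \eqref{Remainder-u-d} was obtained precisely by inserting the ansatz \eqref{Ansatz-1} into \eqref{HLQ}, the pair $(\u^\eps,\dr^\eps)$ solves the third equation of \eqref{HLQ} with $\bm{\gamma}^\eps$ given by \eqref{gamma-eps}, \emph{without} the unit-length constraint being imposed a priori. This unconstrained director equation is the object I will work with.

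The heart of the argument is to derive a closed, homogeneous, linear equation for $f$. Taking the (pointwise) inner product of the director equation with $\dr^\eps$ and abbreviating $\D := \D_{\u^\eps}$, I use the identities $\dr^\eps\cdot\D^2\dr^\eps = \tfrac12\D^2 f - |\D\dr^\eps|^2$, $\dr^\eps\cdot\Delta\dr^\eps = \tfrac12\Delta f - |\nabla\dr^\eps|^2$ and $\dr^\eps\cdot\D\dr^\eps = \tfrac12\D f$, together with $\dr^\eps\cdot\B^\eps\dr^\eps = 0$ (antisymmetry of $\B^\eps$) and $\dr^\eps\cdot\A^\eps\dr^\eps = \A^\eps:\dr^\eps\otimes\dr^\eps$. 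Substituting the explicit form \eqref{gamma-eps} of $\bm{\gamma}^\eps$ and expanding $|\dr^\eps|^2 = 1 + f$, the nonlinear terms $-\eps|\D\dr^\eps|^2$, $|\nabla\dr^\eps|^2$ and $-\lambda_2\A^\eps:\dr^\eps\otimes\dr^\eps$ carried by the factor $\bm{\gamma}^\eps\cdot 1$ cancel exactly against the corresponding terms produced by $\dr^\eps\cdot\D^2\dr^\eps$, $\dr^\eps\cdot\Delta\dr^\eps$ and $\lambda_2\dr^\eps\cdot\A^\eps\dr^\eps$, leaving
\begin{equation*}
  \eps\,\D^2_{\u^\eps} f - \lambda_1\,\D_{\u^\eps} f - \Delta f - 2\bm{\gamma}^\eps f = 0 \,.
\end{equation*}
This is a linear damped wave equation for $f$ transported along the flow of $\u^\eps$, with the smooth coefficient $\bm{\gamma}^\eps$ and the (divergence-free) field $\u^\eps$ regarded as known. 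I expect this cancellation --- which is precisely where the specific design of the Lagrange multiplier $\bm{\gamma}^\eps$ enters --- to be the main technical point; everything after it is routine.

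Next I would verify the initial data. From the initial constraint \eqref{IC-constraint-1} one has $f(0,x) = 0$, hence also $\nabla f(0,x) = 0$. For the first material derivative, $\D_{\u^\eps} f = 2\,\dr^\eps\cdot\D_{\u^\eps}\dr^\eps$, so I need $(\dr^\eps\cdot\D_{\u^\eps}\dr^\eps)(0,x) = 0$. Using the reconstruction together with the initial conditions \eqref{IC-Remainder}, the definition \eqref{Initial-Layer} of the initial layer and the relation $\eps\partial_t\D_I^\eps(0,\cdot) = \D^{in}$, a short computation shows $\dr^\eps(0,\cdot) = \dr^{in}$ and $\D_{\u^\eps}\dr^\eps(0,\cdot) = (\D_{\u_0}\dr_0)(0,\cdot) + \D^{in} = \tilde{\dr}^{in}$, the last equality being the definition \eqref{Differ-IC} of the disparity. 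The compatibility condition $\dr^{in}\cdot\tilde{\dr}^{in} = 0$ from \eqref{IC-compatibility} then gives $\D_{\u^\eps}f(0,\cdot) = 0$, and since $\nabla f(0,\cdot)=0$ this coincides with $\partial_t f(0,\cdot) = 0$.

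Finally I would close the argument by a standard energy/uniqueness estimate for the linear equation above. Setting $\mathcal{E}(t) := \tfrac12\int_{\T^3}\big(\eps|\D_{\u^\eps} f|^2 + |\nabla f|^2 + f^2\big)\,\d x$, I multiply the equation by $\D_{\u^\eps} f$ and integrate over $\T^3$, using $\div\u^\eps = 0$ to discard the transport contributions and to integrate by parts in the $\Delta f$ term; the term $\lambda_1\|\D_{\u^\eps}f\|_{L^2}^2$ has a favourable sign because $\lambda_1 < 0$, while the remaining commutator term $\int_{\T^3}\partial_i f\,\partial_i\u^\eps_j\,\partial_j f\,\d x$ and the zeroth-order term $\int_{\T^3}2\bm{\gamma}^\eps f\,\D_{\u^\eps}f\,\d x$ are bounded by $C\big(\|\nabla\u^\eps\|_{L^\infty}+\|\bm{\gamma}^\eps\|_{L^\infty}\big)\mathcal{E}$, with a constant that may depend on $\eps$ (harmless for $\eps$ fixed). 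This yields $\tfrac{\d}{\d t}\mathcal{E}(t) \leq C_\eps\,\mathcal{E}(t)$, and since $\mathcal{E}(0) = 0$ by the previous step, Gronwall's inequality forces $\mathcal{E}\equiv 0$, hence $f\equiv 0$ for all $t\geq 0$. Translating back through the equivalence established at the outset, the constraint \eqref{Constraint-1} holds for all $t\geq 0$, as claimed.
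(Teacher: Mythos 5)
Your proposal is correct, and at the structural level it follows the same route as the paper: reconstruct $\dr^\eps = \dr_0 + \eps\D_I^\eps + \sqrt{\eps}\,\dr_R^\eps$, observe that \eqref{Constraint-1} is equivalent to $|\dr^\eps| \equiv 1$ because the remainder system comes from plugging the ansatz \eqref{Ansatz-1} into the \emph{unconstrained} equations of \eqref{HLQ}, verify from \eqref{IC-Remainder}, \eqref{Differ-IC} and \eqref{IC-compatibility} the two initial compatibilities $|\dr^\eps|(0,\cdot) = 1$ and $(\dr^\eps\cdot\D_{\u^\eps}\dr^\eps)(0,\cdot) = \dr^{in}\cdot\tilde{\dr}^{in} = 0$, and then propagate. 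The one genuine difference is in the propagation step: the paper treats it as a black box, citing Lemma 4.1 of \cite{Jiang-Luo-2019-SIAM} (restated as Lemma \ref{Lmm-Lagrangian-gamma}), whereas you prove it inline by dotting the director equation with $\dr^\eps$, using the specific form \eqref{gamma-eps} of $\bm{\gamma}^\eps$ to produce the exact cancellations that leave the closed linear equation $\eps\,\D^2_{\u^\eps} f - \lambda_1\D_{\u^\eps} f - \Delta f - 2\bm{\gamma}^\eps f = 0$ for $f = |\dr^\eps|^2 - 1$, and closing with a Gronwall estimate on $\mathcal{E}(t) = \tfrac12\int_{\T^3}(\eps|\D_{\u^\eps}f|^2 + |\nabla f|^2 + f^2)\,\d x$ (I checked the algebra: the $-\eps|\D\dr^\eps|^2$, $|\nabla\dr^\eps|^2$ and $\lambda_2\A^\eps:\dr^\eps\otimes\dr^\eps$ contributions cancel as you claim, the sign of $\lambda_1$ is not even needed since for fixed $\eps$ that term can be absorbed into the Gronwall constant, and your verification of the initial data, including the cancellation of the $\eps(\u_0\cdot\nabla\D_I^\eps)(0,\cdot)$ terms via the third line of \eqref{IC-Remainder} and $\eps\partial_t\D_I^\eps(0,\cdot)=\D^{in}$, matches the paper's). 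What your approach buys is self-containedness: the reader need not consult the earlier SIAM paper, and the computation makes visible precisely where the design of the Lagrange multiplier enters. What the paper's approach buys is brevity and reuse of an established result; otherwise the two arguments are the same mechanism.
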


\begin{remark}\label{Rmk-constraint-2}
	For the remainder system \eqref{Remainder-u-d-2}-\eqref{IC-Rmd-2} corresponding to the well-prepared initial data, the initial layer $\D_I^\eps (t,x) \equiv 0$. If the initial constraint $ 2 (\dr_0 \cdot \dr_R^\eps) (0,x) + \sqrt{\eps} |\dr_R^\eps|^2 (0,x) = 0 $ holds, then the constraint \eqref{Constraint-2}, hence $2 \dr_0 \cdot \dr_R^\eps + \sqrt{\eps} |\dr_R^\eps|^2 = 0$ still holds for all $t \geq 0$.
\end{remark}

Before proving Lemma \ref{Lmm-Constraints}, for convenience to readers, we list Lemma 4.1 in \cite{Jiang-Luo-2019-SIAM} as follows:
\begin{lemma}[Lemma 4.1 in \cite{Jiang-Luo-2019-SIAM}]\label{Lmm-Lagrangian-gamma}
	Assume $(\u^\eps , \dr^\eps)$ is a sufficiently smooth solution to the system \eqref{HLQ}-\eqref{IC-HyperLQ}. If the constraint $|\dr^\eps| = 1$ is further assumed, then the Lagrangian multiplier $\bm{\gamma}^\eps$ is \eqref{gamma-eps}, i.e.,
	\begin{equation*}
	\begin{aligned}
	\bm{\gamma}^\eps = - \eps | \D_{\u^\eps} \dr^\eps |^2 + |\nabla \dr^\eps|^2 - \lambda_2 \A^\eps : \dr^\eps \otimes \dr^\eps \,.
	\end{aligned}
	\end{equation*}
	Conversely, if we give the form of $\bm{\gamma}^\eps$ as \eqref{gamma-eps} and $\dr^\eps$ satisfies the initial compatibility $|\dr^\eps| \big|_{t=0} = 1$ and $ ( \dr^\eps \cdot \D_{\u^\eps} \dr^\eps ) |_{t=0} = 0 $, then $|\dr^\eps| = 1$ holds at any time.
\end{lemma}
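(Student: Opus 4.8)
The plan is to track the scalar quantity $g := |\dr^\eps|^2$ (equivalently $f := |\dr^\eps|^2 - 1$) and to extract a closed scalar equation for it by testing the director equation against $\dr^\eps$ itself. The only structural facts I need are that the material derivative $\D_{\u^\eps} = \partial_t + \u^\eps \cdot \nabla$ is a derivation, so $\D_{\u^\eps}(a \cdot b) = (\D_{\u^\eps} a) \cdot b + a \cdot (\D_{\u^\eps} b)$, together with the symmetry of $\A^\eps$ and the skew-symmetry of $\B^\eps$. Taking the dot product of the third equation of \eqref{HLQ} with $\dr^\eps$ and using the derivation identities $\dr^\eps \cdot \D_{\u^\eps} \dr^\eps = \tfrac12 \D_{\u^\eps} g$, $\dr^\eps \cdot \D^2_{\u^\eps} \dr^\eps = \tfrac12 \D^2_{\u^\eps} g - |\D_{\u^\eps}\dr^\eps|^2$, $\dr^\eps \cdot \Delta \dr^\eps = \tfrac12 \Delta g - |\nabla \dr^\eps|^2$, the vanishing $\dr^\eps \cdot \B^\eps \dr^\eps = 0$ (skew-symmetry), and $\dr^\eps \cdot \A^\eps \dr^\eps = \A^\eps : \dr^\eps \otimes \dr^\eps$, I obtain the \emph{master identity}
\begin{equation}\label{master-id}
  \tfrac{\eps}{2} \D^2_{\u^\eps} g - \tfrac12 \Delta g - \tfrac{\lambda_1}{2} \D_{\u^\eps} g - \bm{\gamma}^\eps g = \eps |\D_{\u^\eps}\dr^\eps|^2 - |\nabla \dr^\eps|^2 + \lambda_2 \A^\eps : \dr^\eps \otimes \dr^\eps \,,
\end{equation}
valid for \emph{any} sufficiently smooth pair $(\u^\eps,\dr^\eps)$ solving the director equation, with no constraint yet imposed.

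For the forward implication I impose $|\dr^\eps| = 1$, i.e. $g \equiv 1$, so $\D_{\u^\eps} g$, $\D^2_{\u^\eps} g$ and $\Delta g$ all vanish. Then \eqref{master-id} collapses to $-\bm{\gamma}^\eps = \eps|\D_{\u^\eps}\dr^\eps|^2 - |\nabla\dr^\eps|^2 + \lambda_2 \A^\eps : \dr^\eps \otimes \dr^\eps$, which is exactly \eqref{gamma-eps}. This direction is pure algebra once the master identity is in hand.

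For the converse I substitute the prescribed form \eqref{gamma-eps} of $\bm{\gamma}^\eps$ into \eqref{master-id} and split $\bm{\gamma}^\eps g = \bm{\gamma}^\eps(g-1) + \bm{\gamma}^\eps$. By \eqref{gamma-eps} the constant-in-$f$ source terms on the two sides coincide (both equal $-\bm{\gamma}^\eps$) and cancel, so with $f = g-1$ the identity reduces to the closed, homogeneous linear damped-wave equation
\begin{equation}\label{f-eqn}
  \eps \D^2_{\u^\eps} f - \lambda_1 \D_{\u^\eps} f - \Delta f - 2 \bm{\gamma}^\eps f = 0 \,.
\end{equation}
The two initial compatibilities translate cleanly: $|\dr^\eps||_{t=0} = 1$ gives $f|_{t=0} = 0$ (hence $\nabla f|_{t=0} = 0$), and then $(\dr^\eps \cdot \D_{\u^\eps}\dr^\eps)|_{t=0} = 0$ gives $\partial_t f|_{t=0} = \D_{\u^\eps} f|_{t=0} = 2(\dr^\eps \cdot \D_{\u^\eps}\dr^\eps)|_{t=0} = 0$. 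Thus $f$ solves \eqref{f-eqn} with zero Cauchy data, and it remains to prove $f \equiv 0$.

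The main obstacle is exactly this uniqueness step for \eqref{f-eqn}, which I settle by an energy estimate. For fixed $\eps > 0$ I set $\mathcal{E}(t) = \tfrac{\eps}{2}\|\D_{\u^\eps} f\|^2_{L^2} + \tfrac12 \|\nabla f\|^2_{L^2} + \tfrac12 \|f\|^2_{L^2}$, test \eqref{f-eqn} against $\D_{\u^\eps} f$, and integrate over $\T^3$. Using $\div \u^\eps = 0$, the top-order terms reorganize into $\tfrac{d}{dt}\mathcal{E}$ plus the damping $\lambda_1 \|\D_{\u^\eps} f\|^2_{L^2}$, which has a favorable sign since $\lambda_1 \le 0$; the remaining contributions are the commutator between $\nabla$ and $\D_{\u^\eps}$ and the coupling $2\langle \bm{\gamma}^\eps f, \D_{\u^\eps} f\rangle$, both bounded by $C(\eps)\,\mathcal{E}(t)$ because $\u^\eps, \dr^\eps$ (hence $\nabla\u^\eps$ and $\bm{\gamma}^\eps$) are smooth and bounded. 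Combined with the elementary bound $\tfrac{d}{dt}\|f\|^2_{L^2} \le \|f\|^2_{L^2} + \|\D_{\u^\eps} f\|^2_{L^2}$ (again discarding the transport term via $\div\u^\eps = 0$), this yields $\tfrac{d}{dt}\mathcal{E} \le C\,\mathcal{E}$; Grönwall's inequality and $\mathcal{E}(0) = 0$ force $\mathcal{E} \equiv 0$, hence $f \equiv 0$ and $|\dr^\eps| = 1$ for all $t \ge 0$. The only delicate points are the bookkeeping of the material-derivative commutators and the inclusion of the $\|f\|^2_{L^2}$ term (needed since $\|\nabla f\|_{L^2}$ alone does not control the zero Fourier mode on $\T^3$); the constant $C$ may depend on $\eps$, which is harmless because only propagation for each fixed $\eps$ is asserted.
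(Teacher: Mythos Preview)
Your proof is correct. The paper does not actually prove this lemma; it merely restates Lemma~4.1 from \cite{Jiang-Luo-2019-SIAM} for the reader's convenience and then invokes it in the proof of Lemma~\ref{Lmm-Constraints}. Your argument---dotting the director equation with $\dr^\eps$ to obtain a closed damped-wave equation for $f = |\dr^\eps|^2 - 1$, then concluding $f \equiv 0$ via an $L^2$ energy estimate and Gr\"onwall---is the standard route and is precisely what one expects the cited proof to do.
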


\begin{proof}[Proof of Lemma \ref{Lmm-Constraints}]
	According to the formal analysis given in Section Introduction, we know that
	\begin{equation}
	  \left\{
	    \begin{array}{l}
	      \u^\eps (t,x) = \u_0 (t,x) + \sqrt{\eps} \u_R^\eps (t,x) \,, \\ [2mm]
	      \dr^\eps (t,x) = \dr_0 (t,x) + \eps \D_I^\eps (t,x) + \sqrt{\eps} \dr_R^\eps (t,x)
	    \end{array}
	  \right.
	\end{equation}
	subject to the equations of \eqref{HLQ} but without the geometric constraint $|\dr^\eps|=1$. Via the initial relations \eqref{IC-relations}, one easily derives from \eqref{IC-constraint-1} and the compatibilities \eqref{IC-compatibility} that
	\begin{equation}
	  \begin{aligned}
	    |\dr^\eps| (0,x) = 1 \,, \ ( \dr^\eps \cdot \D_{\u^\eps} \dr^\eps ) (0,x) = 0 \,.
	  \end{aligned}
	\end{equation}
	Therefore, Lemma \ref{Lmm-Lagrangian-gamma} implies that $| \dr^\eps | = 1$ holds for all $t \geq 0$. Noticing that $|\dr_0 (t,x)| \equiv 1$, we easily derive from the expression of $\dr^\eps (t,x)$ and the geometric constraint $\dr^\eps (t,x) \in \mathbb{S}^2$ that the constraint \eqref{Constraint-1} holds for all $t \geq 0$. Then the proof of Lemma \ref{Lmm-Constraints} is finished.
\end{proof}

The same arguments in justifying Lemma \ref{Lmm-Constraints} will also imply the conclusions of Remark \ref{Rmk-constraint-2}, just letting $\D_I^\eps (t,x) \equiv 0$.

\section{A priori uniform energy estimates for the remainder system}\label{Sec:Uniform-Ener}

In this section, we will derive the a priori uniform (in small $\eps > 0$) energy estimates for the remainder systems \eqref{Remainder-u-d} or \eqref{Remainder-u-d-2} globally in times. Notice that there two differences between the equations \eqref{Remainder-u-d} and \eqref{Remainder-u-d-2}:

1) the system \eqref{Remainder-u-d} involves the terms $\sqrt{\eps} \partial_t ( \u_R^\eps \cdot \nabla \D_I^\eps )$, $\eps \div \mathcal{Q}_{\u} (\D_I)$ and $\mathcal{Q}_{\dr} (\D_I)$;

2) The initial conditions of \eqref{Remainder-u-d} are nontrivial (see \eqref{IC-Remainder}) but that of \eqref{Remainder-u-d-2} are all imposed on zero (see \eqref{IC-Rmd-2}).

However, the initial data \eqref{IC-Remainder} of \eqref{Remainder-u-d} are infinitely small quantities and the initial layer $\D_I^\eps (t,x)$ defined in \eqref{Initial-Layer} is also infinitely small as $\eps \ll 1$. The terms $\eps \div \mathcal{Q}_{\u} (\D_I)$ and $\mathcal{Q}_{\dr} (\D_I)$ involved in \eqref{Remainder-u-d}-\eqref{IC-Remainder} will not result to any more difficulty in the energy estimates comparing to the energy estimates of the system \eqref{Remainder-u-d-2}-\eqref{IC-Rmd-2}. For term $\sqrt{\eps} \partial_t ( \u_R^\eps \cdot \nabla \D_I^\eps )$, it will be dealt with the same way as the term $\partial_t ( \u_R^\eps \cdot \nabla \dr_0 )$, the details of which will be given later. To avoid the tedious calculations in controlling the tedious terms $\eps \div \mathcal{Q}_{\u}$ and $\mathcal{Q}_{\dr}$, we only derive the a priori estimates of the remainder system \eqref{Remainder-u-d-2} with the initial conditions \eqref{IC-Rmd-2}, for simplicity. Actually, its calculations remain very annoying and complicated.

Next, we aim at deriving the a priori estimates of the remainder equations \eqref{Remainder-u-d-2} with the initial data \eqref{IC-Rmd-2}. The {\em key points} are:
\begin{itemize}
	\item The $H^N$-norm of term $\partial_t (\u_R^\eps \cdot \nabla \dr_0)$ in the $\dr_R^\eps$-equation of \eqref{Remainder-u-d-2} has the same regularity as the $H^N$-norm of $\Delta \u_R^\eps$ and the energy dissipative rate will only supply a regularity of $\| \nabla \u_R^\eps \|^2_{H^N}$, so that we can not crudely view it as a source term to be controlled in the right-hand side of the energy inequality. To overcome this, we deal with this term as an energy term, just like $\partial_t \u_R^\eps$ or $\partial_t \D_{\u_0 + \sqrt{\eps} \u_R^\eps } \dr_R^\eps$. Consequently, the energy functional of the remainder system \eqref{Remainder-u-d-2} is depended on the vector field $\dr_0$, which is a solution to the limit equations \eqref{PLQ}.
	
	\item The relations \eqref{Cancellations} is essential to deal the the terms $\mathcal{C}_{\u}$ and $\mathcal{C}_{\dr}$, which are all linearly dependent on the $\u_R^\eps$ and $\dr_R^\eps$ and with the coefficient $\dr_0$. We derive some useful dissipative structures from these two terms. So, we will derive an energy dissipative rate of the remainder system depending also on $\dr_0$.
\end{itemize}

Before deriving the a priori uniform energy estimates on the remainder system \eqref{Remainder-u-d-2}, we state the global existence, which has been proved by Wang-Zhang-Zhang in \cite{Wang-Zhang-Zhang-2013-ARMA}, to the incompressible parabolic Ericksen-Leslie's liquid crystal model \eqref{PLQ} with small initial data \eqref{IC-ParaLQ}. For convenience to readers, we restate this result here. We first define the following energy functionals $\mathscr{E}_{s,0}$ and $\mathscr{D}_{s,0}$ for any integer $s \geq 2$ (see Section 5 of \cite{Wang-Zhang-Zhang-2013-ARMA}):
\begin{equation}
  \begin{aligned}
    \mathscr{E}_{s,0} (t) = & \ \| \nabla \dr_0 \|^2_{L^2} + \| \nabla \Delta^s \dr_0 \|^2_{L^2} + \| \u_0 \|^2_{L^2} + \| \Delta^s \u_0 \|^2_{L^2} \,, \\
    \mathscr{D}_{s,0} (t) = & \ \tfrac{1}{- \lambda_1} \| \Delta \dr_0 \|^2_{L^2} + \tfrac{1}{- \lambda_1} \| \Delta^{s+1} \dr_0 \|^2_{L^2} + \tfrac{1}{2} \mu_4 \| \nabla \u_0 \|^2_{L^2} + \tfrac{1}{2} \mu_4 \| \nabla \Delta^s \u_0 \|^2_{L^2} \,.
  \end{aligned}
\end{equation}
By interpolation, one easily know that
\begin{equation}\label{Bnd-Es0-Ds0}
  \begin{aligned}
    \mathscr{E}_{s,0} (t) & \thicksim \| \u_0 \|^2_{H^{2s}} + \| \nabla \dr_0 \|^2_{H^{2s}} \,, \\
    \mathscr{D}_{s,0} (t) & \thicksim \| \nabla \u_0 \|^2_{H^{2s}} + \| \Delta \dr_0 \|^2_{H^{2s}} \,.
  \end{aligned}
\end{equation}
Then, the following result holds:
\begin{proposition}[Wang-Zhang-Zhang in \cite{Wang-Zhang-Zhang-2013-ARMA}]\label{Prop-WP-PLQ}
	Let $s \geq 2$ be an integer. Assume that the Leslie coefficients satisfy \eqref{Coefficients-Relations} and \eqref{Assumption-coefficients}, and the initial data $ \nabla \dr^{in} \in H^{2s} $, $ \u^{in} \in H^{2s} $. Then if there exists a $\beta_{s,0} > 0$ such that
	\begin{equation*}
	  \| \nabla \dr^{in} \|^2_{H^{2s}} + \| \u^{in} \|^2_{H^{2s}} \leq \beta_{s,0} \,,
	\end{equation*}
	the incompressible parabolic Ericksen-Leslie's liquid crystal model \eqref{PLQ} with initial conditions \eqref{IC-ParaLQ} admits a unique global classical solution
	\begin{equation*}
	  \u_0, \nabla \dr_0 \in C( \R^+; H^{2s} )\,, \ \nabla \u_0 \in L^2( \R^+; H^{2s} )
	\end{equation*}
	satisfying the uniform bound
	\begin{equation}\label{u0-d0-Bnd}
	  \begin{aligned}
	    \| \u_0 \|^2_{H^{2s}} + \| \nabla \dr_0 \|^2_{H^{2s}} \leq c_0^{-1} \mathscr{E}_{s,0} \leq c_0^{-1} C_0 \beta_{s,0}
	  \end{aligned}
	\end{equation}
	holds for all $t \geq 0$. Moreover, the following energy inequality holds:
	\begin{equation}\label{u0-d0-differential-ineq}
	  \begin{aligned}
	    \tfrac{\d}{\d t} \mathscr{E}_{s,0} + \mathscr{D}_{s,0} \leq 0 \,, \quad \forall t \geq 0 \,.
	  \end{aligned}
	\end{equation}
\end{proposition}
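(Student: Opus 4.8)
The plan is to combine a standard local-in-time existence result for the quasilinear parabolic system \eqref{PLQ}-\eqref{IC-ParaLQ} with a closed a priori energy estimate, and then run a continuation (bootstrap) argument under the smallness of the initial data to obtain the global solution together with the differential inequality \eqref{u0-d0-differential-ineq}. First I would reformulate the director equation: since $\lambda_1<0$, solving the third equation of \eqref{PLQ} for the material derivative gives
\[
\partial_t \dr_0 = - \u_0 \cdot \nabla \dr_0 - \B_0 \dr_0 - \tfrac{1}{\lambda_1} \big( \Delta \dr_0 + \bm{\gamma}_0 \dr_0 + \lambda_2 \A_0 \dr_0 \big) ,
\]
which exhibits $-\tfrac{1}{\lambda_1}\Delta \dr_0$ as a genuine heat diffusion. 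Together with the viscous term $\tfrac{1}{2}\mu_4\Delta\u_0$ ($\mu_4>0$) in the velocity equation, the system is parabolic, so local existence and uniqueness of a classical solution on some $[0,T)$ follow from a linearization/fixed-point scheme. The geometric constraint $|\dr_0|=1$ is preserved in time by the same mechanism as in Lemma \ref{Lmm-Lagrangian-gamma} specialized to $\eps=0$, so it may be used freely in the estimates (in particular $\dr_0\cdot\partial_t\dr_0=0$ and $\dr_0\cdot\D_{\u_0}\dr_0=0$).

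The core is the a priori estimate. At the basic level I would test the velocity equation with $\u_0$ and the director equation with $\D_{\u_0}\dr_0$ (equivalently with $-\Delta\dr_0-\bm{\gamma}_0\dr_0-\lambda_2\A_0\dr_0$), then integrate over $\T^3$. The transport terms vanish by $\div\u_0=0$; the crucial point is that the Ericksen stress $-\div(\nabla\dr_0\odot\nabla\dr_0)$ and the $\Delta\dr_0$ coupling cancel, while the Leslie stress $\div\bm{\sigma}_0$ pairs against $\B_0\dr_0+\tfrac{\lambda_2}{\lambda_1}\A_0\dr_0$ exactly as in the cancellation identity of Lemma \ref{Lmm-Cancellation} (this is where the relations \eqref{Coefficients-Relations}, including Parodi's relation, are used). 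The sign conditions \eqref{Assumption-coefficients}, namely $\lambda_1<0$, $\mu_1\ge0$ and $\mu_5+\mu_6+\tfrac{\lambda_2^2}{\lambda_1}\ge0$, then guarantee that the resulting bilinear dissipation form is nonnegative and controls $\tfrac{1}{-\lambda_1}\|\Delta\dr_0\|_{L^2}^2+\tfrac{1}{2}\mu_4\|\nabla\u_0\|_{L^2}^2$ from below, producing the lowest-order part of \eqref{u0-d0-differential-ineq}.

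Next I would propagate this to top order by applying $\Delta^s$ to both equations and testing with $\Delta^s\u_0$ and $\Delta^s\D_{\u_0}\dr_0$. The principal terms reproduce the same cancellation and coercive dissipation, now at the level of $\|\nabla\Delta^s\u_0\|_{L^2}^2$ and $\|\Delta^{s+1}\dr_0\|_{L^2}^2$, yielding the top-order part of $\mathscr{D}_{s,0}$. Every remaining term is a commutator in which at least one derivative falls on a coefficient such as $\u_0$, $\dr_0$ or $\nabla\dr_0$; using the Leibniz rule, the Sobolev embedding $H^{2s}\hookrightarrow L^\infty$ for $s\ge2$ on $\T^3$, and the interpolation \eqref{Bnd-Es0-Ds0}, each such term is bounded by $C\,\mathscr{E}_{s,0}^{1/2}\,\mathscr{D}_{s,0}$. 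Collecting everything gives a differential inequality of the form $\tfrac{\d}{\d t}\mathscr{E}_{s,0}+\mathscr{D}_{s,0}\le C\,\mathscr{E}_{s,0}^{1/2}\mathscr{D}_{s,0}$. Choosing $\beta_{s,0}$ so small that $C(c_0^{-1}C_0\beta_{s,0})^{1/2}\le\tfrac12$ and bootstrapping on the continuity of $\mathscr{E}_{s,0}$, the nonlinear term is absorbed into half the dissipation; this keeps $\mathscr{E}_{s,0}(t)\le\mathscr{E}_{s,0}(0)$, forbids finite-time blow-up, and upgrades the local solution to a global one satisfying \eqref{u0-d0-Bnd} and \eqref{u0-d0-differential-ineq}.

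The hard part will be the top-order estimate: organizing the highest-order commutators so that the quasilinear terms hidden inside the material derivative $\D_{\u_0}\dr_0=\partial_t\dr_0+\u_0\cdot\nabla\dr_0$ do not consume more regularity than the dissipation $\mathscr{D}_{s,0}$ provides, and simultaneously verifying that the full dissipative bilinear form remains coercive at every order. This requires using the cancellation structure of Lemma \ref{Lmm-Cancellation} together with the constraint $|\dr_0|=1$ in a genuinely algebraic way, rather than merely bounding terms crudely.
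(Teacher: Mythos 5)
The paper does not actually prove Proposition \ref{Prop-WP-PLQ} --- it is imported verbatim from Wang--Zhang--Zhang \cite{Wang-Zhang-Zhang-2013-ARMA} --- and your sketch reconstructs precisely the energy method used in that cited source: local existence from the parabolic structure made explicit by $\lambda_1<0$, propagation of $|\dr_0|=1$, the basic energy law from testing with $\u_0$ and $\D_{\u_0}\dr_0$ (where $\dr_0\cdot\D_{\u_0}\dr_0=0$ kills the multiplier term and the Ericksen stress cancels the transport part of $\langle\Delta\dr_0,\u_0\cdot\nabla\dr_0\rangle$), coercivity of the Leslie dissipation under \eqref{Coefficients-Relations}--\eqref{Assumption-coefficients}, the $\Delta^s$-level estimate with commutators closed via $H^{2s}\hookrightarrow L^\infty$, and a small-data bootstrap yielding \eqref{u0-d0-Bnd} and \eqref{u0-d0-differential-ineq}. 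The only points you leave implicit --- that the terms $\tfrac{1}{-\lambda_1}\|\Delta\dr_0\|^2_{L^2}$ and $\tfrac{1}{-\lambda_1}\|\Delta^{s+1}\dr_0\|^2_{L^2}$ in $\mathscr{D}_{s,0}$ are not produced directly by the tested quadratic form but are recovered by substituting the director equation $\Delta\dr_0+\bm{\gamma}_0\dr_0+\lambda_2\A_0\dr_0=-\lambda_1(\D_{\u_0}\dr_0+\B_0\dr_0)$ and absorbing the $\bm{\gamma}_0\dr_0$ contribution via smallness, and that absorption yields $\tfrac{\d}{\d t}\mathscr{E}_{s,0}+\tfrac{1}{2}\mathscr{D}_{s,0}\le 0$ rather than \eqref{u0-d0-differential-ineq} verbatim until one rescales the dissipation functional --- are routine adjustments handled in \cite{Wang-Zhang-Zhang-2013-ARMA}, not gaps.
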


We now introduce the following energy functional $\mathscr{E}_{N, \eps} (t)$
\begin{equation}
  \begin{aligned}
    \mathscr{E}_{N, \eps} (t) = & \tfrac{1}{\eps} \| \u_R^\eps \|^2_{H^N} + ( 1 - \delta ) \| \D_{\u_0 + \sqrt{\eps} \u_R^\eps } \dr_R^\eps \|^2_{H^N} + \tfrac{1}{\eps} \| \nabla \dr_R^\eps \|^2_{H^N} \\
    & + ( \tfrac{- \delta \lambda_1}{\eps} - \tfrac{5}{4} \delta ) \| \dr_R^\eps \|^2_{H^N} + \| \u_R^\eps \cdot \nabla \dr_0 + \tfrac{\delta}{2} \dr_R^\eps \|^2_{H^N} \\
    & + \delta \| \D_{\u_0 + \sqrt{\eps} \u_R^\eps } \dr_R^\eps + \dr_R^\eps \|^2_{H^N}  + 2 \sum_{|m| \leq N} \l \partial^m ( \u_R^\eps \cdot \nabla \dr_0 ), \partial^m \D_{\u_0 + \sqrt{\eps} \u_R^\eps } \dr_R^\eps \r
  \end{aligned}
\end{equation}
and the energy dissipative rate $\mathscr{D}_{N, \eps} (t)$
\begin{equation}
  \begin{aligned}
    \mathscr{D}_{N, \eps} (t) = & \tfrac{3 \mu_4}{8 \eps} \| \nabla \u_R^\eps \|^2_{H^N} + \tfrac{\delta}{2 \eps} \| \nabla \dr_R^\eps \|^2_{H^N} - \delta \| \D_{\u_0 + \sqrt{\eps} \u_R^\eps } \dr_R^\eps \|^2_{H^N}  \\
    & + \tfrac{\mu_1}{\eps} \sum_{|m| \leq N} \| (\partial^m \A_R^\eps) : \dr_0 \otimes \dr_0 \|^2_{L^2} + \tfrac{1}{\eps} ( \mu_5 + \mu_6 + \tfrac{\lambda_2^2}{\lambda_1} ) \sum_{|m| \leq N} \| ( \partial^m \A_R^\eps ) \dr_0 \|^2_{L^2} \\
    &  + \tfrac{- \lambda_1}{\eps} \sum_{|m| \leq N} \| \partial^m \D_{\u_0 + \sqrt{\eps} \u_R^\eps } \dr_R^\eps + ( \partial^m \B_R^\eps ) \dr_0 + \tfrac{\lambda_2}{\lambda_1} ( \partial^m \A_R^\eps ) \dr_0 \|^2_{L^2} \,,
  \end{aligned}
\end{equation}
where $\delta \in (0, \tfrac{1}{2}]$ is a fixed constant, depending only on $\lambda_1$, $\lambda_2$ and $N$.

One notices that the energy $\mathscr{E}_{N, \eps} (t)$ and the energy dissipative rate $\mathscr{D}_{N, \eps } (t)$ may not be nonnegative, since there is an indefinitely signed term $ \sum_{|m| \leq N} \l \partial^m \u_R^\eps \cdot \nabla \dr_0 , \partial^m \D_{\u_0 + \sqrt{\eps} \u_R^\eps } \dr_R^\eps \r $ appearing in $\mathscr{E}_{N, \eps} (t)$ and the functional $ \mathscr{D}_{N, \eps} (t) $ includes a negative term $ - \delta \| \D_{\u_0 + \sqrt{\eps} \u_R^\eps } \dr_R^\eps \|^2_{H^N} $. However, if the inertia constant $\eps > 0$ is sufficiently small, the functionals $\mathscr{E}_{N, \eps} (t)$ and $ \mathscr{D}_{N, \eps} (t) $ will be both nonnegative. More precisely, we derive the following lemma.

\begin{lemma}\label{Lmm-Energies}
	If the integer $N \geq 2$ and the Leslie's coefficients satisfy relations \eqref{Coefficients-Relations} and \eqref{Assumption-coefficients}, then there is a small $\eps_0 > 0$, depending only on $N$, $\beta_{S_{\!_N}, 0}$, and the all Leslie's coefficients, such that the energy $\mathscr{E}_{N, \eps} (t)$ and the energy dissipative rate $\mathscr{D}_{N, \eps} (t)$ are both nonnegative for any $\eps \in ( 0, \eps_0 )$. Moreover, for all $\eps \in (0, \eps_0)$, we have
	\begin{equation}\label{Energy-E-bnd}
	  \begin{aligned}
	     \mathscr{E}_{N, \eps} (t) \thicksim  \tfrac{1}{\eps} \| \u_R^\eps \|^2_{H^N} + \| \D_{\u_0 + \sqrt{\eps} \u_R^\eps } \dr_R^\eps \|^2_{H^N} + \tfrac{1}{\eps} \| \nabla \dr_R^\eps \|^2_{H^N} + \tfrac{1}{\eps} \| \dr_R^\eps \|^2_{H^N}  \,,
	  \end{aligned}
	\end{equation}
	and
	\begin{equation}\label{Energy-D-bnd}
	  \begin{aligned}
	     \mathscr{D}_{N, \eps} (t) \thicksim &  \tfrac{1}{\eps} \| \nabla \u_R^\eps \|^2_{H^N} + \tfrac{1}{\eps} \| \nabla \dr_R^\eps \|^2_{H^N} + \| \D_{\u_0 + \sqrt{\eps} \u_R^\eps } \dr_R^\eps \|^2_{H^N} \\
	    &  + \tfrac{1}{\eps} \sum_{|m| \leq N} \| \partial^m \D_{\u_0 + \sqrt{\eps} \u_R^\eps } \dr_R^\eps + ( \partial^m \B_R^\eps ) \dr_0 + ( \partial^m \A_R^\eps ) \dr_0 \|^2_{L^2} \,.
	  \end{aligned}
	\end{equation}
	Here the small positive constant $\beta_{S_{\!_N}, 0}$ is given in Proposition \ref{Prop-WP-PLQ}.
\end{lemma}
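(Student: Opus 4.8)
The plan is to prove both non-negativity claims by regrouping $\mathscr{E}_{N,\eps}$ and $\mathscr{D}_{N,\eps}$ into sums of manifestly non-negative squares plus remainders that become absorbable once $\eps$ is small. Throughout I write $w:=\D_{\u_0+\sqrt{\eps}\u_R^\eps}\dr_R^\eps$ and $v:=\u_R^\eps\cdot\nabla\dr_0$, abbreviate $\|\cdot\|:=\|\cdot\|_{H^N}$, and treat $\delta\in(0,\tfrac12]$ as the fixed constant from the definition. The analytic inputs are the global bound \eqref{u0-d0-Bnd} of Proposition \ref{Prop-WP-PLQ}, the embedding $H^{2S_{\!_N}}\hookrightarrow L^\infty$ (valid since $2S_{\!_N}\ge N+2>\tfrac32$), and $|\dr_0|\equiv1$; via the Moser product rule these yield $\|v\|^2\le C\beta_{S_{\!_N},0}\,\|\u_R^\eps\|^2$ together with $\sum_{|m|\le N}\big(\|(\partial^m\B_R^\eps)\dr_0\|_{L^2}^2+\|(\partial^m\A_R^\eps)\dr_0\|_{L^2}^2\big)\le C(1+\beta_{S_{\!_N},0})\,\|\nabla\u_R^\eps\|^2$.

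For $\mathscr{E}_{N,\eps}$ I would first expand the squares $(1-\delta)\|w\|^2$, $\delta\|w+\dr_R^\eps\|^2$ and $\|v+\tfrac\delta2\dr_R^\eps\|^2$; the coefficient of $\|w\|^2$ then collects to exactly $1$, and the decisive algebraic point is that this unit-coefficient term, together with $\|v\|^2$ and the indefinite coupling $2\sum_{|m|\le N}\l\partial^m v,\partial^m w\r$, recombine into the single non-negative square $\sum_{|m|\le N}\|\partial^m v+\partial^m w\|_{L^2}^2$. After this reduction,
\[
\mathscr{E}_{N,\eps}=\tfrac1\eps\|\u_R^\eps\|^2+\sum_{|m|\le N}\|\partial^m v+\partial^m w\|_{L^2}^2+\tfrac1\eps\|\nabla\dr_R^\eps\|^2+c_1\|\dr_R^\eps\|^2+R,
\]
where $c_1=\tfrac{-\delta\lambda_1}{\eps}-\tfrac14\delta+\tfrac{\delta^2}4$ is positive and of size $\tfrac1\eps$ for small $\eps$ (because $\lambda_1<0$), and $R=\delta\sum_{|m|\le N}\l\partial^m v,\partial^m\dr_R^\eps\r+2\delta\sum_{|m|\le N}\l\partial^m w,\partial^m\dr_R^\eps\r$. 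I would then dominate $R$ by Young's inequality, sending its $\dr_R^\eps$-factors into the reservoir $c_1\|\dr_R^\eps\|^2$ and the remaining factors into $\tfrac1\eps\|\u_R^\eps\|^2$ (through $\|v\|^2\le C\beta_{S_{\!_N},0}\|\u_R^\eps\|^2$) and into $\|w\|^2$. Choosing $\eps_0$ small makes every absorption strict, which gives $\mathscr{E}_{N,\eps}\ge0$ and the lower bound in \eqref{Energy-E-bnd}; the matching upper bound follows from Cauchy--Schwarz on the coupling term and the same product estimate.

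For $\mathscr{D}_{N,\eps}$, under \eqref{Assumption-coefficients} every term except $-\delta\|w\|^2$ is non-negative (using $\mu_1\ge0$, $\mu_5+\mu_6+\tfrac{\lambda_2^2}{\lambda_1}\ge0$, $\lambda_1<0$). The crux is to absorb $-\delta\|w\|^2$ by showing it is controlled with an extra factor $\eps$ by the non-negative part $\mathscr{D}_{N,\eps}^{+}$. I would use the triangle inequality
\[
\|\partial^m w\|_{L^2}\le\big\|\partial^m w+(\partial^m\B_R^\eps)\dr_0+\tfrac{\lambda_2}{\lambda_1}(\partial^m\A_R^\eps)\dr_0\big\|_{L^2}+\|(\partial^m\B_R^\eps)\dr_0\|_{L^2}+\tfrac{|\lambda_2|}{|\lambda_1|}\|(\partial^m\A_R^\eps)\dr_0\|_{L^2},
\]
then square, sum over $|m|\le N$, and bound the last two sums by $\|\nabla\u_R^\eps\|^2$ through the product estimate. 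Since the first sum equals $\tfrac{\eps}{-\lambda_1}$ times the corresponding dissipation term, while $\|\nabla\u_R^\eps\|^2=\tfrac{8\eps}{3\mu_4}$ times the velocity dissipation term, both contributions carry a factor $\eps$; hence $\delta\|w\|^2\le C\eps\,\mathscr{D}_{N,\eps}^{+}$ and $\mathscr{D}_{N,\eps}\ge(1-C\eps)\mathscr{D}_{N,\eps}^{+}\ge0$ for $\eps\le\eps_0$. Shrinking $\eps_0$ if necessary yields the equivalence \eqref{Energy-D-bnd}, the only discrepancy between the $\tfrac{\lambda_2}{\lambda_1}(\partial^m\A_R^\eps)\dr_0$ appearing in $\mathscr{D}_{N,\eps}$ and the $(\partial^m\A_R^\eps)\dr_0$ in \eqref{Energy-D-bnd} being again controlled by $\|\nabla\u_R^\eps\|^2$.

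The main obstacle is the non-negativity of $\mathscr{E}_{N,\eps}$: it fails term-by-term and hinges entirely on the precise bookkeeping that makes the unit-coefficient $\|w\|^2$, the square $\|v\|^2$ and the indefinite coupling assemble into one square, leaving remainders small enough to be swallowed by the $\tfrac1\eps$-weighted $\|\dr_R^\eps\|^2$. Making this quantitative forces the smallness thresholds $\eps_0$ and $\beta_{S_{\!_N},0}$ to be chosen jointly, depending on $N$ and all the Leslie coefficients, exactly as asserted.
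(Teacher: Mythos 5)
Your proposal is correct, and the two halves sit differently relative to the paper. For $\mathscr{D}_{N,\eps}$ your argument is essentially the paper's own: the paper likewise expands $\|\partial^m w\|_{L^2}$ (writing, as you do, $w=\D_{\u_0+\sqrt{\eps}\u_R^\eps}\dr_R^\eps$, $v=\u_R^\eps\cdot\nabla\dr_0$) via the triangle inequality against the dissipative square $\|\partial^m w+(\partial^m\B_R^\eps)\dr_0+\tfrac{\lambda_2}{\lambda_1}(\partial^m\A_R^\eps)\dr_0\|_{L^2}$ plus $\|\nabla\u_R^\eps\|_{H^N}$, and absorbs $2\delta\|w\|^2_{H^N}$ (keeping a spare $\delta\|w\|^2_{H^N}$ so that the lower bound in \eqref{Energy-D-bnd} is immediate) under $\eps\le\min\{\tfrac{-\lambda_1}{8\delta},\tfrac{\mu_4\lambda_1^2}{64\delta(\lambda_1^2+\lambda_2^2)}\}$; your remark that \eqref{Energy-D-bnd} drops the factor $\tfrac{\lambda_2}{\lambda_1}$ and that the discrepancy is controlled by $\tfrac1\eps\|\nabla\u_R^\eps\|^2_{H^N}$ is exactly the right reading of the statement. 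For $\mathscr{E}_{N,\eps}$ you take a genuinely different decomposition: the paper never opens the squares $\|v+\tfrac{\delta}{2}\dr_R^\eps\|^2_{H^N}$ and $\delta\|w+\dr_R^\eps\|^2_{H^N}$ (it simply drops them as nonnegative), retains $\tfrac12\|w\|^2_{H^N}$ out of $(1-\delta)\|w\|^2_{H^N}$ using $\delta\le\tfrac12$, and kills the one indefinite coupling by $\big|\sum_{|m|\le N}\l\partial^m v,\partial^m w\r\big|\le C\beta_{S_{\!_N},0}\|\u_R^\eps\|^2_{H^N}+\tfrac14\|w\|^2_{H^N}$ under $\eps\le\tfrac{1}{2C\beta_{S_{\!_N},0}}$, which also makes the lower bound of \eqref{Energy-E-bnd} immediate; you instead complete the square exactly, via $(1-\delta)+\delta=1$ and $\|v\|^2_{H^N}+\|w\|^2_{H^N}+2\sum_{|m|\le N}\l\partial^m v,\partial^m w\r=\|v+w\|^2_{H^N}$, which is structurally cleaner but regenerates the cross terms $\delta\sum\l\partial^m v,\partial^m\dr_R^\eps\r$ and $2\delta\sum\l\partial^m w,\partial^m\dr_R^\eps\r$ that the paper's route never creates. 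These are absorbable as you claim, with one loose phrase to repair: after your regrouping there is no standalone $\|w\|^2_{H^N}$ left to receive anything, so in $2\delta\l w,\dr_R^\eps\r$ you must first split $\|w\|\le\|v+w\|+\|v\|$, then Young the pieces into $\|v+w\|^2_{H^N}$, into $\tfrac1\eps\|\u_R^\eps\|^2_{H^N}$ through $\|v\|^2_{H^N}\le C\beta_{S_{\!_N},0}\|\u_R^\eps\|^2_{H^N}$, and into your reservoir $c_1\|\dr_R^\eps\|^2_{H^N}$ with $c_1\thicksim\tfrac{-\delta\lambda_1}{\eps}$; the same triangle inequality converts $\|v+w\|^2_{H^N}$ back into $\|w\|^2_{H^N}$ for the lower bound of \eqref{Energy-E-bnd}, at the cost of a further $-C\beta_{S_{\!_N},0}\|\u_R^\eps\|^2_{H^N}$ absorbed by the $\tfrac1\eps$ weight. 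With these routine completions your thresholds, and the joint dependence of $\eps_0$ on $N$, $\beta_{S_{\!_N},0}$ and the Leslie coefficients, coincide with the paper's; the paper's route is shorter because it handles only the single genuinely indefinite term, while yours buys an exact sum-of-squares representation of $\mathscr{E}_{N,\eps}$ at the price of extra bookkeeping.
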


\begin{proof}[Proof of Lemma \ref{Lmm-Energies}]
	We now find a constant $\eps_0 > 0$ such that
	\begin{equation}
	  \begin{aligned}
	    \mathscr{E}_{N, \eps} \geq 0 \,, \ \textrm{and } \mathscr{D}_{N, \eps } (t) \geq 0
	  \end{aligned}
	\end{equation}
	for all $\eps \in (0, \eps_0)$. First, we require the coefficient $ \tfrac{- \delta \lambda_1}{ \eps} - \tfrac{5}{4} \delta $ in $\mathscr{E}_{N, \eps} (t)$ satisfies
	\begin{equation*}
	  \tfrac{- \delta \lambda_1}{\eps} - \tfrac{5}{4} \delta \geq \tfrac{- \delta \lambda_1}{2 \eps} > 0 \,,
	\end{equation*}
	which means
	\begin{equation}
	 0 < \eps \leq \tfrac{- 2 \lambda_1}{5} \,.
	\end{equation}
	Noticing that $0 < \delta \leq \tfrac{1}{2}$, we have
	\begin{equation}
	  \begin{aligned}
	    & \| \D_{\u_0 + \sqrt{\eps} \u_R^\eps } \dr_R^\eps \|^2_{H^N} - \delta \| \D_{\u_0 + \sqrt{\eps} \u_R^\eps } \dr_R^\eps \|^2_{\dot{H}^N} \\
	    \geq & \tfrac{1}{2} \| \D_{\u_0 + \sqrt{\eps} \u_R^\eps } \dr_R^\eps \|^2_{H^N} + ( \tfrac{1}{2} - \delta ) \| \D_{\u_0 + \sqrt{\eps} \u_R^\eps } \dr_R^\eps \|^2_{\dot{H}^N} + \tfrac{1}{2} \| \D_{\u_0 + \sqrt{\eps} \u_R^\eps } \dr_R^\eps \|^2_{L^2} \\
	    \geq & \tfrac{1}{2} \| \D_{\u_0 + \sqrt{\eps} \u_R^\eps } \dr_R^\eps \|^2_{H^N} \geq  0 \,.
	  \end{aligned}
	\end{equation}
	Then by utilizing the H\"older inequality, the Sobolev embedding theory, the Young's inequality and the bound \eqref{u0-d0-Bnd} in Proposition \ref{Prop-WP-PLQ}, we estimate
	\begin{equation}
	  \begin{aligned}
	    & \Big| \sum_{|m| \leq N} \l \partial^m ( \u_R^\eps \cdot \nabla \dr_0 ) , \partial^m \D_{\u_0 + \sqrt{\eps} \u_R^\eps } \dr_R^\eps \r \Big| \\
	    \leq & \| \u_R^\eps \cdot \nabla \dr_0 \|_{H^N} \| \D_{\u_0 + \sqrt{\eps} \u_R^\eps } \dr_R^\eps \|_{H^N} \\
	    \leq & C \| \nabla \dr_0 \|_{L^\infty( \R^+ ; H^N)} \| \u_R^\eps \|_{H^N} \| \D_{\u_0 + \sqrt{\eps} \u_R^\eps } \dr_R^\eps \|_{H^N} \\
	    \leq & C \| \nabla \dr_0 \|^2_{L^\infty(\R^+ ; H^N)} \| \u_R^\eps \|^2_{H^N} + \tfrac{1}{4} \| \D_{\u_0 + \sqrt{\eps} \u_R^\eps } \dr_R^\eps \|^2_{H^N}  \\
	    \leq & C \beta_{S_{\!_N},0} \| \u_R^\eps \|^2_{H^N} + \tfrac{1}{4} \| \D_{\u_0 + \sqrt{\eps} \u_R^\eps } \dr_R^\eps \|^2_{H^N}
	  \end{aligned}
	\end{equation}
	for some constant $C = C(N) > 0$. Consequently, we deduce
	\begin{equation}
	  \begin{aligned}
	    & \tfrac{1}{\eps} \| \u_R^\eps \|^2_{H^N} + \tfrac{1}{2} \| \D_{\u_0 + \sqrt{\eps} \u_R^\eps } \dr_R^\eps \|^2_{H^N} + \sum_{|m| \leq N} \l \partial^m ( \u_R^\eps \cdot \nabla \dr_0 ) , \partial^m \D_{\u_0 + \sqrt{\eps} \u_R^\eps } \dr_R^\eps \r \\
	    \geq & \big( \tfrac{1}{\eps} - C \beta_{S_{\!_N},0} \big) \| \u_R^\eps \|^2_{H^N} + \tfrac{1}{4} \| \D_{\u_0 + \sqrt{\eps} \u_R^\eps } \dr_R^\eps \|^2_{H^N} \,.
	  \end{aligned}
	\end{equation}
	If we require $\tfrac{1}{\eps} - C \beta_{S_{\!_N},0} \geq \tfrac{1}{2 \eps} $, hence $ 0 < \eps \leq \tfrac{1}{2 C \beta_{S_{\!_N},0}} $, then
	\begin{equation}
	  \begin{aligned}
	    & \tfrac{1}{\eps} \| \u_R^\eps \|^2_{H^N} + \tfrac{1}{2} \| \D_{\u_0 + \sqrt{\eps} \u_R^\eps } \dr_R^\eps \|^2_{H^N} + \sum_{|m| \leq N} \l \partial^m ( \u_R^\eps \cdot \nabla \dr_0 ) , \partial^m \D_{\u_0 + \sqrt{\eps} \u_R^\eps } \dr_R^\eps \r \\
	    & \geq \tfrac{1}{2 \eps} \| \u_R^\eps \|^2_{H^N} + \tfrac{1}{4} \| \D_{\u_0 + \sqrt{\eps} \u_R^\eps } \dr_R^\eps \|^2_{H^N} \geq 0 \,.
	  \end{aligned}
	\end{equation}
	As a result, if $0 < \eps \leq \min \big\{ \tfrac{- 2 \lambda_1 }{5} , \tfrac{1}{2 C \beta_{S_{\!_N},0} } \big\}$, we have $\mathscr{E}_{N, \eps} (t) \geq 0$.
	
	Next we consider the energy dissipative rate $ \mathscr{D}_{N , \eps} (t) $. One observes that there is only a negative term $ - \delta \| \D_{\u_0 + \sqrt{\eps} \u_R^\eps } \dr_R^\eps \|^2_{H^N}$ in $\mathscr{D}_{N, \eps} (t)$ under assumption \eqref{Assumption-coefficients}. Via the following elementary estimates
	\begin{equation}
	  \begin{aligned}
	    \| \D_{\u_0 + \sqrt{\eps} \u_R^\eps } \dr_R^\eps \|^2_{H^N} \leq & 2 \sum_{|m| \leq N } \Big( \| \partial^m \D_{\u_0 + \sqrt{\eps} \u_R^\eps } \dr_R^\eps + ( \partial^m \B_R^\eps ) \dr_0 + \tfrac{\lambda_2}{\lambda_1} ( \partial^m \A_R^\eps ) \dr_0 \|^2_{L^2} \\
	    & + 2 \| ( \partial^m \B_R^\eps ) \dr_0 + \tfrac{\lambda_2}{\lambda_1} ( \partial^m \A_R^\eps ) \dr_0 \|^2_{L^2}  \Big) \\
	    \leq & 2 \sum_{|m| \leq N} \| \partial^m \D_{\u_0 + \sqrt{\eps} \u_R^\eps } \dr_R^\eps + ( \partial^m \B_R^\eps ) \dr_0 + \tfrac{\lambda_2}{\lambda_1} ( \partial^m \A_R^\eps ) \dr_0 \|^2_{L^2} \\
	    & + 4 ( 1 + \tfrac{\lambda_2^2}{\lambda_1^2} ) \| \nabla \u_R^\eps \|^2_{H^N} \,,
	  \end{aligned}
	\end{equation}
	we know
	\begin{equation}\label{Dissip-D-part}
	  \begin{aligned}
	    & \tfrac{3 \mu_4}{8 \eps} \| \nabla \u_R^\eps \|^2_{H^N} - 2 \delta \| \D_{\u_0 + \sqrt{\eps} \u_R^\eps } \dr_R^\eps \|^2_{H^N} \\
	    & + \tfrac{- \lambda_1}{\eps} \sum_{|m| \leq N} \| \partial^m \D_{\u_0 + \sqrt{\eps} \u_R^\eps } \dr_R^\eps + ( \partial^m \A_R^\eps ) \dr_0 + \tfrac{\lambda_2}{\lambda_1} ( \partial^m \A_R^\eps ) \dr_0 \|^2_{L^2} \\
	    \geq & \big( \tfrac{3 \mu_4}{8 \eps} - 8 \delta - \tfrac{ 8 \delta \lambda_2^2}{\lambda_1^2} \big) \| \nabla \u_R^\eps \|^2_{H^N} \\
	    & + \big( \tfrac{- \lambda_1}{\eps} - 4 \delta \big) \sum_{|m| \leq N} \| \partial^m \D_{\u_0 + \sqrt{\eps} \u_R^\eps } \dr_R^\eps + ( \partial^m \A_R^\eps ) \dr_0 + \tfrac{\lambda_2}{\lambda_1} ( \partial^m \A_R^\eps ) \dr_0 \|^2_{L^2} \,.
	  \end{aligned}
	\end{equation}
	If we require $ \tfrac{3 \mu_4}{8 \eps} - 8 \delta - \tfrac{ 8 \delta \lambda_2^2}{\lambda_1^2} \geq \tfrac{\mu_4}{4 \eps} $ and $ \tfrac{- \lambda_1}{\eps} - 4 \delta \geq \tfrac{- \lambda_1}{2 \eps} $, namely, $ 0 < \eps \leq \min \big\{ \tfrac{- \lambda_1}{8 \delta} , \tfrac{\mu_4 \lambda_1^2}{64 \delta ( \lambda_1^2 + \lambda_2^2 )} \big\} $, then the quantity in the right-hand side of the inequality \eqref{Dissip-D-part} has a lower bound
	\begin{equation*}
	  \begin{aligned}
	    \tfrac{\mu_4}{4 \eps} \| \nabla \u_R^\eps \|^2_{H^N} + \tfrac{- \lambda_1}{2 \eps} \sum_{|m| \leq N} \| \partial^m \D_{\u_0 + \sqrt{\eps} \u_R^\eps } \dr_R^\eps + ( \partial^m \A_R^\eps ) \dr_0 + \tfrac{\lambda_2}{\lambda_1} ( \partial^m \A_R^\eps ) \dr_0 \|^2_{L^2} \geq 0 \,.
	  \end{aligned}
	\end{equation*}
	In summary, we can take $ \eps_0 = \min \big\{ 1 , \tfrac{- 2 \lambda_1}{5}, \tfrac{- \lambda_1}{8 \delta}, \tfrac{\mu_4 \lambda_1^2}{64 \delta ( \lambda_1^2 + \lambda_2^2 )} , \tfrac{1}{2 C \beta_{S_{\!_N},0} } \big\} > 0$, so that $\mathscr{E}_{N, \eps} ( t) \geq 0$ and $\mathscr{D}_{N, \eps} (t) \geq 0$ hold for all $\eps \in ( 0 , \eps_0 ]$. Moreover, following the previous estimates, one can easily derive the lower bounds of the inequalities \eqref{Energy-E-bnd} and \eqref{Energy-D-bnd}, and the upper bounds are obviously holds. Then the proof of Lemma \ref{Lmm-Energies} is finished.	
\end{proof}

Next we derive the following key energy inequality, which will reduce to the uniform energy bound under the assumption of small size of the initial data. More precisely, we will give the following proposition.

\begin{proposition}\label{Prop-Unif-Bnd}
	Let $N \geq 2$ be an integer and assume that $ ( \u_R^\eps , \dr_R^\eps ) $ is a sufficiently smooth solution to the remainder system \eqref{Remainder-u-d} on $[0,T]$. Then there are constants $C > 0$ and $\theta_0 \gg 1$, depending only on the Leslie coefficients and $\beta_{S_{\!_N},0}$ given in Proposition \ref{Prop-WP-PLQ}, such that
	\begin{equation}\label{Apriori-Energy-Inq}
	\begin{aligned}
	& \tfrac{\d}{\d t} \Big[ \mathscr{E}_{N,\eps} (t) + \theta_0 \mathscr{E}_{S_{\!_N}, 0} (t) \Big] + \mathscr{D}_{N,\eps} (t) + \tfrac{\theta_0}{2} \mathscr{D}_{S_{\!_N}, 0} (t) \\
	\leq & C \big[ \mathscr{E}^2_{N,\eps} (t) + \mathscr{E}^\frac{1}{2}_{N,\eps} (t) + \mathscr{E}^\frac{1}{2}_{S_{\!_N},\eps} (t) \big] \big[ \mathscr{D}_{N,\eps} (t) + \tfrac{\theta_0}{2} \mathscr{D}_{S_{\!_N}, 0} (t) \big]
	\end{aligned}
	\end{equation}
	holds for all $t \in [0,T]$ and $\eps \in ( 0, \eps_0 ]$, where the small positive constant $\eps_0$ is mentioned in Lemma \ref{Lmm-Energies}, and the integer $S_{_{\!N}}$ is defined in \eqref{SN-integer}.
\end{proposition}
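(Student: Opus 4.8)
The plan is to run a weighted high-order energy estimate on the remainder system \eqref{Remainder-u-d-2} (the well-prepared case, which by the discussion opening Section \ref{Sec:Uniform-Ener} governs the analysis of the general system \eqref{Remainder-u-d} as well), applying $\partial^m$ for every multi-index $|m|\le N$ and testing the $\u_R^\eps$-equation against $\tfrac{1}{\eps}\partial^m\u_R^\eps$ and the $\dr_R^\eps$-equation against $\partial^m\dot{\dr}_R^\eps$, where I abbreviate $\dot{\dr}_R^\eps:=\D_{\u_0+\sqrt{\eps}\u_R^\eps}\dr_R^\eps$. First I would record the principal identities: after integrating by parts in $x$, the operator $\D^2_{\u_0+\sqrt{\eps}\u_R^\eps}$ produces $\tfrac12\tfrac{\d}{\d t}\|\partial^m\dot{\dr}_R^\eps\|_{L^2}^2$, the term $-\tfrac1\eps\Delta\dr_R^\eps$ produces $\tfrac1{2\eps}\tfrac{\d}{\d t}\|\nabla\partial^m\dr_R^\eps\|_{L^2}^2$ (up to transport commutators arising from $\tfrac1\eps\langle\partial^m\Delta\dr_R^\eps,(\u_0+\sqrt{\eps}\u_R^\eps)\cdot\nabla\partial^m\dr_R^\eps\rangle$), and the damping $\tfrac{-\lambda_1}\eps\dot{\dr}_R^\eps$ yields the coercive $\tfrac{-\lambda_1}\eps\|\partial^m\dot{\dr}_R^\eps\|_{L^2}^2$. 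The decisive algebraic input is Lemma \ref{Lmm-Cancellation}: on summing the two test identities the linear stress terms $\div\mathcal{C}_{\u}$ and $\mathcal{C}_{\dr}$ collapse into the three squared quantities of \eqref{Cancellations}, which under the sign assumptions \eqref{Assumption-coefficients} furnish precisely the coercive part of $\mathscr{D}_{N,\eps}$, together with the lower-order commutator $\mathcal{G}_m$ to be absorbed later.

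The heart of the argument is the pairing $\sum_{|m|\le N}\langle\partial^m\partial_t(\u_R^\eps\cdot\nabla\dr_0),\partial^m\dot{\dr}_R^\eps\rangle$, whose regularity matches $\tfrac12\mu_4\Delta\u_R^\eps$ and hence cannot be treated as a source controlled by the dissipation $\tfrac1\eps\|\nabla\u_R^\eps\|_{H^N}^2$. Following the deformation sketched in the Introduction, I would pull the time derivative outside and re-express the leftover through the $\dr_R^\eps$-equation, turning this pairing into $\tfrac12\tfrac{\d}{\d t}\big(\|\partial^m(\u_R^\eps\cdot\nabla\dr_0)\|_{L^2}^2+2\langle\partial^m(\u_R^\eps\cdot\nabla\dr_0),\partial^m\dot{\dr}_R^\eps\rangle\big)$ plus quadratic terms $P_1,P_2$ that are genuinely controllable. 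This is exactly why $\mathscr{E}_{N,\eps}$ carries the indefinite cross term $2\sum\langle\partial^m(\u_R^\eps\cdot\nabla\dr_0),\partial^m\dot{\dr}_R^\eps\rangle$ together with the squares $\|\u_R^\eps\cdot\nabla\dr_0+\tfrac\delta2\dr_R^\eps\|_{H^N}^2$ and $\delta\|\dot{\dr}_R^\eps+\dr_R^\eps\|_{H^N}^2$, and why $\mathscr{D}_{N,\eps}$ tolerates the negative piece $-\delta\|\dot{\dr}_R^\eps\|_{H^N}^2$: the small parameter $\delta$ is fixed so that Lemma \ref{Lmm-Energies} already guarantees $\mathscr{E}_{N,\eps},\mathscr{D}_{N,\eps}\ge0$ with the equivalences \eqref{Energy-E-bnd}–\eqref{Energy-D-bnd} for $\eps$ small. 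Testing the $\dr_R^\eps$-equation additionally against $\tfrac{-\delta\lambda_1}\eps\dr_R^\eps$ is what supplies the coercive weights $\tfrac1\eps\|\dr_R^\eps\|_{H^N}^2$ and $\tfrac{\delta}{2\eps}\|\nabla\dr_R^\eps\|_{H^N}^2$, while the $\mu_1$- and $(\mu_5+\mu_6+\tfrac{\lambda_2^2}{\lambda_1})$-terms in $\mathscr{D}_{N,\eps}$ come directly from the $m$-th order version of the cancellation.

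To close, I would estimate all commutators $\mathcal{G}_m$ and every tedious forcing term $\mathcal{T}_{\u},\mathcal{K}_{\u},\mathcal{S}^1_{\dr},\mathcal{S}^2_{\dr},\mathcal{R}_{\u},\mathcal{R}_{\dr}$ by Moser-type product inequalities, placing the factor that carries up to $N$ derivatives in $H^N$ and pairing it with $\mathscr{D}_{N,\eps}$, and putting the remaining factor in $L^\infty$, controlled either by $\mathscr{E}_{N,\eps}^{1/2}$ or, when the derivative count forces the auxiliary level, by the limit norms $\|\u_0\|_{H^{2S_{N}}}$, $\|\nabla\dr_0\|_{H^{2S_{N}}}$ via the embedding $2S_{N}\ge N+2$ and the global bound \eqref{u0-d0-Bnd} of Proposition \ref{Prop-WP-PLQ}; this is where the lower-level energy $\mathscr{E}_{S_{N},\eps}^{1/2}$ enters the right-hand side. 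The singular prefactors $\tfrac1\eps$ and $\tfrac1{\sqrt{\eps}}$ in \eqref{Remainder-u-d-2} must be matched against the $\tfrac1\eps$-weights hidden inside $\mathscr{E}_{N,\eps}$ and $\mathscr{D}_{N,\eps}$, so that no uncompensated negative power of $\eps$ survives and the bookkeeping yields the cubic bound $[\mathscr{E}_{N,\eps}^2+\mathscr{E}_{N,\eps}^{1/2}+\mathscr{E}_{S_{N},\eps}^{1/2}]\,\mathscr{D}$. Finally, adjoining $\theta_0\mathscr{E}_{S_{N},0}$ with $\theta_0\gg1$ and invoking the limit dissipation inequality \eqref{u0-d0-differential-ineq} produces the extra $\tfrac{\theta_0}{2}\mathscr{D}_{S_{N},0}$, which absorbs the residual errors that are linear in $(\u_R^\eps,\dr_R^\eps)$ but quadratic in derivatives of the limit solution. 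I expect the main obstacle to be precisely this $\eps$-power bookkeeping intertwined with the top-order term $\partial_t(\u_R^\eps\cdot\nabla\dr_0)$: one must verify that every term generated by the deformation, and every commutator that crosses the $\tfrac1\eps$ weights, regroups into the admissible shape (small energy)$\times$(dissipation) rather than into an uncontrolled $\tfrac1\eps$-singular source.
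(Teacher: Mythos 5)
Your architecture is essentially the paper's: the same weighted pairings ($\tfrac{1}{\eps}\partial^m\u_R^\eps$ for the velocity equation, $\partial^m\D_{\u_0+\sqrt{\eps}\u_R^\eps}\dr_R^\eps$ for the director equation, plus the $\delta$-weighted extra pairing with $\partial^m\dr_R^\eps$ that produces $\tfrac{\delta}{2\eps}\|\nabla\dr_R^\eps\|^2_{H^N}$ and the $\|\dr_R^\eps\|^2_{H^N}$-weights), the cancellation \eqref{Cancellations} of Lemma \ref{Lmm-Cancellation}, the time-derivative deformation of $\langle \partial_t\partial^m(\u_R^\eps\cdot\nabla\dr_0),\partial^m\D_{\u_0+\sqrt{\eps}\u_R^\eps}\dr_R^\eps\rangle$, and the final $\theta_0$-coupling with \eqref{u0-d0-differential-ineq}. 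However, there is one genuine gap: your closing scheme (top-derivative factor in $H^N$ paired with $\mathscr{D}_{N,\eps}$, remaining factor in $L^\infty$ carrying the smallness) fails on the singular linear term $-\lambda_2(\A_R^\eps:\dr_0\otimes\dr_0)\dr_0$ inside $\mathcal{S}^1_{\dr}$ (see \eqref{Sd-1}). There the $L^\infty$ factor is built from $\dr_0$ alone, and $|\dr_0|=1$ gives \emph{no} smallness; the $\eps$-bookkeeping then exactly balances rather than wins. Indeed, using $\|\partial^m\A_R^\eps\|_{L^2}\lesssim\sqrt{\eps}\,\mathscr{D}^{1/2}_{N,\eps}$ and $\|\partial^m\D_{\u_0+\sqrt{\eps}\u_R^\eps}\dr_R^\eps\|_{L^2}\lesssim \|\partial^m\D_{\u_0+\sqrt{\eps}\u_R^\eps}\dr_R^\eps+(\partial^m\B_R^\eps)\dr_0+\tfrac{\lambda_2}{\lambda_1}(\partial^m\A_R^\eps)\dr_0\|_{L^2}+C\|\nabla\u_R^\eps\|_{H^N}\lesssim\sqrt{\eps}\,\mathscr{D}^{1/2}_{N,\eps}$, the pairing is only bounded by $C\,\mathscr{D}_{N,\eps}$ with $C$ of order one, which cannot be absorbed by the left-hand side (every admissible right-hand contribution must carry a small prefactor such as $\|\nabla\dr_0\|_{H^{N+2}}$, $\eps\,\mathscr{E}^{1/2}_{N,\eps}$, or $\beta_{S_{\!_N},0}$-small limit norms). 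The same obstruction occurs for the $\delta$-pairing of this term with $\partial^m\dr_R^\eps$, since $\|\partial^m\dr_R^\eps\|_{L^2}\lesssim\|\nabla\dr_R^\eps\|_{H^N}\lesssim\sqrt{\eps}\,\mathscr{D}^{1/2}_{N,\eps}$ again only yields $C\,\mathscr{D}_{N,\eps}$.

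The missing ingredient, which the paper uses and your proposal must incorporate, is the geometric constraint: because the dangerous term is contracted against $\dr_0$, one rewrites $\dr_0\cdot\partial^m\D_{\u_0+\sqrt{\eps}\u_R^\eps}\dr_R^\eps$ through $\partial_t(\dr_0\cdot\partial^m\dr_R^\eps)$ (a time integration by parts) and then invokes \eqref{Constraint-2} in the differentiated form \eqref{Const-3}, i.e. $\dr_0\cdot\partial^m\dr_R^\eps=-\tfrac{\sqrt{\eps}}{2}\partial^m|\dr_R^\eps|^2-\sum_{0\neq m'\leq m}C_m^{m'}\partial^{m'}\dr_0\cdot\partial^{m-m'}\dr_R^\eps$; every resulting contribution (the terms $I_1$--$I_7$ and $I\!I_1$--$I\!I_3$ in the paper) then carries either an explicit $\sqrt{\eps}$ or at least one derivative of the limit solution, restoring the admissible shape (small)$\times\mathscr{D}_{N,\eps}$. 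Incidentally, note that this is also why Lemma \ref{Lmm-Constraints} must be established \emph{before} the energy estimate, not only at the end; and the quantity $\mathscr{E}^{1/2}_{S_{\!_N},\eps}$ in \eqref{Apriori-Energy-Inq} is a typo for the limit energy $\mathscr{E}^{1/2}_{S_{\!_N},0}$, consistent with your reading via \eqref{u0-d0-Bnd}.
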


\begin{proof}[ Proof of Proposition \ref{Prop-Unif-Bnd} ]
	For all multi-indexes $m \in \mathbb{N}^3$ with $|m| \leq N \ ( N \geq 2 )$, we take the derivative operator $\partial^m $ on the first equation of the remainder system \eqref{Remainder-u-d} and take $L^2$-inner product by multiplying $\partial^m \u_R^\eps$ and integrating by parts over $x \in \T^3$. We hence obtain
	\begin{equation}\label{u-equ}
	  \begin{aligned}
	    & \tfrac{1}{2} \tfrac{\d}{\d t} \| \partial^m \u_R^\eps \|^2_{L^2} + \tfrac{1}{2} \mu_4 \| \nabla \partial^m \u_R^\eps \|^2_{L^2} + \mu_1 \big\| \partial^m  \A_R^\eps : \dr_0 \otimes \dr_0 \big\|^2_{L^2} \\
	    = & - \mu_1 \sum_{0 \neq m' \leq m} C_m^{m'} \sum_{m'' \leq m'} C_{m'}^{m''} \l ( \partial^{m - m'} \A_R^\eps : \partial^{m' - m''} ( \dr_0 \otimes \dr_0 ) ) \partial^{m''} ( \dr_0 \otimes \dr_0 ) , \nabla \partial^m \u_R^\eps \r \\
	    & + \l \div \partial^m \mathcal{C}_{\u} , \partial^m \u_R^\eps \r + \l \partial^m \mathcal{K}_{\u} , \partial^m \u_R^\eps \r + \l \partial^m \div ( \mathcal{T}_{\u} + \sqrt{\eps} \mathcal{R}_{\u}  ) , \partial^m \u_R^\eps \r \,,
	  \end{aligned}
	\end{equation}
	where we make use of the divergence-free property of $\u_R^\eps$, the relation $\nabla \partial^m \u_R^\eps = \partial^m \A_R^\eps + \partial^m \B_R^\eps$ and the skew-symmetry of $ \partial^m \B_R^\eps $.
	
	Acting the derivative operator $\partial^m$ on the third equation of \eqref{Remainder-u-d}, taking $L^2$-inner product by dot with $\partial^m \D_{\u_0 + \sqrt{\eps} \u_R^\eps } \dr_R^\eps$ and integrating by parts over $x \in \T^3$, we know
	\begin{equation}\label{d-equ}
	  \begin{aligned}
	    & \tfrac{1}{2} \tfrac{\d}{\d t} \big( \| \partial^m \D_{\u_0 + \sqrt{\eps} \u_R^\eps } \dr_R^\eps \|^2_{L^2} + \tfrac{1}{\eps} \| \nabla \partial^m \dr_R^\eps \|^2_{L^2} \big) \\
	    & + \l \partial_t \partial^m ( \u_R^\eps \cdot \nabla \dr_0 ) , \partial^m \D_{\u_0 + \sqrt{\eps} \u_R^\eps } \dr_R^\eps \r + \tfrac{- \lambda_1}{\eps} \big\| \partial^m \D_{\u_0 + \sqrt{\eps} \u_R^\eps } \dr_R^\eps \big\|^2_{L^2} \\
	    = & - \l \partial^m ( \sqrt{\eps} \u_R^\eps \cdot \nabla \D_{\u_0 + \sqrt{\eps} \u_R^\eps } \dr_R^\eps ) , \partial^m \D_{\u_0 + \sqrt{\eps} \u_R^\eps } \dr_R^\eps \r - \tfrac{1}{\eps} \l \nabla \partial^m \dr_R^\eps , \nabla \partial^m ( \sqrt{\eps} \u_R^\eps \cdot \nabla \dr_R^\eps ) \r \\
	    & + \l \tfrac{1}{\eps} \partial^m \mathcal{C}_{\dr} + \tfrac{1}{\eps} \partial^m \mathcal{S}_{\dr}^1 + \tfrac{1}{\sqrt{\eps}} \partial^m \mathcal{S}_{\dr}^2 + \partial^m \mathcal{R}_{\dr} , \partial^m \D_{\u_0 + \sqrt{\eps} \u_R^\eps } \dr_R^\eps \r \,.
	  \end{aligned}
	\end{equation}
	Accounting for the cancellation \eqref{Cancellations} in Lemma \ref{Lmm-Cancellation}, we add the $\frac{1}{\eps}$ times of \eqref{u-equ} to \eqref{d-equ}, and then deduce that for $|m| \leq N$
	\begin{equation}\label{u-d-equ-1}
	  \begin{aligned}
	    & \tfrac{1}{2} \tfrac{\d}{\d t} \big( \tfrac{1}{\eps} \| \partial^m \u_R^\eps \|^2_{L^2} + \big\| \partial^m \D_{\u_0 + \sqrt{\eps} \u_R^\eps } \dr_R^\eps \big\|^2_{L^2} + \tfrac{1}{\eps} \| \nabla \partial^m \dr_R^\eps \|^2_{L^2} \big) \\
	    & + \tfrac{\mu_4}{2 \eps} \| \nabla \partial^m \u_R^\eps \|^2_{L^2} + \tfrac{\mu_1}{\eps} \| \partial^m \A_R^\eps : \dr_0 \otimes \dr_0 \|^2_{L^2}  + \l \partial_t \partial^m ( \u_R^\eps \cdot \nabla \dr_0 ) , \partial^m \D_{\u_0 + \sqrt{\eps} \u_R^\eps } \dr_R^\eps \r \\
	    & + \tfrac{- \lambda_1}{\eps} \| \partial^m \D_{\u_0 + \sqrt{\eps} \u_R^\eps } \dr_R^\eps + (\partial^m \B_R^\eps) \dr_0 + \tfrac{\lambda_2}{\lambda_1} ( \partial^m \A_R^\eps ) \dr_0 \|^2_{L^2} + \tfrac{1}{\eps} ( \mu_5 + \mu_6 + \tfrac{\lambda_2^2}{\lambda_1} ) \| (\partial^m \A_R^\eps) \dr_0 \|^2_{L^2} \\
	    = & - \tfrac{ \mu_1}{\eps} \sum_{\substack{0 \neq m' \leq m \\ m'' \leq m'} } C_m^{m'} C_{m'}^{m''} \l ( \partial^{m - m'} \A_R^\eps : \partial^{m' - m''} ( \dr_0 \otimes \dr_0 ) ) \partial^{m''} ( \dr_0 \otimes \dr_0 ) , \nabla \partial^m \u_R^\eps \r \\
	    &  + \tfrac{1}{\eps} \l \partial^m \div ( \mathcal{T}_{\u} + \sqrt{\eps} \mathcal{R}_{\u}  ) , \partial^m \u_R^\eps \r + \tfrac{1}{\eps} \l \partial^m \mathcal{K}_{\u} , \partial^m \u_R^\eps \r \\
	    & - \l \partial^m ( \sqrt{\eps} \u_R^\eps \cdot \nabla \D_{\u_0 + \sqrt{\eps} \u_R^\eps } \dr_R^\eps ) , \partial^m \D_{\u_0 + \sqrt{\eps} \u_R^\eps } \dr_R^\eps \r - \tfrac{1}{\eps} \l \nabla \partial^m \dr_R^\eps , \nabla \partial^m ( \sqrt{\eps} \u_R^\eps \cdot \nabla \dr_R^\eps ) \r \\
	    & + \l \tfrac{1}{\eps} \partial^m \mathcal{S}_{\dr}^1 + \tfrac{1}{\sqrt{\eps}} \partial^m \mathcal{S}_{\dr}^2 + \partial^m \mathcal{R}_{\dr} , \partial^m \D_{\u_0 + \sqrt{\eps} \u_R^\eps } \dr_R^\eps \r + \tfrac{1}{\eps} \mathcal{G}_m \,.
	  \end{aligned}
	\end{equation}
	We next deal with the term $ \l \partial_t \partial^m ( \u_R^\eps \cdot \nabla \dr_0 ) , \partial^m \D_{\u_0 + \sqrt{\eps} \u_R^\eps } \dr_R^\eps \r $. Straightforward calculation reduces to
	\begin{equation}
	  \begin{aligned}
	    & \l \partial_t \partial^m ( \u_R^\eps \cdot \nabla \dr_0 ) , \partial^m \D_{\u_0 + \sqrt{\eps} \u_R^\eps } \dr_R^\eps \r \\
	    = & \tfrac{\d}{\d t} \l \partial^m ( \u_R^\eps \cdot \nabla \dr_0 ) , \partial^m \D_{\u_0 + \sqrt{\eps} \u_R^\eps } \dr_R^\eps \r - \l \partial^m ( \u_R^\eps \cdot \nabla \dr_0 ) , \partial^m \partial_t \D_{\u_0 + \sqrt{\eps} \u_R^\eps } \dr_R^\eps \r \\
	    = & \tfrac{\d}{\d t} \l \partial^m ( \u_R^\eps \cdot \nabla \dr_0 ) , \partial^m \D_{\u_0 + \sqrt{\eps} \u_R^\eps } \dr_R^\eps \r + \l \sqrt{\eps} \u_R^\eps \cdot \nabla \partial^m \D_{\u_0 + \sqrt{\eps} \u_R^\eps } \dr_R^\eps , \partial^m ( \u_R^\eps \cdot \nabla \dr_0 ) \r \\
	    & - \l \partial^m ( \u_R^\eps \cdot \nabla \dr_0 ) , \partial^m \D^2_{\u_0 + \sqrt{\eps} \u_R^\eps } \dr_R^\eps \r \,.
	  \end{aligned}
	\end{equation}
	Then, from utilizing the third $\dr_R^\eps$-equation of \eqref{Remainder-u-d}, we derive that
	\begin{equation}
	  \begin{aligned}
	    & - \l \partial^m ( \u_R^\eps \cdot \nabla \dr_0 ) , \partial^m \D^2_{\u_0 + \sqrt{\eps} \u_R^\eps } \dr_R^\eps \r \\
	    = & \tfrac{1}{2} \tfrac{\d}{\d t} \| \partial^m ( \u_R^\eps \cdot \nabla \dr_0 ) \|^2_{L^2} + \tfrac{- \lambda_1}{\eps} \l \partial^m (\u_R^\eps \cdot \nabla \dr_0), \partial^m \D_{\u_0 + \sqrt{\eps} \u_R^\eps } \dr_R^\eps \r \\
	    & + \tfrac{1}{\eps } \l \nabla \partial^m \dr_R^\eps , \nabla \partial^m ( \u_R^\eps \cdot \nabla \dr_0 ) \r \\
	    & - \l \tfrac{1}{\eps} \partial^m \mathcal{C}_{\dr} + \tfrac{1}{\eps} \partial^m \mathcal{S}_{\dr}^1 + \tfrac{1}{\sqrt{\eps}} \partial^m \mathcal{S}_{\dr}^2 + \partial^m \mathcal{R}_{\dr} , \partial^m ( \u_R^\eps \cdot \nabla\dr_0 ) \r \,.
	  \end{aligned}
	\end{equation}
	In summary, we obtain the {\em key} relation to deal with the term $ \partial_t ( \u_R^\eps \cdot \nabla \dr_0 ) $ that
	\begin{equation}\label{u-d0}
	  \begin{aligned}
	    & \l \partial_t \partial^m ( \u_R^\eps \cdot \nabla \dr_0 ) , \partial^m \D_{\u_0 + \sqrt{\eps} \u_R^\eps } \dr_R^\eps \r \\
	    = & \tfrac{1}{2} \tfrac{\d}{\d t} \big( \| \partial^m ( \u_R^\eps \cdot \nabla \dr_0 ) \|^2_{L^2} + 2 \big\langle \partial^m ( \u_R^\eps \cdot \nabla \dr_0   ) , \partial^m \D_{\u_0 + \sqrt{\eps} \u_R^\eps } \dr_R^\eps \big\rangle \big) \\
	    & - \l \sqrt{\eps} \partial^m \D_{\u_0 + \sqrt{\eps} \u_R^\eps } \dr_R^\eps ,  \u_R^\eps \cdot \nabla \partial^m ( \u_R^\eps \cdot \nabla \dr_0 ) \r \\
	    & + \tfrac{- \lambda_1}{\eps} \l \partial^m (\u_R^\eps \cdot \nabla \dr_0), \partial^m \D_{\u_0 + \sqrt{\eps} \u_R^\eps } \dr_R^\eps \r \\
	    &  + \tfrac{1}{\eps } \l \nabla \partial^m \dr_R^\eps , \nabla \partial^m ( \u_R^\eps \cdot \nabla \dr_0 ) \r \\
	    & - \l \tfrac{1}{\eps} \partial^m \mathcal{C}_{\dr} + \tfrac{1}{\eps} \partial^m \mathcal{S}_{\dr}^1 + \tfrac{1}{\sqrt{\eps}} \partial^m \mathcal{S}_{\dr}^2 + \partial^m \mathcal{R}_{\dr} , \partial^m ( \u_R^\eps \cdot \nabla\dr_0 ) \r \,.
	  \end{aligned}
	\end{equation}
	Consequently, it is derived from the equalities \eqref{u-d-equ-1}, \eqref{u-d0} and summing up for $|m| \leq N$ that
	  \begin{align}\label{u-d-2}
	    \no & \tfrac{1}{2} \tfrac{\d}{\d t} \big( \tfrac{1}{\eps} \| \u_R^\eps \|^2_{H^N} + \big\| \D_{\u_0 + \sqrt{\eps} \u_R^\eps} \dr_R^\eps \big\|^2_{H^N} + \tfrac{1}{\eps} \| \nabla \dr_R^\eps \|^2_{H^N} \\
	    \no & \qquad + \| \u_R^\eps \cdot \nabla \dr_0 \|^2_{H^N} + 2 \sum_{|m| \leq N} \big\langle \partial^m ( \u_R^\eps \cdot \nabla \dr_0   ) , \partial^m \D_{\u_0 + \sqrt{\eps} \u_R^\eps} \dr_R^\eps \big\rangle \big) \\
	    \no & + \tfrac{\mu_4}{2 \eps} \| \nabla \u_R^\eps \|^2_{H^N}  + \tfrac{- \lambda_1}{\eps} \sum_{|m| \leq N} \| \partial^m \D_{\u_0 + \sqrt{\eps} \u_R^\eps} \dr_R^\eps + (\partial^m \B_R^\eps) \dr_0 + \tfrac{\lambda_2}{\lambda_1} ( \partial^m \A_R^\eps ) \dr_0 \|^2_{L^2} \\
	    \no &  + \tfrac{\mu_1}{\eps} \sum_{|m| \leq N} \| \partial^m \A_R^\eps : \dr_0 \otimes \dr_0 \|^2_{L^2} + \tfrac{1}{\eps} ( \mu_5 + \mu_6 + \tfrac{\lambda_2^2}{\lambda_1} ) \sum_{|m| \leq N} \| (\partial^m \A_R^\eps) \dr_0 \|^2_{L^2} \\
	    \no = & - \tfrac{ \mu_1}{\eps} \sum_{|m| \leq N} \sum_{\substack{0 \neq m' \leq m \\ m'' \leq m'} } C_m^{m'} C_{m'}^{m''} \l ( \partial^{m - m'} \A_R^\eps : \partial^{m' - m''} ( \dr_0 \otimes \dr_0 ) ) \partial^{m''} ( \dr_0 \otimes \dr_0 ) , \nabla \partial^m \u_R^\eps \r \\
	    \no & + \tfrac{1}{\eps} \sum_{|m| \leq N} \l \partial^m \mathcal{K}_{\u} , \partial^m \u_R^\eps \r + \tfrac{1}{\eps} \sum_{|m| \leq N} \l \partial^m \div \mathcal{T}_{\u} , \partial^m \u_R^\eps \r  + \tfrac{1}{\sqrt{\eps}} \l \partial^m \div \mathcal{R}_{\u}  , \partial^m \u_R^\eps \r\\
	    \no & - \sum_{|m| \leq N} \l \partial^m ( \sqrt{\eps} \u_R^\eps \cdot \nabla \D_{\u_0 + \sqrt{\eps} \u_R^\eps} \dr_R^\eps ) , \partial^m \D_{\u_0 + \sqrt{\eps} \u_R^\eps} \dr_R^\eps \r \\
	    \no & - \tfrac{1}{\sqrt{\eps}} \sum_{|m| \leq N} \l \nabla \partial^m \dr_R^\eps , \nabla \partial^m ( \u_R^\eps \cdot \nabla \dr_R^\eps ) \r \\
	     \no & - \sum_{|m| \leq N} \l \sqrt{\eps} \partial^m \D_{\u_0 + \sqrt{\eps} \u_R^\eps} \dr_R^\eps ,  \u_R^\eps \cdot \nabla \partial^m ( \u_R^\eps \cdot \nabla \dr_0 ) \r \\
	     \no & + \tfrac{- \lambda_1}{\eps} \sum_{|m| \leq N} \l \partial^m (\u_R^\eps \cdot \nabla \dr_0), \partial^m \D_{\u_0 + \sqrt{\eps} \u_R^\eps} \dr_R^\eps \r \\
	     \no & + \tfrac{1}{\eps } \sum_{|m| \leq N} \l \nabla \partial^m \dr_R^\eps , \nabla \partial^m ( \u_R^\eps \cdot \nabla \dr_0 ) \r + \tfrac{1}{\eps} \sum_{|m| \leq N} \mathcal{G}_m - \tfrac{1}{\eps} \sum_{|m| \leq N} \l \partial^m ( \u_R^\eps \cdot \nabla \dr_0 ) , \partial^m \mathcal{C}_{\dr} \r \\
	     & + \sum_{|m| \leq N} \l \tfrac{1}{\eps} \partial^m \mathcal{S}_{\dr}^1 + \tfrac{1}{\sqrt{\eps}} \partial^m \mathcal{S}_{\dr}^2 + \partial^m \mathcal{R}_{\dr} , \partial^m \D_{\u_0 + \sqrt{\eps} \u_R^\eps} \dr_R^\eps - \partial^m ( \u_R^\eps \cdot \nabla\dr_0 ) \r \,.
	  \end{align}
	
	Next we will seek an additional dissipative structure $ \tfrac{1}{\eps} \| \nabla \dr_R^\eps \|^2_{H^N} = \tfrac{1}{\eps} \sum_{|m| \leq N} \| \nabla \partial^m \dr_R^\eps \|^2_{L^2}$ from the term $\tfrac{1}{\eps} \Delta \dr_R^\eps$ in the third $\dr_R^\eps$-equation of the remainder equations \eqref{Remainder-u-d}. More precisely, we act the derivative operator $\partial^m$ on $\eqref{Remainder-u-d}_3$, take $L^2$-inner product by dot with $\partial^m \dr_R^\eps$ and integrate by parts over $x \in \T^3$. Then we have for $ |m| \leq N$
	\begin{equation}\label{d-nabla-d-decay-1}
	  \begin{aligned}
	    & \tfrac{- \lambda_1}{2 \eps} \tfrac{\d}{\d t} \| \partial^m \dr_R^\eps \|^2_{L^2} + \tfrac{1}{\eps} \| \nabla \partial^m \dr_R^\eps \|^2_{L^2} \\
	    & + \l \partial^m \D^2_{\u_0 + \sqrt{\eps} \u_R^\eps } \dr_R^\eps , \partial^m \dr_R^\eps \r + \l \partial_t \partial^m ( \u_R^\eps \cdot \nabla \dr_0 ) , \partial^m \dr_R^\eps \r \\
	    = & \tfrac{\lambda_1}{\eps} \l \partial^m ( (\u_0 + \sqrt{\eps} \u_R^\eps ) \cdot \nabla \dr_R^\eps ) , \partial^m \dr_R^\eps \r \\
	    & + \l \tfrac{1}{\eps} \partial^m \mathcal{C}_{\dr} + \tfrac{1}{\eps} \partial^m \mathcal{S}_{\dr}^1 + \tfrac{1}{\sqrt{\eps}} \partial^m \mathcal{S}_{\dr}^2 + \partial^m \mathcal{R}_{\dr} , \partial^m \dr_R^\eps \r \,.
	  \end{aligned}
	\end{equation}
	It is easily calculated that
	\begin{equation}\label{d-d0-1}
	  \begin{aligned}
	    & \l \partial_t \partial^m ( \u_R^\eps \cdot \nabla \dr_0 ) , \partial^m \dr_R^\eps \r \\
	    = & \tfrac{\d}{\d t} \l \partial^m ( \u_R^\eps \cdot \nabla \dr_0 ) , \partial^m \dr_R^\eps \r -\l \partial^m ( \u_R^\eps \cdot \nabla \dr_0 ) , \partial^m \partial_t \dr_R^\eps \r \\
	    = & \tfrac{\d}{\d t} \l \partial^m ( \u_R^\eps \cdot \nabla \dr_0 ) , \partial^m \dr_R^\eps \r - \l \partial^m ( \u_R^\eps \cdot \nabla \dr_0 ), \partial^m \D_{\u_0 + \sqrt{\eps} \u_R^\eps} \dr_R^\eps \r \\
	    & + \l \partial^m ( \u_R^\eps \cdot \nabla \dr_0 ) , \partial^m ( (\u_0 + \sqrt{\eps} \u_R^\eps ) \cdot \nabla \dr_R^\eps ) \r \,.
	  \end{aligned}
	\end{equation}
	Furthermore, from the integration by parts over $t \in \R^+$, we deduce that
	  \begin{align}\label{d-d0-2}
	    \no & \l \partial^m \D^2_{\u_0 + \sqrt{\eps} \u_R^\eps } \dr_R^\eps , \partial^m \dr_R^\eps \r \\
	    \no = & \l \partial_t \partial^m \D_{\u_0 + \sqrt{\eps} \u_R^\eps} \dr_R^\eps , \partial^m \dr_R^\eps \r + \l \partial^m ( \sqrt{\eps} \u_R^\eps \cdot  \nabla \D_{\u_0 + \sqrt{\eps} \u_R^\eps} \dr_R^\eps ) , \partial^m \dr_R^\eps \r \\
	    \no = & \tfrac{\d}{\d t} \l \partial^m \D_{\u_0 + \sqrt{\eps} \u_R^\eps} \dr_R^\eps , \partial^m \dr_R^\eps \r - \| \partial^m \D_{\u_0 + \sqrt{\eps} \u_R^\eps} \dr_R^\eps \|^2_{L^2} \\
	    \no & + \l \partial^m ( \sqrt{\eps} \u_R^\eps \cdot \nabla \D_{\u_0 + \sqrt{\eps} \u_R^\eps} \dr_R^\eps ) , \partial^m \dr_R^\eps \r - \l \partial^m ( \sqrt{\eps} \u_R^\eps \otimes \D_{\u_0 + \sqrt{\eps} \u_R^\eps} \dr_R^\eps ) , \nabla \partial^m \dr_R^\eps \r \\
	    \no = & \tfrac{1}{2} \tfrac{\d}{\d t} \big( \| \partial^m \D_{\u_0 + \sqrt{\eps} \u_R^\eps} \dr_R^\eps + \partial^m \dr_R^\eps \|^2_{L^2} - \| \partial^m \D_{\u_0 + \sqrt{\eps} \u_R^\eps} \dr_R^\eps \|^2_{L^2} - \| \partial^m \dr_R^\eps \|^2_{L^2} \big)  \\
	    \no & - \| \partial^m \D_{\u_0 + \sqrt{\eps} \u_R^\eps} \dr_R^\eps \|^2_{L^2} + \sum_{0 \neq m' \leq m} C_m^{m'} \l \partial^m \D_{\u_0 + \sqrt{\eps} \u_R^\eps} \dr_R^\eps , \sqrt{\eps} \partial^{m'} \u_R^\eps \cdot \nabla \partial^{m - m'} \dr_R^\eps \r \\
	    & - \sum_{0 \neq m' \leq m} C_m^{m'} \l \sqrt{\eps} \partial^{m'} \u_R^\eps \otimes \partial^{m - m'} \D_{\u_0 + \sqrt{\eps} \u_R^\eps} \dr_R^\eps , \nabla \partial^m \dr_R^\eps \r
	  \end{align}
	Substituting the relations \eqref{d-d0-1} and \eqref{d-d0-2} into \eqref{d-nabla-d-decay-1} and summing up for $|m| \leq N$ reduce to
	\begin{equation}\label{d-nabla-d}
	  \begin{aligned}
	    & \tfrac{1}{2} \tfrac{\d}{\d t} \Big( \| \D_{\u_0 + \sqrt{\eps} \u_R^\eps} \dr_R^\eps + \dr_R^\eps \|^2_{H^N} - \| \D_{\u_0 + \sqrt{\eps} \u_R^\eps} \dr_R^\eps \|^2_{H^N} + ( \tfrac{- \lambda_1}{\eps} - 1 ) \| \dr_R^\eps \|^2_{H^N} \\
	    & \qquad + \sum_{|m| \leq N} \l \partial^m ( \u_R^\eps \cdot \nabla \dr_0 ) , \partial^m \dr_R^\eps \r \Big) + \tfrac{1}{\eps} \| \nabla \dr_R^\eps \|^2_{H^N} - \| \D_{\u_0 + \sqrt{\eps} \u_R^\eps} \dr_R^\eps \|^2_{H^N} \\
	    = & - \sum_{|m| \leq N} \l \partial^m ( \u_R^\eps \cdot \nabla \dr_0 ) , \partial^m ( (\u_0 + \sqrt{\eps} \u_R^\eps ) \cdot \nabla \dr_R^\eps ) \r \\
	    & + \sum_{|m| \leq N} \l \partial^m ( \u_R^\eps \cdot \nabla \dr_0 ) , \partial^m \D_{\u_0 + \sqrt{\eps} \u_R^\eps} \dr_R^\eps \r \\
	    & + \tfrac{\lambda_1}{\eps} \sum_{ |m| \leq N} \l \partial^m ( ( \u_0 + \sqrt{\eps} \u_R^\eps ) \cdot \nabla \dr_R^\eps ) , \partial^m \dr_R^\eps \r +  \tfrac{1}{\eps} \sum_{|m| \leq N} \l \partial^m \mathcal{C}_{\dr} , \partial^m \dr_R^\eps \r \\
	    & + \sum_{|m| \leq N} \l \tfrac{1}{\eps} \partial^m \mathcal{S}_{\dr}^1 + \tfrac{1}{\sqrt{\eps}} \partial^m \mathcal{S}_{\dr}^2 + \partial^m \mathcal{R}_{\dr} , \partial^m \dr_R^\eps \r \\
	    & + \sum_{|m| \leq N} \sum_{0 \neq m' \leq m} C_m^{m'} \l \sqrt{\eps} \partial^{m'} \u_R^\eps \otimes \partial^{m - m'} \D_{\u_0 + \sqrt{\eps} \u_R^\eps} \dr_R^\eps , \nabla \partial^m \dr_R^\eps \r \\
	    &  - \sum_{|m| \leq N} \sum_{0 \neq m' \leq m} C_m^{m'} \l \partial^m \D_{\u_0 + \sqrt{\eps} \u_R^\eps} \dr_R^\eps , \sqrt{\eps} \partial^{m'} \u_R^\eps \cdot \nabla \partial^{m - m'} \dr_R^\eps \r
	  \end{aligned}
	\end{equation}
	for all $\eps > 0$. Recalling the definition of $\mathcal{C}_{\dr}$ in \eqref{C-d}, we calculate that
	\begin{equation}
	  \begin{aligned}
	    & \tfrac{1}{\eps} \sum_{1 \leq |m| \leq N} \l \partial^m \mathcal{C}_{\dr} , \partial^m \dr_R^\eps \r \\
	    = & \tfrac{1}{\eps} \sum_{ 1 \leq |m| \leq N} \l \lambda_1 ( \partial^m \B_R^\eps ) \dr_0 + \lambda_2 ( \partial^m \A_R^\eps ) \dr_0 , \partial^m \dr_R^\eps \r \\
	    & + \tfrac{1}{\eps} \sum_{1 \leq |m| \leq N} \sum_{0 \neq m' \leq m} C_m^{m'} \l \lambda_1 ( \partial^{m - m'} \B_R^\eps ) \partial^{m'} \dr_0 + \lambda_2 ( \partial^{m - m'} \A_R^\eps ) \partial^{m'} \dr_0 , \partial^m \dr_R^\eps \r \,,
	  \end{aligned}
	\end{equation}
	where the first term in the right-hand side of the previous equality can be bounded by
	\begin{equation}
	  \begin{aligned}
	    & \tfrac{1}{\eps} \sum_{1 \leq |m| \leq N} \big( |\lambda_1| \| \partial^m \B_R^\eps \|_{L^2} + |\lambda_2| \| \partial^m \A_R^\eps \|_{L^2} \big) \| \partial^m \dr_R^\eps \|_{L^2} \\
	    \leq & \tfrac{1}{\eps} \sqrt{c'_0 ( \lambda_1 , \lambda_2 , N )} \| \nabla \u_R^\eps \|_{H^N} \| \nabla \dr_R^\eps \|_{H^N} \\
	    \leq & \tfrac{1}{\eps} c'_0 ( \lambda_1 , \lambda_2 , N ) \| \nabla \u_R^\eps \|^2_{H^N} + \tfrac{1}{4 \eps } \| \nabla \dr_R^\eps \|^2_{H^N}
	  \end{aligned}
	\end{equation}
	for some constant $c'_0 ( \lambda_1 , \lambda_2 , N ) > 0$. Furthermore, thanks to $ \int_{\T^3} \d x = |\T^3|^3 < \infty $, we derive from the Gagliardo-Nirenberg interpolation inequality $ \| f \|_{L^6} \leq C \| \nabla f \|_{L^2} $ and the Young's inequality that
	\begin{equation}\label{Cd-d-linear}
	  \begin{aligned}
	    \tfrac{1}{\eps} \l \lambda_1 \B_R^\eps \dr_0 + \lambda_2 \A_R^\eps \dr_0 , \dr_R^\eps \r \leq & \tfrac{1}{\eps} ( |\lambda_1| + |\lambda_2| ) \big( \int_{\T^3} |\dr_0|^3 \d x  \big)^\frac{1}{3} \| \nabla \u_R^\eps \|_{L^2} \| \dr_R^\eps \|_{L^6} \\
	    \leq & \tfrac{1}{\eps} ( |\lambda_1| + |\lambda_2| ) | \T^3 |^\frac{1}{3} \| \nabla \u_R^\eps \|_{H^N} \| \nabla \dr_R^\eps \|_{H^N} \\
	    \leq & \tfrac{1}{\eps} c_0^* (\lambda_1, \lambda_2) \| \nabla \u_R^\eps \|^2_{H^N} + \tfrac{1}{4 \eps} \| \nabla \dr_R^\eps \|^2_{H^N}
	  \end{aligned}
	\end{equation}
	for some constant $c_0^* (\lambda_1, \lambda_2) > 0$. Consequently, we know that
	\begin{equation}
	  \begin{aligned}
	    & \tfrac{1}{\eps} \sum_{|m| \leq N} \l \partial^m \mathcal{C}_{\dr} , \partial^m \dr_R^\eps \r \leq \tfrac{1}{\eps} c_0 ( \lambda_1, \lambda_2, N ) \| \nabla \u_R^\eps \|^2_{H^N} + \tfrac{1}{2 \eps} \| \nabla \dr_R^\eps \|^2_{H^N} \\
	    & + \tfrac{1}{\eps} \sum_{1 \leq |m| \leq N} \sum_{0 \neq m' \leq m} C_m^{m'} \l \lambda_1 ( \partial^{m - m'} \B_R^\eps ) \partial^{m'} \dr_0 + \lambda_2 ( \partial^{m - m'} \A_R^\eps ) \partial^{m'} \dr_0 , \partial^m \dr_R^\eps \r
	  \end{aligned}
	\end{equation}
	holds for $N \geq 2$, where $c_0 ( \lambda_1, \lambda_2, N ) = c_0' (\lambda_1, \lambda_2, N) + c_0^* (\lambda_1, \lambda_2) > 0$.
	
	We now take a small constant
	\begin{equation}
	  \begin{aligned}
	    \delta = \min \big\{ \tfrac{1}{2} , \tfrac{ \mu_4 }{8 c_0 (\lambda_1, \lambda_2, N)} \big\} \in  ( 0, \tfrac{1}{2} ]
	  \end{aligned}
	\end{equation}
	such that $\delta c_0 (\lambda_1, \lambda_2, N) \leq \tfrac{\mu_4}{8}$. Combining the relation \eqref{Cd-d-linear}, we multiply \eqref{d-nabla-d} by $\delta$ and then add it to the \eqref{u-d-2}, which gives us
	\begin{equation}\label{u-d-decay-2}
	  \begin{aligned}
	    \tfrac{1}{2} \tfrac{\d}{\d t} \mathscr{E}_{N, \eps} (t) + \mathscr{D}_{N, \eps} (t) \leq \mathcal{I}_N^{(1)} + \mathcal{I}_N^{(2)} + \mathcal{I}_N^{(3)} + \mathcal{I}_N^{(4)}
	  \end{aligned}
	\end{equation}
	for $N \geq 2$, where the symbols $\mathcal{I}_N^{(i)}$ $(1 \leq i \leq 4)$ are defined as
	\begin{equation}
	   \begin{aligned}
	     & \mathcal{I}_N^{(1)} = \tfrac{1}{\eps} \sum_{|m| \leq N} \l \partial^m \mathcal{K}_{\u} , \partial^m \u_R^\eps \r + \tfrac{1}{\eps } \sum_{|m| \leq N} \l \nabla \partial^m \dr_R^\eps , \nabla \partial^m ( \u_R^\eps \cdot \nabla \dr_0 ) \r \\
	     & +  \tfrac{\delta \lambda_1}{ \eps } \sum_{ |m| \leq N} \l \partial^m ( ( \u_0 + \sqrt{\eps} \u_R^\eps ) \cdot \nabla \dr_R^\eps ) , \partial^m \dr_R^\eps \r \\
	     & - \sum_{|m| \leq N} \l \partial^m ( \sqrt{\eps} \u_R^\eps \cdot \nabla \D_{\u_0 + \sqrt{\eps} \u_R^\eps} \dr_R^\eps ) , \partial^m \D_{\u_0 + \sqrt{\eps} \u_R^\eps} \dr_R^\eps \r \\
	     & - \tfrac{1}{\sqrt{\eps}} \sum_{|m| \leq N} \l \nabla \partial^m \dr_R^\eps , \nabla \partial^m ( \u_R^\eps \cdot \nabla \dr_R^\eps ) \r - \sum_{|m| \leq N} \l \sqrt{\eps} \partial^m \D_{\u_0 + \sqrt{\eps} \u_R^\eps} \dr_R^\eps ,  \u_R^\eps \cdot \nabla \partial^m ( \u_R^\eps \cdot \nabla \dr_0 ) \r \\
	     & + \tfrac{- \lambda_1}{\eps} \sum_{|m| \leq N} \l \partial^m (\u_R^\eps \cdot \nabla \dr_0), \partial^m \D_{\u_0 + \sqrt{\eps} \u_R^\eps} \dr_R^\eps \r \\
	     & - \tfrac{ \mu_1}{\eps} \sum_{|m| \leq N} \sum_{\substack{0 \neq m' \leq m \\ m'' \leq m'} } C_m^{m'} C_{m'}^{m''} \l ( \partial^{m - m'} \A_R^\eps : \partial^{m' - m''} ( \dr_0 \otimes \dr_0 ) ) \partial^{m''} ( \dr_0 \otimes \dr_0 ) , \nabla \partial^m \u_R^\eps \r \\
	     & - \delta \sum_{ |m| \leq N} \l \partial^m (  \u_R^\eps \cdot \nabla \dr_0 ) , \partial^m ( ( \u_0 + \sqrt{\eps} \u_R^\eps ) \cdot \nabla \dr_R^\eps ) \r + \delta \sum_{ |m| \leq N} \l \partial^m ( \u_R^\eps \cdot \nabla \dr_0 ) , \partial^m \D_{\u_0 + \sqrt{\eps} \u_R^\eps} \dr_R^\eps \r \\
	     & + \delta \sum_{1 \leq |m| \leq N} \sum_{0 \neq m' \leq m} C_m^{m'} \l \sqrt{\eps} \partial^{m'} \u_R^\eps \otimes \partial^{m - m'} \D_{\u_0 + \sqrt{\eps} \u_R^\eps} , \nabla \partial^m \dr_R^\eps \r \\
	     &  - \delta \sum_{1 \leq |m| \leq N} \sum_{0 \neq m' \leq m} C_m^{m'} \l \partial^m \D_{\u_0 + \sqrt{\eps} \u_R^\eps} \dr_R^\eps , \sqrt{\eps} \partial^{m'} \u_R^\eps \cdot \nabla \partial^{m - m'} \dr_R^\eps \r
	   \end{aligned}
	\end{equation}
	for the functional $\mathcal{K}_{\u}$ defined in \eqref{Lu}, and
	\begin{equation}
	  \begin{aligned}
	    \mathcal{I}_N^{(2)} = - \tfrac{1}{\eps } \sum_{|m| \leq N} \l \partial^m \mathcal{T}_{\u} , \nabla \partial^m \u_R^\eps \r - \tfrac{1}{\sqrt{\eps}} \sum_{|m| \leq N} \l \partial^m \mathcal{R}_{\u} , \nabla \partial^m \u_R^\eps \r
	  \end{aligned}
	\end{equation}
	for the expressions $\mathcal{T}_{\u}$, $\mathcal{R}_{\u}$ given in \eqref{Tu}, \eqref{Ru}, respectively, and
	\begin{equation}
	  \begin{aligned}
	    \mathcal{I}_N^{(3)} = \tfrac{1}{\eps} \sum_{|m| \leq N} \mathcal{G}_m
	  \end{aligned}
	\end{equation}
	with the quantity $\mathcal{G}_m$ mentioned in Lemma \ref{Lmm-Cancellation}, and
	\begin{equation}\label{IN4}
	  \begin{aligned}
	    \mathcal{I}_N^{(4)} = & - \tfrac{1}{\eps} \sum_{|m| \leq N} \l \partial^m ( \u_R^\eps \cdot \nabla \dr_0 ) , \partial^m \mathcal{C}_{\dr} \r \\
	    & + \sum_{|m| \leq N} \l \tfrac{1}{\eps} \partial^m \mathcal{S}_{\dr}^1 + \tfrac{1}{\sqrt{\eps}} \partial^m \mathcal{S}_{\dr}^2 + \partial^m \mathcal{R}_{\dr} , \partial^m \D_{\u_0 + \sqrt{\eps} \u_R^\eps} \dr_R^\eps - \partial^m ( \u_R^\eps \cdot \nabla\dr_0 ) \r \\
	    & + \delta \sum_{ |m| \leq N} \l \tfrac{1}{\eps} \partial^m \mathcal{S}_{\dr}^1 + \tfrac{1}{\sqrt{\eps}} \partial^m \mathcal{S}_{\dr}^2 + \partial^m \mathcal{R}_{\dr} , \partial^m \dr_R^\eps \r \\
	    & + \tfrac{\delta}{\eps} \sum_{1 \leq |m| \leq N} \sum_{0 \neq m' \leq m} C_m^{m'} \l \lambda_1 ( \partial^{m-m'} \B_R^\eps ) \partial^{m'} \dr_0 + \lambda_2 ( \partial^{m-m'} \A_R^\eps ) \partial^{m'} \dr_0 , \partial^m \dr_R^\eps \r \,.
	  \end{aligned}
	\end{equation}
	Here the vectors $\mathcal{C}_{\dr}$, $\mathcal{S}_{\dr}^1$, $\mathcal{C}_{\dr}^2$ and $\mathcal{R}_{\dr}$ are determined in \eqref{C-d}, \eqref{Sd-1}, \eqref{Sd-2} and \eqref{Rd}, respectively.
	
	It remains to control the terms $\mathcal{I}_N^{(i)}$ $(1 \leq i \leq 4)$ for $N \geq 2$. We emphasize that, in the following estimates, we will frequently use the Sobolev interpolation inequality $\| f \|_{L^6(\T^3)} \leq C \| \nabla f \|_{L^2(\T^3)}$, Sobolev embeddings $H^1 (\T^3) \hookrightarrow L^4 (\T^3)$ (or $L^3 (\T^3)$), $ H^2 (\T^3) \hookrightarrow L^\infty (\T^3) $ and the inequalities \eqref{Energy-E-bnd}, \eqref{Energy-D-bnd} with the constraint $\eps \in (0, \eps_0)$ in Lemma \ref{Lmm-Energies}. Furthermore, for $|m| \leq N$ $(N \geq 2)$, the calculus inequality (see \cite{Majda-2002-BOOK}, for instance)
	\begin{equation}
	  \begin{aligned}
	    \| \partial^m ( f g ) \|_{L^2} \leq C \| f \|_{H^N} \| g \|_{H^N}
	  \end{aligned}
	\end{equation}
	will also be frequently utilized. The geometric constraint $|\dr_0| = 1$ is also considered in the following energy estimates.\\
	
	{\em Step 1. Control the term $\mathcal{I}_N^{(1)}$.}
	
	Via the divergence-free property of $\u_0$, we have
	\begin{equation}\label{IN1-1}
	  \begin{aligned}
	    - \tfrac{1}{\eps} & \l \partial^m ( \u_0 \cdot \nabla \u_R^\eps ) , \partial^m \u_R^\eps \r = - \tfrac{1}{\eps} \sum_{0 \neq m' \leq m} C_m^{m'} \l \partial^{m'} \u_0 \cdot \nabla \partial^{m-m'} \u_R^\eps , \partial^m \u_R^\eps \r \\
	    \leq & \tfrac{C}{\eps} \sum_{0 \neq m' \leq m} \| \partial^{m'} \u_0 \|_{L^4} \| \nabla \partial^{m-m'} \u_R^\eps \|_{L^4} \| \partial^m \u_R^\eps \|_{L^2} \\
	    \leq & \tfrac{C}{\eps} \sum_{0 \neq m' \leq m} \| \partial^{m'} \u_0 \|_{H^1} \| \nabla \partial^{m-m'} \u_R^\eps \|_{H^1} \| \partial^m \u_R^\eps \|_{L^2} \\
	    \leq & \tfrac{C}{\eps} \| \nabla \u_0 \|_{H^N} \| \nabla \u_R^\eps \|_{H^N} \| u_R^\eps \|_{H^N} \leq C \mathscr{E}^\frac{1}{2}_{N, \eps} (t) \mathscr{D}^\frac{1}{2}_{N, \eps} (t) \| \nabla \u_0 \|_{H^N}
	  \end{aligned}
	\end{equation}
	holds for all $|m| \leq N$. If $1 \leq |m| \leq N$, we estimate
	\begin{equation}
	  \begin{aligned}
	    - \tfrac{1}{\eps} & \l \partial^m ( \u_R^\eps \cdot \nabla \u_0 ) , \partial^m \u_R^\eps \r = - \tfrac{1}{\eps} \sum_{m' \leq m} C_m^{m'} \l \partial^{m'} \u_R^\eps \cdot \nabla \partial^{m-m'} \u_0 , \partial^m \u_R^\eps \r \\
	    \leq & \tfrac{C}{\eps} \| \u_R^\eps \|_{L^\infty} \| \nabla \partial^m \u_0 \|_{L^2} \| \partial^m \u_R^\eps \|_{L^2} \\
	    & + \tfrac{C}{\eps} \sum_{0 \neq m' \leq m} \| \partial^{m'} \u_R^\eps \| \nabla \partial^{m-m'} \u_0 \|_{L^4} \| \partial^m \u_R^\eps \|_{L^4} \\
	    \leq & \tfrac{C}{\eps} \| \u_R^\eps \|_{H^N} \| \nabla \u_R^\eps \|_{H^N} \| \nabla \u_0 \|_{H^N} \leq C \mathscr{E}^\frac{1}{2}_{N, \eps} (t) \mathscr{D}^\frac{1}{2}_{N, \eps} (t) \| \nabla \u_0 \|_{H^N} \,.
	  \end{aligned}
	\end{equation}
	Moreover, for $m = 0$, we have
	\begin{equation}
	  \begin{aligned}
	    - \tfrac{1}{\eps} & \l \u_R^\eps \cdot \nabla \u_0 , \u_R^\eps \r \leq \tfrac{1}{\eps} \| \u_R^\eps \|^2_{L^4} \| \nabla \u_0 \|_{L^2} \leq \tfrac{C}{\eps} \| \nabla \u_R^\eps \|^\frac{3}{2}_{L^2} \| \u_R^\eps \|^\frac{1}{2}_{L^2} \\
	    \leq & \tfrac{C}{\eps} \| \u_R^\eps \|_{H^1} \| \nabla \u_R^\eps \|_{L^2} \| \nabla \u_0 \|_{L^2} \leq C \mathscr{E}^\frac{1}{2}_{N, \eps} (t) \mathscr{D}^\frac{1}{2}_{N, \eps} (t) \| \nabla \u_0 \|_{H^N} \,,
	  \end{aligned}
	\end{equation}
	where we utilize the Sobolev interpolation inequality $ \| f \|_{L^4} \leq C \| f \|^\frac{1}{4}_{L^2} \| \nabla f \|^\frac{3}{4}_{L^2} $. Then, we obtain
	\begin{equation}\label{IN1-2}
	  \begin{aligned}
	    - \tfrac{1}{\eps} \l \partial^m ( \u_R^\eps \cdot \nabla \u_0 ) , \partial^m \u_R^\eps \r \leq - \tfrac{1}{\eps} \l \partial^m ( \u_R^\eps \cdot \nabla \u_0 ) , \partial^m \u_R^\eps \r
	  \end{aligned}
	\end{equation}
	for all $|m| \leq N$ $(N \geq 2)$. It is easy to derive from the divergence-free property of $\u_R^\eps$ that
	\begin{equation}\label{IN1-3}
	  \begin{aligned}
	    - \tfrac{1}{\eps} & \l \partial^m ( \sqrt{\eps} \u_R^\eps \cdot \nabla \u_R^\eps ) , \partial^m \u_R^\eps \r = - \tfrac{1}{\sqrt{\eps}} \sum_{0 \neq m' \leq m} C_m^{m'} \l \partial^{m'} \u_R^\eps \cdot \nabla \partial^{m-m'} \u_R^\eps , \partial^m \u_R^\eps \r \\
	    \leq & \tfrac{C}{\sqrt{\eps}} \sum_{0 \neq m' \leq m} \| \partial^{m'} \u_R^\eps \|_{L^4} \| \nabla \partial^{m-m'} \u_R^\eps \|_{L^4} \| \partial^m \u_R^\eps \|_{L^2} \leq \tfrac{C}{\sqrt{\eps}} \| \u_R^\eps \|_{H^N} \| \nabla \u_R^\eps \|^2_{H^N} \\
	    \leq & C \eps \mathscr{E}^\frac{1}{2}_{N, \eps} (t) \mathscr{D}_{N, \eps} (t) \,.
	  \end{aligned}
	\end{equation}
	Moreover, we can deduce
	\begin{equation}\label{IN1-4}
	  \begin{aligned}
	    & - \tfrac{1}{\eps} \l \partial^m \div ( \nabla \dr_0 \odot \nabla \dr_R^\eps + \nabla \dr_R^\eps \odot \nabla \dr_0 + \sqrt{\eps} \nabla \dr_R^\eps \odot \nabla \dr_R^\eps ) , \partial^m \u_R^\eps \r \\
	    = & \tfrac{1}{\eps} \l \partial^m ( \nabla \dr_0 \odot \nabla \dr_R^\eps + \nabla \dr_R^\eps \odot \nabla \dr_0 + \sqrt{\eps} \nabla \dr_R^\eps \odot \nabla \dr_R^\eps ) , \nabla \partial^m \u_R^\eps \r \\
	    \leq & \tfrac{C}{\eps} \big( \| \partial^m ( \nabla \dr_0 \odot \nabla \dr_R^\eps ) \|_{L^2} + \sqrt{\eps} \| \partial^m ( \nabla \dr_R^\eps \odot \nabla \dr_R^\eps ) \|_{L^2} \big) \| \nabla \partial^m \u_R^\eps \|_{L^2} \\
	    \leq & \tfrac{C}{\eps} \| \nabla \dr_R^\eps \|_{H^N} ( \| \nabla \dr_0 \|_{H^N} + \sqrt{\eps} \| \nabla \dr_R^\eps \|_{H^N} ) \| \nabla \u_R^\eps \|_{H^N} \\
	    \leq & C \big( \| \nabla \dr_0 \|_{H^N} + \eps \mathscr{E}^\frac{1}{\eps}_{N, \eps} (t) \big) \mathscr{D}_{N, \eps} (t)
	  \end{aligned}
	\end{equation}
	for all $|m| \leq N$. Recalling the definition of $\mathcal{K}_{\u}$ in \eqref{Lu}, we derive from the bounds \eqref{IN1-1}, \eqref{IN1-2}, \eqref{IN1-3} and \eqref{IN1-4} that for $|m| \leq N$
	\begin{equation}\label{IN1-5}
	  \begin{aligned}
	    \tfrac{1}{\eps} \l \partial^m \mathcal{K}_{\u} , \partial^m \u_R^\eps \r \leq C \mathscr{E}^\frac{1}{2}_{N, \eps} (t) \mathscr{D}^\frac{1}{2}_{N, \eps} (t) \| \nabla \u_0 \|_{H^N} + C \big( \| \nabla \dr_0 \|_{H^N} + \eps \mathscr{E}^\frac{1}{\eps}_{N, \eps} (t) \big) \mathscr{D}_{N, \eps} (t) \,.
	  \end{aligned}
	\end{equation}
	
	For $|m| \leq N$, we calculate
	\begin{equation}\label{IN1-6}
	  \begin{aligned}
	    & \tfrac{1}{\eps} \l \nabla \partial^m \dr_R^\eps , \nabla \partial^m ( \u_R^\eps \cdot \nabla \dr_0 ) \r = \tfrac{1}{\eps} \l \nabla \partial^m \dr_R^\eps , \partial^m ( \u_R^\eps \cdot \nabla^2 \dr_0 + \nabla \u_R^\eps \cdot  \nabla \dr_0 ) \r \\
	    = & \tfrac{1}{\eps} \sum_{m' \leq m} C_m^{m'} \l \nabla \partial^m \dr_R^\eps , \partial^{m'} \u_R^\eps \cdot \nabla^2 \partial^{m-m'} \dr_0 + \nabla \partial^{m'} \u_R^\eps \cdot \nabla \partial^{m-m'} \dr_0 \r \\
	    \leq & \tfrac{C}{\eps} \sum_{m' \leq m} \| \nabla \partial^m \dr_R^\eps \|_{L^2} \Big( \| \partial^{m'} \u_R^\eps \|_{L^6} \| \nabla^2 \partial^{m-m'} \dr_0 \|_{L^3} + \| \nabla \partial^{m'} \u_R^\eps \|_{L^2} \| \nabla \partial^{m-m'} \dr_0 \|_{L^\infty} \Big) \\
	    \leq & \tfrac{C}{\eps} \| \nabla \dr_R^\eps \|_{H^N} \| \nabla \u_R^\eps \|_{H^N} \| \nabla \dr_0 \|_{H^{N+2}} \leq C \| \nabla \dr_0 \|_{H^{N+2}} \mathscr{D}_{N, \eps} (t) \,.
	  \end{aligned}
	\end{equation}
	Via the divergence-free property of $\u_R^\eps$, we yield that
	\begin{equation}\label{IN1-7}
	  \begin{aligned}
	    & \tfrac{\delta \lambda_1}{\sqrt{\eps}} \l \partial^m ( \u_R^\eps \cdot \nabla \dr_R^\eps ), \partial^m \dr_R^\eps \r = \tfrac{\delta \lambda_1}{\sqrt{\eps}} \sum_{0 \neq m' \leq m} C_m^{m'} \l \partial^{m'} \u_R^\eps \cdot \nabla \partial^{m-m'} \dr_R^\eps , \partial^m \dr_R^\eps \r \\
	    \leq & \tfrac{C}{\sqrt{\eps}} \sum_{0 \neq m' \leq m} \| \partial^{m'} \u_R^\eps \|_{L^4} \| \nabla \partial^{m-m'} \dr_R^\eps \|_{L^4} \| \partial^m \dr_R^\eps \|_{L^2} \\
	    \leq & \tfrac{C}{\sqrt{\eps}} \| \nabla \dr_R^\eps \|_{H^N} \| \dr_R^\eps \|_{H^N} \| \nabla \u_R^\eps \|_{H^N} \leq C \eps \mathscr{E}^\frac{1}{2}_{N, \eps} (t) \mathscr{D}_{N, \eps} (t)
	  \end{aligned}
	\end{equation}
	holds for $|m| \leq N$. Similarly, we deduce from the fact $\div \u_R^\eps = 0$ that
	\begin{equation}\label{IN1-8}
	  \begin{aligned}
	     & - \l \partial^m ( \sqrt{\eps} \u_R^\eps \cdot \nabla \D_{\u_0 + \sqrt{\eps} \u_R^\eps} \dr_R^\eps ) , \partial^m \D_{\u_0 + \sqrt{\eps} \u_R^\eps} \dr_R^\eps \r \\
	     = & - \sqrt{\eps} \sum_{0 \neq m' \leq m} C_m^{m'} \l \partial^{m'} \u_R^\eps \cdot \nabla \partial^{m-m'} \D_{\u_0 + \sqrt{\eps} \u_R^\eps} \dr_R^\eps , \partial^m \D_{\u_0 + \sqrt{\eps} \u_R^\eps} \dr_R^\eps \r \\
	     \leq & C \sqrt{\eps} \sum_{|m'| = 1} \| \partial^{m'} \u_R^\eps \|_{L^\infty} \| \nabla \partial^{m-m'} \D_{\u_0 + \sqrt{\eps} \u_R^\eps} \dr_R^\eps \|_{L^2} \| \partial^m \D_{\u_0 + \sqrt{\eps} \u_R^\eps} \dr_R^\eps \|_{L^2} \\
	     & + C \sqrt{\eps} \sum_{2 \leq |m'| \leq |m|} \| \partial^{m'} \u_R^\eps \|_{L^4} \| \nabla \partial^{m-m'} \D_{\u_0 + \sqrt{\eps} \u_R^\eps} \dr_R^\eps \|_{L^4} \| \partial^m \D_{\u_0 + \sqrt{\eps} \u_R^\eps} \dr_R^\eps \|_{L^2} \\
	     \leq & C \sqrt{\eps} \| \nabla \u_R^\eps \|_{H^N} \| \D_{\u_0 + \sqrt{\eps} \u_R^\eps} \dr_R^\eps \|^2_{H^N} \leq C \eps \mathscr{E}^\frac{1}{2}_{N, \eps} (t) \mathscr{D}_{N, \eps} (t)
	  \end{aligned}
	\end{equation}
	for all multi-indexes $m \in \mathbb{N}^3$ with $|m| \leq N$. For the term $ - \tfrac{1}{\sqrt{\eps}} \l \nabla \partial^m \dr_R^\eps , \nabla \partial^m ( \u_R^\eps \cdot \nabla \dr_R^\eps ) \r $, one estimates that for $|m| \leq N$
	\begin{equation}\label{IN1-9}
	  \begin{aligned}
	    & - \tfrac{1}{\sqrt{\eps}} \l \nabla \partial^m \dr_R^\eps , \nabla \partial^m ( \u_R^\eps \cdot \nabla \dr_R^\eps ) \r \\
	    = & - \tfrac{1}{\sqrt{\eps}} \sum_{0 \neq m' \leq m} C_m^{m'} \l \nabla \partial^m \dr_R^\eps , \partial^{m'} \u_R^\eps \cdot \nabla \partial^{m-m'} \nabla \dr_R^\eps \r - \tfrac{1}{\sqrt{\eps}} \l \nabla \partial^m \dr_R^\eps , \partial^m ( \nabla \u_R^\eps \cdot \nabla \dr_R^\eps ) \r \\
	    \leq & \tfrac{C}{\sqrt{\eps}} \sum_{|m'|=1} \| \partial^{m'} \u_R^\eps \|_{L^\infty} \| \nabla \partial^{m-m'} \nabla \dr_R^\eps \|_{L^2} \| \nabla \partial^m \dr_R^\eps \|_{L^2} \\
	    & + \tfrac{C}{\sqrt{\eps}} \sum_{2 \leq |m'| \leq |m|} \| \partial^{m'} \u_R^\eps \|_{L^4} \| \nabla \partial^{m-m'} \nabla \dr_R^\eps \|_{L^4} \| \nabla \partial^m \dr_R^\eps \|_{L^2} \\
	    & + \tfrac{C}{\sqrt{\eps}} \| \nabla \partial^m \dr_R^\eps \|{L^2} \| \partial^m ( \nabla \u_R^\eps \cdot \nabla \dr_R^\eps ) \|_{L^2} \\
	    \leq & \tfrac{C}{\sqrt{\eps}} \| \nabla \u_R^\eps \|_{H^N} \| \nabla \dr_R^\eps \|^2_{H^N} \leq C \eps \mathscr{E}^\frac{1}{2}_{N, \eps} (t) \mathscr{D}_{N, \eps} (t) \,,
	  \end{aligned}
	\end{equation}
	where the fact $\div \u_R^\eps = 0$ is also utilized.
	
	We derive the bound of the term $ - \l \sqrt{\eps} \partial^m \D_{\u_0 + \sqrt{\eps} \u_R^\eps} \dr_R^\eps, \u_R^\eps \cdot \nabla \partial^m ( \u_R^\eps \cdot \nabla \dr_0 ) \r $ as follows:
	\begin{equation}\label{IN1-10}
	  \begin{aligned}
	    & - \l \sqrt{\eps} \partial^m \D_{\u_0 + \sqrt{\eps} \u_R^\eps} \dr_R^\eps, \u_R^\eps \cdot \nabla \partial^m ( \u_R^\eps \cdot \nabla \dr_0 ) \r \\
	    = & - \l \sqrt{\eps} \partial^m \D_{\u_0 + \sqrt{\eps} \u_R^\eps} \dr_R^\eps, \u_R^\eps \cdot \partial^m ( \nabla \u_R^\eps \cdot \nabla \dr_0 + \u_R^\eps \cdot \nabla \nabla \dr_0 ) \r \\
	    \leq & \sqrt{\eps} \| \partial^m \D_{\u_0 + \sqrt{\eps} \u_R^\eps} \dr_R^\eps \|_{L^2} \| \u_R^\eps \|_{L^\infty} \| \partial^m ( \nabla \u_R^\eps \cdot \nabla \dr_0 ) \|_{L^2} \\
	    & + \sqrt{\eps} \| \partial^m \D_{\u_0 + \sqrt{\eps} \u_R^\eps} \dr_R^\eps \|_{L^2} \| \u_R^\eps \|^2_{L^6} \| \nabla \partial^m \dr_0 \|_{L^6} \\
	    & + C \sqrt{\eps} \sum_{0 \neq m' \leq m} \| \partial^m \D_{\u_0 + \sqrt{\eps} \u_R^\eps} \dr_R^\eps \|_{L^2} \| \u_R^\eps \|_{L^\infty} \| \partial^{m'} \u_R^\eps \|_{L^4} \| \nabla \partial^{m-m'} \dr_0 \|_{L^4} \\
	    \leq & C \sqrt{\eps} \| \D_{\u_0 + \sqrt{\eps} \u_R^\eps} \dr_R^\eps \|_{H^N} \| \u_R^\eps \|_{H^N} \| \nabla \u_R^\eps \|_{H^N} \| \nabla \dr_0 \|_{H^{N+2}} \\
	    \leq & C \sqrt{\eps}^3 \| \nabla \dr_0 \|_{H^{N+2}} \mathscr{E}^\frac{1}{2}_{N, \eps} (t) \mathscr{D}_{N, \eps} (t) \,.
	  \end{aligned}
	\end{equation}
	For all $|m| \leq N$, it is derived that
	  \begin{align}\label{IN1-11}
	    \no & \tfrac{- \lambda_1}{\eps} \l \partial^m ( \u_R^\eps \cdot \nabla \partial^m \dr_0 ) , \partial^m \D_{\u_0 + \sqrt{\eps} \u_R^\eps} \dr_R^\eps \r \\
	    \no = & \tfrac{- \lambda_1}{\eps} \l \u_R^\eps \cdot \nabla \partial^m \dr_0 , \partial^m \D_{\u_0 + \sqrt{\eps} \u_R^\eps} \dr_R^\eps \r + \tfrac{- \lambda_1}{\eps} \sum_{0 \neq m' \leq m } \l \partial^{m'} \u_R^\eps \cdot \nabla \partial^{m-m'} \dr_0 , \partial^m \D_{\u_0 + \sqrt{\eps} \u_R^\eps} \dr_R^\eps \r \\
	    \no \leq & \tfrac{- \lambda_1}{\eps} \| \u_R^\eps \|_{L^6} \| \nabla \partial^m \dr_0 \|_{L^3} \| \partial^m \D_{\u_0 + \sqrt{\eps} \u_R^\eps} \dr_R^\eps \|_{L^2} \\
	    \no & + \tfrac{- C \lambda_1}{\eps} \sum_{0 \neq m' leq m} \| \partial^{m'} \u_R^\eps \|_{L^4} \| \nabla \partial^{m-m'} \dr_0 \|_{L^4} \| \partial^m \D_{\u_0 + \sqrt{\eps} \u_R^\eps} \dr_R^\eps \|_{L^2} \\
	    \no \leq & \tfrac{C}{\eps} \| \nabla \u_R^\eps \|_{H^N} \| \nabla \dr_0 \|_{H^{N+1}} \| \partial^m \D_{\u_0 + \sqrt{\eps} \u_R^\eps} \dr_R^\eps \|_{L^2} \\
	    \no \leq & \tfrac{C}{\eps} \| \nabla \u_R^\eps \|_{H^N} \| \nabla \dr_0 \|_{H^{N+1}} \Big( \| \partial^m \D_{\u_0 + \sqrt{\eps} \u_R^\eps} \dr_R^\eps + ( \partial^m \B_R^\eps ) \dr_0 + \tfrac{\lambda_2}{\lambda_1} (  \partial^m \A_R^\eps ) \dr_0 \|_{L^2} \\
	    \no & \qquad \qquad + ( 1 + \tfrac{|\lambda_2|}{-\lambda_1} ) \| \nabla \partial^m \u_R^\eps \|_{L^2} \Big) \\
	    \no \leq & \tfrac{C}{\eps} \| \nabla \u_R^\eps \|_{H^N} \| \nabla \dr_0 \|_{H^{N+1}} \Big( \| \partial^m \D_{\u_0 + \sqrt{\eps} \u_R^\eps} \dr_R^\eps + ( \partial^m \B_R^\eps ) \dr_0 + \tfrac{\lambda_2}{\lambda_1} (  \partial^m \A_R^\eps ) \dr_0 \|_{L^2} + \| \nabla \partial^m \u_R^\eps \|_{L^2} \Big) \\
	    \leq & C \| \nabla \dr_0 \|_{H^{N+1}} \mathscr{D}_{N, \eps} (t) \,.
	  \end{align}
	We also can yield that
	\begin{equation}\label{IN1-12}
	  \begin{aligned}
	    & - \tfrac{\mu_1}{\eps} \sum_{\substack{ 0 \neq m' \leq m \\ m'' \leq m' }} C_m^{m'} C_{m'}^{m''} \l ( \partial^{m-m'} \A_R^\eps : \partial^{m'-m''} ( \dr_0 \otimes \dr_0 ) ) \partial^{m''} ( \dr_0 \otimes \dr_0 ) , \nabla \partial^m \u_R^\eps \r \\
	    \leq & \tfrac{C}{\eps} \| \nabla \partial^m \u_R^\eps \|_{L^2} \sum_{\substack{ 0 \neq m' \leq m \\ m'' \leq m' }} \| \partial^{m-m'} \A_R^\eps \|_{L^2} \| \partial^{m'-m''} ( \dr_0 \otimes \dr_0 ) \|_{L^\infty} \| \partial^{m''} ( \dr_0 \otimes \dr_0 ) \|_{L^\infty} \\
	    \leq & \tfrac{C}{\eps} \| \nabla \u_R^\eps \|^2_{H^N} \big( \| \nabla \dr_0 \|_{H^{N+1}} + \| \nabla \dr_0 \|^2_{H^{N+1}} + \| \nabla \dr_0 \|^3_{H^{N+1}} + \| \nabla \dr_0 \|^4_{H^{N+1}} \big) \\
	    \leq & C \big( \| \nabla \dr_0 \|_{H^{N+1}} + \| \nabla \dr_0 \|^4_{H^{N+1}} \big) \mathscr{D}_{N, \eps} (t)
	  \end{aligned}
	\end{equation}
	holds for all $|m| \leq N$. For the term $ - \delta \l \partial^m ( \u_R^\eps \cdot \nabla \dr_0 ) , \partial^m ( \sqrt{\eps} \u_R^\eps \cdot \nabla \dr_R^\eps ) \r $, we have
	\begin{equation}\label{IN1-13}
	  \begin{aligned}
	    & - \delta \l \partial^m ( \u_R^\eps \cdot \nabla \dr_0 ) , \partial^m ( \sqrt{\eps} \u_R^\eps \cdot \nabla \dr_R^\eps ) \r \\
	    \leq & C \sqrt{\eps} \| \u_R^\eps \|_{L^6} \| \nabla \partial^m \dr_0 \|_{L^3} \| \partial^m ( \u_R^\eps \cdot \nabla \dr_R^\eps ) \|_{L^2} \\
	    & + C \sqrt{\eps} \sum_{0 \neq m' \leq m} \| \partial^{m'} \u_R^\eps \|_{L^4} \| \nabla \partial^{m - m'} \dr_0 \|_{L^4} \| \partial^m (  \u_R^\eps \cdot \nabla \dr_R^\eps ) \|_{L^2} \\
	    \leq & C \sqrt{\eps} \| \nabla \u_R^\eps \|_{H^N} \| \nabla \dr_0 \|_{H^{N+1}} \| \u_R^\eps \|_{H^N} \| \nabla \dr_R^\eps \|_{H^N} \\
	    \leq & C \eps^2 \| \nabla \dr_0 \|_{H^{N+1}} \mathscr{E}^\frac{1}{2}_{N, \eps} (t) \mathscr{D}_{N, \eps} (t)
	  \end{aligned}
	\end{equation}
	for $|m| \leq N$. It is easy to estimate
	\begin{equation}\label{IN1-14}
	  \begin{aligned}
	    & \delta \l \partial^m ( \u_R^\eps \cdot \nabla \dr_0 ) , \partial^m \D_{\u_0 + \sqrt{\eps} \u_R^\eps} \dr_R^\eps \r \\
	    \leq & C \sum_{m' \leq m} \| \partial^{m'} \u_R^\eps \|_{L^6} \| \nabla \partial^{m-m'} \dr_0 \|_{L^3} \| \partial^m \D_{\u_0 + \sqrt{\eps} \u_R^\eps} \dr_R^\eps \|_{L^2} \\
	    \leq & C \| \nabla \u_R^\eps \|_{H^N} \| \nabla \dr_0 \|_{H^{N+1}} \| \D_{\u_0 + \sqrt{\eps} \u_R^\eps} \dr_R^\eps \|_{L^2} \\
	    \leq & C \sqrt{\eps} \| \nabla \dr_0 \|_{H^{N+1}} \mathscr{D}_{N, \eps} (t)
	  \end{aligned}
	\end{equation}
	for $|m| \leq N$. The term $ \sum_{0 \neq m' \leq m} C_m^{m'} \l \sqrt{\eps} \partial^{m'} \u_R^\eps \otimes \partial^{m-m'} \D_{\u_0 + \sqrt{\eps} \u_R^\eps} \dr_R^\eps , \nabla \partial^m \dr_R^\eps \r $ can be bounded by
	\begin{equation}\label{IN1-15}
	  \begin{aligned}
	    &  \sum_{0 \neq m' \leq m} C_m^{m'} \l \sqrt{\eps} \partial^{m'} \u_R^\eps \otimes \partial^{m-m'} \D_{\u_0 + \sqrt{\eps} \u_R^\eps} \dr_R^\eps , \nabla \partial^m \dr_R^\eps \r \\
	    \leq & C \sqrt{\eps} \sum_{0 \neq m' \leq m} \| \partial^{m'} \u_R^\eps \|_{L^4} \| \partial^{m-m'} \D_{\u_0 + \sqrt{\eps} \u_R^\eps} \dr_R^\eps \|_{L^4} \| \nabla \partial^m \dr_R^\eps \|_{L^2} \\
	    \leq & C \sqrt{\eps} \| \nabla \u_R^\eps \|_{H^N} \| \D_{\u_0 + \sqrt{\eps} \u_R^\eps} \dr_R^\eps \|_{H^N} \\
	    \leq & C \eps \mathscr{E}^\frac{1}{2}_{N, \eps} (t) \mathscr{D}_{N, \eps} (t) \,.
	  \end{aligned}
	\end{equation}
	Via the similar arguments in \eqref{IN1-15}, we yield
	\begin{equation}\label{IN1-16}
	  \begin{aligned}
	    & - \sum_{0 \neq m' \leq m} C_m^{m'} \l \partial^m \D_{\u_0 + \sqrt{\eps} \u_R^\eps} \dr_R^\eps , \sqrt{eps} \partial^{m'} \u_R^\eps \cdot \nabla \partial^{m-m'} \dr_R^\eps \r \\
	    \leq & C \sqrt{\eps} \| \nabla \u_R^\eps \|_{H^N} \| \nabla \dr_R^\eps \|_{H^N} \| \D_{\u_0 + \sqrt{\eps} \u_R^\eps} \dr_R^\eps \|_{H^N} \\
	    \leq & C \eps \mathscr{E}^\frac{1}{2}_{N, \eps} (t) \mathscr{D}_{N, \eps} (t) \,.
	  \end{aligned}
	\end{equation}
	
	Collecting the bounds \eqref{IN1-5}-\eqref{IN1-16} reduces to
	\begin{equation}\label{IN1-Bnd}
	  \begin{aligned}
	    \mathcal{I}_N^{(1)} \leq &  C \mathscr{E}^\frac{1}{2}_{N,\eps} (t) \mathscr{D}^\frac{1}{2}_{N, \eps} (t) \| \nabla \u_0 \|_{H^N} + C \big( \| \nabla \dr_0 \|_{H^{N+1}} + \| \nabla \dr_0 \|^4_{H^{N+1}} \big) \mathscr{D}_{N, \eps} (t) \\
	    & + C \eps ( 1 + \| \nabla \dr_0 \|_{H^{N+2}} ) \mathscr{E}^\frac{1}{2}_{N,\eps} (t) \mathscr{D}_{N, \eps} (t)
	  \end{aligned}
	\end{equation}
	for all $\eps \in (0, \eps_0)$. \\
	
	{\em Step 2. Control the term $\mathcal{I}_N^{(2)}$.}
	
	We need to estimate the terms $ - \tfrac{1}{\eps} \l \partial^m \mathcal{T}_{\u} , \nabla \partial^m \u_R^\eps \r $ and $ - \tfrac{1}{\sqrt{\eps}} \l \partial^m \mathcal{R}_{\u}, \nabla \partial^m \u_R^\eps \r $, where the expressions $ \mathcal{T}_{\u} $ and $\mathcal{R}_{\u}$ are defined in \eqref{Tu} and \eqref{Ru}, respectively. First, we have
	\begin{equation}\label{IN2-1}
	  \begin{aligned}
	    & - \tfrac{\mu_1}{\eps} \l \partial^m \big[ ( \A_0 : ( \dr_R^\eps \otimes \dr_0 + \dr_0 \otimes \dr_R^\eps ) \dr_0 \otimes \dr_0 ) \big] , \nabla \partial^m \u_R^\eps \r \\
	    \leq & \tfrac{C}{\eps} \sum_{m' \leq m} \| \partial^{m'} \u_R^\eps \|_{L^6} \| \partial^{m-m'} ( \A_0 \dr_0 \otimes \dr_0 \otimes \dr_0 ) \|_{L^3} \| \nabla \partial^m \u_R^\eps \|_{L^2} \\
	    \leq & \tfrac{C}{\eps} \| \nabla \dr_R^\eps \|_{H^N} \| \u_0 \|_{H^{N+2}} \| \nabla \u_R^\eps \|_{H^N} \big( 1 + \| \nabla \dr_0 \|_{H^{N+1}} + \| \nabla \dr_0 \|^2_{H^{N+1}} + \| \nabla \dr_0 \|^3_{H^{N+1}} \big) \\
	    \leq & \tfrac{C}{\eps} \| \nabla \u_R^\eps \|_{H^N} \| \nabla \dr_R^\eps \|_{H^N} \| \u_0 \|_{H^{N+2}} ( 1 + \| \nabla \dr_0 \|^3_{H^{N+1}} ) \\
	    \leq & C  \| \u_0 \|_{H^{N+2}} ( 1 + \| \nabla \dr_0 \|^3_{H^{N+1}} ) \mathscr{D}_{N, \eps} (t)
	  \end{aligned}
	\end{equation}
	holds for all $|m| \leq N$. We derive from the similar arguments in \eqref{IN2-1} that for $|m| \leq N$
	\begin{equation}\label{IN2-2}
	  \begin{aligned}
	    & - \tfrac{\mu_1}{\eps} \l \partial^m \big[ ( \A_0 : \dr_0 \otimes \dr_0 ) ( \dr_R^\eps \otimes \dr_0 + \dr_0 \otimes \dr_R^\eps ) \big] , \nabla \partial^m \u_R^\eps \r \\
	    & \leq C  \| \u_0 \|_{H^{N+2}} ( 1 + \| \nabla \dr_0 \|^3_{H^{N+1}} ) \mathscr{D}_{N, \eps} (t) \,.
	  \end{aligned}
	\end{equation}
	For the term $ - \tfrac{\mu_2}{\eps} \l \partial^m \big[ ( \D_{\u_0} \dr_0 + \B_0 \dr_0 ) \otimes \dr_R^\eps \big] , \nabla \partial^m \u_R^\eps \r $, we estimate
	\begin{equation}
	  \begin{aligned}
	    & - \tfrac{\mu_2}{\eps} \l \partial^m \big[ ( \D_{\u_0} \dr_0 + \B_0 \dr_0 ) \otimes \dr_R^\eps \big] , \nabla \partial^m \u_R^\eps \r \\
	    \leq & \tfrac{C}{\eps} \sum_{m' \leq m} \| \partial^{m'} \dr_R^\eps \|_{L^6}  \| \partial^m ( \D_{\u_0} \dr_0 + \B_0 \dr_0  ) \|_{L^3} \| \nabla \partial^m \u_R^\eps \|_{L^2} \\
	    \leq & \tfrac{C}{\eps} \| \nabla \dr_R^\eps \|_{H^N} \| \nabla \u_R^\eps \|_{H^N} \big( \| \D_{\u_0} \dr_0 \|_{H^{N+1}} + \| \u_0 \|_{H^{N+2}} ( 1 + \| \nabla \dr_0 \|_{H^{N+1}} ) \big) \\
	    \leq & C \big( \| \D_{\u_0} \dr_0 \|_{H^{N+1}} + \| \u_0 \|_{H^{N+2}} ( 1 + \| \nabla \dr_0 \|_{H^{N+1}} ) \big) \mathscr{D}_{N, \eps} (t) \,.
	  \end{aligned}
	\end{equation}
	Recall that $\dr_0$ obeys the third equation of \eqref{PLQ}, hence
	\begin{equation}
	  \begin{aligned}
	    -\lambda_1 ( \D_{\u_0} \dr_0 + \B_0 \dr_0 ) = \Delta \dr_0 + \bm{\gamma}_0 \dr_0 + \lambda_2 \A_0 \dr_0 \,,
	  \end{aligned}
	\end{equation}
	where the Lagrangian multiplier $\bm{\gamma}_0$ is
	\begin{equation}
	  \begin{aligned}
	    \bm{\gamma}_0 = |\nabla \dr_0|^2 - \lambda_2 \A_0 : \dr_0 \otimes \dr_0 \,.
	  \end{aligned}
	\end{equation}
	Then one can easily yield that if the integer $N \geq 2$,
	\begin{equation}\label{Du0-d0-HN}
	  \begin{aligned}
	    \| \D_{\u_0} \dr_0 \|_{H^N} \leq C ( \| \nabla \u_0 \|_{H^N} + \| \Delta \dr_0 \|_{H^N} ) ( 1 + \| \nabla \dr_0 \|^3_{H^N} ) \,.
	  \end{aligned}
	\end{equation}
	and
	\begin{equation}\label{D2u0-d0-HN}
	  \begin{aligned}
	    \| \D_{\u_0}^2 \dr_0 \|_{H^N} \leq C ( \| \nabla \u_0 \|_{H^{N+2}} + \| \Delta \dr_0 \|_{H^{N+2}} ) ( 1 + \| \u_0 \|^6_{H^{N+2}} + \| \nabla \dr_0 \|^6_{H^{N+2}} ) \,.
	  \end{aligned}
	\end{equation}
	Consequently, we obtain the bound
	\begin{equation}\label{IN2-3}
	  \begin{aligned}
	    & - \tfrac{\mu_2}{\eps} \l \partial^m \big[ ( \D_{\u_0} \dr_0 + \B_0 \dr_0 ) \otimes \dr_R^\eps \big] , \nabla \partial^m \u_R^\eps \r \\
	    \leq & C ( \| \u_0 \|_{H^{N+2}} + \| \nabla \dr_0 \|_{H^{N+2}} ) ( 1 + \| \nabla \dr_0 \|^3_{H^{N+2}} ) \mathscr{D}_{N, \eps} (t) \,.
	  \end{aligned}
	\end{equation}
	Next, we deduce
	\begin{equation}\label{IN2-4}
	  \begin{aligned}
	    & - \tfrac{\mu_2}{\eps} \l \partial^m \big[ ( \B_0 \dr_R^\eps + \u_0 \cdot \nabla \dr_R^\eps + \u_R^\eps \cdot \nabla \dr_0 ) \otimes \dr_0 \big] , \nabla \partial^m \u_R^\eps \r \\
	    \leq & \tfrac{C}{\eps} \| \nabla \partial^m \u_R^\eps \|_{L^2} \sum_{m' \leq m} \Big( \| \partial^{m'} \dr_R^\eps \|_{L^6} \| \partial^{m-m'} ( \B_0 \otimes \dr_0 ) \|_{L^3} \\
	    & \qquad \qquad + \| \partial^{m'} \u_R^\eps \|_{L^6} \| \partial^{m-m'} (\nabla \dr_0 \otimes \dr_0) \|_{L^3} \Big) \\
	    & + \tfrac{C}{\eps} \| \u_0 \otimes \dr_0 \|_{L^\infty} \| \nabla \partial^m \dr_R^\eps \|_{L^2} \| \nabla \partial^m \u_R^\eps \|_{L^2} \\
	    & + \tfrac{C}{\eps} \sum_{0 \neq m' \leq m} \| \partial^{m'} ( \u_0 \otimes \dr_0 ) \|_{L^4} \| \nabla \partial^{m-m'} \dr_R^\eps \|_{L^4} \| \nabla \partial^m \u_R^\eps \|_{L^2} \\
	    \leq & \tfrac{C}{\eps} \| \nabla \dr_R^\eps \|_{H^N} \| \nabla \u_R^\eps \|_{H^N} \| \u_0 \|_{H^{N+2}} ( 1 + \| \nabla \dr_0 \|_{H^{N+2}} ) \\
	    & + \tfrac{C}{\eps} \| \nabla \u_R^\eps \|^2_{H^N} ( \| \nabla \dr_0 \|_{H^{N+1}} + \| \nabla \dr_0 \|^2_{H^{N+1}} ) \\
	    \leq & C ( \| \u_0 \|_{H^{N+2}} + \| \nabla \dr_0 \|_{H^{N+2}} ) ( 1 + \| \nabla \dr_0 \|_{H^{H^{N+1}}} ) \mathscr{D}_{N, \eps} (t) \,.
	  \end{aligned}
	\end{equation}
	Analogous estimates in \eqref{IN2-3} tell us
	\begin{equation}\label{IN2-5}
	  \begin{aligned}
	    & - \tfrac{\mu_3}{\eps} \l \partial^m \big[ \dr_R^\eps \otimes ( \D_{\u_0} \dr_0 + \B_0 \dr_0 ) \big], \nabla \partial^m \u_R^\eps \r \\
	    \leq & C ( \| \u_0 \|_{H^{N+2}} + \| \nabla \dr_0 \|_{H^{N+2}} ) ( 1 + \| \nabla \dr_0 \|^3_{H^{N+2}} ) \mathscr{D}_{N, \eps} (t) \,,
	  \end{aligned}
	\end{equation}
	and the similar calculations in \eqref{IN2-4} reduce to
	\begin{equation}\label{IN2-6}
	  \begin{aligned}
	    & - \tfrac{\mu_3}{\eps} \l \partial^m \big[ \dr_0 \otimes ( \B_0 \dr_R^\eps + \u_0 \cdot \nabla \dr_R^\eps + \u_R^\eps \cdot \nabla \dr_0 ) \big] , \nabla \partial^m \u_R^\eps \r \\
	    \leq & C ( \| \u_0 \|_{H^{N+2}} + \| \nabla \dr_0 \|_{H^{N+2}} ) ( 1 + \| \nabla \dr_0 \|_{H^{H^{N+1}}} ) \mathscr{D}_{N, \eps} (t) \,.
	  \end{aligned}
	\end{equation}
	It is easy to derive
	\begin{equation}\label{IN2-7}
	  \begin{aligned}
	    & - \tfrac{\mu_5}{\eps} \l \partial^m \big[ ( \A_0 \dr_0 ) \otimes \dr_R^\eps + ( \A_0 \dr_R^\eps ) \otimes \dr_0 \big] , \nabla \partial^m \u_R^\eps \r \\
	    \leq & \tfrac{C}{\eps} \sum_{m' \leq m} \| \partial^{m'} \dr_R^\eps \|_{L^6}  \| \partial^{m-m'} ( \A_0 \dr_0 ) \|_{L^3} \| \nabla \partial^m \dr_R^\eps \|_{L^2} \\
	    \leq & \tfrac{C}{\eps} \| \nabla \dr_R^\eps \|_{H^N} \| \nabla \u_R^\eps \|_{H^N} \| \u_0 \|_{H^{N+2}} ( 1 + \| \nabla \dr_0 \|_{H^{N}} ) \\
	    \leq & C \| \u_0 \|_{H^{N+2}} ( 1 + \| \nabla \dr_0 \|_{H^N} ) \mathscr{D}_{N, \eps} (t)
	  \end{aligned}
	\end{equation}
	for all $|m| \leq N$. Furthermore, via the analogous estimates in \eqref{IN2-7}, we imply that
	\begin{equation}\label{IN2-8}
	  \begin{aligned}
	    & - \tfrac{\mu_6}{\eps} \l \partial^m \big[ \dr_R^\eps \otimes (\A_0 \dr_0) + \dr_0 \otimes ( \A_0 \dr_R^\eps ) \big] , \nabla \partial^m \u_R^\eps \r \\
	    \leq & C \| \u_0 \|_{H^{N+2}} ( 1 + \| \nabla \dr_0 \|_{H^N} ) \mathscr{D}_{N, \eps} (t) \,.
	  \end{aligned}
	\end{equation}
	As a consequence, collecting the estimates \eqref{IN2-1}, \eqref{IN2-2}, \eqref{IN2-3}, \eqref{IN2-4}, \eqref{IN2-5}, \eqref{IN2-6}, \eqref{IN2-7} and \eqref{IN2-8}, we deduce from the definition of $\mathcal{T}_{\u}$ in \eqref{Tu} that
	\begin{equation}\label{IN2-9}
	  \begin{aligned}
	    - \tfrac{1}{\eps} \l \partial^m \mathcal{T}_{\u} , \nabla \partial^m \u_R^\eps \r \leq C ( \| \u_0 \|_{H^{N+2}} + \| \nabla \dr_0 \|_{H^{N+2}} ) ( 1 + \| \nabla \dr_0 \|^3_{H^{N+2}} ) \mathscr{D}_{N, \eps} (t) \,.
	  \end{aligned}
	\end{equation}
	
	We next estimate the quantity $ - \tfrac{1}{\sqrt{\eps}} \l \partial^m \mathcal{R}_{\u} , \nabla \partial^m \u_R^\eps \r $. More precisely, we will consider the terms $ - \tfrac{1}{\sqrt{\eps}} \l \sqrt{\eps}^{i-1} \partial^m \mathcal{M}_i , \nabla \partial^m \u_R^\eps \r $ for $1 \leq i \leq 4$. First, we estimate
	  \begin{align}\label{IN2-10}
	    \no & - \tfrac{\mu_1}{\sqrt{\eps}} \l \partial^m \big[ ( \A_0 : \dr_R^\eps \otimes \dr_R^\eps ) \dr_0 \otimes \dr_0 \big] , \nabla \partial^m \u_R^\eps \r \\
	    \no \leq & \tfrac{C}{\sqrt{\eps}} \| \partial^m ( \A_0 : \dr_0 \otimes \dr_0  ) \|_{L^6} \| \dr_R^\eps \|^2_{L^6} \| \nabla \partial^m \u_R^\eps \|_{L^2} \\
	    \no & + \tfrac{C}{\sqrt{\eps}} \sum_{0 \neq m' \leq m} \| \partial^{m-m'} ( \A_0 : \dr_0 \otimes \dr_0 ) \|_{L^4} \| \partial^{m'} ( \dr_R^\eps \otimes \dr_R^\eps ) \|_{L^4} \| \nabla \partial^m \u_R^\eps \|_{L^2} \\
	    \no \leq & \tfrac{C}{\sqrt{\eps}} \| \u_0 \|_{H^{N+2}} ( 1 + \| \nabla \dr_0 \|^2_{H^{N+2}} ) \| \nabla \dr_R^\eps \|^2_{H^N} \| \nabla \u_R^\eps \|_{H^N} \\
	    \leq & C \eps \| \u_0 \|_{H^{N+2}} ( 1 + \| \nabla \dr_0 \|^2_{H^{N+2}} ) \mathscr{E}^\frac{1}{2}_{N, \eps} (t) \mathscr{D}_{N, \eps} (t) \,.
	  \end{align}
	The term $ - \tfrac{\mu_1}{\sqrt{\eps}} \l \partial^m \big[ 2 ( \A_R^\eps : \dr_0 \otimes \dr_R^\eps ) \dr_0 \otimes \dr_0 \big] , \nabla \partial^m \u_R^\eps \r $ can be controlled by
	\begin{equation}\label{IN2-11}
	  \begin{aligned}
	    & - \tfrac{\mu_1}{\sqrt{\eps}} \l \partial^m \big[ 2 ( \A_R^\eps : \dr_0 \otimes \dr_R^\eps ) \dr_0 \otimes \dr_0 \big] , \nabla \partial^m \u_R^\eps \r \\
	    \leq & \tfrac{C}{\sqrt{\eps}} \| \partial^m \A_R^\eps \|_{L^2} \| ( \dr_R^\eps \otimes \dr_0 ) ( \dr_0 \otimes \dr_0) \|_{L^\infty} \| \nabla \partial^m \u_R^\eps \|_{L^2} \\
	    & + \tfrac{C}{\sqrt{\eps}} \sum_{\substack{0 \neq m' \leq m \\ m'' \leq m' }} \| \partial^{m-m'} \A_R^\eps \|_{L^4} \| \partial^{m''} \dr_R^\eps \|_{L^4} \| \partial^{m'-m''} ( \dr_0 ( \dr_0 \otimes \dr_0 ) ) \|_{L^\infty} \| \nabla \partial^m \u_R^\eps \|_{L^2} \\
	    \leq & \tfrac{C}{\sqrt{\eps}} \| \dr_R^\eps \|_{H^N} \| \nabla \u_R^\eps \|^2_{H^N} + \tfrac{C}{\sqrt{\eps}} \| \nabla \u_R^\eps \|^2_{H^N} ( \| \dr_R^\eps \|_{H^N} + \| \nabla \dr_R^\eps \|_{H^N} ) ( 1 + \| \nabla \dr_0 \|^3_{H^{N+2}} ) \\
	    \leq & C \eps ( 1 + \| \nabla \dr_0 \|^3_{H^{N+2}}  ) \mathscr{E}^\frac{1}{2}_{N, \eps} (t) \mathscr{D}_{N, \eps} (t) \,.
	  \end{aligned}
	\end{equation}
	Via the similar arguments in \eqref{IN2-10} and \eqref{IN2-11}, we imply that for $|m| \leq N$
	\begin{equation}\label{IN2-12}
	  \begin{aligned}
	    & - \tfrac{\mu_1}{\sqrt{\eps}} \l \partial^m \big[ ( \A_0 : \dr_0 \otimes \dr_0 ) \dr_R^\eps \otimes \dr_R^\eps \big] , \nabla \partial^m \u_R^\eps \r \\
	    & - \tfrac{\mu_1}{\sqrt{\eps}} \l \partial^m \big[ ( 2 \A_0 : \dr_0 \otimes \dr_R^\eps + \A_R^\eps : \dr_0 \otimes \dr_0  ) ( \dr_R^\eps \otimes \dr_0 + \dr_0 \otimes \dr_R^\eps ) \big] , \nabla \partial^m \u_R^\eps \r \\
	    \leq & C \eps \| \u_0 \|_{H^{N+2}} ( 1 + \| \nabla \dr_0 \|^2_{H^{N+2}} ) \mathscr{E}^\frac{1}{2}_{N, \eps} (t) \mathscr{D}_{N, \eps} (t) \\
	    & + C \eps ( 1 + \| \nabla \dr_0 \|^3_{H^{N+2}}  ) \mathscr{E}^\frac{1}{2}_{N, \eps} (t) \mathscr{D}_{N, \eps} (t) \\
	    \leq & C ( 1 + \| \u_0 \|^3_{H^{N+2}} + \| \nabla \dr_0 \|^3_{H^{N+2}} ) \mathscr{E}^\frac{1}{2}_{N, \eps} (t) \mathscr{D}_{N, \eps} (t) \,.
	  \end{aligned}
	\end{equation}
	We calculate that
	\begin{equation}\label{IN2-13}
	  \begin{aligned}
	    & - \tfrac{\mu_2}{\sqrt{\eps}} \l \partial^m [ (\B_R^\eps \dr_R^\eps) \otimes \dr_0 ] , \nabla \partial^m \u_R^\eps \r \\
	    \leq & \tfrac{C}{\sqrt{\eps}} \| \nabla \partial^{m'} \u_R^\eps \|_{L^2} \| \partial^m (\B_R^\eps \dr_R^\eps) \|_{L^2} \\
	    & + \tfrac{C}{\sqrt{\eps}} \sum_{0 \neq m' \leq m} \| \partial^{m-m'} ( \B_R^\eps \dr_R^\eps ) \|_{L^4} \| \partial^{m'} \dr_0 \|_{L^4} \| \nabla \partial^m \u_R^\eps \|_{L^2} \\
	    \leq & \tfrac{C}{\sqrt{\eps}} \| \nabla \u_R^\eps \|^2_{H^N} \| \dr_R^\eps \|_{H^N} + \tfrac{C}{\sqrt{\eps}} \| \nabla \u_R^\eps \|^2_{H^N} \| \dr_R^\eps \|_{H^N} \| \nabla \dr_0 \|_{H^N} \\
	    \leq & C \eps ( 1 + \| \nabla \dr_0 \|_{H^N} ) \mathscr{E}^\frac{1}{2}_{N, \eps} (t) \mathscr{D}_{N, \eps} (t) \,.
	  \end{aligned}
	\end{equation}
	Moreover, it is easily calculated that
	  \begin{align}\label{IN2-14}
	    \no & - \tfrac{\mu_2}{\sqrt{\eps}} \l \partial^m \big[ ( \D_{\u_0 + \sqrt{\eps} \u_R^\eps} \dr_R^\eps + \u_0 \cdot \nabla \dr_R^\eps + \u_R^\eps \cdot \nabla \dr_0 + \B_0 \dr_R^\eps + \B_R^\eps \dr_0  ) \otimes \dr_R^\eps \big] , \nabla \partial^m \u_R^\eps \r \\
	    \no \leq & \tfrac{\mu_2}{\sqrt{\eps}} \| \partial^m ( \D_{\u_0 + \sqrt{\eps} \u_R^\eps} \dr_R^\eps \otimes \dr_R^\eps ) \|_{L^2} \| \nabla \partial^m \u_R^\eps \|_{L^2}  + \tfrac{\mu_2}{\sqrt{\eps}} \| \partial^m ( (\u_0 \cdot \nabla \dr_R^\eps ) \otimes \dr_R^\eps ) \|_{L^2} \| \nabla \partial^m \u_R^\eps \|_{L^2} \\
	    \no & + \tfrac{\mu_2}{\sqrt{\eps}} \| \partial^m ( ( \B_R^\eps \dr_0 ) \otimes \dr_R^\eps ) \|_{L^2} \| \nabla \partial^m \u_R^\eps \|_{L^2} \\
	    \no & + \tfrac{C}{\sqrt{\eps}} \sum_{m' \leq m} \| \partial^{m'} \u_R^\eps \|_{L^6} \| \partial^{m-m'} ( \nabla \dr_0 \otimes \dr_R^\eps ) \|_{L^3} \| \nabla \partial^m \u_R^\eps \|_{L^2} \\
	    \no & + \tfrac{C}{\sqrt{\eps}} \sum_{m' \leq m} \| \partial^{m'} \dr_R^\eps \|_{L^6} \| \partial^{m-m'} ( \B_0 \dr_R^\eps ) \|_{L^3} \| \nabla \partial^m \u_R^\eps \|_{L^2} \\
	    \no \leq & \tfrac{C}{\sqrt{\eps}} \| \D_{\u_0 + \sqrt{\eps} \u_R^\eps} \dr_R^\eps \|_{H^N} \| \dr_R^\eps \|_{H^N} \| \nabla \u_R^\eps \|_{H^N} + \tfrac{C}{\sqrt{\eps}} \| \nabla \u_R^\eps \|_{H^N} \| \u_0 \|_{H^N} \| \nabla \dr_R^\eps \|_{H^N} \| \dr_R^\eps \|_{H^N} \\
	    \no & + \tfrac{C}{\sqrt{\eps}} \| \nabla \u_R^\eps \|^2_{H^N} \| \dr_R^\eps \|_{H^N} ( 1 + \| \nabla \dr_0 \|_{H^{N+1}} ) \\
	    \no & + \tfrac{C}{\sqrt{\eps}} \| \nabla \dr_R^\eps \|_{H^N} \| \nabla \u_R^\eps \|_{H^N} ( \| \dr_R^\eps \|_{H^N} + \| \nabla \dr_R^\eps \|_{H^N} ) \| \u_0 \|_{H^{N+2}} \\
	    \no \leq & \tfrac{C}{\sqrt{\eps}} ( \| \nabla \u_R^\eps \|^2_{H^N} + \| \nabla \dr_R^\eps \|^2_{H^N} ) ( \| \dr_R^\eps \|_{H^N} + \| \nabla \dr_R^\eps \|_{H^N} ) ( 1 + \| \u_0 \|_{H^{N+2}} + \| \nabla \dr_0 \|_{H^{N+2}} ) \\
	    \leq & C \eps ( 1 + \| \u_0 \|_{H^{N+2}} + \| \nabla \dr_0 \|_{H^{N+2}} ) \mathscr{E}^\frac{1}{2}_{N, \eps} (t) \mathscr{D}_{N,\eps} (t) \,.
	  \end{align}
	We employ the similar arguments of \eqref{IN2-13} to obtain
	\begin{equation}\label{IN2-15}
	  \begin{aligned}
	    - \tfrac{\mu_3}{\sqrt{\eps}} \l \partial^m [ \dr_0 \otimes ( \B_R^\eps \dr_R^\eps ) ] , \nabla \partial^m \u_R^\eps \r \leq C \eps ( 1 + \| \nabla \dr_0 \|_{H^N} ) \mathscr{E}^\frac{1}{2}_{N,\eps} (t) \mathscr{D}_{N,\eps} (t) \,,
	  \end{aligned}
	\end{equation}
	and employ the analogous estimates of \eqref{IN2-14} to imply
	\begin{equation}\label{IN2-16}
	  \begin{aligned}
	    & - \tfrac{\mu_3}{\sqrt{\eps}} \l \partial^m \big[ \dr_R^\eps \otimes ( \D_{\u_0 + \sqrt{\eps} \u_R^\eps} \dr_R^\eps + \u_0 \cdot \nabla \dr_R^\eps + \u_R^\eps \cdot \nabla \dr_0 + \B_0 \dr_R^\eps + \B_R^\eps \dr_0  ) \big] , \nabla \partial^m \u_R^\eps \r \\
	    & \leq C \eps ( 1 + \| \u_0 \|_{H^{N+2}} + \| \nabla \dr_0 \|_{H^{N+2}} ) \mathscr{E}^\frac{1}{2}_{N, \eps} (t) \mathscr{D}_{N,\eps} (t) \,.
	  \end{aligned}
	\end{equation}
	We now estimate that for all $|m| \leq N$
	\begin{equation}\label{IN2-17}
	  \begin{aligned}
	    & - \tfrac{\mu_5}{\sqrt{\eps}} \l \partial^m \big[ ( \A_R^\eps \dr_R^\eps ) \otimes \dr_0 + ( \A_R^\eps \dr_0 + \A_0 \dr_R^\eps ) \otimes \dr_R^\eps \big] , \nabla \partial^m \u_R^\eps \r \\
	    \leq & \tfrac{C}{\sqrt{\eps}} \| \partial^m \A_R^\eps \|_{L^2} \| \dr_R^\eps \otimes \dr_0 \|_{L^\infty} \| \nabla \partial^m \u_R^\eps \|_{L^2} \\
	    & + \tfrac{C}{\sqrt{\eps}} \sum_{0 \neq m' \leq m} \| \partial^{m-m'} \A_R^\eps \|_{L^4} \| \partial^{m'} ( \dr_R^\eps \otimes \dr_0 ) \|_{L^4} \| \nabla \partial^m \u_R^\eps \|_{L^2} \\
	    & + \tfrac{C}{\sqrt{\eps}} \sum_{m' \leq m} \| \partial^{m'} \dr_R^\eps \|_{L^6} \| \partial^{m-m'} ( \A_0 \dr_R^\eps ) \|_{L^3} \| \nabla \partial^m \u_R^\eps \|_{L^2} \\
	    \leq & \tfrac{C}{\sqrt{\eps}} \| \nabla \u_R^\eps \|^2_{H^N} \| \dr_R^\eps \|_{H^N} + \tfrac{C}{\sqrt{\eps}} \| \nabla \u_R^\eps \|^2_{H^N} ( \| \dr_R^\eps \|_{H^N} + \| \nabla \dr_R^\eps \|_{H^N} ) ( 1 + \| \nabla \dr_0 \|_{H^N} ) \\
	    & + \tfrac{C}{\sqrt{\eps}} \| \nabla \u_R^\eps \|_{H^N} \| \nabla \dr_R^\eps \|_{H^N} ( \| \dr_R^\eps \|_{H^N} + \| \nabla \dr_R^\eps \|_{H^N} ) \| \u_0 \|_{H^{N+2}} \\
	    \leq &  \tfrac{C}{\sqrt{\eps}} ( \| \nabla \u_R^\eps \|^2_{H^N} + \| \nabla \dr_R^\eps \|^2_{H^N} ) ( \| \dr_R^\eps \|_{H^N} + \| \nabla \dr_R^\eps \|_{H^N} ) ( 1 + \| \u_0 \|_{H^{N+2}} + \| \nabla \dr_0 \|_{H^{N+2}} ) \\
	    \leq & C \eps  ( 1 + \| \u_0 \|_{H^{N+2}} + \| \nabla \dr_0 \|_{H^{N+2}} ) \mathscr{E}^\frac{1}{2}_{N, \eps} (t) \mathscr{D}_{N, \eps} (t) \,,
	  \end{aligned}
	\end{equation}
	and similarly we have
	\begin{equation}\label{IN2-18}
	  \begin{aligned}
	    & - \tfrac{\mu_6}{\sqrt{\eps}} \l \partial^m \big[ \dr_0 \otimes ( \A_R^\eps \dr_R^\eps ) + \dr_R^\eps \otimes ( \A_R^\eps \dr_0 + \A_0 \dr_R^\eps ) \big] , \nabla \partial^m \u_R^\eps \r \\
	    & \leq C \eps  ( 1 + \| \u_0 \|_{H^{N+2}} + \| \nabla \dr_0 \|_{H^{N+2}} ) \mathscr{E}^\frac{1}{2}_{N, \eps} (t) \mathscr{D}_{N, \eps} (t) \,.
	  \end{aligned}
	\end{equation}
	Recalling the definition of $\mathcal{M}_1$ in \eqref{M1}, we derive from collecting the bounds \eqref{IN2-10}, \eqref{IN2-11}, \eqref{IN2-12}, \eqref{IN2-13}, \eqref{IN2-14}, \eqref{IN2-15}, \eqref{IN2-16}, \eqref{IN2-17} and \eqref{IN2-18} that for all $|m| \leq N$
	\begin{equation}\label{IN2-19}
	  \begin{aligned}
	    - \tfrac{1}{\sqrt{\eps}} \l \partial^m \mathcal{M}_1 , \nabla \partial^m \u_R^\eps \r \leq C \eps ( 1 + \| \u_0 \|^3_{H^{N+2}} + \| \nabla \dr_0 \|^3_{H^{N+2}} )  \mathscr{E}^\frac{1}{2}_{N, \eps} (t) \mathscr{D}_{N, \eps} (t) \,.
	  \end{aligned}
	\end{equation}
	
	Next we estimate the term $ - \tfrac{1}{\sqrt{\eps}} \l \sqrt{\eps} \partial^m \mathcal{M}_2 , \nabla \partial^m \u_R^\eps \r $ for $|m| \leq N$. Firstly, we have
	\begin{equation}\label{IN2-20}
	  \begin{aligned}
	    & - \tfrac{\mu_1}{\sqrt{\eps}} \l \sqrt{\eps} \partial^m \big[ ( \A_R^\eps : \dr_0 \otimes \dr_0 + 2 \A_0 : \dr_0 \otimes \dr_R^\eps ) \dr_R^\eps \otimes \dr_R^\eps \big], \nabla \partial^m \u_R^\eps \r \\
	    \leq & \mu_1 \| \partial^m \A_R^\eps \|_{L^2} \| ( \dr_0 \otimes \dr_0  ) ( \dr_R^\eps \otimes \dr_R^\eps ) \|_{L^\infty} \| \nabla \partial^m \u_R^\eps \|_{L^2} \\
	    & + C \sum_{0 \neq m' \leq m} \| \partial^{m-m'} \A_R^\eps \|_{L^4} \| \partial^{m'} ( ( \dr_0 \otimes \dr_0  ) ( \dr_R^\eps \otimes \dr_R^\eps ) ) \|_{L^4} \| \nabla \partial^m \u_R^\eps \|_{L^2} \\
	    & + C \sum_{m' \leq m} \| \partial^{m'} \dr_R^\eps \|_{L^6} \| \partial^{m-m'}( ( \A_0 : \dr_0 \otimes \dr_R^\eps ) \dr_R^\eps ) \|_{L^3} \| \nabla \partial^m \u_R^\eps \|_{L^2} \\
	    \leq & C \| \nabla \u_R^\eps \|^2_{H^N} \| \dr_R^\eps \|^2_{H^N} + C \| \nabla \u_R^\eps \|^2_{H^N} ( \| \dr_R^\eps \|^2_{H^N} + \| \nabla \dr_R^\eps \|^2_{H^N} ) ( 1 + \| \nabla \dr_0 \|^2_{H^N} ) \\
	    & + C \| \nabla \u_R^\eps \|_{H^N} \| \nabla \dr_R^\eps \|_{H^N} ( \| \dr_R^\eps \|^2_{H^N} + \| \nabla \dr_R^\eps \|^2_{H^N} ) \| \u_0 \|_{H^{N+2}} \\
	    \leq & C ( \| \nabla \u_R^\eps \|^2_{H^N} + \| \nabla \dr_R^\eps \|^2_{H^N} ) ( \| \dr_R^\eps \|^2_{H^N} + \| \nabla \dr_R^\eps \|^2_{H^N} ) ( 1 + \| \u_0 \|^2_{H^{N+2}} + \| \nabla \dr_0 \|^2_{H^{N+2}} ) \\
	    \leq & C \eps^2 ( 1 + \| \u_0 \|^2_{H^{N+2}} + \| \nabla \dr_0 \|^2_{H^{N+2}} ) \mathscr{E}_{N,\eps} (t) \mathscr{D}_{N,\eps} (t) \,.
	  \end{aligned}
	\end{equation}
	Via employing the analogous estimates in \eqref{IN2-20}, we have
	\begin{equation}\label{IN2-21}
	  \begin{aligned}
	    & - \tfrac{\mu_1}{\sqrt{\eps}} \l \sqrt{\eps} \partial^m [ ( \A_R^\eps : \dr_R^\eps \otimes \dr_R^\eps ) \dr_0 \otimes \dr_0 ] , \nabla \partial^m \u_R^\eps \r \\
	    & \leq C \eps^2 ( 1 + \| \nabla \dr_0 \|^2_{H^{N+2}} ) \mathscr{E}_{N,\eps} (t) \mathscr{D}_{N,\eps} (t)
	  \end{aligned}
	\end{equation}
	and
	\begin{equation}\label{IN2-22}
	  \begin{aligned}
	    & - \tfrac{\mu_1}{\sqrt{\eps}} \l \sqrt{\eps} \partial^m [ ( 2 \A_R^\eps : \dr_0 \otimes \dr_R^\eps + \A_0 : \dr_R^\eps \otimes \dr_R^\eps ) ( \dr_0 \otimes \dr_R^\eps + \dr_R^\eps \otimes \dr_0 ) ] , \nabla \partial^m \u_R^\eps \r \\
	    & \leq C \eps^2 ( 1 + \| \u_0 \|^2_{H^{N+2}} + \| \nabla \dr_0 \|^2_{H^{N+2}} ) \mathscr{E}_{N,\eps} (t) \mathscr{D}_{N,\eps} (t) \,.
	  \end{aligned}
	\end{equation}
	For the term $ - \tfrac{\mu_2}{\sqrt{\eps}} \l \sqrt{\eps} \partial^m [ ( \B_R^\eps \dr_R^\eps ) \otimes \dr_R^\eps ] , \nabla \partial^m \u_R^\eps \r  $, we deduce that
	\begin{equation}\label{IN2-23}
	  \begin{aligned}
	    & - \tfrac{\mu_2}{\sqrt{\eps}} \l \sqrt{\eps} \partial^m [ ( \B_R^\eps \dr_R^\eps ) \otimes \dr_R^\eps ] , \nabla \partial^m \u_R^\eps \r \\
	    \leq & C \| \partial^m \B_R^\eps \|_{L^2} \| \dr_R^\eps \otimes \dr_R^\eps \|_{L^\infty} \| \nabla \partial^m \u_R^\eps \|_{L^2} \\
	    & + C \sum_{0 \neq m' \leq m} \| \partial^{m-m'} \B_R^\eps \|_{L^4} \| \partial^{m'} (\dr_R^\eps \otimes \dr_R^\eps) \|_{L^4} \| \nabla \partial^m \u_R^\eps \|_{L^2} \\
	    \leq & C \| \nabla \u_R^\eps \|^2_{H^N} \| \dr_R^\eps \|^2_{H^N} + C \| \nabla \u_R^\eps \|^2_{H^N} ( \| \dr_R^\eps \|^2_{H^N} + \| \nabla \dr_R^\eps \|^2_{H^N} ) \\
	    \leq & C \eps^2 \mathscr{E}_{N,\eps} (t) \mathscr{D}_{N,\eps} (t) \,.
	  \end{aligned}
	\end{equation}
	By utilizing the similar calculations in \eqref{IN2-23}, we yield that
	\begin{equation}\label{IN2-24}
	  \begin{aligned}
	    - \tfrac{1}{\sqrt{\eps}} \l \sqrt{\eps} \partial^m [ \mu_3 \dr_R^\eps \otimes ( \B_R^\eps \dr_R^\eps ) + \mu_5 ( \A_R^\eps \dr_R^\eps ) \otimes \dr_R^\eps + \mu_6 \dr_R^\eps \otimes ( \A_R^\eps \dr_R^\eps ) ] , \nabla \partial^m \u_R^\eps \r \\
	    \leq C \eps^2 \mathscr{E}_{N,\eps} (t) \mathscr{D}_{N,\eps} (t) \,.
	  \end{aligned}
	\end{equation}
	Recalling the definition of $\mathcal{M}_2$ in \eqref{M2}, one deduces from collecting the estimates \eqref{IN2-20}, \eqref{IN2-21}, \eqref{IN2-22}, \eqref{IN2-23} and \eqref{IN2-24} that
	\begin{equation}\label{IN2-25}
	  \begin{aligned}
	    - \tfrac{1}{\sqrt{\eps}} \l \sqrt{\eps} \partial^m \mathcal{M}_2 , \nabla \partial^m \u_R^\eps \r \leq C \eps^2 ( 1 + \| \u_0 \|^2_{H^{N+2}} + \| \nabla \dr_0 \|^2_{H^{N+2}} ) \mathscr{E}_{N,\eps} (t) \mathscr{D}_{N,\eps} (t)
	  \end{aligned}
	\end{equation}
	for all multi-indexes $m \in \mathbb{N}^3$ with $|m| \leq N$.
	
	We next estimate the term $ - \tfrac{1}{\sqrt{\eps}} \l \sqrt{\eps}^2 \partial^m \mathcal{M}_3 , \nabla \partial^m \u_R^\eps \r $ for all $|m| \leq N$. First, we compute that
	\begin{equation}\label{IN2-26}
	  \begin{aligned}
	    & - \tfrac{\mu_1}{\sqrt{\eps}} \l \sqrt{\eps}^2 \partial^m \big[ ( 2 \A_R^\eps : \dr_0 \otimes \dr_R^\eps ) \dr_R^\eps \otimes \dr_R^\eps \big] , \nabla \partial^m \u_R^\eps \r \\
	    \leq & C \sqrt{\eps} \| \partial^m \A_R^\eps \|_{L^2} \| ( \dr_0 \otimes \dr_R^\eps ) ( \dr_R^\eps \otimes \dr_R^\eps ) \|_{L^\infty} \| \nabla \partial^m \u_R^\eps \|_{L^2} \\
	    & + C \sqrt{\eps} \sum_{0 \neq m' \leq m} \| \partial^{m-m'} \A_R^\eps \|_{L^4} \| \partial^{m'} ( ( \dr_0 \otimes \dr_R^\eps ) ( \dr_R^\eps \otimes \dr_R^\eps ) ) \|_{L^4} \| \nabla \partial^m \u_R^\eps \|_{L^2} \\
	    \leq & C \sqrt{\eps} \| \nabla \u_R^\eps \|^2_{H^N} \| \dr_R^\eps \|^3_{H^N} + C \sqrt{\eps} \| \nabla \u_R^\eps \|^2_{H^N} ( \| \dr_R^\eps \|^3_{H^N} + \| \nabla \dr_R^\eps \|^3_{H^N} ) ( 1 + \| \nabla \dr_0 \|_{H^{N+1}} )  \\
	    \leq & C \eps^3 ( 1 + \| \nabla \dr_0 \|_{H^{N+1}} ) \mathscr{E}^\frac{3}{2}_{N, \eps} (t) \mathscr{D}_{N, \eps} (t) \,,
	  \end{aligned}
	\end{equation}
	Similarly as in \eqref{IN2-26}, we have
	\begin{equation}\label{IN2-27}
	  \begin{aligned}
	    - \tfrac{\mu_1}{\sqrt{\eps}} \l \sqrt{\eps}^2 \partial^m \big[ ( \A_R^\eps : \dr_R^\eps \otimes \dr_R^\eps ) ( \dr_R^\eps \otimes \dr_0 + \dr_0 \otimes \dr_R^\eps ) \big] , \nabla \partial^m \u_R^\eps \r \\
	    \leq C \eps^3 ( 1 + \| \nabla \dr_0 \|_{H^{N+1}} ) \mathscr{E}^\frac{3}{2}_{N, \eps} (t) \mathscr{D}_{N, \eps} (t) \,.
	  \end{aligned}
	\end{equation}
	For all $|m| \leq N$, we deduce that
	  \begin{align}\label{IN2-28}
	    \no & - \tfrac{\mu_1}{\sqrt{\eps}} \l \sqrt{\eps}^2 \partial^m \big[ ( \A_0 : \dr_R^\eps \otimes \dr_R^\eps ) \dr_R^\eps \otimes \dr_R^\eps \big] , \nabla \partial^m \u_R^\eps \r \\
	    \no \leq & C \sqrt{\eps} \| \partial^m \A_0 \|_{L^3} \| \dr_R^\eps \|_{L^6} \| \dr_R^\eps \otimes \dr_R^\eps \|_{L^\infty} \| \nabla \partial^m \u_R^\eps \|_{L^2} \\
	    \no & + C \sqrt{\eps} \sum_{0 \neq m' \leq m} \| \partial^{m-m'} \A_0 \|_{L^4} \| \partial^{m'} ( (\dr_R^\eps \otimes \dr_R^\eps) ( \dr_R^\eps \otimes \dr_R^\eps ) ) \|_{L^4} \| \nabla \partial^m \u_R^\eps \|_{L^2} \\
	    \no \leq & C \sqrt{\eps} \| \u_0 \|_{H^{N+2}} \| \nabla \u_R^\eps \|_{H^N} \| \nabla \dr_R^\eps \|_{H^N} \| \dr_R^\eps \|^3_{H^N} \\
	    \leq & C \eps^3 \| \u_0 \|_{H^{N+2}} \mathscr{E}^\frac{3}{2}_{N, \eps} (t) \mathscr{D}_{N, \eps} (t) \,.
	  \end{align}
	Noticing the definition of $\mathcal{M}_3$ in \eqref{M3}, we collect the estimates \eqref{IN2-26}, \eqref{IN2-27} and \eqref{IN2-28}, and then obtain
	\begin{equation}\label{IN2-29}
	  \begin{aligned}
	    - \tfrac{1}{\sqrt{\eps}} \l \sqrt{\eps}^2 \partial^m \mathcal{M}_3 , \nabla \partial^m \u_R^\eps \r \leq C \eps^3 ( 1 + \| \u_0 \|_{H^{N+2}} + \| \nabla \dr_0 \|_{H^{N+2}} ) \mathscr{E}^\frac{3}{2}_{N, \eps} (t) \mathscr{D}_{N, \eps} (t) \,.
	  \end{aligned}
	\end{equation}
	
	Recalling the definition of $\mathcal{M}_4$ in \eqref{M4}, we calculate that
	\begin{equation}\label{IN2-30}
	  \begin{aligned}
	    & - \tfrac{1}{\sqrt{\eps}} \l \sqrt{\eps}^3 \partial^m \mathcal{M}_4 , \nabla \partial^m \u_R^\eps \r = - \mu_1 \eps \| (\partial^m \A_R^\eps ) : \dr_R^\eps \otimes \dr_R^\eps \|_{L^2} \\
	    & - \mu_1 \eps \sum_{\substack{ 0 \neq m' \leq m \\ m'' \leq m' }} \l \partial^{m - m'} \A_R^\eps : \partial^{m''} ( \dr_R^\eps \otimes \dr_R^\eps ) , \nabla \partial^m \u_R^\eps : \partial^{m' - m''} ( \dr_R^\eps \otimes \dr_R^\eps ) \r \\
	    \leq & C \eps \sum_{\substack{ 0 \neq m' \leq m \\ m'' \leq m' }} \| \partial^{m - m'} \A_R^\eps \|_{L^4} \| \partial^{m''} ( \dr_R^\eps \otimes \dr_R^\eps ) \partial^{m' - m''} ( \dr_R^\eps \otimes \dr_R^\eps ) \|_{L^4} \| \nabla \partial^m \u_R^\eps \|_{L^2} \\
	    \leq & C \eps \| \nabla \u_R^\eps \|^2_{H^N} ( \| \dr_R^\eps \|^4_{H^N} + \| \nabla \dr_R^\eps \|^4_{H^N} ) \\
	    \leq & C \eps^4 \mathscr{E}^2_{N, \eps} (t) \mathscr{D}_{N, \eps} (t) \,.
	  \end{aligned}
	\end{equation}
	Therefore, from the definition of $\mathcal{R}_{\u}$ in \eqref{Ru} and the bounds \eqref{IN2-19}, \eqref{IN2-25}, \eqref{IN2-29} and \eqref{IN2-30}, we derive that
	\begin{equation}\label{IN2-31}
	  \begin{aligned}
	    - \tfrac{1}{\sqrt{\eps}} \l \partial^m \mathcal{R}_{\u} , \nabla \partial^m \u_R^\eps \r = & - \tfrac{1}{\sqrt{\eps}} \l \partial^m ( \mathcal{M}_1 + \sqrt{\eps} \mathcal{M}_2 + \sqrt{\eps}^2 \mathcal{M}_3 + \sqrt{\eps}^3 \mathcal{M}_4 ) , \nabla \partial^m \u_R^\eps \r \\
	    \leq & C \eps ( 1 + \| \u_0 \|^3_{H^{N+2}} + \| \nabla \dr_0 \|^3_{H^{N+2}} ) \mathscr{E}^\frac{1}{2}_{N, \eps} (t) \mathscr{D}_{N, \eps} (t) \\
	    & + C \eps^2 ( 1 + \| \u_0 \|^2_{H^{N+2}} + \| \nabla \dr_0 \|^2_{H^{N+2}} ) \mathscr{E}_{N, \eps} (t) \mathscr{D}_{N, \eps} (t) \\
	    & + C \eps^3 ( 1 + \| \u_0 \|_{H^{N+2}} + \| \nabla \dr_0 \|_{H^{N+2}} ) \mathscr{E}^\frac{3}{2}_{N, \eps} (t) \mathscr{D}_{N, \eps} (t) \\
	    & + C \eps^4 \mathscr{E}^2_{N, \eps} (t) \mathscr{D}_{N, \eps} (t) \\
	    \leq & C \eps ( 1 + \| \u_0 \|^3_{H^{N+2}} + \| \nabla \dr_0 \|^3_{H^{N+2}} ) \sum_{p=1}^4 \mathscr{E}^\frac{p}{2}_{N, \eps} (t) \mathscr{D}_{N, \eps} (t) \\
	    \leq & C \eps ( 1 + \| \u_0 \|^3_{H^{N+2}} + \| \nabla \dr_0 \|^3_{H^{N+2}} ) \big( \mathscr{E}^\frac{1}{2}_{N, \eps} (t) + \mathscr{E}^2_{N, \eps} (t) \big) \mathscr{D}_{N, \eps} (t)
	  \end{aligned}
	\end{equation}
	for $0 < \eps \leq 1$. Consequently, via the estimates \eqref{IN2-9} and \eqref{IN2-31}, we obtain
	\begin{equation}\label{IN2-Bnd}
	  \begin{aligned}
	    \mathcal{I}_N^{(2)} = & - \tfrac{1}{\eps} \l \partial^m \mathcal{T}_{\u} , \nabla \partial^m \u_R^\eps \r - \tfrac{1}{\sqrt{\eps}} \l \partial^m \mathcal{R}_{\u} , \nabla \partial^m \u_R^\eps \r \\
	    \leq & C ( \| \u_0 \|_{H^{N+2}} + \| \nabla \dr_0 \|_{H^{N+2}} ) ( 1 + \| \nabla \dr_0 \|^3_{H^{N+2}} ) \mathscr{D}_{N, \eps} (t) \\
	    & + C \eps ( 1 + \| \u_0 \|^3_{H^{N+2}} + \| \nabla \dr_0 \|^3_{H^{N+2}} ) \big( \mathscr{E}^\frac{1}{2}_{N, \eps} (t) + \mathscr{E}^2_{N, \eps} (t) \big) \mathscr{D}_{N, \eps} (t)
	  \end{aligned}
	\end{equation}
	for all $|m| \leq N$ $(N \geq 2)$ and for all $0 < \eps \leq \eps_0 \leq 1$. \\
	
	{\em Step 3. Control the quantity $\mathcal{I}_N^{(3)}$.}
	
	Now we carefully estimate the term $\mathcal{I}_N^{(3)}$ for $N \geq 2$. First, we have
	  \begin{align}\label{IN3-1}
	    \no & \tfrac{1}{\eps} \sum_{|m| \leq N} \sum_{m' < m} C_m^{m'} \l \lambda_1 ( \partial^{m'} \B_R^\eps ) \partial^{m-m'} \dr_0 , \partial^m \D_{\u_0 + \sqrt{\eps} \u_R^\eps} \dr_R^\eps \r \\
	    \no \leq & \tfrac{C}{\eps} \sum_{|m| \leq N} \sum_{m' < m} \| \partial^{m'} \B_R^\eps \|_{L^4} \| \partial^{m-m'} \dr_0 \|_{L^4} \| \partial^m \D_{\u_0 + \sqrt{\eps} \u_R^\eps} \dr_R^\eps \|_{L^2} \\
	    \no \leq & \tfrac{C}{\eps} \| \nabla \u_R^\eps \|_{H^N} \| \nabla \dr_0 \|_{H^{N+1}} \| \partial^m \D_{\u_0 + \sqrt{\eps} \u_R^\eps} \dr_R^\eps \|_{L^2} \\
	    \no \leq &  \tfrac{C}{\eps} \| \nabla \dr_0 \|_{H^{N+1}} \| \nabla \u_R^\eps \|_{H^N} \| \partial^m \D_{\u_0 + \sqrt{\eps} \u_R^\eps} \dr_R^\eps + ( \partial^m \B_R^\eps ) \dr_0 + \tfrac{\lambda_2}{\lambda_1} ( \partial^m \A_R^\eps ) \dr_0 \|_{L^2} \\
	    \no & + \tfrac{C}{\eps} \| \nabla \dr_0 \|_{H^{N+1}} \| \nabla \u_R^\eps \|_{H^N} \big( \| ( \partial^m \B_R^\eps ) \dr_0 \|_{L^2} + \tfrac{|\lambda_2|}{|\lambda_1|} ( \partial^m \A_R^\eps ) \dr_0 \|_{L^2}  \big) \\
	    \no \leq & \tfrac{C}{\eps} \| \nabla \dr_0 \|_{H^{N+1}} \| \nabla \u_R^\eps \|_{H^N} \| \partial^m \D_{\u_0 + \sqrt{\eps} \u_R^\eps} \dr_R^\eps + ( \partial^m \B_R^\eps ) \dr_0 + \tfrac{\lambda_2}{\lambda_1} ( \partial^m \A_R^\eps ) \dr_0 \|_{L^2} \\
	    \no & + \tfrac{C}{\eps} \| \nabla \dr_0 \|_{H^{N+1}} \| \nabla \u_R^\eps \|^2_{H^N} \\
	    \leq & C \| \nabla \dr_0 \|_{H^{N+1}} \mathscr{D}_{N, \eps} (t) \,.
	  \end{align}
	Similarly as in \eqref{IN3-1}, we imply
	\begin{equation}\label{IN3-2}
	  \begin{aligned}
	    \tfrac{1}{\eps} \sum_{|m| \leq N} \sum_{m' < m} C_m^{m'} \l \lambda_2 ( \partial^{m'} \A_R^\eps ) \partial^{m-m'} \dr_0 , \partial^m \D_{\u_0 + \sqrt{\eps} \u_R^\eps} \dr_R^\eps \r \leq C \| \nabla \dr_0 \|_{H^{N+1}} \mathscr{D}_{N, \eps} (t) \,,
	  \end{aligned}
	\end{equation}
	and
	\begin{equation}\label{IN3-3}
	  \begin{aligned}
	    - \tfrac{1}{\eps} \sum_{|m| \leq N} \sum_{m' < m} \l \mu_2 \partial^{m-m'} \dr_{0,j} \partial^{m'} ( \D_{\u_0 + \sqrt{\eps} \u_R^\eps} \dr_R^\eps )_i + \mu_3 \partial^{m-m'} \dr_{0,i} \partial^{m'} ( \D_{\u_0 + \sqrt{\eps} \u_R^\eps} \dr_R^\eps )_j , \partial^m \partial_j \u_{R, i}^\eps \r \\
	    \leq C \| \nabla \dr_0 \|_{H^{N+1}} \mathscr{D}_{N, \eps} (t) \,.
	  \end{aligned}
	\end{equation}
	
	It is easy to estimate that
	\begin{equation}\label{IN3-4}
	  \begin{aligned}
	    & - \tfrac{1}{\eps} \sum_{|m| \leq N} \sum_{m' < m} C_m^{m'} \l \mu_2 \partial^{m-m'} ( \dr_{0,j} \dr_{0,k} ) \partial^{m'} \B_{R, ki}^\eps , \partial^m \partial_j \u_{R, i}^\eps \r \\
	    \leq & \tfrac{C}{\eps} \sum_{|m| \leq N} \sum_{m' < m} \| \partial^{m-m'} ( \dr_{0,j} \dr_{0,k} ) \|_{L^4} \| \partial^{m'} \B_{R, ki}^\eps \|_{L^4} \| \partial^m \partial_j \u_{R, i}^\eps \|_{L^2} \\
	    \leq & \tfrac{C}{\eps} \| \nabla \dr_0 \|_{H^{N+1}} ( 1 + \| \nabla \dr_0 \|_{H^{N+1}} ) \| \nabla \u_R^\eps \|^2_{H^N} \\
	    \leq & C \| \nabla \dr_0 \|_{H^{N+1}} ( 1 + \| \nabla \dr_0 \|_{H^{N+1}} ) \mathscr{D}_{N, \eps} (t) \,.
	  \end{aligned}
	\end{equation}
	Furthermore, via the analogous calculations in \eqref{IN3-4}, we imply
	\begin{equation}\label{IN3-5}
	  \begin{aligned}
	    & - \tfrac{1}{\eps} \sum_{|m| \leq N} \sum_{m' < m} C_m^{m'} \l \mu_3 \partial^{m-m'} ( \dr_{0,i} \dr_{0,k} ) \partial^{m'} \B_{R, kj}^\eps , \partial^m \partial_j \u_{R,i}^\eps \r \\
	    & - \tfrac{1}{\eps} \sum_{|m| \leq N} \sum_{m' < m} C_m^{m'} \l \mu_5 \partial^{m-m'} ( \dr_{0,j} \dr_{0,k} ) \partial^{m'} \A_{R, ki}^\eps , \partial^m \partial_j \u_{R,i}^\eps \r \\
	    & - \tfrac{1}{\eps} \sum_{|m| \leq N} \sum_{m' < m} C_m^{m'} \l \mu_6 \partial^{m-m'} ( \dr_{0,i} \dr_{0,k} ) \partial^{m'} \A_{R, kj}^\eps , \partial^m \partial_j \u_{R,i}^\eps \r \\
	    & \leq C \| \nabla \dr_0 \|_{H^{N+1}} ( 1 + \| \nabla \dr_0 \|_{H^{N+1}} ) \mathscr{D}_{N, \eps} (t) \,.
	  \end{aligned}
	\end{equation}
	Recalling the definition of $\mathcal{G}_m$ in \eqref{Gm} and noticing that $\mathcal{I}_N^{(3)} = \tfrac{1}{\eps} \sum_{|m| \leq N} \mathcal{G}_m$, we derive from the inequalities \eqref{IN3-1}, \eqref{IN3-2}, \eqref{IN3-3}, \eqref{IN3-4} and \eqref{IN3-5} that
	\begin{equation}\label{IN3-Bnd}
	  \begin{aligned}
	    \mathcal{I}_N^{(3)} \leq C \| \nabla \dr_0 \|_{H^{N+1}} ( 1 + \| \nabla \dr_0 \|_{H^{N+1}} ) \mathscr{D}_{N, \eps} (t)
	  \end{aligned}
	\end{equation}
	holds for all $N \geq 2$ and $0 < \eps \leq \eps_0$. \\
	
	{\em Step 4. Control the term $\mathcal{I}_N^{(4)}$.}
	
	We first rewrite the expression of $\mathcal{I}_N^{(4)}$ in \eqref{IN4} as
	  \begin{align}\label{IN4-1}
	    \no \mathcal{I}_N^{(4)} = & \underset{\mathcal{I}_N^{(4)} (\mathcal{C}_{\dr}) }{ \underbrace{  - \tfrac{1}{\eps} \sum_{|m| \leq N} \l \partial^m ( \u_R^\eps \cdot \nabla \dr_0 ) , \partial^m \mathcal{C}_{\dr} \r }} \\
	    \no + & \underset{\mathcal{I}_N^{(4)} (\mathcal{S}_{\dr}^1) }{ \underbrace{  \tfrac{1}{\eps} \sum_{|m| \leq N} \l \partial^m \mathcal{S}_{\dr}^1 , \partial^m \D_{\u_0 + \sqrt{\eps} \u_R^\eps} \dr_R^\eps - \partial^m ( \u_R^\eps \cdot \nabla \dr_0 ) + \delta \partial^m \dr_R^\eps \r }} \\
	    \no + & \underset{\mathcal{I}_N^{(4)} (\mathcal{S}_{\dr}^2) }{ \underbrace{  \tfrac{1}{\sqrt{\eps}} \sum_{|m| \leq N} \l \partial^m \mathcal{S}_{\dr}^2 , \partial^m \D_{\u_0 + \sqrt{\eps} \u_R^\eps} \dr_R^\eps - \partial^m ( \u_R^\eps \cdot \nabla \dr_0 ) + \delta \partial^m \dr_R^\eps \r }} \\
	    \no + & \underset{\mathcal{I}_N^{(4)} (\mathcal{S}_{\dr}^2) }{ \underbrace{  \sum_{|m| \leq N} \l \partial^m \mathcal{R}_{\dr} , \partial^m \D_{\u_0 + \sqrt{\eps} \u_R^\eps} \dr_R^\eps - \partial^m ( \u_R^\eps \cdot \nabla \dr_0 ) + \delta \partial^m \dr_R^\eps \r }} \\
	    + & \underset{\mathcal{I}_N^{(4)} (\A \B) }{ \underbrace{  \tfrac{\delta}{\eps} \sum_{1 \leq |m| \leq N} \sum_{0 \neq m' \leq m} C_m^{m'} \l \lambda_1 ( \partial^{m-m'} \B_R^\eps ) \partial^{m'} \dr_0 + \lambda_2 ( \partial^{m-m'} \A_R^\eps ) \partial^{m'} \dr_0 , \partial^m \dr_R^\eps \r }} \,.
	  \end{align}
	We next estimate the previous terms one by one. Before doing this, we derive the following inequality, which will be frequently used. More precisely, for all $|m| \leq N$,
	\begin{equation}\label{uR-nabla-d0}
	\begin{aligned}
	\| \partial^m ( \u_R^\eps \cdot \nabla \dr_0  ) \|_{L^2} \leq & C \| \u_R^\eps \cdot \nabla \partial^m \dr_0 \|_{L^2} + C \sum_{0 \neq m' \leq m} \| \partial^{m'} \u_R^\eps \cdot \nabla \partial^{m-m'} \dr_0 \|_{L^4} \\
	\leq & C \| \u_R^\eps \|_{L^6} \| \nabla \partial^m \dr_0 \|_{L^3} + C \sum_{0 \neq m' \leq m } \| \partial^{m'} \u_R^\eps \|_{L^4} \| \nabla \partial^{m-m'} \dr_0 \|_{L^4} \\
	\leq & C \| \nabla \u_R^\eps \|_{H^N} \| \nabla \dr_0 \|_{H^{N+1}} \,.
	\end{aligned}
	\end{equation}
	We initially estimate the quantity $ \mathcal{I}_N^{(4)} (\mathcal{C}_{\dr}) $. It is easy to deduce that
	\begin{equation}\label{IN4-2}
	  \begin{aligned}
	    & - \tfrac{1}{\eps} \l \lambda_1 \partial^m ( \B_R^\eps \dr_0 ) , \partial^m ( \u_R^\eps \cdot \nabla \dr_0 ) \r \\
	    \leq & \tfrac{C}{\eps} \Big( \| \partial^m \B_R^\eps \|_{L^2} + \sum_{0 \neq m' \leq m} \| \partial^{m-m'} \B_R^\eps \|_{L^4} \| \partial^{m'} \dr_0 \|_{L^4} \Big) \| \partial^m ( \u_R^\eps \cdot \nabla \dr_0 ) \|_{L^2} \\
	    \leq & \tfrac{C}{\eps} \| \nabla \u_R^\eps \|^2_{H^N} \| \nabla \dr_0 \|_{H^{N+1}} ( 1 + \| \nabla \dr_0 \|_{H^{N+1}} ) \\
	    \leq & C \| \nabla \dr_0 \|_{H^{N+1}} ( 1 + \| \nabla \dr_0 \|_{H^{N+1}} ) \mathscr{D}_{N, \eps} (t) \,,
	  \end{aligned}
	\end{equation}
	where we also make use of the inequality \eqref{uR-nabla-d0}. Similarly as in \eqref{IN4-2}, we yield that
	\begin{equation}\label{IN4-3}
	  \begin{aligned}
	    - \tfrac{1}{\eps} \l \lambda_2 \partial^m ( \A_R^\eps \dr_0 ) , \partial^m ( \u_R^\eps \cdot \nabla \dr_0 ) \r \leq C \| \nabla \dr_0 \|_{H^{N+1}} ( 1 + \| \nabla \dr_0 \|_{H^{N+1}} ) \mathscr{D}_{N, \eps} (t) \,.
	  \end{aligned}
	\end{equation}
	Based on the definition of $\mathcal{C}_{\dr}$ in \eqref{C-d}, the inequalities \eqref{IN4-2} and \eqref{IN4-3} reduces to
	\begin{equation}\label{IN4-Cd}
	  \begin{aligned}
	    \mathcal{I}_N^{(4)} (\mathcal{C}_{\dr}) \leq C \| \nabla \dr_0 \|_{H^{N+1}} ( 1 + \| \nabla \dr_0 \|_{H^{N+1}} ) \mathscr{D}_{N, \eps} (t) \,.
	  \end{aligned}
	\end{equation}
	For the term $ \mathcal{I}_N^{(4)} ( \A \B ) $, we estimate that
	\begin{equation}\label{IN4-AB}
	  \begin{aligned}
	    \mathcal{I}_N^{(4)} ( \A \B ) \leq & \tfrac{C}{\eps} \sum_{1 \leq |m| \leq N} \sum_{0 \neq m' \leq m} ( \| \partial^{m-m'} \B_R^\eps \|_{L^4} + \| \partial^{m-m'} \A_R^\eps \|_{L^4} ) \| \partial^{m'} \dr_0 \|_{L^4} \| \partial^m \dr_R^\eps \|_{L^2} \\
	    \leq & \tfrac{C}{\eps} \| \nabla \u_R^\eps \|_{H^N} \| \nabla \dr_R^\eps \|_{H^N} \| \nabla \dr_0 \|_{H^{N+1}} \leq C \| \nabla \dr_0 \|_{H^{N+1}} \mathscr{D}_{N, \eps} (t) \,.
	  \end{aligned}
	\end{equation}
	
	We now estimate the quantity $ \mathcal{I}_N^{(4)} ( \mathcal{S}_{\dr}^1 ) $. First, we have
	  \begin{align}\label{IN4-Sd1-1}
	    \no & \tfrac{1}{\eps} \l 2 \partial^m [ ( \nabla \dr_0 \cdot \nabla \dr_R^\eps ) \dr_0 ] , \partial^m \D_{\u_0 + \sqrt{\eps} \u_R^\eps} \dr_R^\eps \r \\
	    \no \leq & \tfrac{2}{\eps} \| \partial^m \nabla \dr_R^\eps \|_{L^2} \| \dr_0 \otimes \nabla \dr_0 \|_{L^\infty} \| \partial^m \D_{\u_0 + \sqrt{\eps} \u_R^\eps} \dr_R^\eps \|_{L^2} \\
	    \no & + \tfrac{C}{\eps} \sum_{0 \neq m' \leq m} \| \nabla \partial^{m-m'} \dr_R^\eps \|_{L^4} \| \partial^{m'} ( \dr_0 \otimes \dr_0 ) \|_{L^4} \| \partial^m \D_{\u_0 + \sqrt{\eps} \u_R^\eps} \dr_R^\eps \|_{L^2} \\
	    \no \leq & \tfrac{C}{\eps} \| \nabla \dr_0 \|_{H^N} \| \nabla \dr_R^\eps \|_{H^N} \| \partial^m \D_{\u_0 + \sqrt{\eps} \u_R^\eps} \dr_R^\eps \|_{L^2} \\
	    \no & + \tfrac{C}{\eps} \| \nabla \dr_0 \|_{H^N} ( 1 + \| \nabla \dr_0 \|_{H^N} ) \| \nabla \dr_R^\eps \|_{H^N} \| \partial^m \D_{\u_0 + \sqrt{\eps} \u_R^\eps} \dr_R^\eps \|_{L^2} \\
	    \no \leq & \tfrac{C}{\eps} \| \nabla \dr_0 \|_{H^N} ( 1 + \| \nabla \dr_0 \|_{H^N} ) \| \nabla \dr_R^\eps \|_{H^N} \| \partial^m \D_{\u_0 + \sqrt{\eps} \u_R^\eps} \dr_R^\eps + ( \partial^m \B_R^\eps ) \dr_0 + \tfrac{\lambda_2}{\lambda_1} (\partial^m \A_R^\eps ) \dr_0 \|_{L^2} \\
	    \no & + \tfrac{C}{\eps} \| \nabla \dr_0 \|_{H^N} ( 1 + \| \nabla \dr_0 \|_{H^N} ) \| \nabla \dr_R^\eps \|_{H^N} \| \nabla \u_R^\eps \|_{H^N} \\
	    \leq & C \| \nabla \dr_0 \|_{H^N} ( 1 + \| \nabla \dr_0 \|_{H^N} ) \mathscr{D}_{N,\eps} (t) \,.
	  \end{align}
	For the term $ \tfrac{1}{\eps} \l \partial^m ( |\nabla \dr_0|^2 \dr_R^\eps ) , \partial^m \D_{\u_0 + \sqrt{\eps} \u_R^\eps} \dr_R^\eps \r $, one can easily estimate that
	\begin{equation}\label{IN4-Sd1-2}
	  \begin{aligned}
	    & \tfrac{1}{\eps} \l \partial^m ( |\nabla \dr_0|^2 \dr_R^\eps ) , \partial^m \D_{\u_0 + \sqrt{\eps} \u_R^\eps} \dr_R^\eps \r \\
	    \leq & \tfrac{1}{\eps} \| \partial^m |\nabla \dr_0|^2 \|_{L^3} \| \dr_R^\eps \|_{L^6} \| \partial^m \D_{\u_0 + \sqrt{\eps} \u_R^\eps} \dr_R^\eps \|_{L^2} \\
	    & + \tfrac{C}{\eps} \sum_{0 \neq m' \leq m} \| \partial^{m-m'} |\nabla \dr_0|^2 \|_{L^4} \| \partial^{m'} \dr_R^\eps \|_{L^4} \| \partial^m \D_{\u_0 + \sqrt{\eps} \u_R^\eps} \dr_R^\eps \|_{L^2} \\
	    \leq & \tfrac{C}{\eps} \| \nabla \dr_0 \|^2_{H^{N+1}} \| \nabla \dr_R^\eps \|_{H^N} \| \partial^m \D_{\u_0 + \sqrt{\eps} \u_R^\eps} \dr_R^\eps \|_{L^2} \\
	    \leq & \tfrac{C}{\eps} \| \nabla \dr_0 \|^2_{H^{N+1}} \| \nabla \dr_R^\eps \|_{H^N} \| \partial^m \D_{\u_0 + \sqrt{\eps} \u_R^\eps} \dr_R^\eps + ( \partial^m \B_R^\eps ) \dr_0 + \tfrac{\lambda_2}{\lambda_1} (\partial^m \A_R^\eps ) \dr_0 \|_{L^2} \\
	    & + \tfrac{C}{\eps} \| \nabla \dr_0 \|^2_{H^{N+1}} \| \nabla \dr_R^\eps \|_{H^N} \| \nabla \u_R^\eps \|_{H^N} \\
	    \leq & C \| \nabla \dr_0 \|^2_{H^{N+1}} \mathscr{D}_{N, \eps} (t) \,.
	  \end{aligned}
	\end{equation}
	We then derive the following bound
	\begin{equation}\label{IN4-Sd1-3}
	  \begin{aligned}
	    & \tfrac{\lambda_1}{\eps} \l \partial^m ( \u_0 \cdot \nabla \dr_R^\eps + \u_R^\eps \cdot \nabla \dr_0 + \B_0 \dr_R^\eps ) , \partial^m \D_{\u_0 + \sqrt{\eps} \u_R^\eps} \dr_R^\eps \r \\
	    \leq & \tfrac{C}{\eps} \sum_{m' \leq m} \| \partial^{m'} \u_0 \|_{L^\infty} \| \nabla \partial^{m-m'} \dr_R^\eps \|_{L^2} \| \partial^m \D_{\u_0 + \sqrt{\eps} \u_R^\eps} \dr_R^\eps \|_{L^2} \\
	    & + \tfrac{C}{\eps} \Big( \| \u_R^\eps \|_{L^6} \| \nabla \dr_0 \|_{L^3} + \sum_{0 \neq m' \leq m} \| \partial^{m'} \u_R^\eps \|_{L^4} \| \nabla \partial^{m-m'} \dr_0 \|_{L^4} \Big) \| \partial^m \D_{\u_0 + \sqrt{\eps} \u_R^\eps} \dr_R^\eps \|_{L^2} \\
	    & + \tfrac{C}{\eps} \Big( \| \dr_R^\eps \|_{L^6} \| \B_0 \|_{L^3} + \sum_{0 \neq m' \leq m} \| \partial^{m'} \dr_R^\eps \|_{L^4} \| \partial^{m-m'} \B_0 \|_{L^4} \Big) \| \partial^m \D_{\u_0 + \sqrt{\eps} \u_R^\eps} \dr_R^\eps \|_{L^2} \\
	    \leq & \tfrac{C}{\eps} ( \| \u_0 \|_{H^{N+1}} + \| \nabla \dr_0 \|_{H^{N+1}} ) ( \| \nabla \u_R^\eps \|_{H^N} + \| \nabla \dr_R^\eps \|_{H^N} ) \| \partial^m \D_{\u_0 + \sqrt{\eps} \u_R^\eps} \dr_R^\eps \|_{L^2} \\
	    \leq & \tfrac{C}{\eps} ( \| \u_0 \|_{H^{N+1}} + \| \nabla \dr_0 \|_{H^{N+1}} ) ( \| \nabla \u_R^\eps \|_{H^N} + \| \nabla \dr_R^\eps \|_{H^N} ) \\
	    & \qquad \times \big( \| \partial^m \D_{\u_0 + \sqrt{\eps} \u_R^\eps} \dr_R^\eps + ( \partial^m \B_R^\eps ) \dr_0 + \tfrac{\lambda_2}{\lambda_1} ( \partial^m \A_R^\eps ) \dr_0 \|_{L^2} + \| \nabla \u_R^\eps \|_{H^N} \big) \\
	    \leq & C ( \| \u_0 \|_{H^{N+1}} + \| \nabla \dr_0 \|_{H^{N+1}} ) \mathscr{D}_{N, \eps} (t) \,.
	  \end{aligned}
	\end{equation}
	Via the analogous arguments in \eqref{IN4-Sd1-3}, we imply that
	\begin{equation}\label{IN4-Sd1-4}
	  \begin{aligned}
	    \tfrac{\lambda_2}{\eps} \l \partial^m ( \A_0 \dr_R^\eps ) , \partial^m \D_{\u_0 + \sqrt{\eps} \u_R^\eps} \dr_R^\eps \r \leq C \| \u_0 \|_{H^{N+1}} \mathscr{D}_{N, \eps} (t) \,.
	  \end{aligned}
	\end{equation}
	It is easily estimated that
	\begin{equation}\label{IN4-Sd1-5}
	  \begin{aligned}
	    & - \tfrac{\lambda_2}{\eps} \l \partial^m [ ( \A_0 : \dr_0 \otimes \dr_0 ) \dr_R^\eps ], \partial^m \D_{\u_0 + \sqrt{\eps} \u_R^\eps} \dr_R^\eps \r \\
	    \leq & \tfrac{C}{\eps} \| \partial^m ( \A_0 : \dr_0 \otimes \dr_0 ) \|_{L^3} \| \dr_R^\eps \|_{L^6} \| \partial^m \D_{\u_0 + \sqrt{\eps} \u_R^\eps} \dr_R^\eps \|_{L^2} \\
	    & + \tfrac{C}{\eps} \sum_{0 \neq m' \leq m} \| \partial^{m-m'} ( \A_0 : \dr_0 \otimes \dr_0 ) \|_{L^4} \| \partial^{m'} \dr_R^\eps \|_{L^4} \| \partial^m \D_{\u_0 + \sqrt{\eps} \u_R^\eps} \dr_R^\eps \|_{L^2} \\
	    \leq & \tfrac{C}{\eps} \| \u_0 \|_{H^{N+2}} ( 1 + \| \nabla \dr_0 \|^2_{H^{N+2}} ) \| \nabla \dr_R^\eps \|_{H^N} \| \partial^m \D_{\u_0 + \sqrt{\eps} \u_R^\eps} \dr_R^\eps \|_{L^2} \\
	    \leq & \tfrac{C}{\eps} \| \u_0 \|_{H^{N+2}} ( 1 + \| \nabla \dr_0 \|^2_{H^{N+2}} ) \| \nabla \dr_R^\eps \|_{H^N} \\
	    & \qquad \times \big( \| \partial^m \D_{\u_0 + \sqrt{\eps} \u_R^\eps} \dr_R^\eps + ( \partial^m \B_R^\eps ) \dr_0 + \tfrac{\lambda_2}{\lambda_1} ( \partial^m \A_R^\eps ) \dr_0 \|_{L^2} + \| \nabla \u_R^\eps \|_{H^N} \big) \\
	    \leq & C \| \u_0 \|_{H^{N+2}} ( 1 + \| \nabla \dr_0 \|^2_{H^{N+2}} ) \mathscr{D}_{N, \eps} (t) \,.
 	  \end{aligned}
	\end{equation}
	Similarly as in \eqref{IN4-Sd1-5}, we also have
	\begin{equation}\label{IN4-Sd1-6}
	  \begin{aligned}
	    - \tfrac{2 \lambda_2}{\eps} \l \partial^m [ ( \A_0 : \dr_0 \otimes \dr_R^\eps ) \dr_0 ] , \partial^m \D_{\u_0 + \sqrt{\eps} \u_R^\eps} \dr_R^\eps \r \leq C \| \u_0 \|_{H^{N+2}} ( 1 + \| \nabla \dr_0 \|^2_{H^{N+2}} ) \mathscr{D}_{N,\eps} (t) \,.
	  \end{aligned}
	\end{equation}
	
	Next, we estimate the quantity $ - \tfrac{\lambda_2}{\eps} \l \partial^m [ ( \A_R^\eps : \dr_0 \otimes \dr_0 ) \dr_0 ] , \partial^m \D_{\u_0 + \sqrt{\eps} \u_R^\eps } \dr_R^\eps \r $ for all $|m| \leq N$. Straightforward calculations imply that
	\begin{equation}\label{Const-1}
	  \begin{aligned}
	    & - \tfrac{\lambda_2}{\eps} \l \partial^m [ ( \A_R^\eps : \dr_0 \otimes \dr_0 ) \dr_0 ] , \partial^m \D_{\u_0 + \sqrt{\eps} \u_R^\eps } \dr_R^\eps \r \\
	    = & \tfrac{- \lambda_2}{\eps} \l \partial^m \A_R^\eps : \dr_0 \otimes \dr_0 , \partial_t ( \dr_0 \cdot \partial^m \dr_R^\eps ) - \partial_t \dr_0 \cdot \partial^m \dr_R^\eps \r \\
	    & + \tfrac{- \lambda_2}{\eps} \l ( \partial^m \A_R^\eps : \dr_0 \otimes \dr_0 ) \dr_0 , \partial^m [ ( \u_0 + \sqrt{\eps} \u_R^\eps ) \cdot \nabla \dr_R^\eps ] \r \\
	    & + \tfrac{- \lambda_2}{\eps} \sum_{0 \neq m' \leq m} C_m^{m'} \l \partial^{m-m'} \A_R^\eps : \partial^{m'} (\dr_0 \otimes \dr_0 \otimes \dr_0) , \partial^m \D_{\u_0 + \sqrt{\eps} \u_R^\eps } \dr_R^\eps \r \,.
	  \end{aligned}
	\end{equation}
	Recalling the constraint \eqref{Constraint-2}, hence $ \dr_0 \cdot \dr_R^\eps = - \tfrac{\sqrt{\eps}}{2} |\dr_R^\eps|^2 $, we easily derive that
	\begin{equation}\label{Const-3}
	  \begin{aligned}
	    \dr_0 \cdot \partial^m \dr_R^\eps = & \partial^m ( \dr_0 \cdot \dr_R^\eps ) - \sum_{0 \neq m' \leq m} C_m^{m'} \partial^{m'} \dr_0 \cdot \partial^{m-m'} \dr_R^\eps \\
	    = & - \tfrac{\sqrt{\eps}}{2} \partial^m |\dr_R^\eps|^2 - \sum_{0 \neq m' \leq m} C_m^{m'} \partial^{m'} \dr_0 \cdot \partial^{m-m'} \dr_R^\eps \,,
	  \end{aligned}
	\end{equation}
	which immediately reduces to
	\begin{equation}\label{Const-2}
	  \begin{aligned}
	    & \partial_t ( \dr_0 \cdot \partial^m \dr_R^\eps ) = - \sqrt{\eps} \partial^m ( \dr_R^\eps \cdot \partial_t \dr_R^\eps ) \\
	    & - \sum_{0 \neq m' \leq m} C_m^{m'} ( \partial^{m'} \partial_t \dr_0 \cdot \partial^{m-m'} \dr_R^\eps + \partial^{m'} \dr_0 \cdot \partial^{m-m'} \partial_t \dr_R^\eps ) \\
	    = & - \sqrt{\eps} \partial^m \big[ \dr_R^\eps \cdot \D_{\u_0 + \sqrt{\eps} \u_R^\eps} \dr_R^\eps - ( \u_0 + \sqrt{\eps} \u_R^\eps ) \cdot \nabla \dr_R^\eps \big] \\
	    & - \sum_{0 \neq m' \leq m} C_m^{m'} \big[ \partial^{m'} \partial_t \dr_0 \cdot \partial^{m-m'} \dr_R^\eps + \partial^{m'} \dr_0 \cdot \partial^{m-m'} \D_{\u_0 + \sqrt{\eps} \u_R^\eps } \dr_R^\eps \big] \\
	    & - \sum_{0 \neq m' \leq m} C_m^{m'} \big[ \partial^{m'} \dr_0 \cdot \partial^{m-m'} ( (\u_0 + \sqrt{\eps} \u_R^\eps) \cdot \nabla \dr_R^\eps ) \big] \,.
	  \end{aligned}
	\end{equation}
	Then, plugging the relation \eqref{Const-2} into the equality \eqref{Const-1} implies that
	  \begin{align}
	    \no & - \tfrac{\lambda_2}{\eps} \l \partial^m [ ( \A_R^\eps : \dr_0 \otimes \dr_0 ) \dr_0 ] , \partial^m \D_{\u_0 + \sqrt{\eps} \u_R^\eps } \dr_R^\eps \r \\
	    \no = & \underset{I_1}{ \underbrace{ \tfrac{- \lambda_2}{\eps} \l ( \partial^m \A_R^\eps : \dr_0 \otimes \dr_0  ) \dr_0 , \partial^m [ ( \u_0 + \sqrt{\eps} \u_R^\eps ) \cdot \nabla \dr_R^\eps ] \r } } \\
	    \no & + \underset{I_2}{ \underbrace{  \tfrac{- \lambda_2}{\eps} \sum_{0 \neq m' \leq m} C_m^{m'} \l \partial^{m-m'} \A_R^\eps : \partial^{m'} ( \dr_0 \otimes \dr_0 \otimes \dr_0 ) , \partial^m \D_{\u_0 + \sqrt{\eps} \u_R^\eps } \dr_R^\eps \r }} \\
	    \no & + \underset{I_3}{ \underbrace{ \tfrac{\lambda_2}{\eps} \l \partial^m \A_R^\eps : \dr_0 \otimes \dr_0 , \partial_t \dr_0 \cdot \partial^m \dr_R^\eps \r }}  \\
	    \no & + \underset{I_4}{ \underbrace{ \tfrac{\lambda_2}{\eps} \l \partial^m \A_R^\eps : \dr_0 \otimes \dr_0 , \sqrt{\eps} \partial^m ( \dr_R^\eps \cdot \D_{\u_0 + \sqrt{\eps} \u_R^\eps } \dr_R^\eps ) \r }}\\
	    \no & + \underset{I_5}{ \underbrace{ \tfrac{\lambda_2}{\eps} \sum_{0 \neq m' \leq m} C_m^{m'} \l \partial^m \A_R^\eps : \dr_0 \otimes \dr_0 , \partial^{m'} \partial_t \dr_0 \cdot \partial^{m-m'} \dr_R^\eps \r  }} \\
	    \no & + \underset{I_6}{ \underbrace{ \tfrac{\lambda_2}{\eps}  \sum_{0 \neq m' \leq m} C_m^{m'} \l \partial^m \A_R^\eps : \dr_0 \otimes \dr_0 , \partial^{m'} \dr_0 \cdot \partial^{m-m'} \D_{\u_0 + \sqrt{\eps} \u_R^\eps } \dr_R^\eps \r }} \\
	    & + \underset{I_7}{ \underbrace{ \tfrac{- \lambda_2}{\eps} \sum_{0 \neq m' \leq m} C_m^{m'} \l \partial^m \A_R^\eps : \dr_0 \otimes \dr_0 , \partial^{m'} \dr_0 \cdot \partial^{m-m'} [ (\u_0 + \sqrt{\eps} \u_R^\eps) \cdot \nabla \dr_R^\eps ] \r }} \,.
	  \end{align}
	For the term $I_1$, we estimate
	\begin{equation}
	  \begin{aligned}
	    I_1 \leq & \tfrac{C}{\eps} \| \partial^m \A_R^\eps \|_{L^2} \| \partial^m [ (\u_0 + \sqrt{\eps} \u_R^\eps) \cdot \nabla \dr_R^\eps ] \|_{L^2} \\
	    \leq &  \tfrac{C}{\eps} \| \nabla \u_R^\eps \|_{H^N} \| \nabla \dr_R^\eps \|_{H^N} ( \| \u_0 \|_{H^N} + \sqrt{\eps} \| \u_R^\eps \|_{H^N} ) \\
	    \leq & C ( \| \u_0 \|_{H^N} + \eps \mathscr{E}_{N,\eps}^\frac{1}{2} (t) ) \mathscr{D}_{N,\eps} (t) \,.
	  \end{aligned}
	\end{equation}
	For the term $I_2$, it is easily controlled that
	\begin{equation}
	  \begin{aligned}
	    I_2 \leq & \tfrac{C}{\eps} \| \A_R^\eps \|_{L^4} \| \partial^m ( \dr_0 \otimes \dr_0 \otimes \dr_0 ) \|_{L^4} \| \partial^m \D_{\u_0 + \sqrt{\eps} \u_R^\eps } \dr_R^\eps \|_{L^2} \\
	    & + \tfrac{C}{\eps} \sum_{0 \neq m' < m} \| \partial^{m-m'} \A_R^\eps \|_{L^2} \| \partial^{m'} ( \dr_0 \otimes \dr_0 \otimes \dr_0 ) \|_{L^\infty} \| \partial^m \D_{\u_0 + \sqrt{\eps} \u_R^\eps } \dr_R^\eps \|_{L^2}  \\
	    \leq & \tfrac{C}{\eps} \| \nabla \u_R^\eps \|_{H^N} \| \nabla \dr_0 \|_{H^N} ( 1 + \| \nabla \dr_0 \|^2_{H^N} ) \| \partial^m \D_{\u_0 + \sqrt{\eps} \u_R^\eps } \dr_R^\eps \|_{L^2} \\
	    \leq & \tfrac{C}{\eps} \| \nabla \u_R^\eps \|^2_{H^N} \| \nabla \dr_0 \|_{H^N} ( 1 + \| \nabla \dr_0 \|^2_{H^N} ) \\
	    & + \tfrac{C}{\eps} \| \nabla \u_R^\eps \|_{H^N} \| \nabla \dr_0 \|_{H^N} ( 1 + \| \nabla \dr_0 \|^2_{H^N} ) \| \partial^m \D_{\u_0 + \sqrt{\eps} \u_R^\eps } \dr_R^\eps + ( \partial^m \B_R^\eps ) \dr_0 + \tfrac{\lambda_2}{\lambda_1} ( \partial^m \A_R^\eps ) \dr_0 \|_{L^2} \\
	    \leq & C ( 1 + \| \nabla \dr_0 \|^2_{H^N} ) \| \nabla \dr_0 \|_{H^N} \mathscr{D}_{N,\eps} (t) \,.
	  \end{aligned}
	\end{equation}
	The quantity $I_3$ can be bounded by
	\begin{equation}
	  \begin{aligned}
	    I_3 \leq & \tfrac{|\lambda_2|}{\eps} \| \partial^m \A_R^\eps \|_{L^2} \| \partial_t \dr_0 \|_{L^3} \| \partial^m \dr_R^\eps \|_{L^6} \\
	    \leq & \tfrac{C}{\eps} \| \nabla \u_R^\eps \|_{H^N} \| \nabla \dr_R^\eps \|_{H^N} \| \partial_t \dr_0 \|_{H^1} \\
	    \leq & \tfrac{C}{\eps} ( 1 + \| \nabla \dr_0 \|^3_{H^N} ) ( \| \u_0 \|_{H^N} + \| \nabla \dr_0 \|_{H^N} ) \| \nabla \dr_R^\eps \|_{H^N} \| \nabla \u_R^\eps \|_{H^N} \\
	    \leq & C ( 1 + \| \nabla \dr_0 \|^3_{H^N} ) ( \| \u_0 \|_{H^N} + \| \nabla \dr_0 \|_{H^N} ) \mathscr{D}_{N,\eps} (t) \,.	
	  \end{aligned}
	\end{equation}
	Here we make use of the bound
	\begin{equation}
	  \begin{aligned}
	    \| \partial_t \dr_0 \|_{H^1} \leq & C ( 1 + \| \nabla \dr_0 \|^3_{H^N} ) ( \| \u_0 \|_{H^N} + \| \nabla \dr_0 \|_{H^N} )
	  \end{aligned}
	\end{equation}
	for $N \geq 2$, which is derived from the $\dr_0$-equation of \eqref{PLQ}. The term $I_4$ can be estimated as
	\begin{equation}
	  \begin{aligned}
	     I_4 \leq & \tfrac{|\lambda_2|}{\sqrt{\eps}} \| \partial^m \A_R^\eps \|_{L^2} \| \partial^m ( \dr_R^\eps \cdot \D_{\u_0 + \sqrt{\eps} \u_R^\eps } \dr_R^\eps ) \|_{L^2} \\
	     \leq & \tfrac{C}{\sqrt{\eps}} \| \nabla \u_R^\eps \|_{H^N} \| \dr_R^\eps \|_{H^N} \| \D_{\u_0 + \sqrt{\eps} \u_R^\eps } \dr_R^\eps \|_{H^N} \\
	     \leq & \tfrac{C}{\sqrt{\eps}} \| \nabla \u_R^\eps \|^2_{H^N} \| \dr_R^\eps \|_{H^N} \\
	     & + \tfrac{C}{\sqrt{\eps}} \| \nabla \u_R^\eps \|_{H^N} \| \dr_R^\eps \|_{H^N} \sum_{|m| \leq N} \| \partial^m \D_{\u_0 + \sqrt{\eps} \u_R^\eps} \dr_R^\eps + (\partial^m \B_R^\eps) \dr_0 + \tfrac{\lambda_2}{\lambda_1} ( \partial^m \A_R^\eps ) \dr_0 \|_{L^2} \\
	     \leq & C \eps \mathscr{E}_{N,\eps}^\frac{1}{2} (t) \mathscr{D}_{N,\eps} (t) \,.
	  \end{aligned}
	\end{equation}
	For the term $I_5$, we calculate that
	  \begin{align}
	    \no I_5 \leq & \tfrac{C}{\eps} \| \partial^m \A_R^\eps \|_{L^2} \sum_{0 \neq m' \leq m} \| \partial^{m'} \partial_t \dr_0 \|_{L^3} \| \partial^{m-m'} \dr_R^\eps \|_{L^6} \\
	    \no \leq & \tfrac{C}{\eps} \| \nabla \u_R^\eps \|_{H^N} \| \partial_t \dr_0 \|_{H^{N+1}} \| \nabla \dr_R^\eps \|_{H^N} \\
	    \no \leq & \tfrac{C}{\eps} ( 1 + \| \nabla \dr_0 \|^3_{H^{N+2}} ) ( \| \u_0 \|_{H^{N+2}} + \| \nabla \dr_0 \|_{H^{N+2}} ) \| \nabla \u_R^\eps \|_{H^N} \| \nabla \dr_R^\eps \|_{H^N} \\
	    \leq & C ( 1 + \| \nabla \dr_0 \|^3_{H^{N+2}} ) ( \| \u_0 \|_{H^{N+2}} + \| \nabla \dr_0 \|_{H^{N+2}} ) \mathscr{D}_{N,\eps} (t) \,,
	  \end{align}
	where the bound $ \| \partial_t \dr_0 \|_{H^{N+1}} \leq C ( 1 + \| \nabla \dr_0 \|^3_{H^{N+2}} ) ( \| \u_0 \|_{H^{N+2}} + \| \nabla \dr_0 \|_{H^{N+2}} ) $ is utilized. We compute the quantity $I_6$ that
	\begin{equation}
	  \begin{aligned}
	    I_6 \leq & \tfrac{C}{\eps} \sum_{0 \neq m' \leq m} \| \partial^m \A_R^\eps \|_{L^2} \| \partial^{m'} \dr_0 \|_{L^4} \| \partial^{m-m'} \D_{\u_0 + \sqrt{\eps} \u_R^\eps } \dr_R^\eps \|_{L^4} \\
	    \leq & \tfrac{C}{\eps} \| \nabla \u_R^\eps \|_{H^N} \| \nabla \dr_0 \|_{H^N} \| \D_{\u_0 + \sqrt{\eps} \u_R^\eps} \dr_R^\eps \|_{H^N} \\
	    \leq & \tfrac{C}{\eps} \| \nabla \dr_0 \|_{H^N} \| \nabla \dr_R^\eps \|_{H^N} \\
	    & \quad \times \Big( \| \nabla \u_R^\eps \|_{H^N} + \sum_{|m| \leq N} \| \partial^m \D_{\u_0 + \sqrt{\eps} \u_R^\eps } \dr_R^\eps + ( \partial^m \B_R^\eps ) \dr_0 + \tfrac{\lambda_2}{\lambda_1} (\partial^m \A_R^\eps) \dr_0 \|_{L^2} \Big) \\
	    \leq & C \| \nabla \dr_0 \|_{H^N} \mathscr{D}_{N,\eps} (t) \,.
	  \end{aligned}
	\end{equation}
	It is easy to be estimated that
	\begin{equation}
	  \begin{aligned}
	     I_7 \leq & \tfrac{C}{\eps} \sum_{0 \neq m' \leq m} \| \partial^m \A_R^\eps \|_{L^2} \| \partial^{m'} \dr_0 \|_{L^4} \| \partial^{m-m'} [ ( \u_0 + \sqrt{\eps} \u_R^\eps ) \cdot \nabla \dr_R^\eps ] \|_{L^4} \\
	     \leq & \tfrac{C}{\eps} \| \nabla \u_R^\eps \|_{H^N} \| \nabla \dr_0 \|_{H^N} \| (\u_0 + \sqrt{\eps} \u_R^\eps) \cdot \nabla \dr_R^\eps \|_{H^N} \\
	     \leq & \tfrac{C}{\eps} \| \nabla \dr_0 \|_{H^N} \| \nabla \u_R^\eps \|_{H^N} \| \nabla \dr_R^\eps \|_{H^N} ( \| \u_0 \|_{H^N} + \sqrt{\eps} \| \u_R^\eps \|_{H^N} ) \\
	     \leq & C \| \nabla \dr_0 \|_{H^N}( \| \u_0 \|_{H^N} + \eps \mathscr{E}_{N,\eps}^\frac{1}{2} (t) ) \mathscr{D}_{N,\eps} (t) \,.
	  \end{aligned}
	\end{equation}
	Collecting the bounds of $I_i$ $(1 \leq i \leq 7)$ above, we obtain
	\begin{equation}\label{IN4-Sd1-6-1}
	  \begin{aligned}
	    - \tfrac{\lambda_2}{\eps} \l \partial^m [ ( \A_R^\eps : \dr_0 \otimes \dr_0 ) \dr_0 ] , \partial^m \D_{\u_0 + \sqrt{\eps} \u_R^\eps } \dr_R^\eps \r \leq   C \eps ( 1 + \| \nabla \dr_0 \|_{H^{N+2}} ) \mathscr{E}_{N,\eps}^\frac{1}{2} (t) \mathscr{D}_{N,\eps} (t) \\
	    + C ( 1 + \| \nabla \dr_0 \|^3_{H^{N+2}} ) ( \| \u_0 \|_{H^{N+2}} + \| \nabla \dr_0 \|_{H^{N+2}} ) \mathscr{D}_{N,\eps} (t) \,.
	  \end{aligned}
	\end{equation}
	Recalling that the expression of $\mathcal{S}_{\dr}^1$ in \eqref{Sd-1}, we derive from the bounds \eqref{IN4-Sd1-1}, \eqref{IN4-Sd1-2}, \eqref{IN4-Sd1-3}, \eqref{IN4-Sd1-4}, \eqref{IN4-Sd1-5}, \eqref{IN4-Sd1-6} and \eqref{IN4-Sd1-6-1} that
	\begin{equation}\label{IN4-Sd1-7}
	  \begin{aligned}
	    \tfrac{1}{\eps} \sum_{|m| \leq N} \l \partial^m \mathcal{S}_{\dr}^1 , \partial^m \D_{\u_0 + \sqrt{\eps} \u_R^\eps} \dr_R^\eps \r \leq C \eps ( 1 + \| \nabla \dr_0 \|_{H^{N+2}} ) \mathscr{E}_{N,\eps}^\frac{1}{2} (t) \mathscr{D}_{N,\eps} (t) \\
	    + C ( \| \u_0 \|_{H^{N+2}} + \| \nabla \dr_0 \|_{H^{N+2}} ) ( 1 + \| \nabla \dr_0 \|^3_{H^{N+2}} ) \mathscr{D}_{N, \eps} (t) \,.
	  \end{aligned}
	\end{equation}
	
	Combining the similar arguments of the inequality \eqref{IN4-Sd1-7} and the bound \eqref{uR-nabla-d0}, we know that
	\begin{equation}\label{IN4-Sd1-8}
	  \begin{aligned}
	    & \tfrac{1}{\eps} \sum_{|m| \leq N} \l \partial^m \mathcal{S}_{\dr}^1 , \partial^m ( \u_R^\eps \cdot \nabla \dr_0 ) \r \\
	    \leq & \tfrac{C}{\sqrt{\eps}} ( \| \u_0 \|_{H^{N+2}} + \| \nabla \dr_0 \|_{H^{N+2}} ) ( 1 + \| \nabla \dr_0 \|^3_{H^{N+2}} ) \mathscr{D}^\frac{1}{2}_{N, \eps} (t) \| \partial^m ( \u_R^\eps \cdot \nabla \dr_0 ) \|_{L^2} \\
	    \leq & \tfrac{C}{\sqrt{\eps}} ( \| \u_0 \|_{H^{N+2}} + \| \nabla \dr_0 \|_{H^{N+2}} ) ( 1 + \| \nabla \dr_0 \|^3_{H^{N+2}} ) \mathscr{D}^\frac{1}{2}_{N, \eps} (t) \| \nabla \u_R^\eps \|_{H^N} \| \nabla \dr_0 \|_{H^{N+1}} \\
	    \leq & C ( \| \u_0 \|_{H^{N+2}} + \| \nabla \dr_0 \|_{H^{N+2}} ) ( 1 + \| \nabla \dr_0 \|^4_{H^{N+2}} ) \mathscr{D}_{N, \eps} (t) \,.
	  \end{aligned}
	\end{equation}
	
	We next estimate the quantity  $ \tfrac{\delta}{\eps } \sum_{|m| \leq N} \l \partial^m \mathcal{S}_{\dr}^1 , \partial^m \dr_R^\eps \r $ for $N \geq 2$. First, we have
	\begin{equation}\label{IN4-Sd1-9}
	  \begin{aligned}
	    & \tfrac{2 \delta}{\eps} \l \partial^m [ ( \nabla \dr_0 \cdot \nabla \dr_R^\eps ) \dr_0 ] , \partial^m \dr_R^\eps \r \\
	    \leq & \tfrac{C}{\eps} \sum_{m' \leq m} \| \partial^{m-m'} ( \dr_0 \otimes \nabla \dr_0 ) \|_{L^3} \| \nabla \partial^{m'} \dr_R^\eps \|_{L^2} \| \partial^m \dr_R^\eps \|_{L^6} \\
	    \leq & \tfrac{C}{\eps} \| \nabla \dr_0 \|_{H^{N+1}} (1 + \| \nabla \dr_0 \|_{H^{N+1}}  ) \| \nabla \dr_R^\eps \|^2_{H^N} \\
	    \leq & C \| \nabla \dr_0 \|_{H^{N+1}} (1 + \| \nabla \dr_0 \|_{H^{N+1}}  ) \mathscr{D}_{N,\eps} (t) \,.
	  \end{aligned}
	\end{equation}
	We then estimate
	\begin{equation}\label{IN4-Sd1-10}
	  \begin{aligned}
	    & \tfrac{\delta}{\eps} \l \partial^m ( |\nabla \dr_0|^2 \dr_R^\eps ) , \partial^m \dr_R^\eps \r \\
	    \leq & \tfrac{C}{\eps} \sum_{m' \leq m} \sum_{m'' \leq m'} \| \nabla \partial^{m''} \dr_0 \|_{L^3} \| \nabla \partial^{m'-m''} \dr_0 \|_{L^3} \| \partial^{m-m'} \dr_R^\eps \|_{L^6} \| \partial^m \dr_R^\eps \|_{L^6} \\
	    \leq & \tfrac{C}{\eps} \| \nabla \dr_0 \|^2_{H^{N+1}} \| \nabla \dr_R^\eps \|^2_{H^N} \leq C  \| \nabla \dr_0 \|^2_{H^{N+1}} \mathscr{D}_{N,\eps} (t) \,.
	  \end{aligned}
	\end{equation}
	It is easy to calculate that
	\begin{equation}\label{IN4-Sd1-11}
	  \begin{aligned}
	    & \tfrac{\delta \lambda_1}{\eps} \l \partial^m ( \u_0 \cdot \nabla \dr_R^\eps + \u_R^\eps \cdot \nabla \dr_0 + \B_0 \dr_R^\eps ) , \partial^m \dr_R^\eps \r \\
	    \leq & \tfrac{C}{\eps} \sum_{m' \leq m} \| \partial^{m'} \u_0 \|_{L^3} \| \nabla \partial^{m-m'} \dr_R^\eps \|_{L^2} \| \partial^m \dr_R^\eps \|_{L^6} \\
	    + & \tfrac{C}{\eps} \sum_{m' \leq m} \Big( \| \partial^{m'} \u_R^\eps \|_{L^6} \| \nabla \partial^{m-m'} \dr_0 \|_{L^2} + \| \partial^{m'} \dr_R^\eps \|_{L^6} \| \partial^{m-m'} \B_0 \|_{L^2} \Big) \| \partial^m \dr_R^\eps \|_{L^6} |\T^3|^\frac{1}{6} \\
	    \leq & \tfrac{C}{\eps} ( \| \u_0 \|_{H^{N+1}} + \| \nabla \dr_0 \|_{H^{N+1}} ) ( \| \nabla \dr_R^\eps \|_{H^N} + \| \nabla \u_R^\eps \|_{H^N} ) \| \nabla \dr_R^\eps \|_{H^N} \\
	    \leq & C ( \| \u_0 \|_{H^{N+1}} + \| \nabla \dr_0 \|_{H^{N+1}} ) \mathscr{D}_{N,\eps} (t) \,,
	  \end{aligned}
	\end{equation}
	where we intrinsically utilize the fact that the volume of $\T^3$ is finite. Similarly as in \eqref{IN4-Sd1-11}, we obtain
	\begin{equation}\label{IN4-Sd1-12}
	  \begin{aligned}
	    \tfrac{\delta \lambda_2}{\eps} \l \partial^m ( \A_0 \dr_R^\eps ) , \partial^m \dr_R^\eps \r \leq C \| \u_0 \|_{H^{N+1}} \mathscr{D}_{N,\eps} (t) \,,
	  \end{aligned}
	\end{equation}
	and
	\begin{equation}\label{IN4-Sd1-13}
	  \begin{aligned}
	    & - \tfrac{\delta \lambda_2}{\eps} \l \partial^m [ ( \A_0 : \dr_0 \otimes \dr_0 ) \dr_R^\eps ] , \partial^m \dr_R^\eps \r  - \tfrac{2 \delta \lambda_2}{\eps} \l \partial^m [ ( \A_0 : \dr_0 \otimes \dr_R^\eps ) \dr_0 ] , \partial^m \dr_R^\eps \r \\
	    & \leq C \| \u_0 \|_{H^{N+1}} ( 1 + \| \nabla \dr_0 \|^2_{H^{N+1}} ) \mathscr{D}_{N,\eps} (t) \,.
	  \end{aligned}
	\end{equation}
	It remains to estimate the quantity $ - \tfrac{\delta \lambda_2}{\eps} \l \partial^m ( (\A_R^\eps : \dr_0 \otimes \dr_0) \dr_0 ), \partial^m \dr_R^\eps \r $ for all $|m| \leq N$. From the relation \eqref{Const-3}, we easily derive that
	\begin{equation}
	  \begin{aligned}
	     & - \tfrac{\delta \lambda_2}{\eps} \l \partial^m ( (\A_R^\eps : \dr_0 \otimes \dr_0) \dr_0 ), \partial^m \dr_R^\eps \r \\
	     = & \underset{I\!I_1}{ \underbrace{ \tfrac{\delta \lambda_2}{2 \sqrt{\eps}} \sum_{m' \leq m} C_m^{m'} \l \partial^m \A_R^\eps : \dr_0 \otimes \dr_0 , \partial^{m'} \dr_R^\eps \cdot \partial^{m-m'} \dr_R^\eps \r }} \\
	     & + \underset{I\!I_2}{ \underbrace{ \tfrac{\delta \lambda_2}{\eps} \sum_{0 \neq m' \leq m} C_m^{m'} \l \partial^m \A_R^\eps : \dr_0 \otimes \dr_0 , \partial^{m'} \dr_0 \cdot \partial^{m-m'} \dr_R^\eps \r }} \\
	     & + \underset{I\!I_1}{ \underbrace{ \tfrac{- \delta \lambda_2}{\eps} \sum_{0 \neq m' \leq m} C_m^{m'} \l \partial^{m-m'} \A_R^\eps : \partial^{m'} (\dr_0 \otimes \dr_0 \otimes \dr_0) , \partial^m \dr_R^\eps \r }} \,.
	  \end{aligned}
	\end{equation}
	For the term $I\!I_1$, we estimate
	\begin{equation}
	  \begin{aligned}
	    I\!I_1 \leq & \tfrac{C}{\sqrt{\eps}} \sum_{m' \leq m} \| \partial^m \A_R^\eps \|_{L^2} \| \partial^{m'} \dr_R^\eps \|_{L^3} \| \partial^{m-m'} \dr_R^\eps \|_{L^6} \\
	    \leq & \tfrac{C}{\sqrt{\eps}} \| \nabla \u_R^\eps \|_{H^N} \| \nabla \dr_R^\eps \|_{H^N} ( \| \dr_R^\eps \|_{H^N} + \| \nabla \dr_R^\eps \|_{H^N} ) \\
	    \leq & C \eps \mathscr{E}_{N,\eps}^\frac{1}{2} (t) \mathscr{D}_{N,\eps} (t) \,.
	  \end{aligned}
	\end{equation}
	For the term $I\!I_2$, we have
	\begin{equation}
	  \begin{aligned}
	    I\!I_2 \leq & \tfrac{C}{\eps} \sum_{0 \neq m' \leq m} \| \partial^m \A_R^\eps \|_{L^2} \| \partial^{m'} \dr_0 \|_{L^3} \| \partial^{m-m'} \dr_R^\eps \|_{L^6} \\
	    \leq & \tfrac{C}{\eps} \| \nabla \u_R^\eps \|_{H^N} \| \nabla \dr_0 \|_{H^N} \| \nabla \dr_R^\eps \|_{H^N} \leq C \| \nabla \dr_0 \|_{H^N} \mathscr{D}_{N,\eps} (t) \,.
	  \end{aligned}
	\end{equation}
	For the term $I\!I_3$, we calculate that
	  \begin{align}
	    \no I\!I_3 \leq & \tfrac{C}{\eps} \sum_{0 \neq m' \leq m } \| \partial^{m-m'} \A_R^\eps \|_{L^2} \| \partial^{m'} (\dr_0 \otimes \dr_0 \otimes \dr_0) \|_{L^3} \| \partial^m \dr_R^\eps \|_{L^6} \\
	    \no \leq & \tfrac{C}{\eps} \| \nabla \u_R^\eps \|_{H^N} \| \nabla \dr_0 \|_{H^N} ( 1 + \| \nabla \dr_0 \|^2_{H^N} ) \| \nabla \dr_R^\eps \|_{H^N} \\
	    \leq & C ( 1 + \| \nabla \dr_0 \|^2_{H^N} ) \| \nabla \dr_0 \|_{H^N} \mathscr{D}_{N,\eps} (t) \,.
	  \end{align}
	Summarizing the estimates $I\!I_1$, $I\!I_2$ and $I\!I_3$, we obtain
	\begin{equation}\label{IN4-Sd1-13-1}
	  \begin{aligned}
	    & - \tfrac{\delta \lambda_2}{\eps} \l \partial^m ( (\A_R^\eps : \dr_0 \otimes \dr_0) \dr_0 ), \partial^m \dr_R^\eps \r \\
	    \leq & C ( 1 + \| \nabla \dr_0 \|^2_{H^N} ) \| \nabla \dr_0 \|_{H^N} \mathscr{D}_{N,\eps} (t) + C \eps \mathscr{E}_{N,\eps}^\frac{1}{2} (t) \mathscr{D}_{N,\eps} (t) \,.
	  \end{aligned}
	\end{equation}
	Recalling the expression of $\mathcal{S}_{\dr}^1$ in \eqref{Sd-1}, we derive from the inequalities \eqref{IN4-Sd1-9}, \eqref{IN4-Sd1-10}, \eqref{IN4-Sd1-11}, \eqref{IN4-Sd1-12}, \eqref{IN4-Sd1-13} and \eqref{IN4-Sd1-13-1} that
	\begin{equation}\label{IN4-Sd1-14}
	  \begin{aligned}
	    \tfrac{\delta}{\eps } \sum_{|m| \leq N} \l \partial^m \mathcal{S}_{\dr}^1 , \partial^m \dr_R^\eps \r \leq C \eps \mathscr{E}_{N,\eps}^\frac{1}{2} (t) \mathscr{D}_{N,\eps} (t) \\
	    + C ( \| \u_0 \|_{H^{N+1}} + \| \nabla \dr_0 \|_{H^{N+1}} ) ( 1 + \| \nabla \dr_0 \|^2_{H^{N+1}} ) \mathscr{D}_{N,\eps} (t) \,.
	  \end{aligned}
	\end{equation}
	Consequently, plugging the inequalities \eqref{IN4-Sd1-7}, \eqref{IN4-Sd1-8} and \eqref{IN4-Sd1-14} into the expression of $\mathcal{I}_N^{(4)} ( \mathcal{S}_{\dr}^1 )$ in \eqref{IN4-1} reduces to
	\begin{equation}\label{IN4-Sd1}
	  \begin{aligned}
	    \mathcal{I}_N^{(4)} ( \mathcal{S}_{\dr}^1 ) \leq  C ( \| \u_0 \|_{H^{N+2}} + \| \nabla \dr_0 \|_{H^{N+2}} ) ( 1 + \| \nabla \dr_0 \|^3_{H^{N+2}} ) \mathscr{D}_{N, \eps} (t) + C \eps \mathscr{E}_{N,\eps}^\frac{1}{2} (t) \mathscr{D}_{N,\eps} (t) \,.
	  \end{aligned}
	\end{equation}
	
	Next we estimate the quantity $\mathcal{I}_N^{(4)} (\mathcal{S}_{\dr}^2)$ for any integer $N \geq 2$. First, we have
	\begin{equation}\label{IN4-Sd2-1}
	  \begin{aligned}
	    & \tfrac{1}{\sqrt{\eps}} \l \partial^m ( |\nabla \dr_R^\eps|^2 \dr_0 ) , \partial^m \D_{\u_0 + \sqrt{\eps} \u_R^\eps } \dr_R^\eps \r \\
	    \leq & \tfrac{C}{\sqrt{\eps}} \sum_{m' \leq m} \| \partial^{m-m'} |\nabla \dr_R^\eps|^2 \|_{L^2} \| \partial^{m'} \dr_0 \|_{L^\infty} \| \partial^m \D_{\u_0 + \sqrt{\eps} \u_R^\eps } \dr_R^\eps \|_{L^2} \\
	    \leq & \tfrac{C}{\sqrt{\eps}} \| \nabla \dr_R^\eps \|^2_{H^N} \| \D_{\u_0 + \sqrt{\eps} \u_R^\eps } \dr_R^\eps \|_{H^N} ( 1 + \| \nabla \dr_0 \|_{H^{N+2}} ) \\
	    \leq & C \sqrt{\eps} ( 1 + \| \nabla \dr_0 \|_{H^{N+2}} ) \mathscr{E}^\frac{1}{2}_{N,\eps} (t) \mathscr{D}_{N,\eps} (t)
	  \end{aligned}
	\end{equation}
	and
	\begin{equation}\label{IN4-Sd2-2}
	  \begin{aligned}
	    & \tfrac{2}{\sqrt{\eps}} \l \partial^m ( ( \nabla \dr_0 \cdot \nabla \dr_R^\eps ) \dr_R^\eps ) , \partial^m \D_{\u_0 + \sqrt{\eps} \u_R^\eps } \dr_R^\eps \r \\
	    \leq & \tfrac{C}{\sqrt{\eps}} \sum_{m' \leq m} \| \nabla \partial^{m'} \dr_0 \|_{L^\infty} \| \partial^{m-m'} ( \dr_R^\eps \otimes \nabla \dr_R^\eps ) \|_{L^2} \| \partial^m \D_{\u_0 + \sqrt{\eps} \u_R^\eps } \dr_R^\eps \|_{L^2} \\
	    \leq & \tfrac{C}{\sqrt{\eps}} \| \nabla \dr_R^\eps \|_{H^N} \| \dr_R^\eps \|_{H^N} \| \D_{\u_0 + \sqrt{\eps} \u_R^\eps } \dr_R^\eps \|_{H^N} \| \nabla \dr_0 \|_{H^{N+2}} \\
	    \leq & C \sqrt{\eps} \| \nabla \dr_0 \|_{H^{N+2}} \mathscr{E}^\frac{1}{2}_{N,\eps} (t) \mathscr{D}_{N,\eps} (t)
	  \end{aligned}
	\end{equation}
	for all $|m| \leq N$. It is easy to deduce that
	\begin{equation}\label{IN4-Sd2-3}
	  \begin{aligned}
	    & - \tfrac{1}{\sqrt{\eps}} \l \partial^m \D_{\u_0}^2 \dr_0 , \partial^m \D_{\u_0 + \sqrt{\eps} \u_R^\eps } \dr_R^\eps \r \leq \tfrac{1}{\sqrt{\eps}} \| \partial^m \D_{\u_0}^2 \dr_0 \|_{L^2} \| \partial^m \D_{\u_0 + \sqrt{\eps} \u_R^\eps } \dr_R^\eps \|_{L^2} \\
	    \leq & \tfrac{C}{\sqrt{\eps}} \| \D_{\u_0}^2 \dr_0 \|_{H^N} \big( \| \partial^m \D_{\u_0 + \sqrt{\eps} \u_R^\eps } \dr_R^\eps + ( \partial^m \B_R^\eps ) \dr_0 + \tfrac{\lambda_2}{\lambda_1} ( \partial^m \A_R^\eps ) \dr_0 \|_{L^2} + \| \nabla \u_R^\eps \|_{H^N} \big) \\
	    \leq & C (1 + \| \u_0 \|^6_{N^{N+2}} + \| \nabla \dr_0 \|^6_{H^{N+2}}) ( \| \nabla \u_0 \|_{H^{N+2}} + \| \Delta \dr_0 \|_{H^{N+2}} ) \mathscr{D}^\frac{1}{2}_{N,\eps} (t) \,,
	  \end{aligned}
	\end{equation}
	where we make use of the inequality \eqref{D2u0-d0-HN}.
	
	For the term $ - \tfrac{1}{\sqrt{\eps}} \l \partial^m ( | \D_{\u_0} \dr_0 |^2 \dr_0 ) , \partial^m \D_{\u_0 + \sqrt{\eps} \u_R^\eps } \dr_R^\eps \r $, we deduce that
	\begin{equation}\label{IN4-Sd2-4}
	  \begin{aligned}
	    & - \tfrac{1}{\sqrt{\eps}} \l \partial^m ( | \D_{\u_0} \dr_0 |^2 \dr_0 ) , \partial^m \D_{\u_0 + \sqrt{\eps} \u_R^\eps } \dr_R^\eps \r \\
	    \leq & \tfrac{1}{\sqrt{\eps}} \| \partial^m |\D_{\u_0} \dr_0|^2 \|_{L^2} \| \partial^m \D_{\u_0 + \sqrt{\eps} \u_R^\eps } \dr_R^\eps \|_{L^2} \\
	    & + \tfrac{C}{\sqrt{\eps}} \sum_{0 \neq m' \leq m} \| \partial^{m-m'} |\D_{\u_0} \dr_0|^2 \|_{L^4} \| \partial^{m'} \dr_0 \|_{L^4} \| \partial^m \D_{\u_0 + \sqrt{\eps} \u_R^\eps } \dr_R^\eps \|_{L^2} \\
	    \leq & \tfrac{C}{\sqrt{\eps}} ( 1 + \| \nabla \dr_0 \|_{H^{N+1}} ) \| \D_{\u_0} \dr_0 \|^2_{H^N} \\
	    & \qquad \times \big( \| \partial^m \D_{\u_0 + \sqrt{\eps} \u_R^\eps } \dr_R^\eps + ( \partial^m \B_R^\eps ) \dr_0 + \tfrac{\lambda_2}{\lambda_1} ( \partial^m \A_R^\eps ) \dr_0 \|_{L^2} + \| \nabla \u_R^\eps \|_{H^N} \big) \\
	    \leq & C ( 1 + \| \nabla \dr_0 \|^7_{H^{N+1}} ) ( \| \nabla \u_0 \|^2_{H^N} + \| \Delta \dr_0 \|^2_{H^N} ) \mathscr{D}^\frac{1}{2}_{N,\eps} (t) \\
	    \leq &  C ( 1 + \| \u_0 \|^8_{H^{N+1}} + \| \nabla \dr_0 \|^8_{H^{N+1}} ) ( \| \nabla \u_0 \|_{H^N} + \| \Delta \dr_0 \|_{H^N} ) \mathscr{D}^\frac{1}{2}_{N,\eps} (t) \,,
	  \end{aligned}
	\end{equation}
	where the inequality \eqref{Du0-d0-HN} is utilized. We next estimate that
	\begin{equation}\label{IN4-Sd2-5}
	  \begin{aligned}
	    & \tfrac{\lambda_1}{\sqrt{\eps}} \l \partial^m ( \B_R^\eps \dr_R^\eps ) , \partial^m \D_{\u_0 + \sqrt{\eps} \u_R^\eps } \dr_R^\eps \r \\
	    \leq & \tfrac{C}{\sqrt{\eps}} \big( \| \partial^m \B_R^\eps \|_{L^2} \| \dr_R^\eps \|_{L^\infty} + \sum_{0 \neq m' \leq m} \| \partial^{m-m'} \B_R^\eps \|_{L^4} \| \partial^{m'} \dr_R^\eps \|_{L^4} \big) \| \partial^m \D_{\u_0 + \sqrt{\eps} \u_R^\eps } \dr_R^\eps \|_{L^2} \\
	    \leq & \tfrac{C}{\sqrt{\eps}} \| \nabla \u_R^\eps \|_{H^N} ( \| \dr_R^\eps \|_{H^N} + \| \nabla \dr_R^\eps \|_{H^N} ) \| \D_{\u_0 + \sqrt{\eps} \u_R^\eps } \dr_R^\eps \|_{H^N} \\
	    \leq & C \sqrt{\eps} \mathscr{E}^\frac{1}{2}_{N,\eps} (t) \mathscr{D}_{N,\eps} (t) \,.
	  \end{aligned}
	\end{equation}
	Similarly as in \eqref{IN4-Sd2-5}, we have
	\begin{equation}\label{IN4-Sd2-6}
	  \begin{aligned}
	    \tfrac{\lambda_2}{\sqrt{\eps}} \l \partial^m ( \A_R^\eps \dr_R^\eps ) , \partial^m \D_{\u_0 + \sqrt{\eps} \u_R^\eps } \dr_R^\eps \r \leq C \sqrt{\eps} \mathscr{E}^\frac{1}{2}_{N,\eps} (t) \mathscr{D}_{N,\eps} (t) \,.
	  \end{aligned}
	\end{equation}
	We deduce the following bound
	\begin{equation}\label{IN4-Sd2-7}
	  \begin{aligned}
	    & - \tfrac{\lambda_2}{\sqrt{\eps}} \l \partial^m [ \A_0 : ( \dr_0 \otimes \dr_R^\eps + \dr_R^\eps \otimes \dr_0 ) \dr_R^\eps ] , \partial^m \D_{\u_0 + \sqrt{\eps} \u_R^\eps } \dr_R^\eps \r \\
	    \leq & \tfrac{C}{\sqrt{\eps}} \sum_{\substack{ m' \leq m \\ m'' \leq m' }} \| \partial^{m-m'} ( \A_0 \dr_0 ) \|_{L^6} \| \partial^{m''} \dr_R^\eps \|_{L^6} \| \partial^{m' - m''} \dr_R^\eps \|_{L^6} \| \partial^m \D_{\u_0 + \sqrt{\eps} \u_R^\eps } \dr_R^\eps \|_{L^2} \\
	    \leq & \tfrac{C}{\sqrt{\eps}} \| \u_0 \|_{H^{N+2}} ( 1 + \| \nabla \dr_0 \|_{H^{N+2}} ) \| \nabla \dr_R^\eps \|^2_{H^N} \| \D_{\u_0 + \sqrt{\eps} \u_R^\eps } \dr_R^\eps \|_{H^N} \\
	    \leq & C \sqrt{\eps} \| \u_0 \|_{H^{N+2}} ( 1 + \| \nabla \dr_0 \|_{H^{N+2}} ) \mathscr{E}^\frac{1}{\eps}_{N,\eps} (t) \mathscr{D}_{N,\eps} (t) \,.
	  \end{aligned}
	\end{equation}
	We calculate that
	\begin{equation}\label{IN4-Sd2-8}
	  \begin{aligned}
	    & - \tfrac{\lambda_2}{\sqrt{\eps}} \l \partial^m [ ( \A_R^\eps : \dr_0 \otimes \dr_0 ) \dr_R^\eps ] , \partial^m \D_{\u_0 + \sqrt{\eps} \u_R^\eps } \dr_R^\eps \r \\
	    \leq & \tfrac{C}{\sqrt{\eps}} \| \partial^m \A_R^\eps \|_{L^2} \| ( \dr_0 \otimes \dr_0 ) \dr_R^\eps \|_{L^\infty} \| \partial^m \D_{\u_0 + \sqrt{\eps} \u_R^\eps } \dr_R^\eps \|_{L^2} \\
	    & + \tfrac{C}{\sqrt{\eps}} \sum_{0 \neq m' \leq m} \| \partial^{m-m'} \A_R^\eps \|_{L^4} \| \partial^{m'} ( (\dr_0 \otimes \dr_0) \dr_R^\eps ) \|_{L^4} \| \partial^m \D_{\u_0 + \sqrt{\eps} \u_R^\eps } \dr_R^\eps \|_{L^2} \\
	    \leq & \tfrac{C}{\sqrt{\eps}} \| \nabla \u_R^\eps \|_{H^N} \| \D_{\u_0 + \sqrt{\eps} \u_R^\eps } \dr_R^\eps \|_{H^N} ( \| \dr_R^\eps \|_{H^N} + \| \nabla \dr_R^\eps \|_{H^N} ) ( 1 + \| \nabla \dr_0 \|^2_{H^{N+1}} ) \\
	    \leq & C \sqrt{\eps} ( 1 + \| \nabla \dr_0 \|^2_{H^{N+1}} ) \mathscr{E}^\frac{1}{2}_{N,\eps} (t) \mathscr{D}_{N,\eps} (t) \,.
	  \end{aligned}
	\end{equation}
	It is derived from the analogous arguments in \eqref{IN4-Sd2-8} that
	\begin{equation}\label{IN4-Sd2-9}
	  \begin{aligned}
	    & - \tfrac{\lambda_2}{\sqrt{\eps}} \l \partial^m [ \A_R^\eps : ( \dr_0 \otimes \dr_R^\eps + \dr_R^\eps \otimes \dr_0 ) \dr_0 ] , \partial^m \D_{\u_0 + \sqrt{\eps} \u_R^\eps } \dr_R^\eps \r \\
	    & \leq C \sqrt{\eps} ( 1 + \| \nabla \dr_0 \|^2_{H^{N+1}} ) \mathscr{E}^\frac{1}{2}_{N,\eps} (t) \mathscr{D}_{N,\eps} (t) \,.
	  \end{aligned}
	\end{equation}
	Moreover, from the similar calculations in \eqref{IN4-Sd2-7}, we derive that
	\begin{equation}\label{IN4-Sd2-10}
	  \begin{aligned}
	    & - \tfrac{\lambda_2}{\sqrt{\eps}} \l \partial^m [ ( \A_0 : \dr_R^\eps \otimes \dr_R^\eps ) \dr_0 ] , \partial^m \D_{\u_0 + \sqrt{\eps} \u_R^\eps } \dr_R^\eps \r \\
	    & \leq C \sqrt{\eps} \| \u_0 \|_{H^{N+2}} ( 1 + \| \nabla \dr_0 \|_{H^{N+2}} ) \mathscr{E}^\frac{1}{2}_{N,\eps} (t) \mathscr{D}_{N,\eps} (t) \,.
	  \end{aligned}
	\end{equation}
	Recalling the definition of $\mathcal{S}_{\dr}^2$ in \eqref{Sd-2}, we collect the inequalities \eqref{IN4-Sd2-1}, \eqref{IN4-Sd2-2}, \eqref{IN4-Sd2-3}, \eqref{IN4-Sd2-4}, \eqref{IN4-Sd2-5}, \eqref{IN4-Sd2-6}, \eqref{IN4-Sd2-7}, \eqref{IN4-Sd2-8}, \eqref{IN4-Sd2-9} and \eqref{IN4-Sd2-10}, and then know that
	\begin{equation}\label{IN4-Sd2-11}
	  \begin{aligned}
	    & \tfrac{1}{\sqrt{\eps}} \l \partial^m \mathcal{S}_{\dr}^2 , \partial^m \D_{\u_0 + \sqrt{\eps} \u_R^\eps } \dr_R^\eps \r \\
	    & \leq C \sqrt{\eps} ( 1 + \| \u_0 \|^2_{H^{N+2} } + \| \nabla \dr_0 \|^2_{H^{N+2}} ) \mathscr{E}^\frac{1}{2}_{N,\eps} (t) \mathscr{D}_{N,\eps} (t) \\
	    & + C ( 1 + \| \u_0 \|^8_{H^{N+1}} + \| \nabla \dr_0 \|^8_{H^{N+1}} ) ( \| \nabla \u_0 \|_{H^N} + \| \Delta \dr_0 \|_{H^N} ) \mathscr{D}^\frac{1}{2}_{N,\eps} (t) \,.
	  \end{aligned}
	\end{equation}
	
	Furthermore, the inequality \eqref{uR-nabla-d0} and similar estimates on \eqref{IN4-Sd2-11} reduce to
	\begin{equation}\label{IN4-Sd2-12}
	  \begin{aligned}
	    & \tfrac{1}{\sqrt{\eps}} \l \partial^m \mathcal{S}_{\dr}^2 , \partial^m ( \u_R^\eps \cdot \nabla \dr_0 ) \r \\
	    \leq & C  ( 1 + \| \u_0 \|^2_{H^{N+2} } + \| \nabla \dr_0 \|^2_{H^{N+2}} ) \mathscr{E}^\frac{1}{2}_{N,\eps} (t) \mathscr{D}^\frac{1}{2}_{N,\eps} (t) \| \u_R^\eps \cdot \nabla \dr_0 \|_{H^N} \\
	    & + \tfrac{C}{\sqrt{\eps}} ( 1 + \| \u_0 \|^8_{H^{N+1}} + \| \nabla \dr_0 \|^8_{H^{N+1}} ) ( \| \nabla \u_0 \|_{H^N} + \| \Delta \dr_0 \|_{H^N} ) \| \u_R^\eps \cdot \nabla \dr_0 \|_{H^N} \\
	    \leq & C \sqrt{\eps} ( 1 + \| \u_0 \|^3_{H^{N+2} } + \| \nabla \dr_0 \|^3_{H^{N+2}} ) \mathscr{E}^\frac{1}{2}_{N,\eps} (t) \mathscr{D}_{N,\eps} (t) \\
	    & + C ( 1 + \| \u_0 \|^9_{H^{N+1}} + \| \nabla \dr_0 \|^9_{H^{N+1}} ) ( \| \nabla \u_0 \|_{H^N} + \| \Delta \dr_0 \|_{H^N} ) \mathscr{D}^\frac{1}{2}_{N,\eps} (t) \,.
	  \end{aligned}
	\end{equation}
	
	We next estimate the quantity $ \tfrac{\delta}{\sqrt{\eps}} \l \partial^m \mathcal{S}_{\dr}^2 , \partial^m \dr_R^\eps \r $ for all $|m| \leq N$. First, we have
	\begin{equation}\label{IN4-Sd2-13}
	  \begin{aligned}
	    & \tfrac{\delta}{\sqrt{\eps}} \l \partial^m ( |\nabla \dr_R^\eps|^2 \dr_0 ) , \partial^m \dr_R^\eps \r \\
	    \leq & \tfrac{C}{\sqrt{\eps}} \| \partial^m |\nabla \dr_R^\eps|^2 \|_{L^2} \| \partial^m \dr_R^\eps \|_{L^2} \\
	    & + \tfrac{C}{\sqrt{\eps}} \sum_{0 \neq m' \leq m} \| \partial^{m-m'} |\nabla \dr_R^\eps|^2 \|_{L^4} \| \partial^{m'} \dr_0 \|_{L^4} \| \partial^m \dr_R^\eps \|_{L^2} \\
	    \leq & \tfrac{C}{\sqrt{\eps}} \| \nabla \dr_R^\eps \|^2_{H^N} \| \dr_R^\eps \|_{H^N} ( 1 + \| \nabla \dr_0 \|_{H^{N+1}} ) \\
	    \leq & C \eps ( 1 + \| \nabla \dr_0 \|_{H^{N+1}} ) \mathscr{E}^\frac{1}{2}_{N,\eps} (t) \mathscr{D}_{N,\eps} (t) \,.
	  \end{aligned}
	\end{equation}
	It is estimated that
	\begin{equation}\label{IN4-Sd2-14}
	  \begin{aligned}
	    & \tfrac{2 \delta}{\sqrt{\eps}} \l \partial^m [ ( \nabla \dr_0 \cdot \nabla \dr_R^\eps ) \dr_R^\eps ] , \partial^m \dr_R^\eps \r \\
	    \leq & \tfrac{C}{\sqrt{\eps}} \sum_{m' \leq m} \sum_{m'' \leq m'} \| \nabla \partial^{m-m'} \dr_R^\eps \|_{L^2} \| \partial^{m''} \dr_R^\eps \|_{L^6} \| \nabla \partial^{m'-m''} \dr_0 \|_{L^6} \| \partial^m \dr_R^\eps \|_{L^6} \\
	    \leq & \tfrac{C}{\sqrt{\eps}} \| \nabla \dr_R^\eps \|^3_{H^N} \| \nabla \dr_0 \|_{H^{N+1}} \leq C \| \nabla \dr_0 \|_{H^{N+1}} \mathscr{E}^\frac{1}{2}_{N,\eps} (t) \mathscr{D}_{N,\eps} (t) \,.
	  \end{aligned}
	\end{equation}
	The following bound holds：
	\begin{equation}\label{IN4-Sd2-15}
	  \begin{aligned}
	    & - \tfrac{\delta}{\sqrt{\eps}} \l \partial^m \D_{\u_0}^2 \dr_0 , \partial^m \dr_R^\eps \r \leq \tfrac{\delta}{\sqrt{\eps}} \| \partial^m \D_{\u_0}^2 \dr_0 \|_{L^2} \| \partial^m \dr_R^\eps \|_{L^6} | \T^3 |^\frac{1}{3} \\
	    \leq & \tfrac{C}{\sqrt{\eps}} \| \D_{\u_0}^2 \dr_0 \|_{H^N} \| \nabla \dr_R^\eps \|_{H^N} \\
	    \leq & \tfrac{C}{\sqrt{\eps}} ( 1 + \| \u_0 \|^6_{H^{N+2}} + \| \nabla \dr_0 \|^6_{H^{N+2}} ) ( \| \nabla \u_0 \|_{H^{N+2}} + \| \Delta \dr_0 \|_{H^{N+2}} ) \| \nabla \dr_R^\eps \|_{H^N} \\
	    \leq & C ( 1 + \| \u_0 \|^6_{H^{N+2}} + \| \nabla \dr_0 \|^6_{H^{N+2}} ) ( \| \nabla \u_0 \|_{H^{N+2}} + \| \Delta \dr_0 \|_{H^{N+2}} ) \mathscr{D}^\frac{1}{2}_{N,\eps} (t) \,,
	  \end{aligned}
	\end{equation}
	where the inequality \eqref{D2u0-d0-HN} and the fact that the volume of $\T^3$ is finite are utilized. Moreover, from the bound \eqref{Du0-d0-HN} and the finiteness of the volume $\T^3$, we deduce that
	\begin{equation}\label{IN4-Sd2-16}
	  \begin{aligned}
	    & - \tfrac{\delta}{\sqrt{\eps}} \l \partial^m ( |\D_{\u_0} \dr_0|^2 \dr_0 ) , \partial^m \dr_R^\eps \r \\
	    \leq & \tfrac{\delta}{\sqrt{\eps}} \| \partial^m |\D_{\u_0} \dr_0|^2 \|_{L^2} \| \partial^m \dr_R^\eps \|_{L^6} |\T^3|^\frac{1}{3} \\
	    & + \tfrac{C}{\sqrt{\eps}} \sum_{0 \neq m' \leq m} \| \partial^{m-m'} |\D_{\u_0} \dr_0|^2 \|_{L^4} \| \partial^{m'} \dr_0 \|_{L^4} \| \partial^m \dr_R^\eps \|_{L^6} |\T^3|^\frac{1}{3} \\
	    \leq & \tfrac{C}{\sqrt{\eps}} \| \D_{\u_0} \dr_0 \|^2_{H^N} \| \nabla \dr_R^\eps \|_{H^N} ( 1 + \| \nabla \dr_0 \|_{H^N} ) \\
	    \leq & \tfrac{C}{\sqrt{\eps}} ( 1 + \| \nabla \dr_0 \|^7_{H^N} ) ( \| \nabla \u_0 \|^2_{H^N} + \| \Delta \dr_0 \|^2_{H^N} ) \| \nabla \dr_R^\eps \|_{H^N} \\
	    \leq & C ( 1 + \| \u_0 \|^8_{H^{N+2}} + \| \nabla \dr_0 \|^8_{H^{N+2}} ) ( \| \nabla \u_0 \|_{H^{N+2}} + \| \Delta \dr_0 \|_{H^{N+2}} ) \mathscr{D}^\frac{1}{2}_{N,\eps} (t) \,.
	  \end{aligned}
	\end{equation}
	For the term $ \tfrac{\delta \lambda_1}{\sqrt{\eps}} \l \partial^m ( \B_R^\eps \dr_R^\eps ) , \partial^m \dr_R^\eps \r $, we estimate that
	\begin{equation}\label{IN4-Sd2-17}
	  \begin{aligned}
	    & \tfrac{\delta \lambda_1}{\sqrt{\eps}} \l \partial^m ( \B_R^\eps \dr_R^\eps ) , \partial^m \dr_R^\eps \r \\
	    \leq & \tfrac{C}{\sqrt{\eps}} \sum_{m' \leq m} \| \partial^{m-m'} \B_R^\eps \|_{L^2} \| \partial^{m'} \dr_R^\eps \|_{L^3} \| \partial^m \dr_R^\eps \|_{L^6} \\
	    \leq & \tfrac{C}{\sqrt{\eps}} \| \nabla \u_R^\eps \|_{H^N} \| \nabla \dr_R^\eps \|_{H^N} ( \| \dr_R^\eps \|_{H^N} + \| \nabla \dr_R^\eps \|_{H^N}  ) \\
	    \leq & C \eps \mathscr{E}^\frac{1}{2}_{N,\eps} (t) \mathscr{D}_{N,\eps} (t) \,.
	  \end{aligned}
	\end{equation}
	Similarly as in \eqref{IN4-Sd2-17}, we have
	\begin{equation}\label{IN4-Sd2-18}
	  \begin{aligned}
	    \tfrac{\delta \lambda_2}{\sqrt{\eps}} \l \partial^m ( \A_R^\eps \dr_R^\eps ) , \partial^m \dr_R^\eps \r \leq C \eps \mathscr{E}^\frac{1}{2}_{N,\eps} (t) \mathscr{D}_{N,\eps} (t) \,.
	  \end{aligned}
	\end{equation}
	We next calculate that
	\begin{equation}\label{IN4-Sd2-19}
	  \begin{aligned}
	    & - \tfrac{\delta \lambda_2}{\sqrt{\eps}} \l \partial^m [ \A_0 : ( \dr_0 \otimes \dr_R^\eps + \dr_R^\eps \otimes \dr_0 ) \dr_R^\eps ] , \partial^m \dr_R^\eps \r \\
	    \leq & \tfrac{C}{\sqrt{\eps}} \sum_{\substack{ m' \leq m \\ m'' \leq m' }} \| \partial^{m-m'} ( \A_0 \dr_0 ) \|_{L^2} \| \partial^{m''} \dr_R^\eps \|_{L^6} \| \partial^{m'-m''} \dr_R^\eps \|_{L^6} \| \partial^m \dr_R^\eps \|_{L^6} \\
	    \leq & \tfrac{C}{\sqrt{\eps}} \| \u_0 \|_{H^{N+1}} ( 1 + \| \nabla \dr_0 \|_{H^{N+1}} ) \| \nabla \dr_R^\eps \|^3_{H^N} \\
	    \leq & C \eps \| \u_0 \|_{H^{N+1}} ( 1 + \| \nabla \dr_0 \|_{H^{N+1}} ) \mathscr{E}^\frac{1}{2}_{N,\eps} (t) \mathscr{D}_{N,\eps} (t) \,.
	  \end{aligned}
	\end{equation}
	One easily derives the following bound
	\begin{equation}\label{IN4-Sd2-20}
	  \begin{aligned}
	    & - \tfrac{\delta \lambda_2}{\sqrt{\eps}} \l \partial^m [ ( \A_R^\eps : \dr_0 \otimes \dr_0 ) \dr_R^\eps ] , \partial^m \dr_R^\eps \r \\
	    \leq & \tfrac{C}{\sqrt{\eps}} \sum_{m' \leq m} \| \partial^{m-m'} \A_R^\eps \|_{L^2} \| \partial^{m'} ( (\dr_0 \otimes \dr_0) \dr_R^\eps ) \|_{L^3} \| \partial^m \dr_R^\eps \|_{L^6} \\
	    \leq & \tfrac{C}{\sqrt{\eps}} \| \nabla \u_R^\eps \|_{H^N} \| \nabla \dr_R^\eps \|_{H^N} ( \| \dr_R^\eps \|_{H^N} + \| \nabla \dr_R^\eps \|_{H^N} ) ( 1 + \| \nabla \dr_0 \|^2_{H^N} ) \\
	    \leq & C ( 1 + \| \nabla \dr_0 \|^2_{H^N} ) \mathscr{E}^\frac{1}{2}_{N,\eps} (t) \mathscr{D}_{N,\eps} (t) \,.
	  \end{aligned}
	\end{equation}
	Via the analogous calculations in \eqref{IN4-Sd2-20}, we imply that
	\begin{equation}\label{IN4-Sd2-21}
	  \begin{aligned}
	    - \tfrac{\delta \lambda_2}{\sqrt{\eps}} \l \partial^m [ \A_R^\eps : ( \dr_0 \otimes \dr_R^\eps + \dr_R^\eps \otimes \dr_0 ) \dr_0 ] , \partial^m \dr_R^\eps \r \leq C ( 1 + \| \nabla \dr_0 \|^2_{H^N} ) \mathscr{E}^\frac{1}{2}_{N,\eps} (t) \mathscr{D}_{N,\eps} (t) \,.
	  \end{aligned}
	\end{equation}
	Furthermore, the similar calculations in \eqref{IN4-Sd2-19} tell us
	\begin{equation}\label{IN4-Sd2-22}
	  \begin{aligned}
	    - \tfrac{\delta \lambda_2}{\sqrt{\eps}} \l \partial^m [ ( \A_0 : \dr_R^\eps \otimes \dr_R^\eps ) \dr_0 ] , \partial^m \dr_R^\eps \r \leq C \eps \| \u_0 \|_{H^{N+1}} ( 1 + \| \nabla \dr_0 \|_{H^{N+1}} ) \mathscr{E}^\frac{1}{2}_{N,\eps} (t) \mathscr{D}_{N,\eps} (t) \,. \,.
	  \end{aligned}
	\end{equation}
	Recalling that the definition of $\mathcal{S}_{\dr}^2$ in \eqref{Sd-2}, we imply by collecting the bounds \eqref{IN4-Sd2-13}, \eqref{IN4-Sd2-14}, \eqref{IN4-Sd2-15}, \eqref{IN4-Sd2-16}, \eqref{IN4-Sd2-17}, \eqref{IN4-Sd2-18}, \eqref{IN4-Sd2-19}, \eqref{IN4-Sd2-20}, \eqref{IN4-Sd2-21} and \eqref{IN4-Sd2-22} that
	\begin{equation}\label{IN4-Sd2-23}
	  \begin{aligned}
	    & \tfrac{\delta}{\sqrt{\eps}} \l \partial^m \mathcal{S}_{\dr}^2 , \partial^m \dr_R^\eps \r \\
	    \leq & C \eps ( 1 + \| \u_0 \|^2_{H^{N+2}} + \| \nabla \dr_0 \|^2_{H^{N+2}} ) \mathscr{E}^\frac{1}{\eps}_{N,\eps} (t) \mathscr{D}_{N,\eps} (t) \\
	    & + C ( 1 + \| \u_0 \|^8_{H^{N+2}} + \| \nabla \dr_0 \|^8_{H^{N+2}} ) ( \| \nabla \u_0 \|_{H^{N+2}} + \| \Delta \dr_0 \|_{H^{N+2}} ) \mathscr{D}^\frac{1}{\eps}_{N,\eps} (t) \,.
 	  \end{aligned}
	\end{equation}
	From the inequalities \eqref{IN4-Sd2-11}, \eqref{IN4-Sd2-12} and \eqref{IN4-Sd2-23}, we deduce that
	\begin{equation}\label{IN4-Sd2}
	  \begin{aligned}
	    \mathcal{I}_N^{(4)} ( \mathcal{S}_{\dr}^2 ) = & \tfrac{1}{\sqrt{\eps}} \l \partial^m \mathcal{S}_{\dr}^2 , \partial^m \D_{\u_0 + \sqrt{\eps} \u_R^\eps } \dr_R^\eps - \partial^m (\u_R^\eps \cdot \nabla \dr_0) + \delta \partial^m \dr_R^\eps \r \\
	    \leq & C \sqrt{\eps} ( 1 + \| \u_0 \|^3_{H^{N+2} } + \| \nabla \dr_0 \|^3_{H^{N+2}} ) \mathscr{E}^\frac{1}{2}_{N,\eps} (t) \mathscr{D}_{N,\eps} (t) \\
	    & + C ( 1 + \| \u_0 \|^9_{H^{N+2}} + \| \nabla \dr_0 \|^9_{H^{N+2}} ) ( \| \nabla \u_0 \|_{H^{N+2}} + \| \Delta \dr_0 \|_{H^{N+2}} ) \mathscr{D}^\frac{1}{2}_{N,\eps} (t) \,.
	  \end{aligned}
	\end{equation}
	
	We next estimate the quantity $\mathcal{I}_N^{(4)} (\mathcal{R}_{\dr})$ for the integer $N \geq 2$. We start with the term $ \l \partial^m \mathcal{R}_{\dr} , \partial^m \D_{\u_0 + \sqrt{\eps} \u_R^\eps } \dr_R^\eps \r $ for $|m| \leq N$. First, we have
	\begin{equation}\label{IN4-Rd-1}
	  \begin{aligned}
	    & \l \partial^m ( |\nabla \dr_R^\eps|^2 \dr_R^\eps ) , \partial^m \D_{\u_0 + \sqrt{\eps} \u_R^\eps } \dr_R^\eps \r \\
	    \leq & \| \partial^m |\nabla \dr_R^\eps|^2 \|_{L^2} \| \dr_R^\eps \|_{L^\infty} \| \partial^m \D_{\u_0 + \sqrt{\eps} \u_R^\eps } \dr_R^\eps \|_{L^2} \\
	    & + C \sum_{0 \neq m' \leq m} \| \partial^{m-m'} |\nabla \dr_R^\eps|^2 \|_{L^4} \| \partial^{m'} \dr_R^\eps \|_{L^4} \| \partial^m \D_{\u_0 + \sqrt{\eps} \u_R^\eps } \dr_R^\eps \|_{L^2} \\
	    \leq & C \| \nabla \dr_R^\eps \|^2_{H^N} \| \D_{\u_0 + \sqrt{\eps} \u_R^\eps } \dr_R^\eps \|_{H^N} ( \| \dr_R^\eps \|_{H^N} + \| \nabla \dr_R^\eps \|_{H^N} ) \\
	    \leq & C \sqrt{\eps}^3 \mathscr{E}_{N,\eps} (t) \mathscr{D}_{N,\eps} (t) \,.
	  \end{aligned}
	\end{equation}
	Via the inequality \eqref{Du0-d0-HN}, we deduce that
	\begin{equation}\label{IN4-Rd-2}
	  \begin{aligned}
	    & - \l \partial^m ( |\D_{\u_0} \dr_0|^2 \dr_R^\eps ) , \partial^m \D_{\u_0 + \sqrt{\eps} \u_R^\eps } \dr_R^\eps \r \\
	    \leq & C \sum_{m' \leq m} \| \partial^{m-m'} |\D_{\u_0} \dr_0|^2 \|_{L^3} \| \partial^{m'} \dr_R^\eps \|_{L^6} \| \partial^m \D_{\u_0 + \sqrt{\eps} \u_R^\eps } \dr_R^\eps \|_{L^2} \\
	    \leq & C \| \D_{\u_0} \dr_0 \|^2_{H^{N+1}} \| \nabla \dr_R^\eps \|_{H^N} \| \D_{\u_0 + \sqrt{\eps} \u_R^\eps } \dr_R^\eps \|_{H^N} \\
	    \leq & C \sqrt{\eps} ( 1 + \| \nabla \dr_0 \|^6_{H^{N+1}} ) ( \| \nabla \u_0 \|^2_{H^{N+1}} + \| \Delta \dr_0 \|_{H^{N+1}} ) \mathscr{D}_{N,\eps} (t) \\
	    \leq & C \sqrt{\eps} ( 1 + \| \nabla \dr_0 \|^6_{H^{N+2}} ) ( \| \u_0 \|^2_{H^{N+2}} + \| \nabla \dr_0 \|^2_{H^{N+2}} ) \mathscr{D}_{N,\eps} (t) \,.
	  \end{aligned}
	\end{equation}
	It is easily derived that
	\begin{equation}\label{IN4-Rd-3}
	  \begin{aligned}
	    & - 2 \l \partial^m [ \D_{\u_0} \dr_0 \cdot ( \D_{\u_0 + \sqrt{\eps} \u_R^\eps } \dr_R^\eps + \u_R^\eps \cdot \nabla \dr_0 ) \dr_0 ] , \partial^m \D_{\u_0 + \sqrt{\eps} \u_R^\eps } \dr_R^\eps \r \\
	    \leq  & C \| \D_{\u_0} \dr_0 \otimes \dr_0 \|_{L^\infty} \| \partial^m \D_{\u_0 + \sqrt{\eps} \u_R^\eps } \dr_R^\eps \|^2_{L^2} \\
	    & + C \sum_{0 \neq m' \leq m} \| \partial^{m'} ( \D_{\u_0} \dr_0 \otimes \dr_0 ) \|_{L^4} \| \partial^{m-m'} \D_{\u_0 + \sqrt{\eps} \u_R^\eps } \dr_R^\eps \|_{L^4} \| \partial^m \D_{\u_0 + \sqrt{\eps} \u_R^\eps } \dr_R^\eps \|_{L^2} \\
	    & + C \sum_{m' \leq m} \| \partial^{m-m'} ( \D_{\u_0} \dr_0 \otimes \dr_0 \otimes \nabla \dr_0 ) \|_{L^3} \| \partial^{m'} \dr_R^\eps \|_{L^6} \| \partial^m \D_{\u_0 + \sqrt{\eps} \u_R^\eps } \dr_R^\eps \|_{L^2} \\
	    \leq & C \| \D_{\u_0} \dr_0 \|_{H^N} ( 1 + \| \nabla \dr_0 \|_{H^N} ) \| \D_{\u_0 + \sqrt{\eps} \u_R^\eps } \dr_R^\eps \|^2_{H^N} \\
	    & + C \| \D_{\u_0} \dr_0 \|_{H^{N+1}} \| \u_0 \|_{H^{N+1}} ( 1 + \| \nabla \dr_0 \|_{H^{N+1}} ) \| \nabla \u_R^\eps \|_{H^N} \| \D_{\u_0 + \sqrt{\eps} \u_R^\eps } \dr_R^\eps \|_{H^N} \\
	    \leq & C \sqrt{\eps} \| \D_{\u_0} \dr_0 \|_{H^{N+1}} ( 1 + \| \u_0 \|^2_{H^{N+1}} + \| \nabla \dr_0 \|^2_{H^{N+1}} ) \mathscr{D}_{N, \eps} (t) \\
	    \leq & C \sqrt{\eps} ( 1 + \| \u_0 \|^5_{H^{N+2}} + \| \nabla \dr_0 \|^5_{H^{N+2}} ) ( \| \u_0 \|_{H^{N+2}} + \| \nabla \dr_0 \|_{H^{N+2}} ) \mathscr{D}_{N,\eps} (t)
	  \end{aligned}
	\end{equation}
	for all $|m| \leq N$ and $0 < \eps \leq 1$, where the bounds \eqref{Du0-d0-HN} is also utilized.
	
	For the term $ - \lambda_2 \l \partial^m [ ( \A_R^\eps : \dr_R^\eps \otimes \dr_R^\eps ) \dr_0 ] , \partial^m \D_{\u_0 + \sqrt{\eps} \u_R^\eps } \dr_R^\eps \r $, we estimate that
	\begin{equation}\label{IN4-Rd-4}
	  \begin{aligned}
	    & - \lambda_2 \l \partial^m [ ( \A_R^\eps : \dr_R^\eps \otimes \dr_R^\eps ) \dr_0 ] , \partial^m \D_{\u_0 + \sqrt{\eps} \u_R^\eps } \dr_R^\eps \r \\
	    \leq & C \| \partial^m \A_R^\eps \|_{L^2} \| ( \dr_R^\eps \otimes \dr_R^\eps ) \dr_0 \|_{L^\infty} \| \partial^m \D_{\u_0 + \sqrt{\eps} \u_R^\eps } \dr_R^\eps \|_{L^2} \\
	    & + C \sum_{0 \neq m' \leq m} \| \partial^{m-m'} \A_R^\eps \|_{L^2} \\
	    \leq & C \| \nabla \u_R^\eps \|_{H^N} \| \D_{\u_0 + \sqrt{\eps} \u_R^\eps } \dr_R^\eps \|_{H^N} ( \| \dr_R^\eps \|^2_{H^N} + \| \nabla \dr_R^\eps \|^2_{H^N} ) ( 1 + \| \nabla \dr_0 \|_{H^{N+1}} ) \\
	    \leq  & C \sqrt{\eps}^3 ( 1 + \| \nabla \dr_0 \|_{H^{N+1}} ) \mathscr{E}_{N,\eps} (t) \mathscr{D}_{N,\eps} (t) \,.
	  \end{aligned}
	\end{equation}
    Furthermore, by the similar arguments as in \eqref{IN4-Rd-4}, we immediately have
    \begin{equation}\label{IN4-Rd-5}
      \begin{aligned}
        & - \lambda_1 \l \partial^m [ \A_R^\eps : ( \dr_0 \otimes \dr_R^\eps + \dr_R^\eps \otimes \dr_0 ) \dr_R^\eps ] , \partial^m \D_{\u_0 + \sqrt{\eps} \u_R^\eps } \dr_R^\eps \r \\
        & \leq  C \sqrt{\eps}^3 ( 1 + \| \nabla \dr_0 \|_{H^{N+1}} ) \mathscr{E}_{N,\eps} (t) \mathscr{D}_{N,\eps} (t) \,,
      \end{aligned}
    \end{equation}
    and
    \begin{equation}\label{IN4-Rd-6}
      \begin{aligned}
        & - 2 \sqrt{\eps} \lambda_2 \l \partial^m [ ( \A_R^\eps : \dr_R^\eps \otimes \dr_R^\eps ) \dr_R^\eps ] , \partial^m \D_{\u_0 + \sqrt{\eps} \u_R^\eps } \dr_R^\eps \r \\
        \leq & C \sqrt{\eps} \| \nabla \u_R^\eps \|_{H^N} \| \D_{\u_0 + \sqrt{\eps} \u_R^\eps } \dr_R^\eps \|_{H^N} ( \| \dr_R^\eps \|^3_{H^N} + \| \nabla \dr_R^\eps \|^3_{H^N} ) \\
        \leq & C \eps^3 \mathscr{E}^\frac{3}{2}_{N,\eps} (t) \mathscr{D}_{N,\eps} (t) \,.
      \end{aligned}
    \end{equation}
    We now calculate that
    \begin{equation}\label{IN4-Rd-7}
      \begin{aligned}
        & - \lambda_2 \l \partial^m [ ( \A_0 : \dr_R^\eps \otimes \dr_R^\eps ) \dr_R^\eps ] , \partial^m \D_{\u_0 + \sqrt{\eps} \u_R^\eps } \dr_R^\eps \r \\
        \leq & C \sum_{m' \leq m} \| \partial^{m-m'} ( \A_0 : \dr_R^\eps \otimes \dr_R^\eps ) \|_{L^3} \| \partial^{m'} \dr_R^\eps \|_{L^6} \| \partial^m \D_{\u_0 + \sqrt{\eps} \u_R^\eps } \dr_R^\eps \|_{L^2} \\
        \leq & C \| \u_0 \|_{H^{N+2}} ( \| \dr_R^\eps \|^2_{H^N} + \| \nabla \dr_R^\eps \|_{H^N} ) \| \nabla \dr_R^\eps \|_{H^N} \| \D_{\u_0 + \sqrt{\eps} \u_R^\eps } \dr_R^\eps \|_{H^N} \\
        \leq & C \sqrt{\eps}^3 \| \u_0 \|_{H^{N+2}} \mathscr{E}_{N,\eps} (t) \mathscr{D}_{N,\eps} (t) \,.
      \end{aligned}
    \end{equation}
    The following bound holds for all $|m| \leq N$ and $0 < \eps \leq 1$:
    \begin{equation}\label{IN4-Rd-8}
      \begin{aligned}
        & - \sqrt{\eps} \l \partial^m ( |\D_{\u_0 + \sqrt{\eps} \u_R^\eps } \dr_R^\eps + \u_R^\eps \cdot \nabla \dr_0|^2 \dr_0 ) , \partial^m \D_{\u_0 + \sqrt{\eps} \u_R^\eps } \dr_R^\eps \r \\
        \leq & C \sqrt{\eps} \sum_{m' \leq m} \| \partial^{m-m'} | \D_{\u_0 + \sqrt{\eps} \u_R^\eps } \dr_R^\eps + \u_R^\eps \cdot \nabla \dr_0 |^2 \|_{L^2} \| \partial^{m'} \dr_0 \|_{L^\infty} \| \partial^m \D_{\u_0 + \sqrt{\eps} \u_R^\eps } \dr_R^\eps \|_{L^2} \\
        \leq & C \sqrt{\eps} ( \| \D_{\u_0 + \sqrt{\eps} \u_R^\eps } \dr_R^\eps \|^2_{H^N} + \| \u_R^\eps \cdot \nabla \dr_0 \|^2_{H^N} ) ( 1 + \| \nabla \dr_0 \|_{H^{N+1}} ) \| \D_{\u_0 + \sqrt{\eps} \u_R^\eps } \dr_R^\eps \|_{H^N} \\
        \leq & C \sqrt{\eps} ( 1 + \| \nabla \dr_0 \|_{H^{N+1}} ) \| \D_{\u_0 + \sqrt{\eps} \u_R^\eps } \dr_R^\eps \|_{H^N} ( \| \D_{\u_0 + \sqrt{\eps} \u_R^\eps } \dr_R^\eps \|^2_{H^N} + \| \nabla \u_R^\eps \|^2_{H^N} \| \nabla \dr_0 \|^2_{H^{N+1}} ) \\
        \leq & C \sqrt{\eps} ( 1 + \| \u_0 \|^3_{H^{N+1}} + \| \nabla \dr_0 \|^3_{H^{N+1}} ) \mathscr{E}^\frac{1}{2}_{N,\eps} (t) \mathscr{D}_{N,\eps} (t) \,,
      \end{aligned}
    \end{equation}
    where the inequality \eqref{uR-nabla-d0} is also used. We then estimate that
      \begin{align}\label{IN4-Rd-9}
        \no & - 2 \sqrt{\eps} \l \partial^m [ \D_{\u_0} \dr_0 \cdot ( \D_{\u_0 + \sqrt{\eps} \u_R^\eps } \dr_R^\eps + \u_R^\eps \cdot \nabla \dr_0 ) \dr_R^\eps ] , \partial^m \D_{\u_0 + \sqrt{\eps} \u_R^\eps } \dr_R^\eps \r \\
        \no \leq & C \sqrt{\eps} \| \partial^m [ ( \D_{\u_0} \dr_0 \cdot \D_{\u_0 + \sqrt{\eps} \u_R^\eps } \dr_R^\eps ) \dr_R^\eps ] \|_{L^2} \| \partial^m \D_{\u_0 + \sqrt{\eps} \u_R^\eps } \dr_R^\eps \|_{L^2} \\
        \no & + C \sqrt{\eps} \sum_{m' \leq m} \| \partial^{m-m'} [ \D_{\u_0} \dr_0 \cdot ( \u_R^\eps \cdot \nabla \dr_0 ) ] \|_{L^3} \| \partial^{m'} \dr_R^\eps \|_{L^6} \| \partial^m \D_{\u_0 + \sqrt{\eps} \u_R^\eps } \dr_R^\eps \|_{L^2} \\
        \no \leq & C \sqrt{\eps} \| \D_{\u_0} \dr_0 \|_{H^N} \| \dr_R^\eps \|_{H^N} \| \D_{\u_0 + \sqrt{\eps} \u_R^\eps } \dr_R^\eps \|^2_{H^N} \\
        \no & + C \sqrt{\eps} \| \D_{\u_0} \dr_0 \|_{H^{N+1}} \| \nabla \dr_0 \|_{H^{N+1}} ( \| \u_R^\eps \|_{H^N} + \| \nabla \u_R^\eps \|_{H^N} ) \| \nabla \dr_R^\eps \|_{H^N} \| \D_{\u_0 + \sqrt{\eps} \u_R^\eps } \dr_R^\eps \|_{H^N} \\
        \no \leq & C \eps \| \D_{\u_0} \dr_0 \|_{H^N} ( 1 + \| \nabla \dr_0 \|_{H^{N+1}} ) \mathscr{E}^\frac{1}{2}_{N,\eps} (t) \mathscr{D}_{N,\eps} (t) \\
        \no \leq & C \eps ( 1 + \| \nabla \dr_0 \|^4_{H^{N+1}} + \| \u_0 \|^4_{H^{N+1}} ) ( \| \u_0 \|_{H^{N+1}} + \| \nabla \dr_0 \|_{H^{N+1}} ) \mathscr{E}^\frac{1}{2}_{N,\eps} (t) \mathscr{D}_{N,\eps} (t) \\
        \leq & C \eps ( 1 + \| \u_0 \|^5_{H^{N+1}} + \| \nabla \dr_0 \|^5_{H^{N+1}} ) \mathscr{E}^\frac{1}{2}_{N,\eps} (t) \mathscr{D}_{N,\eps} (t)
      \end{align}
    for all $|m| \leq N$ and $0 < \eps \leq 1$, where the last second inequality is implied the bound \eqref{uR-nabla-d0}. Via the same calculations as in the bound \eqref{IN4-Rd-8}, we have
    \begin{equation}\label{IN4-Rd-10}
      \begin{aligned}
        & - \eps \l \partial^m ( | \D_{\u_0 + \sqrt{\eps} \u_R^\eps } \dr_R^\eps + \u_R^\eps \dr_0 |^2 \dr_R^\eps ) , \partial^m \D_{\u_0 + \sqrt{\eps} \u_R^\eps } \dr_R^\eps \r \\
        & \leq C \eps ( 1 + \| \u_0 \|^2_{H^{N+1}} + \| \nabla \dr_0 \|^2_{H^{N+1}} ) \mathscr{E}_{N,\eps} (t) \mathscr{D}_{N,\eps} (t) \,.
      \end{aligned}
    \end{equation}
    It is easy to deduce that
    \begin{equation}\label{IN4-Rd-11}
      \begin{aligned}
        & - \l \partial^m [ \u_R^\eps \cdot \nabla ( \D_{\u_0} \dr_0 ) ] , \partial^m \D_{\u_0 + \sqrt{\eps} \u_R^\eps } \dr_R^\eps \r \\
        \leq & \| \partial^m [ \u_R^\eps \cdot \nabla ( \D_{\u_0} \dr_0 ) ] \|_{L^2} \| \partial^m \D_{\u_0 + \sqrt{\eps} \u_R^\eps } \dr_R^\eps \|_{L^2} \\
        \leq & C \| \u_R^\eps \|_{H^N} \| \nabla ( \D_{\u_0} \dr_0  ) \|_{H^N} \| \D_{\u_0 + \sqrt{\eps} \u_R^\eps } \dr_R^\eps \|_{H^N} \\
        \leq & C ( 1 + \| \nabla \dr_0 \|^3_{H^{N+1}} ) ( \| \nabla \u_0 \|_{H^{N+1}} + \| \Delta \dr_0 \|_{H^{N+1}} ) \| \u_R^\eps \|_{H^N} \| \D_{\u_0 + \sqrt{\eps} \u_R^\eps } \dr_R^\eps \|_{H^N} \\
        \leq & C \sqrt{\eps} ( 1 + \| \nabla \dr_0 \|^3_{H^{N+1}} ) ( \| \nabla \u_0 \|_{H^{N+1}} + \| \Delta \dr_0 \|_{H^{N+1}} ) \mathscr{E}^\frac{1}{2}_{N,\eps} (t) \mathscr{D}^\frac{1}{2}_{N,\eps} (t) \,,
      \end{aligned}
    \end{equation}
    where the last second inequality is derived from the bound \eqref{Du0-d0-HN}. We next estimate that
    \begin{equation}\label{IN4-Rd-12}
      \begin{aligned}
        & - \l \partial^m [ \u_0 \cdot \nabla ( \u_R^\eps \cdot \nabla \dr_0 ) ] , \partial^m \D_{\u_0 + \sqrt{\eps} \u_R^\eps } \dr_R^\eps \r \\
        = & - \l \partial^m ( \u_0 \cdot \nabla \u_R^\eps \cdot \nabla \dr_0 + \u_0 \cdot \u_R^\eps \cdot \nabla \nabla \dr_0 ) , \partial^m \D_{\u_0 + \sqrt{\eps} \u_R^\eps } \dr_R^\eps \r \\
        \leq & C \| \nabla \partial^m \u_R^\eps \|_{L^2} \| \u_0 \cdot \nabla \dr_0 \|_{L^\infty} \| \partial^m \D_{\u_0 + \sqrt{\eps} \u_R^\eps } \dr_R^\eps \|_{L^2} \\
        & + \sum_{0 \neq m' \leq m} \| \nabla \partial^{m-m'} \u_R^\eps \|_{L^4} \| \partial^{m'} ( \u_0 \cdot \nabla \dr_0 ) \|_{L^4} \| \partial^m \D_{\u_0 + \sqrt{\eps} \u_R^\eps } \dr_R^\eps \|_{L^2} \\
        & + C \sum_{m' \leq m} \| \partial^{m-m'} ( \u_0 \cdot \nabla \u_0 \cdot \nabla \nabla \dr_0 ) \|_{L^3} \| \partial^{m'} \u_R^\eps \|_{L^6} \| \partial^m \D_{\u_0 + \sqrt{\eps} \u_R^\eps } \dr_R^\eps \|_{L^2} \\
        \leq & C \| \nabla \u_R^\eps \|_{H^N} \| \D_{\u_0 + \sqrt{\eps} \u_R^\eps } \dr_R^\eps \|_{H^N} \| \u_0 \|_{H^{N+2}} \| \nabla \dr_0 \|_{H^{N+2}} ( 1 + \| \u_0 \|_{H^{N+2}} ) \\
        \leq & C \sqrt{\eps} \| \u_0 \|_{H^{N+2}} \| \nabla \dr_0 \|_{H^{N+2}} ( 1 + \| \u_0 \|_{H^{N+2}} ) \mathscr{D}_{N,\eps} (t) \,.
      \end{aligned}
    \end{equation}
    Furthermore, we have
    \begin{equation}\label{IN4-Rd-13}
      \begin{aligned}
        & - \sqrt{\eps} \l \partial^m [ \u_R^\eps \cdot \nabla ( \u_R^\eps \cdot \nabla \dr_0 ) ] , \partial^m \D_{\u_0 + \sqrt{\eps} \u_R^\eps } \dr_R^\eps \r \\
        \leq & C \sqrt{\eps} \| \u_R^\eps \|^2_{L^6} \| \nabla \partial^m \nabla \dr_0 \|_{L^6} \| \partial^m \D_{\u_0 + \sqrt{\eps} \u_R^\eps } \dr_R^\eps \|_{L^2} \\
        & + C \sqrt{\eps} \sum_{0 \neq m' \leq m} \| \partial^{m'} ( \u_R^\eps \otimes \u_R^\eps ) \|_{L^4} \| \nabla \partial^{m-m'} \nabla \dr_0 \|_{L^4} \| \partial^m \D_{\u_0 + \sqrt{\eps} \u_R^\eps } \dr_R^\eps \|_{L^2} \\
        \leq & C \sqrt{\eps} \| \nabla \dr_0 \|_{H^{N+2}} \| \u_R^\eps \|_{H^N} \| \nabla \u_R^\eps \|_{H^N} \| \D_{\u_0 + \sqrt{\eps} \u_R^\eps } \dr_R^\eps \|_{H^N} \\
        \leq & C \sqrt{\eps}^3 \| \nabla \dr_0 \|_{H^{N+2}} \mathscr{E}^\frac{1}{2}_{N,\eps} (t) \mathscr{D}_{N,\eps} (t) \,.
      \end{aligned}
    \end{equation}
    Recalling the definition of $\mathcal{R}_{\dr}$ in \eqref{Rd} and collecting the previous bounds \eqref{IN4-Rd-1}, \eqref{IN4-Rd-2}, \eqref{IN4-Rd-3}, \eqref{IN4-Rd-4}, \eqref{IN4-Rd-5}, \eqref{IN4-Rd-6}, \eqref{IN4-Rd-7}, \eqref{IN4-Rd-8}, \eqref{IN4-Rd-9}, \eqref{IN4-Rd-10}, \eqref{IN4-Rd-11}, \eqref{IN4-Rd-12} and \eqref{IN4-Rd-13}, we obtain
    \begin{equation}\label{IN4-Rd-14}
      \begin{aligned}
        & \l \partial^m \mathcal{R}_{\dr} , \partial^m \D_{\u_0 + \sqrt{\eps} \u_R^\eps } \dr_R^\eps \r \\
        \leq & C \sqrt{\eps} ( 1 + \| \u_0 \|^5_{H^{N+2}} + \| \nabla \dr_0 \|^5_{H^{N+2}} ) [ \mathscr{E}^\frac{1}{2}_{N,\eps} (t) + \mathscr{E}^2_{N,\eps} (t) ] \mathscr{D}_{N,\eps} (t) \\
        + & C \sqrt{\eps} ( 1 + \| \u_0 \|^7_{H^{N+2}} + \| \nabla \dr_0 \|^7_{H^{N+2}} ) ( \| \u_0 \|_{H^{N+2}} + \| \nabla \dr_0 \|_{H^{N+2}} ) \mathscr{D}_{N,\eps} (t) \\
        + & C \sqrt{\eps} ( 1 + \| \nabla \dr_0 \|^2_{H^{N+2}} ) ( \| \nabla \u_0 \|_{H^{N+2}} + \| \Delta \dr_0 \|_{H^{N+2}} ) \mathscr{E}^\frac{1}{2}_{N,\eps} (t) \mathscr{D}^\frac{1}{2}_{N,\eps} (t)
      \end{aligned}
    \end{equation}
    for all $|m| \leq N$ and $\eps \in ( 0, \eps_0 ] $.

    By the similar estimates on the quantity $ \l \partial^m \mathcal{R}_{\dr} , \partial^m \D_{\u_0 + \sqrt{\eps} \u_R^\eps } \dr_R^\eps \r $ in \eqref{IN4-Rd-14}, we obtain
    \begin{equation}\label{IN4-Rd-15}
      \begin{aligned}
        &  \l \partial^m \mathcal{R}_{\dr} , \partial^m ( \u_R^\eps \cdot \nabla \dr_0 ) \r \\
        \leq & C ( 1 + \| \u_0 \|^5_{H^{N+2}} + \| \nabla \dr_0 \|^5_{H^{N+2}} ) [ \mathscr{E}^\frac{1}{2}_{N,\eps} (t) + \mathscr{E}^2_{N,\eps} (t) ] \mathscr{D}^\frac{1}{2}_{N,\eps} (t) \| \u_R^\eps \cdot \nabla \dr_0 \|_{H^N} \\
        + & C ( 1 + \| \u_0 \|^7_{H^{N+2}} + \| \nabla \dr_0 \|^7_{H^{N+2}} ) ( \| \u_0 \|_{H^{N+2}} + \| \nabla \dr_0 \|_{H^{N+2}} ) \mathscr{D}^\frac{1}{2}_{N,\eps} (t) \| \u_R^\eps \cdot \nabla \dr_0 \|_{H^N} \\
        + & C ( 1 + \| \nabla \dr_0 \|^2_{H^{N+2}} ) ( \| \nabla \u_0 \|_{H^{N+2}} + \| \Delta \dr_0 \|_{H^{N+2}} ) \mathscr{E}^\frac{1}{2}_{N,\eps} (t) \| \u_R^\eps \cdot \nabla \dr_0 \|_{H^N} \\
        \leq & C \sqrt{\eps} ( 1 + \| \u_0 \|^6_{H^{N+2}} + \| \nabla \dr_0 \|^6_{H^{N+2}} ) [ \mathscr{E}^\frac{1}{2}_{N,\eps} (t) + \mathscr{E}^2_{N,\eps} (t) ] \mathscr{D}_{N,\eps} (t) \\
        + & C \sqrt{\eps} ( 1 + \| \u_0 \|^8_{H^{N+2}} + \| \nabla \dr_0 \|^8_{H^{N+2}} ) ( \| \u_0 \|_{H^{N+2}} + \| \nabla \dr_0 \|_{H^{N+2}} ) \mathscr{D}_{N,\eps} (t) \\
        + & C \sqrt{\eps} ( 1 + \| \nabla \dr_0 \|^3_{H^{N+2}} ) ( \| \nabla \u_0 \|_{H^{N+2}} + \| \Delta \dr_0 \|_{H^{N+2}} ) \mathscr{E}^\frac{1}{2}_{N,\eps} (t) \mathscr{D}^\frac{1}{2}_{N,\eps} (t) \,,
      \end{aligned}
    \end{equation}
    where we make use of the inequality \eqref{uR-nabla-d0}, hence for $N \geq 2$
    \begin{equation*}
      \begin{aligned}
        \| \u_R^\eps \cdot \nabla \dr_0 \|_{H^N} \leq C \| \nabla \u_R^\eps \|_{H^N} \| \nabla \dr_0 \|_{H^{N+1}} \leq C \sqrt{\eps} \| \nabla \dr_0 \|_{H^{N+1}} \mathscr{D}^\frac{1}{2}_{N,\eps} (t) \,.
      \end{aligned}
    \end{equation*}

    Notice that the bound
    \begin{equation}
    \begin{aligned}
    \| \partial^m \dr_R^\eps \|_{L^2} \leq \| \partial^m \dr_R^\eps \|_{L^6} |\T^3|^\frac{1}{3} \leq C \| \nabla \dr_R^\eps \|_{H^N} \leq C \sqrt{\eps} \mathscr{D}^\frac{1}{2}_{N,\eps} (t)
    \end{aligned}
    \end{equation}
    holds for all $|m| \leq N$. Then, from the similar arguments in \eqref{IN4-Rd-14} and the previous bound, we deduce that
    \begin{equation}\label{IN4-Rd-16}
    \begin{aligned}
    &  \delta \l \partial^m \mathcal{R}_{\dr} , \partial^m \dr_R^\eps \r \\
    \leq & C ( 1 + \| \u_0 \|^5_{H^{N+2}} + \| \nabla \dr_0 \|^5_{H^{N+2}} ) [ \mathscr{E}^\frac{1}{2}_{N,\eps} (t) + \mathscr{E}^2_{N,\eps} (t) ] \mathscr{D}^\frac{1}{2}_{N,\eps} (t) \| \partial^m \dr_R^\eps \|_{L^2} \\
    + & C ( 1 + \| \u_0 \|^7_{H^{N+2}} + \| \nabla \dr_0 \|^7_{H^{N+2}} ) ( \| \u_0 \|_{H^{N+2}} + \| \nabla \dr_0 \|_{H^{N+2}} ) \mathscr{D}^\frac{1}{2}_{N,\eps} (t) \| \partial^m \dr_R^\eps \|_{L^2} \\
    + & C ( 1 + \| \nabla \dr_0 \|^2_{H^{N+2}} ) ( \| \nabla \u_0 \|_{H^{N+2}} + \| \Delta \dr_0 \|_{H^{N+2}} ) \mathscr{E}^\frac{1}{2}_{N,\eps} (t) \| \partial^m \dr_R^\eps \|_{L^2} \\
    \leq & C \sqrt{\eps} ( 1 + \| \u_0 \|^5_{H^{N+2}} + \| \nabla \dr_0 \|^5_{H^{N+2}} ) [ \mathscr{E}^\frac{1}{2}_{N,\eps} (t) + \mathscr{E}^2_{N,\eps} (t) ] \mathscr{D}_{N,\eps} (t) \\
    + & C \sqrt{\eps} ( 1 + \| \u_0 \|^7_{H^{N+2}} + \| \nabla \dr_0 \|^7_{H^{N+2}} ) ( \| \u_0 \|_{H^{N+2}} + \| \nabla \dr_0 \|_{H^{N+2}} ) \mathscr{D}_{N,\eps} (t) \\
    + & C \sqrt{\eps} ( 1 + \| \nabla \dr_0 \|^2_{H^{N+2}} ) ( \| \nabla \u_0 \|_{H^{N+2}} + \| \Delta \dr_0 \|_{H^{N+2}} ) \mathscr{E}^\frac{1}{2}_{N,\eps} (t) \mathscr{D}^\frac{1}{2}_{N,\eps} (t) \,.
    \end{aligned}
    \end{equation}
    Finally, from the inequalities \eqref{IN4-Rd-14}, \eqref{IN4-Rd-15} and \eqref{IN4-Rd-16}, we derive that
    \begin{equation}\label{IN4-Rd}
      \begin{aligned}
        \mathcal{I}_N^{(4)} (\mathcal{R}_{\dr}) = & \sum_{|m| \leq N} \l \partial^m \mathcal{R}_{\dr} , \partial^m \D_{\u_0 + \sqrt{\eps} \u_R^\eps } \dr_R^\eps - \partial^m ( \u_R^\eps \cdot \nabla \dr_0) + \delta \partial^m \dr_R^\eps \r \\
        \leq & C \sqrt{\eps} ( 1 + \| \u_0 \|^6_{H^{N+2}} + \| \nabla \dr_0 \|^6_{H^{N+2}} ) [ \mathscr{E}^\frac{1}{2}_{N,\eps} (t) + \mathscr{E}^2_{N,\eps} (t) ] \mathscr{D}_{N,\eps} (t) \\
        + & C \sqrt{\eps} ( 1 + \| \u_0 \|^8_{H^{N+2}} + \| \nabla \dr_0 \|^8_{H^{N+2}} ) ( \| \u_0 \|_{H^{N+2}} + \| \nabla \dr_0 \|_{H^{N+2}} ) \mathscr{D}_{N,\eps} (t) \\
        + & C \sqrt{\eps} ( 1 + \| \nabla \dr_0 \|^3_{H^{N+2}} ) ( \| \nabla \u_0 \|_{H^{N+2}} + \| \Delta \dr_0 \|_{H^{N+2}} ) \mathscr{E}^\frac{1}{2}_{N,\eps} (t) \mathscr{D}^\frac{1}{2}_{N,\eps} (t)
      \end{aligned}
    \end{equation}
    holds for all $N \geq 2$ and $ 0 < \eps \leq \eps_0 $.

    As a consequence, substituting the inequalities \eqref{IN4-Cd}, \eqref{IN4-AB}, \eqref{IN4-Sd1}, \eqref{IN4-Sd2} and \eqref{IN4-Rd} into \eqref{IN4-1} reduces to
    \begin{equation}\label{IN4-Bnd}
      \begin{aligned}
        \mathcal{I}_N^{(4)} \leq & C ( 1 + \| \u_0 \|^9_{H^{N+2}} + \| \nabla \dr_0 \|^9_{H^{N+2}} ) ( \| \nabla \u_0 \|_{H^{N+2}} + \| \Delta \dr_0 \|_{H^{N+2}} ) \mathscr{D}^\frac{1}{2}_{N,\eps} (t) \\
        + & C \sqrt{\eps} ( 1 + \| \u_0 \|^6_{H^{N+2}} + \| \nabla \dr_0 \|^6_{H^{N+2}} ) [ \mathscr{E}^\frac{1}{2}_{N,\eps} (t) + \mathscr{E}^2_{N,\eps} (t) ] \mathscr{D}_{N,\eps} (t) \\
        + & C \sqrt{\eps} ( 1 + \| \u_0 \|^8_{H^{N+2}} + \| \nabla \dr_0 \|^8_{H^{N+2}} ) ( \| \u_0 \|_{H^{N+2}} + \| \nabla \dr_0 \|_{H^{N+2}} ) \mathscr{D}_{N,\eps} (t) \\
        + & C \sqrt{\eps} ( 1 + \| \nabla \dr_0 \|^3_{H^{N+2}} ) ( \| \nabla \u_0 \|_{H^{N+2}} + \| \Delta \dr_0 \|_{H^{N+2}} ) \mathscr{E}^\frac{1}{2}_{N,\eps} (t) \mathscr{D}^\frac{1}{2}_{N,\eps} (t)
      \end{aligned}
    \end{equation}
    for all $N \geq 2$ and $ 0 < \eps \leq \eps_0 $.\\

    {\em Step 5. Close the a priori uniform energy estimates.}

    Via plugging the bounds \eqref{IN1-Bnd}, \eqref{IN2-Bnd}, \eqref{IN3-Bnd} and \eqref{IN4-Bnd} into the relation \eqref{u-d-decay-2}, we obtain
    \begin{equation}
      \begin{aligned}
        & \tfrac{1}{2} \tfrac{\d}{\d t} \mathscr{E}_{N,\eps} (t) + \mathscr{D}_{N,\eps} (t) \\
        \leq & C  ( \| \nabla \u_0 \|_{H^{N+2}} + \| \Delta \dr_0 \|_{H^{N+2}} ) \mathscr{D}^\frac{1}{2}_{N,\eps} (t) \\
        + & C ( \| \u_0 \|^9_{H^{N+2}} + \| \nabla \dr_0 \|^9_{H^{N+2}} ) ( \| \nabla \u_0 \|_{H^{N+2}} + \| \Delta \dr_0 \|_{H^{N+2}} ) \mathscr{D}^\frac{1}{2}_{N,\eps} (t) \\
        + & C \sqrt{\eps} ( 1 + \| \u_0 \|^6_{H^{N+2}} + \| \nabla \dr_0 \|^6_{H^{N+2}} ) [ \mathscr{E}^\frac{1}{2}_{N,\eps} (t) + \mathscr{E}^2_{N,\eps} (t) ] \mathscr{D}_{N,\eps} (t) \\
        + & C \sqrt{\eps} ( 1 + \| \u_0 \|^8_{H^{N+2}} + \| \nabla \dr_0 \|^8_{H^{N+2}} ) ( \| \u_0 \|_{H^{N+2}} + \| \nabla \dr_0 \|_{H^{N+2}} ) \mathscr{D}_{N,\eps} (t) \\
        + & C \sqrt{\eps} ( 1 + \| \nabla \dr_0 \|^3_{H^{N+2}} ) ( \| \nabla \u_0 \|_{H^{N+2}} + \| \Delta \dr_0 \|_{H^{N+2}} ) \mathscr{E}^\frac{1}{2}_{N,\eps} (t) \mathscr{D}^\frac{1}{2}_{N,\eps} (t)
      \end{aligned}
    \end{equation}
    for all $0 < \eps \leq \eps_0 \leq 1$. Furthermore, by the relations \eqref{Bnd-Es0-Ds0} and the bound \eqref{u0-d0-Bnd} we know that
    \begin{equation}
      \begin{aligned}
        & \| \nabla \dr_0 \|^2_{H^{N+2}} + \| \u_0 \|^2_{H^{N+2}} \leq c_0^{-1} \mathscr{E}_{S_{\!_N} , 0} (t) \leq C(\beta_{S_{\!_N},0}) \,, \\
        & \| \nabla \u_0 \|^2_{H^{N+2}} + \| \Delta \dr_0 \|^2_{H^{N+2}} \leq c_0^{-1} \mathscr{D}_{S_{\!_N} , 0} (t) \,.
      \end{aligned}
    \end{equation}
    As a result, we deduce that for all $N \geq 2$ and $0 < \eps \leq \eps_0$
    \begin{equation}
      \begin{aligned}
        & \tfrac{1}{2} \tfrac{\d}{\d t} \mathscr{E}_{N,\eps} (t) + \mathscr{D}_{N,\eps} (t) \\
        \leq & C \mathscr{D}^\frac{1}{2}_{S_{\!_N} , 0} (t) \mathscr{D}^\frac{1}{2}_{N,\eps} (t) + C \big( \mathscr{E}^\frac{1}{2}_{S_{\!_N}, 0} (t) + \mathscr{E}^\frac{1}{2}_{N, \eps} (t)  \big) \mathscr{D}^\frac{1}{2}_{S_{\!_N}, 0} (t) \mathscr{D}^\frac{1}{2}_{N,\eps} (t) \\
        + & C \sqrt{\eps} \big( \mathscr{E}^\frac{1}{2}_{S_{\!_N}, 0} (t) + \mathscr{E}^\frac{1}{2}_{N,\eps} (t) + \mathscr{E}^2_{N,\eps} (t) \big) \mathscr{D}_{N,\eps} (t) \,,
      \end{aligned}
    \end{equation}
    which immediately implies by the Young's inequality that
    \begin{equation}
      \begin{aligned}
        \tfrac{\d}{\d t} \mathscr{E}_{N,\eps} (t) + \mathscr{D}_{N,\eps} (t) \leq & C \sqrt{\eps} \big( \mathscr{E}^\frac{1}{2}_{S_{\!_N}, 0} (t) + \mathscr{E}^\frac{1}{2}_{N,\eps} (t) + \mathscr{E}^2_{N,\eps} (t) \big) \mathscr{D}_{N,\eps} (t) \\
        + & C \mathscr{D}_{S_{\!_N} , 0} (t) + C \big( \mathscr{E}^\frac{1}{2}_{S_{\!_N}, 0} (t) + \mathscr{E}^\frac{1}{2}_{N, \eps} (t)  \big) \mathscr{D}^\frac{1}{2}_{S_{\!_N}, 0} (t) \mathscr{D}^\frac{1}{2}_{N,\eps} (t)
      \end{aligned}
    \end{equation}
    for all $0 < \eps \leq \eps_0$. Taking a large constant $\theta_0 \gg 1$ and adding the $\theta_0$ times of the differential inequality \eqref{u0-d0-differential-ineq} to the previous inequality gives us
    \begin{equation}
      \begin{aligned}
        & \tfrac{\d}{\d t} \Big[ \mathscr{E}_{N,\eps} (t) + \theta_0 \mathscr{E}_{S_{\!_N}, 0} (t) \Big] + \mathscr{D}_{N,\eps} (t) + \tfrac{\theta_0}{2} \mathscr{D}_{S_{\!_N}, 0} (t) \\
        \leq & C \big[ \mathscr{E}^2_{N,\eps} (t) + \mathscr{E}^\frac{1}{2}_{N,\eps} (t) + \mathscr{E}^\frac{1}{2}_{S_{\!_N},\eps} (t) \big] \big[ \mathscr{D}_{N,\eps} (t) + \tfrac{\theta_0}{2} \mathscr{D}_{S_{\!_N}, 0} (t) \big]
      \end{aligned}
    \end{equation}
    for all $N \geq 2$ and $0 < \eps \leq \eps_0$. Then the proof of Proposition \ref{Prop-Unif-Bnd} is finished.
\end{proof}

\section{Global well-posedness of the remainder system: proof of Theorem \ref{Thm-main}}\label{Sec:Proof-Main-Thm}.

In this section, we aim at completing the proof of Theorem \ref{Thm-main}. Without loss of generality, based on the a priori energy estimate \eqref{Apriori-Energy-Inq} derived in Proposition \ref{Prop-Unif-Bnd}, we prove the global existence of the remainder system \eqref{Remainder-u-d-2} with the initial data \eqref{IC-Rmd-2}. Then, combining the solution $(\u_0 , \dr_0)$ to the limit equations \eqref{PLQ} with initial conditions \eqref{IC-ParaLQ} given in Proposition \ref{Prop-WP-PLQ}, we know that
$$ ( \u^\eps , \dr^\eps ) = ( \u_0 + \sqrt{\eps} \u_R^\eps , \dr_0 + \sqrt{\eps} \dr_R^\eps ) $$
obeys the first three equations of the system \eqref{HLQ} with the initial data \eqref{IC-HyperLQ}. We remark that the similar arguments will also justify the global existence to the remainder system \eqref{Remainder-u-d}-\eqref{IC-Remainder}, which is the remainder system with respect to the ill-posedness initial data. Thus, we obtain a global classical solution
\begin{equation*}
  \begin{aligned}
    ( \u^\eps , \dr^\eps ) = ( \u_0 + \sqrt{\eps} \u_R^\eps , \dr_0 + \eps \D_I^\eps + \sqrt{\eps} \dr_R^\eps )
  \end{aligned}
\end{equation*}
to the original system \eqref{HLQ}-\eqref{IC-HyperLQ}. We omit the details of this case here.

We now introduce a mollifier over the periodic space variable. Recall that $ \T^3 = \R^3 / \mathbb{L}^3 $, where $\mathbb{L}^3 \subset \R^3$ is some $3$-dimensional lattice. Let $ \varphi \in C^\infty (\R^3)$ be such that $\varphi \geq 0$, $\int_{\R^3} \varphi (x) \d x = 1$, and $\varphi (x) = 0$ for $|x| > 1$. We then define $\varphi^\zeta (x) \in C^\infty ( \T^3 )$ by
\begin{equation*}
  \begin{aligned}
    \varphi^\zeta (x) = \tfrac{1}{\zeta^3} \sum_{l \in \mathbb{L}^3} \varphi \left( \tfrac{x+l}{\zeta} \right)
  \end{aligned}
\end{equation*}
for any $\zeta > 0$. Then we define a mollifier $\J_\zeta$ as
$$ \J_\zeta f (x) = \varphi^\zeta * f (x) = \int_{\T^3} \varphi^\zeta (x-y) f(y) \d y \,. $$

Next we prove the main results of this paper.

\begin{proof}[Proof of Theorem \ref{Thm-main}]

We first construct the following approximate system of the remainder equations \eqref{Remainder-u-d}-\eqref{IC-Remainder}
\begin{equation}\label{Appr-sys}
  \left\{
    \begin{array}{r}
      \partial_t \u_{R,\zeta}^\eps - \tfrac{1}{2} \mu_4 \J_\zeta \Delta \J_\zeta \u_{R,\zeta}^\eps + \nabla p_{R,\zeta}^\eps = \mu_1 \J_\zeta \div \big( ( \J_\zeta \A_{R,\zeta}^\eps : \dr_0 \otimes \dr_0 ) \dr_0 \otimes \dr_0 \big) \\ [2mm]
      \qquad\qquad\qquad\qquad\qquad\qquad\qquad + \J_\zeta \mathcal{K}_{\u, \zeta} + \J_\zeta \div \big( \mathcal{C}_{\u, \zeta} + \mathcal{T}_{\u, \zeta} + \sqrt{\eps} \mathcal{R}_{\u , \zeta} \big) \,, \\ [3mm]
       \div \, \u_{R, \zeta}^\eps = 0 \,, \qquad\qquad\qquad\qquad\qquad\qquad\qquad  \\ [3mm]
      \partial_t ( \D_{\u_0 + \sqrt{\eps} \u_{R,\zeta}^\eps} \dr_{R,\eps}^\eps )_\zeta + \J_\zeta \big[ ( \u_0 + \sqrt{\eps} \J_\zeta \u_{R,\zeta}^\eps ) \cdot \nabla ( \D_{\u_0 + \sqrt{\eps} \u_{R,\zeta}^\eps} \dr_{R,\eps}^\eps )_\zeta \big] \\ [2mm]
      \qquad\qquad\qquad + \tfrac{- \lambda_1}{\eps} ( \D_{\u_0 + \sqrt{\eps} \u_{R,\zeta}^\eps} \dr_{R,\eps}^\eps )_\zeta - \tfrac{1}{\eps} \J_\zeta \Delta \J_\zeta \dr_{R,\zeta}^\eps + \partial_t ( \J_\zeta \u_{R,\eps}^\eps \cdot \nabla \dr_0 ) \\ [2mm]
      \qquad\qquad\qquad = \tfrac{1}{\eps} \J_\zeta \mathcal{C}_{\dr, \zeta} + \tfrac{1}{\eps} \J_\zeta \mathcal{S}^1_{\dr, \zeta} + \tfrac{1}{\sqrt{\eps}} \mathcal{S}^2_{\dr , \zeta} + \J_\zeta \mathcal{R}_{\dr, \zeta} \,, \\ [3mm]
      \partial_t  \dr_{R,\zeta}^\eps = ( \D_{\u_0 + \sqrt{\eps} \u_{R,\zeta}^\eps} \dr_{R,\eps}^\eps )_\zeta - \J_\zeta \big( ( \u_0 + \sqrt{\eps} \J_\zeta \u_{R,\zeta}^\eps ) \cdot \nabla \J_\zeta \dr_{R,\zeta}^\eps \big)
    \end{array}
  \right.
\end{equation}
with the initial conditions
\begin{equation}
  \begin{aligned}
    \big( \u_{R, \zeta}^\eps , \dr_{R, \zeta}^\eps , ( \D_{\u_0 + \sqrt{\eps} \u_{R,\zeta}^\eps} \dr_{R,\eps}^\eps )_\zeta  \big) \big|_{t=0} = ( \J_\zeta \u_R^{\eps, in}, \J_\zeta \dr_R^{\eps, in} , \J_\zeta \widetilde{\dr}_R^{\eps, in}) \,,
  \end{aligned}
\end{equation}
where
$$ \A_{R, \zeta}^\eps = \tfrac{1}{2} ( \nabla \u_{R,\zeta}^\eps + ( \nabla \u_{R,\zeta}^\eps )^\top ) \,, \  \B_{R, \zeta}^\eps = \tfrac{1}{2} ( \nabla \u_{R,\zeta}^\eps - ( \nabla \u_{R,\zeta}^\eps )^\top ) \,,$$
and the symbols $\mathcal{K}_{\u,\zeta}$, $\mathcal{C}_{\u,\zeta}$, $\mathcal{T}_{\u,\zeta}$, $\mathcal{R}_{\u,\zeta}$, $\mathcal{C}_{\dr,\zeta}$, $\mathcal{S}^1_{\dr,\zeta}$, $\mathcal{S}^1_{\dr,\zeta}$ and $\mathcal{R}_{\dr,\zeta}$ are the same form as  $\mathcal{K}_{\u}$, $\mathcal{C}_{\u}$, $\mathcal{T}_{\u}$, $\mathcal{R}_{\u}$, $\mathcal{C}_{\dr}$, $\mathcal{S}^1_{\dr}$, $\mathcal{S}^1_{\dr}$ and $\mathcal{R}_{\dr}$, respectively (just replacing the symbols $\u_R^\eps$, $\dr_R^\eps$ and $\D_{\u_0 + \sqrt{\eps} \u_{R,\zeta}^\eps} \dr_R^\eps$ with the corresponding symbols $\u_{R,\zeta}^\eps$, $\dr_{R,\zeta}^\eps$ and $( \D_{\u_0 + \sqrt{\eps} \u_{R,\zeta}^\eps} \dr_{R,\eps}^\eps )_\zeta$). The previous approximate system can be regarded as an ordinary differential equation in $H^N$ for any $\eps > 0$ by verifying the conditions of Cauchy-Lipschitz theorem. Thus it admits a unique solution $( \u_{R, \zeta}^\eps , \dr_{R, \zeta}^\eps )$ in $ C([0,T_{\eps,\zeta} ) ; H^N) $ with the maximal time interval $ [ 0, T_{\eps,\zeta} )$.

We define the following approximate energy $ \mathscr{E}_{N,\eps}^\zeta (t) $ and the approximate energy dissipative rate $\mathscr{D}_{N,\eps}^\zeta (t)$:
\begin{equation}
  \begin{aligned}
    \mathscr{E}_{N,\eps}^\zeta (t) = & \tfrac{1}{\eps} \| \u_{R,\zeta}^\eps \|^2_{H^N} + ( 1 - \delta ) \| ( \D_{\u_0 + \sqrt{\eps} \u_{R,\zeta}^\eps} \dr_{R,\eps}^\eps )_\zeta \|^2_{H^N} + \tfrac{1}{\eps} \| \nabla \J_\zeta \dr_{R,\zeta}^\eps \|^2_{H^N} \\
    + & ( \tfrac{- \delta \lambda_1}{\eps} - \tfrac{5}{4} \delta ) \| \dr_{R,\zeta}^\eps \|^2_{H^N} + \| \J_\zeta \u_{R,\zeta}^\eps \cdot \nabla \dr_0 + \tfrac{\delta}{2} \dr_{R,\zeta}^\eps \|^2_{H^N} \\
    + & \delta \| ( \D_{\u_0 + \sqrt{\eps} \u_{R,\zeta}^\eps} \dr_{R,\eps}^\eps )_\zeta + \dr_{R,\zeta}^\eps \|^2_{H^N} \\
    + & 2 \sum_{|m| \leq N} \l \partial^m ( \J_\zeta \u_{R,\zeta}^\eps \cdot \nabla \dr_0 ) , \partial^m ( \D_{\u_0 + \sqrt{\eps} \u_{R,\zeta}^\eps} \dr_{R,\eps}^\eps )_\zeta \r \,,
  \end{aligned}
\end{equation}
and
\begin{equation}
  \begin{aligned}
    \mathscr{D}_{N,\eps}^\zeta (t) = & \tfrac{3 \mu_4}{8 \eps} \| \nabla \J_\zeta \u_{R, \zeta}^\eps \|^2_{H^N} + \tfrac{\delta}{2 \eps} \| \nabla \J_\zeta \dr_{R,\zeta}^\eps \|^2_{H^N} - \delta \| ( \D_{\u_0 + \sqrt{\eps} \u_{R,\zeta}^\eps} \dr_{R,\eps}^\eps )_\zeta \|^2_{H^N} \\
    + & \tfrac{\mu_1}{\eps} \sum_{|m| \leq N} \| ( \partial^m \J_\zeta \A_{R,\zeta}^\eps ) : \dr_0 \otimes \dr_0 \|^2_{L^2} \\
    + & \tfrac{1}{\eps} ( \mu_5 + \mu_6 + \tfrac{\lambda_2^2}{\lambda_1} ) \sum_{|m| \leq N} \| ( \partial^m \J_\zeta \A_{R,\zeta}^\eps ) \dr_0 \|^2_{L^2} \\
    + & \tfrac{- \lambda_1}{\eps} \sum_{|m| \leq N} \| \partial^m ( \D_{\u_0 + \sqrt{\eps} \u_{R,\zeta}^\eps} \dr_{R,\eps}^\eps )_\zeta + ( \partial^m \J_\zeta \B_{R,\zeta}^\eps ) \dr_0 + \tfrac{\lambda_2}{\lambda_1} ( \partial^m \J_\zeta \A_{R,\zeta}^\eps ) \dr_0 \|^2_{L^2} \,,
  \end{aligned}
\end{equation}
where the constant $\delta \in ( 0, \tfrac{1}{2} ]$ is the same as that mentioned in $\mathscr{E}_{N,\eps} (t)$ and $\mathscr{D}_{N,\eps} (t)$. As shown in Lemma \ref{Lmm-Energies}, when $N \geq 2$ and $0 < \eps \leq \eps_0$, the approximate energy $ \mathscr{E}_{N,\eps}^\zeta (t) $ and the approximate energy dissipative rate $\mathscr{D}_{N,\eps}^\zeta (t)$ are nonnegative. Moreover,
\begin{equation}
  \begin{aligned}
    \mathscr{E}_{N,\eps}^\zeta (t) \thicksim \tfrac{1}{\eps} \| \u_{R,\zeta}^\eps \|^2_{H^N} + \| ( \D_{\u_0 + \sqrt{\eps} \u_{R,\zeta}^\eps} \dr_{R,\eps}^\eps )_\zeta \|^2_{H^N} + \tfrac{1}{\eps} \| \nabla \J_\zeta \dr_{R,\zeta}^\eps \|^2_{H^N} + \tfrac{1}{\eps} \| \dr_{R,\zeta}^\eps \|^2_{H^N} \,,
  \end{aligned}
\end{equation}
and
\begin{equation}
  \begin{aligned}
    \mathscr{D}_{N,\eps}^\zeta (t) \thicksim & \tfrac{1}{\eps} \| \nabla \J_\zeta \u_{R,\zeta}^\eps \|^2_{H^N} + \tfrac{1}{\eps} \| \nabla \J_\zeta \dr_{R,\zeta}^\eps \|^2_{H^N} + \| ( \D_{\u_0 + \sqrt{\eps} \u_{R,\zeta}^\eps} \dr_{R,\eps}^\eps )_\zeta \|^2_{H^N} \\
    & + \tfrac{1}{\eps} \sum_{|m| \leq N} \| \partial^m ( \D_{\u_0 + \sqrt{\eps} \u_{R,\zeta}^\eps} \dr_{R,\eps}^\eps )_\zeta + ( \partial^m \J_\zeta \B_{R,\zeta}^\eps ) \dr_0 + \tfrac{\lambda_2}{\lambda_1} ( \partial^m \J_\zeta \A_{R,\zeta}^\eps ) \dr_0 \|^2_{L^2} \,.
  \end{aligned}
\end{equation}

Via the similar arguments in Proposition \ref{Prop-Unif-Bnd}, we can derive that for all $0 < \eps \leq \eps_0$, $\zeta > 0$ and $t \in [ 0, T_{\eps, \zeta} )$
\begin{equation}\label{Unif-Diffe-Ineq}
  \begin{aligned}
    \tfrac{\d}{\d t} & \Big[ \mathscr{E}^\zeta_{N,\eps} (t) + \theta_0 \mathscr{E}_{S_{\!_N}, 0} (t) \Big] + \mathscr{D}^\zeta_{N,\eps} (t) + \tfrac{\theta_0}{2} \mathscr{D}_{S_{\!_N}, 0} (t) \\
    \leq & C \big[ ( \mathscr{E}^\zeta_{N,\eps} (t) )^2 + (\mathscr{E}^\zeta_{N,\eps} (t) )^\frac{1}{2} + \mathscr{E}^\frac{1}{2}_{S_{\!_N},0} (t) \big] \big[ \mathscr{D}^\zeta_{N,\eps} (t) + \tfrac{\theta_0}{2} \mathscr{D}_{S_{\!_N}, 0} (t) \big] \,,
  \end{aligned}
\end{equation}
where the constant $C > 0$, $\eps_0 \in ( 0, 1 ]$ and $\theta_0 \gg 1$ are mentioned in Proposition \ref{Prop-Unif-Bnd}.

Next we prove that the maximal time $T_{\eps, \zeta} = + \infty$ under the small size constraint of the initial energy $E^{in}$, defined in \eqref{Initial-Energy}. More precisely, there is a small $\xi_0 > 0$, independent of $\eps$ and $\zeta$, such that if $E^{in} \leq \xi_0$, we have $T_{\eps, \zeta} = + \infty $ and
\begin{equation}
  \begin{aligned}
    \mathscr{E}^\zeta_{N,\eps} (t) + \theta_0 \mathscr{E}_{S_{_{\!N}}, 0} (t) + \tfrac{1}{2} \int_0^t \big[ \mathscr{D}^\zeta_{N,\eps} (\tau) + \tfrac{\theta_0}{2} \mathscr{D}_{S_{_{\!_N}}, 0} (\tau) \big] \d \tau \leq (1 + \theta_0) \xi_0
  \end{aligned}
\end{equation}
holds for all $t \geq 0$, $\zeta > 0$ and $ 0 < \eps \leq \eps_0 $.

Assume that $T_{\eps, \zeta} < + \infty$, We define a time number $T_{\eps, \zeta}^*$ as
\begin{equation}
  \begin{aligned}
    T_{\eps, \zeta}^* = \sup \Big\{  \tau \in [ 0 , T_{\eps, \zeta} ) ; \sup_{t \in [0, \tau]} C \big[ ( \mathscr{E}^\zeta_{N,\eps} (t) )^2 + (\mathscr{E}^\zeta_{N,\eps} (t) )^\frac{1}{2} + \mathscr{E}^\frac{1}{2}_{S_{\!_N},0} (t) \big] \leq \tfrac{1}{2}  \Big\} \in [ 0 , T_{\eps, \zeta} ) \,.
  \end{aligned}
\end{equation}
By Lemma \ref{Lmm-Energies} and the definition of the initial energy $E^{in}$ in \eqref{Initial-Energy}, we have
\begin{equation}
  \begin{aligned}
    & C \big[ ( \mathscr{E}^\zeta_{N,\eps} (0) )^2 + (\mathscr{E}^\zeta_{N,\eps} (0) )^\frac{1}{2} + \mathscr{E}^\frac{1}{2}_{S_{\!_N},0} (0) \big] \\
    \leq & C \big[ ( C_2 E^{in} )^2 + 2 \sqrt{C_2 E^{in}} \big] \\
    \leq & C \big[ ( C_2 \xi_0 )^2 + 2 \sqrt{C_2 \xi_0} \big]
  \end{aligned}
\end{equation}
holds for all $0 < \eps \leq \eps_0$, where the constants $C_2$ and $\eps_0$ are given in Lemma \ref{Lmm-Energies}. As listed in Proposition \ref{Prop-WP-PLQ} of this paper, we first require the number $\xi_0 \leq \beta_{S_{\!_N}, 0}$ such that the initial energy bound $E^{in} \leq \xi_0$ guarantees the global existence results of Wang-Zhang-Zhang \cite{Wang-Zhang-Zhang-2013-ARMA} to the limit system \eqref{PLQ} with the initial data \eqref{IC-ParaLQ}. We thus choose the small positive constant $\xi_0 \in ( 0, \beta_{S_{\!_N}, 0} ] $ such that
$$ C \big[ ( C_2 \xi_0 )^2 + 2 \sqrt{C_2 \xi_0} \big] \leq \tfrac{1}{4} \,.$$
Specifically, one can take any $ \xi_0 \in (0 , \min \{ 1 , \beta_{S_{\!_N}, 0}, \tfrac{1}{C_2} \tfrac{1}{144 C^2 C_2} \} ] \subset ( 0, 1 ] $. As a result, we know that if $E^{in} \leq \xi_0$, then for all $\eps \in ( 0, \eps_0 ]$
\begin{equation}\label{Initial-Bnd}
  \begin{aligned}
    C \big[ ( \mathscr{E}^\zeta_{N,\eps} (0) )^2 + (\mathscr{E}^\zeta_{N,\eps} (0) )^\frac{1}{2} + \mathscr{E}^\frac{1}{2}_{S_{\!_N},0} (0) \big] \leq \tfrac{1}{4} \,.
  \end{aligned}
\end{equation}
By the initial energy bound \eqref{Initial-Bnd} and the continuity of the energy functional $ \mathscr{E}^\zeta_{N,\eps} (t) $ in $ t \in [ 0, T_{\eps, \zeta} ) $, we imply that $ T_{\eps, \zeta}^* > 0 $.

We now claim that there is a small $ \xi_0 \in (0 , \min \{ 1 , \beta_{S_{\!_N}, 0}, \tfrac{1}{C_2} \tfrac{1}{144 C^2 C_2} \} ] $ such that if $E^{in} \leq \xi_0$, then $ T_{\eps, \zeta}^* = T_{\eps, \zeta} $ holds for all $0 < \eps \leq \eps_0$ and $\zeta > 0$. Indeed, if $ T_{\eps, \zeta}^* < T_{\eps, \zeta} $ for all $ \xi_0 \in (0 , \min \{ 1 , \beta_{S_{\!_N}, 0}, \tfrac{1}{C_2} \tfrac{1}{144 C^2 C_2} \} ] $, the inequality \eqref{Unif-Diffe-Ineq} tells us that
\begin{equation}
  \begin{aligned}
    \tfrac{\d}{\d t} & \Big[ \mathscr{E}^\zeta_{N,\eps} (t) + \theta_0 \mathscr{E}_{S_{\!_N}, 0} (t) \Big] + \tfrac{1}{2} \big[ \mathscr{D}^\zeta_{N,\eps} (t) + \tfrac{\theta_0}{2} \mathscr{D}_{S_{\!_N}, 0} (t) \big] \leq 0
  \end{aligned}
\end{equation}
holds for all $t \in [ 0, T_{\eps,\zeta}^* ]$ and for all $\zeta > 0$, $\eps \in ( 0, \eps_0 ]$. Hence, integrating the previous differential inequality over $[0, t]$ reduces to
\begin{equation}
  \begin{aligned}
    & \mathscr{E}_{N,\eps}^\zeta (t) + \theta_0 \mathscr{E}_{S_{_{\!N}}, 0} (t) + \tfrac{1}{2}  \int_0^t \big[ \mathscr{D}^\zeta_{N,\eps} (\tau) + \tfrac{\theta_0}{2} \mathscr{D}_{S_{\!_N}, 0} (\tau) \big] \d \tau \\
    \leq & \mathscr{E}_{N,\eps}^\zeta (0) + \theta_0 \mathscr{E}_{S_{_{\!N}}, 0} (0) \\
    \leq & ( 1 + \theta_0 ) \big[ \mathscr{E}_{N,\eps}^\zeta (0) +  \mathscr{E}_{S_{_{\!N}}, 0} (0) \big] \\
    \leq & ( 1 + \theta_0 ) C_2 \xi_0
  \end{aligned}
\end{equation}
holds for all  $t \in [ 0, T_{\eps,\zeta}^* ]$ and for all $\zeta > 0$, $\eps \in ( 0, \eps_0 ]$, which means that
\begin{equation}
  \begin{aligned}
    \mathscr{E}_{N,\eps}^\zeta (t) + \mathscr{E}_{S_{_{\!N}}, 0} (t)  \leq ( 1 + \theta_0 ) C_2 \xi_0 \,.
  \end{aligned}
\end{equation}
If we choose a fixed $ \xi_0 = \min \{ 1 , \beta_{S_{\!_N}, 0}, \tfrac{1}{C_2 (1 + \theta_0)} \tfrac{1}{144 C^2 C_2 (1 + \theta_0)} \} \in (0 , \min \{ 1 , \beta_{S_{\!_N}, 0}, \tfrac{1}{C_2} \tfrac{1}{144 C^2 C_2} \} ] $, then the previous energy bound implies that under the constraint $E^{in} \leq \xi_0$
\begin{equation}
  \begin{aligned}
    & C \big[ ( \mathscr{E}^\zeta_{N,\eps} (t) )^2 + (\mathscr{E}^\zeta_{N,\eps} (t) )^\frac{1}{2} + \mathscr{E}^\frac{1}{2}_{S_{\!_N},0} (t) \big] \\
    \leq & C \Big[ ( C_2 ( 1 + \theta_0 ) \xi_0 )^2 + 2 \sqrt{C_2 (1 + \theta_0) \xi_0} \Big] \leq \tfrac{1}{4} < \tfrac{1}{2}
  \end{aligned}
\end{equation}
holds for all $t \in [ 0, T_{\eps,\zeta}^* ]$. Thus the continuity of $\mathscr{E}^\zeta_{N,\eps} (t)$ yields that there exists a $t^* > T_{\eps,\zeta}^*$ such that
$$ \sup_{t \in [0, t^*]} C \big[ ( \mathscr{E}^\zeta_{N,\eps} (t) )^2 + (\mathscr{E}^\zeta_{N,\eps} (t) )^\frac{1}{2} + \mathscr{E}^\frac{1}{2}_{S_{\!_N},0} (t) \big] \leq \tfrac{1}{2} \,, $$
which contradicts to the definition of $T_{\eps,\zeta}^*$. Consequently, we have $T_{\eps,\zeta}^* = T_{\eps,\zeta} < + \infty$.

Therefore, it must hold that at time $t = T_{\eps,\zeta}$
\begin{equation}
  \begin{aligned}
    \mathscr{E}^\zeta_{N,\eps} (t) + \mathscr{E}^\frac{1}{2}_{S_{\!_N},0} (t) < + \infty \,.
  \end{aligned}
\end{equation}
We then can extend the solution $( \u_{R,\zeta}^\eps , \dr_{R,\zeta}^\eps )$ of the approximate system \eqref{Appr-sys} to a lager interval $ [ 0, T_{\eps, \zeta} + \kappa ) $ for some $\kappa > 0$. This contradicts to the maximality of $T_{\eps,\zeta}$. As a consequence, there is a small $\xi_0 > 0$, independent of $\eps$ and $\zeta$, such that if $E^{in} \leq \xi_0$ for all $\eps \in ( 0, \eps_0 ]$, we have $T_{\eps, \zeta} = + \infty $ and
\begin{equation}
\begin{aligned}
\sup_{t \geq 0} \big[ \mathscr{E}^\zeta_{N,\eps} (t) + \mathscr{E}_{S_{_{\!N}}, 0} (t) \big] + \tfrac{1}{2} \int_0^\infty \big[ \mathscr{D}^\zeta_{N,\eps} (\tau) + \tfrac{1}{2} \mathscr{D}_{S_{_{\!_N}}, 0} (\tau) \big] \d \tau \leq (1 + \theta_0) \xi_0
\end{aligned}
\end{equation}
holds for all $\zeta > 0$ and $ 0 < \eps \leq \eps_0 $.

Then, by compactness arguments (let $\zeta \rightarrow 0$), we get vector field $ ( \u_R^\eps , \dr_R^\eps ) \in \R^3 \times \R^3 $ satisfying
$$ \u_R^\eps , \D_{\u_0 + \sqrt{\eps} \u_{R,\zeta}^\eps} \dr_R^\eps , \nabla \dr_R^\eps, \dr_R^\eps \in L^\infty ( \R^+; H^N ) \,, \ \nabla \u_R^\eps \in L^2 ( \R^+; H^N ) $$
for all $0 < \eps \leq \eps_0$, which solves the remainder system \eqref{Remainder-u-d}-\eqref{IC-Remainder}. Moreover, $(\u_R^\eps, \dr_R^\eps)$ obeys the energy bound
\begin{equation*}
  \begin{aligned}
    \sup_{t \geq 0} \left( \tfrac{1}{\eps} \| \u_R^\eps \|^2_{H^N} + \| \D_{\u_0 + \sqrt{\eps} \u_{R,\zeta}^\eps} \dr_R^\eps \|^2_{H^N} + \tfrac{1}{\eps} \| \dr_R^\eps \|^2_{H^{N+1}} \right) (t) + \tfrac{1}{\eps} \int_0^\infty \| \nabla \u_R^\eps \|^2_{H^N} (t) \d t \leq C \xi_0
  \end{aligned}
\end{equation*}
for some $C > 0$, which is uniform in $\eps \in ( 0, \eps_0 ]$. It is easy to know that
\begin{equation*}
  \begin{aligned}
    2 (\dr_0 \cdot \dr_R^\eps) (0,x) + \sqrt{\eps} |\dr_R^\eps|^2 (0,x) = 0 \,.
  \end{aligned}
\end{equation*}
As a consequence, Lemma \ref{Lmm-Constraints} (or Remark \ref{Rmk-constraint-2}) implies the constraint \eqref{Constraint-2} holds at any time.Then the proof of Theorem \ref{Thm-main} is finished.
\end{proof}

\appendix

\section{Remainder systems}\label{Appendix}

In this section, we will present the tedious terms of the remainder system \eqref{Remainder-u-d}. After submitting the ansatz \eqref{Ansatz} into the hyperbolic Ericksen-Leslie's liquid crystal model \eqref{HLQ}, one obtain the reminder system \eqref{Remainder-u-d}, namely,
\begin{equation*}
\left\{
\begin{array}{c}
\partial_t \u_R^\eps - \tfrac{1}{2} \mu_4 \Delta \u_R^\eps + \nabla p_R^\eps = \mu_1 \div \big[ (\A_R^\eps : \dr_0 \otimes \dr_0) \dr_0 \otimes \dr_0 \big] \\
\qquad\qquad + \mathcal{K}_{\u} + \div ( \mathcal{C}_{\u} + \mathcal{T}_{\u} + \sqrt{\eps} \mathcal{R}_{\u} ) + \eps \div \mathcal{Q}_{\u}(\D_I) \,,\\[2mm]
\div \u_R^\eps = 0 \,, \\[2mm]
\D^2_{\u_0 + \sqrt{\eps} \u_R^\eps} \dr_R^\eps + \tfrac{- \lambda_1}{\eps} \D_{\u_0 + \sqrt{\eps} \u_R^\eps} \dr_R^\eps - \tfrac{1}{\eps} \Delta \dr_R^\eps + \partial_t ( \u_R^\eps \cdot \nabla \dr_0 + \sqrt{\eps} \u_R^\eps \cdot \nabla \D_I^\eps ) \\
= \tfrac{1}{\eps} \mathcal{C}_{\dr} + \tfrac{1}{\eps} \mathcal{S}^1_{\dr} + \tfrac{1}{\sqrt{\eps}} \mathcal{S}^2_{\dr} + \mathcal{R}_{\dr} + \mathcal{Q}_{\dr}(\D_I)
\end{array}
\right.
\end{equation*}
with the constraint
\begin{equation}
\begin{aligned}
2 \dr_0 \cdot ( \dr_R^\eps + \sqrt{\eps} \D_I^\eps ) + \sqrt{\eps} | \dr_R^\eps + \sqrt{\eps} \D_I^\eps |^2 = 0 \,,
\end{aligned}
\end{equation}
where the tensor $\mathcal{C}_{\u}$ is defined in \eqref{C-u} and the vector field $\mathcal{C}_{\dr}$ is given in \eqref{C-d}, the linear tensor term $\mathcal{T}_{\u}$ is
\begin{equation}\label{Tu}
\begin{aligned}
\mathcal{T}_{\u} = & \mu_1 \big[ ( \A_0 : (\dr_R^\eps \otimes \dr_0 + \dr_0 \otimes \dr_R^\eps )) \dr_0 \otimes \dr_0 + (\A_0 : \dr_0 \otimes \dr_0) ( \dr_0 \otimes \dr_R^\eps + \dr_R^\eps \otimes \dr_0 ) \big] \\
+ & \mu_2 \big[ (\D_{\u_0} \dr_0 + \B_0 \dr_0) \otimes \dr_R^\eps + ( \B_0 \dr_R^\eps + \u_R^\eps \cdot \nabla \dr_0 ) \otimes \dr_0  \big] \\
+ & \mu_3 \big[ \dr_R^\eps \otimes (\D_{\u_0} \dr_0 + \B_0 \dr_0)  + \dr_0  \otimes ( \B_0 \dr_R^\eps + \u_R^\eps \cdot \nabla \dr_0 ) \big] \\
+ & \mu_5 \big[ (\A_0 \dr_0) \otimes \dr_R^\eps + (\A_0 \dr_R^\eps) \otimes \dr_0 \big] + \mu_6 \big[  \dr_R^\eps \otimes (\A_0 \dr_0) + \dr_0 \otimes (\A_0 \dr_R^\eps) \big] \,,
\end{aligned}
\end{equation}
the linear vector field $\mathcal{K}_{\u}$ is
\begin{equation}\label{Lu}
\begin{aligned}
\mathcal{K}_{\u} = & - \u_0 \cdot \nabla \u_R^\eps - \u_R^\eps \cdot \nabla\u_0 - \sqrt{\eps} \u_R^\eps \cdot \nabla \u_R^\eps \\
& - \div ( \nabla \dr_0 \odot \nabla_R^\eps + \nabla \dr_R^\eps \odot \nabla \dr_0 + \sqrt{\eps} \nabla \dr_R^\eps \odot \nabla \dr_R^\eps )\,,
\end{aligned}
\end{equation}
the singular linear term $\mathcal{S}^1_{\dr}$ is of the form
\begin{equation}\label{Sd-1}
\begin{aligned}
\mathcal{S}^1_{\dr} = & 2 ( \nabla \dr_0 \cdot \nabla \dr_R^\eps ) \dr_0 + |\nabla \dr_0|^2 \dr_R^\eps + \lambda_1 ( \u_R^\eps \cdot \nabla \dr_0 + \B_0 \dr_R^\eps ) \\
+ & \lambda_2 \big[ \A_0 \dr_R^\eps - ( \A_R^\eps : \dr_0 \otimes \dr_0 ) \dr_0 - (\A_0 : \dr_0 \otimes \dr_0) \dr_R^\eps - 2 ( \A_0 :  \dr_0 \otimes \dr_R^\eps ) \dr_0 \big] \,,
\end{aligned}
\end{equation}
the singular nonlinear term $\mathcal{S}^2_{\dr}$ is defined as
\begin{equation}\label{Sd-2}
\begin{aligned}
\mathcal{S}^2_{\dr} = & |\nabla \dr_R^\eps|^2 \dr_0 + 2 ( \nabla \dr_0 \cdot \nabla \dr_R^\eps ) \dr_R^\eps - \D^2_{\u_0} \dr_0 - |\D_{\u_0} \dr_0|^2 \dr_0 + \lambda_1 \B_R^\eps \dr_R^\eps \\
+ & \lambda_2 \big[ \A_R^\eps \dr_R^\eps - \A_0 : ( \dr_0 \otimes \dr_R^\eps + \dr_R^\eps \otimes \dr_0 ) \dr_R^\eps - ( \A_R^\eps : \dr_0 \otimes \dr_0 ) \dr_R^\eps \big] \\
- & \lambda_2 \big[ \A_R^\eps : ( \dr_R^\eps \otimes \dr_0 + \dr_0 \otimes \dr_R^\eps ) \dr_0 + ( \A_0 : \dr_R^\eps \otimes \dr_R^\eps ) \dr_0 \big] \,,
\end{aligned}
\end{equation}
the nonsingular nonlinear term $\mathcal{R}_{\dr}$ is
\begin{equation}\label{Rd}
\begin{aligned}
\mathcal{R}_{\dr} = & ( |\nabla \dr_R^\eps|^2 - |\D_{\u_0} \dr_0|^2 ) \dr_R^\eps - 2 \big[ \D_{\u_0} \dr_0 \cdot ( \D_{\u_0 + \sqrt{\eps} \u_R^\eps} \dr_R^\eps + \u_R^\eps \cdot \nabla \dr_0 ) \big] \dr_0 \\
- & \lambda_2 \big[ (\A_R^\eps : \dr_R^\eps \otimes \dr_R^\eps ) \dr_0 + \A_R^\eps : ( \dr_R^\eps \otimes \dr_0 + \dr_0 \otimes \dr_R^\eps ) \dr_R^\eps + ( \A_0 : \dr_R^\eps \otimes \dr_R^\eps ) \dr_R^\eps \big] \\
- & \sqrt{\eps} \big[ | \D_{\u_0 + \sqrt{\eps} \u_R^\eps} \dr_R^\eps + \u_R^\eps \cdot \nabla \dr_0 |^2 \dr_0 + 2 \lambda_2 ( \A_R^\eps : \dr_R^\eps \otimes \dr_R^\eps ) \dr_R^\eps \\
& \qquad + 2 \D_{\u_0} \dr_0 \cdot ( \D_{\u_0 + \sqrt{\eps} \u_R^\eps} \dr_R^\eps + \u_R^\eps \cdot \nabla \dr_0 ) \dr_R^\eps \big] \\
- & \eps | \D_{\u_0 + \sqrt{\eps} \u_R^\eps} \dr_R^\eps + \u_R^\eps \cdot \nabla \dr_0 |^2 \dr_R^\eps \\
- & \u_R^\eps \cdot \nabla (\D_{\u_0} \dr_0 ) - \u_0 \cdot \nabla ( \u_R^\eps \cdot \nabla \dr_0 ) - \sqrt{\eps} \u_R^\eps \cdot \nabla ( \u_R^\eps \cdot \nabla \dr_0 ) \,,
\end{aligned}
\end{equation}
and the nonsingular nonlinear tensor $\mathcal{R}_{\u}$ is
\begin{equation}\label{Ru}
\begin{aligned}
\mathcal{R}_{\u} = \mathcal{M}_1 + \sqrt{\eps} \mathcal{M}_2 + \sqrt{\eps}^2 \mathcal{M}_3 + \sqrt{\eps}^3 \mathcal{M}_4 \,.
\end{aligned}
\end{equation}
Here the term $\mathcal{M}_1$ is
\begin{equation}\label{M1}
\begin{aligned}
\mathcal{M}_1 = & \mu_1 \big[ ( \A_0 : \dr_R^\eps \otimes \dr_R^\eps + 2 \A_R^\eps : \dr_0 \otimes \dr_R^\eps ) \dr_0 \otimes \dr_0 + (\A_0 : \dr_0 \otimes \dr_0 ) \dr_R^\eps \otimes \dr_R^\eps  \\
& \quad + ( 2 \A_0 : \dr_0 \otimes \dr_R^\eps   +  \A_R^\eps : \dr_0 \otimes \dr_0 ) (  \dr_R^\eps \otimes \dr_0 + \dr_0 \otimes \dr_R^\eps ) \big] \\
+ & \mu_2 \big[ ( \B_R^\eps \dr_R^\eps ) \otimes \dr_0 + ( \D_{\u_0 + \sqrt{\eps} \u_R^\eps} \dr_R^\eps + \u_R^\eps \cdot \nabla \dr_0 + \B_0 \dr_R^\eps + \B_R^\eps \dr_0 ) \otimes \dr_R^\eps \big] \\
+ & \mu_3 \big[ \dr_0 \otimes ( \B_R^\eps \dr_R^\eps )  + \dr_R^\eps \otimes ( \D_{\u_0 + \sqrt{\eps} \u_R^\eps} \dr_R^\eps + \u_R^\eps \cdot \nabla \dr_0 + \B_0 \dr_R^\eps + \B_R^\eps \dr_0 ) \big] \\
+ & \mu_5 \big[ ( \A_R^\eps \dr_R^\eps ) \otimes \dr_0 + ( \A_R^\eps \dr_0 + \A_0 \dr_R^\eps ) \otimes \dr_R^\eps \big] \\
+ & \mu_6 \big[ \dr_0  \otimes  ( \A_R^\eps \dr_R^\eps ) + \dr_R^\eps \otimes ( \A_R^\eps \dr_0 + \A_0 \dr_R^\eps ) \big] \,,
\end{aligned}
\end{equation}
the term $\mathcal{M}_2$ is
\begin{equation}\label{M2}
\begin{aligned}
\mathcal{M}_2 = & \mu_1 \big[ ( \A_R^\eps : \dr_0 \otimes \dr_0 + 2 \A_0 : \dr_0 \otimes \dr_R^\eps ) \dr_R^\eps \otimes \dr_R^\eps + ( \A_R^\eps : \dr_R^\eps \otimes \dr_R^\eps ) \dr_0 \otimes \dr_0 \\
& \quad + ( 2 \A_R^\eps : \dr_0 \otimes \dr_R^\eps + \A_0 : \dr_R^\eps \otimes \dr_R^\eps ) ( \dr_0 \otimes \dr_R^\eps + \dr_R^\eps \otimes \dr_0 ) \big] \\
+ & \mu_2 ( \B_R^\eps \dr_R^\eps ) \otimes \dr_R^\eps + \mu_3 \dr_R^\eps \otimes ( \B_R^\eps \dr_R^\eps ) + \mu_5 ( \A_R^\eps \dr_R^\eps ) \otimes \dr_R^\eps + \mu_6 \dr_R^\eps \otimes ( \A_R^\eps \dr_R^\eps ) \,,
\end{aligned}
\end{equation}
the term $\mathcal{M}_3$ is
\begin{equation}\label{M3}
\begin{aligned}
\mathcal{M}_3 = & \mu_1 \big[ ( 2 \A_R^\eps : \dr_0 \otimes \dr_R^\eps ) \dr_R^\eps \otimes \dr_R^\eps + (\A_0 : \dr_R^\eps \otimes \dr_R^\eps) \dr_R^\eps \otimes \dr_R^\eps \\
& + ( \A_R^\eps : \dr_R^\eps \otimes \dr_R^\eps ) ( \dr_R^\eps \otimes \dr_0 + \dr_0 \otimes \dr_R^\eps ) \big] \,,
\end{aligned}
\end{equation}
the term $\mathcal{M}_4$ is
\begin{equation}\label{M4}
\mathcal{M}_4 = \mu_1 ( \A_R^\eps : \dr_R^\eps \otimes \dr_R^\eps ) \dr_R^\eps \otimes \dr_R^\eps \,.
\end{equation}
Moreover, the tensor term $\mathcal{Q}_{\u} (\D_I)$ involving initial layer in the $\u_R^\eps$-equation of \eqref{Remainder-u-d} reads
\begin{equation}\label{Q-u}
\begin{aligned}
\mathcal{Q}_{\u} (\D_I) = & \tfrac{1}{\sqrt{\eps}} \mathcal{Q}_{\u}^1 + \mathcal{Q}_{\u}^2 + \sqrt{\eps} \mathcal{Q}_{\u}^3 + \sqrt{\eps}^2 \mathcal{Q}_{\u}^4 + \sqrt{\eps}^3 \mathcal{Q}_{\u}^5 + \sqrt{\eps}^4 \mathcal{Q}_{\u}^6 + \sqrt{\eps}^5 \mathcal{Q}_{\u}^7 + \sqrt{\eps}^6 \mathcal{Q}_{\u}^8 \,,
\end{aligned}
\end{equation}
where the term $\mathcal{Q}_{\u}^1$ is
\begin{equation}
\begin{aligned}
\mathcal{Q}_{\u}^1 = & - \nabla \dr_0 \odot \nabla \D_I^\eps + \nabla \D_I^\eps \odot \nabla \dr_0 \\
+ & \mu_1 \big[ 2 ( \A_0 : \dr_0 \otimes \D_I^\eps ) \dr_0 \otimes \dr_0 + ( \A_0 : \dr_0 \otimes \dr_0 ) ( \D_I^\eps \otimes \dr_0 + \dr_0 \otimes \D_I^\eps ) \big] \\
+ & \mu_2 \big[ ( \D_{\u_0} \dr_0 + \B_0 \dr_0 ) \otimes \D_I^\eps + ( \D_{\u_0} \D_I^\eps + \B_0 \D_I^\eps ) \otimes \dr_0 \big] \\
+ & \mu_3 \big[ \D_I^\eps \otimes  ( \D_{\u_0} \dr_0 + \B_0 \dr_0 ) + \dr_0 \otimes ( \D_{\u_0} \D_I^\eps + \B_0 \D_I^\eps ) \big] \\
+ & \mu_5 \big[ ( \A_0 \dr_0 ) \otimes \D_I^\eps + ( \A_0 \D_I^\eps ) \otimes \dr_0 \big] + \mu_6 \big[ \D_I^\eps \otimes ( \A_0 \dr_0 )  + \dr_0 \otimes ( \A_0 \D_I^\eps ) \big] \,,
\end{aligned}
\end{equation}
the term $\mathcal{Q}_{\u}^2$ is
\begin{equation}
\begin{aligned}
\mathcal{Q}_{\u}^2 = & - \nabla \D_I^\eps \odot \nabla \dr_R^\eps - \nabla \dr_R^\eps \odot \nabla \D_I^\eps + 2 \mu_1 (\A_0 : \dr_0 \otimes \D_I^\eps) ( \dr_R^\eps \otimes \dr_0 + \dr_0 \otimes \dr_R^\eps ) \\
+ & 2 \mu_1 ( \A_R^\eps : \D_I^\eps \otimes \dr_0 + \A_0 : \D_I^\eps \otimes \dr_R^\eps ) \dr_0 \otimes \dr_0   \\
+ & \mu_1 ( \A_R^\eps : \dr_0 \otimes \dr_0 + 2 \A_0 : \dr_0 \otimes \dr_R^\eps ) ( \D_I^\eps \otimes \dr_0 + \dr_0 \otimes \D_I^\eps ) \\
+ & \mu_2 ( \u_R^\eps \cdot \nabla \dr_0 + \D_{\u_0 + \sqrt{\eps} \u_R^\eps } \dr_R^\eps + \B_R^\eps \dr_0 + \B_0 \dr_R^\eps ) \otimes \D_I^\eps \\
+ & \mu_2 \big[ ( \u_R^\eps \cdot \nabla \D_I^\eps + \B_R^\eps \D_I^\eps ) \otimes \dr_0 + ( \D_{\u_0} \D_I^\eps + \B_0 \D_I^\eps ) \otimes \dr_R^\eps \big] \\
+ & \mu_3 \D_I^\eps \otimes ( \u_R^\eps \cdot \nabla \dr_0 + \D_{\u_0 + \sqrt{\eps} \u_R^\eps } \dr_R^\eps + \B_R^\eps \dr_0 + \B_0 \dr_R^\eps ) \\
+ & \mu_3 \big[ \dr_0 \otimes ( \u_R^\eps \cdot \nabla \D_I^\eps + \B_R^\eps \D_I^\eps ) + \dr_R^\eps \otimes ( \D_{\u_0} \D_I^\eps + \B_0 \D_I^\eps ) \big] \\
+ & \mu_5 \big[ ( \A_0 \dr_R^\eps + \A_R^\eps \dr_0 ) \otimes \D_I^\eps + (\A_0 \D_I^\eps) \otimes \dr_R^\eps + (\A_R^\eps \D_I^\eps) \otimes \dr_0 \big] \\
+ & \mu_6 \big[ \D_I^\eps \otimes ( \A_0 \dr_R^\eps + \A_R^\eps \dr_0 ) + \dr_R^\eps \otimes (\A_0 \D_I^\eps) + \dr_0 \otimes (\A_R^\eps \D_I^\eps) \big] \,,
\end{aligned}
\end{equation}
the term $\mathcal{Q}_{\u}^3$ is
\begin{equation}
\begin{aligned}
\mathcal{Q}_{\u}^3 = & - \nabla \D_I^\eps \odot \nabla \D_I^\eps + \mu_1 \big[ ( \A_0 : \D_I^\eps \otimes \D_I^\eps ) \dr_0 \otimes \dr_0 + ( \A_0 : \dr_0 \otimes \dr_0 ) \D_I^\eps \otimes \D_I^\eps \big] \\
+ & \mu_1 \big[ 2 ( \A_0 : \D_I^\eps \otimes \dr_0 ) ( \D_I^\eps \otimes \dr_0 + \dr_0 \otimes \D_I^\eps ) + 2 ( \A_R^\eps : \D_I^\eps \otimes \dr_R^\eps ) \dr_0 \otimes \dr_0 \big] \\
+ & \mu_1 \big[ 2 ( \A_0 : \D_I^\eps \otimes \dr_0 ) \dr_R^\eps \otimes \dr_R^\eps + 2 ( \A_R^\eps : \D_I^\eps \otimes \dr_0 ) ( \dr_0 \otimes \dr_R^\eps + \dr_R^\eps \otimes \dr_0 ) \big] \\
+ & \mu_1 ( 2 \A_0 : \D_I^\eps \otimes \dr_R^\eps + \A_R^\eps : \D_I^\eps \otimes \dr_R^\eps  ) ( \dr_0 \otimes \dr_R^\eps + \dr_R^\eps \otimes \dr_0 ) \\
+ & \mu_1 ( \A_0 : \dr_R^\eps \otimes \dr_R^\eps + 2 \A_R^\eps : \dr_R^\eps \otimes \dr_0 ) ( \dr_0 \otimes \D_I^\eps + \D_I^\eps \otimes \dr_0 ) \\
+ & \mu_1 ( \A_R^\eps : \dr_0 \otimes \dr_0 + 2 \A_0 : \dr_0 \otimes \dr_R^\eps ) ( \dr_R^\eps \otimes \D_I^\eps + \D_I^\eps \otimes \dr_R^\eps ) \\
+ & \mu_2 \big[ ( \D_{\u_0} \D_I^\eps + \B_0 \D_I^\eps ) \otimes \D_I^\eps + ( \B_R^\eps \dr_R^\eps ) \otimes \D_I^\eps + ( \u_R^\eps \cdot \nabla \D_I^\eps + \B_R^\eps \D_I^\eps ) \otimes \dr_R^\eps \big] \\
+ & \mu_3 \big[ \otimes \D_I^\eps ( \D_{\u_0} \D_I^\eps + \B_0 \D_I^\eps ) + \D_I^\eps \otimes ( \B_R^\eps \dr_R^\eps ) + \dr_R^\eps \otimes ( \u_R^\eps \cdot \nabla \D_I^\eps + \B_R^\eps \D_I^\eps ) \big] \\
+ & \mu_5 \big[ (\A_0 \D_I^\eps) \otimes \D_I^\eps + ( \A_R^\eps \dr_R^\eps ) \otimes \D_I^\eps + ( \A_R^\eps \D_I^\eps ) \otimes \dr_R^\eps \big] \\
+ & \mu_6 \big[ \D_I^\eps \otimes (\A_0 \D_I^\eps) + \D_I^\eps \otimes ( \A_R^\eps \dr_R^\eps ) + \dr_R^\eps \otimes ( \A_R^\eps \D_I^\eps ) \otimes \dr_R^\eps \big] \,,
\end{aligned}
\end{equation}
the term $\mathcal{Q}_{\u}^4$ is
\begin{equation}
\begin{aligned}
\mathcal{Q}_{\u}^4 = & \mu_1 \big[ ( \A_R^\eps : \D_I^\eps \otimes \D_I^\eps ) \dr_0 \otimes \dr_0 + ( \A_R^\eps : \dr_0 \otimes \dr_0 + 2 \A_0 : \dr_0 \otimes \dr_R^\eps ) \D_I^\eps \otimes \D_I^\eps \big] \\
+ & \mu_1 \big[ ( \A_0 : \D_I^\eps \otimes \D_I^\eps ) ( \dr_R^\eps \otimes \dr_0 + \dr_0 \otimes \dr_R^\eps ) + 2 ( \A_0 : \D_I^\eps \otimes \dr_0 ) ( \dr_R^\eps \otimes \D_I^\eps + \D_I^\eps \otimes \dr_R^\eps ) \big] \\
+ & 2 \mu_1 ( \A_R^\eps : \D_I^\eps \otimes \dr_0 + \A_0 : \D_I^\eps \otimes \dr_R^\eps ) ( \D_I^\eps \otimes \dr_0 + \dr_0 \otimes \D_I^\eps ) \\
+ & 2 \mu_1 ( \A_0 : \D_I^\eps \otimes \dr_R^\eps + \A_R^\eps : \D_I^\eps \otimes \dr_0  ) \dr_R^\eps \otimes \dr_R^\eps \\
+ & \mu_1 ( \A_0 : \dr_R^\eps \otimes \dr_R^\eps + 2 \A_R^\eps : \dr_R^\eps \otimes \dr_0  ) ( \D_I^\eps \otimes \dr_R^\eps + \dr_R^\eps \otimes \D_I^\eps ) \\
+ & \mu_1 \big[ 2 ( \A_R^\eps : \D_I^\eps \otimes \dr_R^\eps ) ( \dr_R^\eps \otimes \dr_0 + \dr_0 \otimes \dr_R^\eps ) + ( \A_R^\eps : \dr_R^\eps \otimes \dr_R^\eps ) ( \D_I^\eps \otimes \dr_0 + \dr_0 \otimes \D_I^\eps ) \big] \\
+ & \mu_2 ( \u_R^\eps \cdot \nabla \D_I^\eps + \B_R^\eps \D_I^\eps ) \otimes \D_I^\eps + \mu_3 \D_I^\eps \otimes ( \u_R^\eps \cdot \nabla \D_I^\eps + \B_R^\eps \D_I^\eps ) \\
+ & \mu_5 (\A_R^\eps \D_I^\eps) \otimes \D_I^\eps + \mu_6 \D_I^\eps \otimes (\A_R^\eps \D_I^\eps) \,,
\end{aligned}
\end{equation}
the term $\mathcal{Q}_{\u}^5$ is
\begin{equation}
\begin{aligned}
\mathcal{Q}_{\u}^5 = & \mu_1 \big[ 2 ( \A_0 : \D_I^\eps \otimes \dr_0 ) \D_I^\eps \otimes \D_I^\eps + ( \A_0 : \D_I^\eps \otimes \D_I^\eps ) ( \D_I^\eps \otimes \dr_0 + \dr_0 \otimes \D_I^\eps ) \big] \\
+ & \mu_1 \big[ ( \A_0 : \D_I^\eps \otimes \D_I^\eps ) \dr_R^\eps \otimes \dr_R^\eps + 2 ( \A_R^\eps : \D_I^\eps \otimes \dr_R^\eps ) ( \dr_0 \otimes \D_I^\eps + \D_I^\eps \otimes \dr_0 ) \big] \\
+ & \mu_1 ( 2 \A_R^\eps : \dr_R^\eps \otimes \dr_0 + \A_0 : \dr_R^\eps \otimes \dr_R^\eps ) \D_I^\eps \otimes \D_I^\eps \\
+ & 2 \mu_1 ( \A_R^\eps : \D_I^\eps \otimes \dr_0 + \A_0 : \D_I^\eps \otimes \dr_R^\eps ) ( \D_I^\eps \otimes \dr_R^\eps + \dr_R^\eps \otimes \D_I^\eps ) \\
+ & \mu_1 \big[ 2 ( \A_R^\eps : \D_I^\eps \otimes \dr_R^\eps ) \dr_R^\eps \otimes \dr_R^\eps + ( \A_R^\eps : \dr_R^\eps \otimes \dr_R^\eps ) ( \D_I^\eps \otimes \dr_R^\eps + \dr_R^\eps \otimes \D_I^\eps ) \big] \,,
\end{aligned}
\end{equation}
the term $\mathcal{Q}_{\u}^6$ is
\begin{equation}
\begin{aligned}
\mathcal{Q}_{\u}^6 = & \mu_1 \big[ 2 ( \A_R^\eps : \D_I^\eps \otimes \dr_0 ) \D_I^\eps \otimes \D_I^\eps + ( \A_R^\eps : \D_I^\eps \otimes \D_I^\eps ) ( \dr_0 \otimes \D_I^\eps + \D_I^\eps \otimes \dr_0 ) \big] \\
+ & \mu_1 \big[ 2 ( \A_0 : \D_I^\eps \otimes \dr_R^\eps ) \D_I^\eps \otimes \D_I^\eps + ( \A_0 : \D_I^\eps \otimes \D_I^\eps ) ( \dr_0 \otimes \D_I^\eps + \D_I^\eps \otimes \dr_0 ) \big] \\
+ & \mu_1 \big[ ( \A_R^\eps : \D_I^\eps \otimes \D_I^\eps ) \dr_R^\eps \otimes \dr_R^\eps + ( \A_R^\eps : \dr_R^\eps \otimes \dr_R^\eps ) \D_I^\eps \otimes \D_I^\eps \big] \\
+ & 2 \mu_1 ( \A_R^\eps : \D_I^\eps \otimes \dr_R^\eps  ) ( \dr_R^\eps \otimes \D_I^\eps + \D_I^\eps \otimes \dr_R^\eps ) \,,
\end{aligned}
\end{equation}
the term $\mathcal{Q}_{\u}^7$ is
\begin{equation}
\begin{aligned}
\mathcal{Q}_{\u}^7 = & \mu_1 (  \A_0 : \D_I^\eps \otimes \D_I^\eps + 2 \A_R^\eps : \D_I^\eps \otimes \dr_R^\eps ) \D_I^\eps \otimes \D_I^\eps \\
+ & \mu_1 ( \A_R^\eps : \D_I^\eps \otimes \D_I^\eps ) ( \dr_R^\eps \otimes \D_I^\eps + \D_I^\eps \otimes \dr_R^\eps )
\end{aligned}
\end{equation}
and the term $\mathcal{Q}_{\u}^8$ is
\begin{equation}
\begin{aligned}
\mathcal{Q}_{\u}^8 = \mu_1 ( \A_R^\eps : \D_I^\eps \otimes \D_I^\eps ) \D_I^\eps \otimes \D_I^\eps \,.
\end{aligned}
\end{equation}
Finally, the vector field term $\mathcal{Q}_{\dr} (\D_I)$ involving the initial layer structure in the $\dr_R^\eps$-equation of \eqref{Remainder-u-d} is defined as
\begin{equation}\label{Q-d}
\begin{aligned}
\mathcal{Q}_{\dr}(\D_I) = \tfrac{1}{\sqrt{\eps}} \mathcal{Q}_{\dr}^1 + \mathcal{Q}_{\dr}^2 + \sqrt{\eps} \mathcal{Q}_{\dr}^3 + \sqrt{\eps}^2 \mathcal{Q}_{\dr}^4 + \sqrt{\eps}^3 \mathcal{Q}_{\dr}^5 + \sqrt{\eps}^4 \mathcal{Q}_{\dr}^6 + \sqrt{\eps}^5 \mathcal{Q}_{\dr}^7 + \sqrt{\eps}^6 \mathcal{Q}_{\dr}^8 \,,
\end{aligned}
\end{equation}
where the term $\mathcal{Q}_{\dr}^1$ is
\begin{equation}
\begin{aligned}
\mathcal{Q}_{\dr}^1 = & \lambda_1 ( \u_0 \cdot \nabla \D_I^\eps + \B_0 \D_I^\eps ) + \lambda_2 \A_0 \D_I^\eps + \bm{\gamma}_0 \D_I^\eps + 2 ( \nabla \dr_0 \cdot \nabla \D_I^\eps - \lambda_2 \A_0 : \D_I^\eps \otimes \dr_0 ) \dr_0 \,,
\end{aligned}
\end{equation}
the term $\mathcal{Q}_{\dr}^2$ is
\begin{equation}
\begin{aligned}
\mathcal{Q}_{\dr}^2 = & \lambda_1 ( \u_R^\eps \cdot \D_I^\eps + \B_R^\eps \D_I^\eps ) + \lambda_2 \A_R^\eps \D_I^\eps + ( 2 \nabla \dr_0 \cdot \nabla \D_I^\eps - 2 \lambda_1 \A_0 : \D_I^\eps \otimes \dr_0 ) \dr_R^\eps \\
& + ( 2 \nabla \D_I^\eps \cdot \nabla \dr_R^\eps - \lambda_2 \A_0 : \D_I^\eps \otimes \dr_R^\eps - 2 \lambda_2 \A_R^\eps : \D_I^\eps \otimes \dr_0 ) \dr_0 \\
& + ( 2 \nabla \dr_0 \cdot \nabla \dr_R^\eps - 2 \lambda_2 \A_0 : \dr_0 \otimes \dr_R^\eps - \lambda_2 \A_R^\eps : \dr_0 \otimes \dr_0 ) \D_I^\eps \,,
\end{aligned}
\end{equation}
the term $\mathcal{Q}_{\dr}^3$ is
\begin{equation}
\begin{aligned}
\mathcal{Q}_{\dr}^3 = & - 2 \u_0 \cdot \nabla \partial_t \D_I^\eps - \partial_t \u_0 \cdot \nabla \D_I^\eps - \u_0 \cdot \nabla ( \u_0 \cdot \nabla \D_I^\eps ) \\
+ & ( |\nabla \D_I^\eps|^2 - \lambda_2 \A_0 : \D_I^\eps \otimes \D_I^\eps ) \dr_0 + 2 ( \nabla \dr_0 \cdot \nabla \D_I^\eps - \lambda_2 \A_0 : \D_I^\eps \otimes \dr_0 ) \D_I^\eps \\
+ & 2 ( \nabla \dr_R^\eps \cdot \nabla \D_I^\eps - \lambda_2 \A_0 : \D_I^\eps \otimes \dr_R^\eps - \lambda_2 \A_R^\eps \D_I^\eps \otimes \dr_0 ) \dr_R^\eps - 2 ( \D_{\u_0} \dr_0 \cdot \D_{\u_0} \D_I^\eps ) \dr_0 \\
- & \u_R^\eps \cdot \nabla ( \u_R^\eps \cdot \nabla \D_I^\eps ) - 2 ( \D_{\u_0} \dr_0 \cdot \D_{\u_0} \D_I^\eps + \lambda_2 \A_R^\eps : \dr_R^\eps \otimes \D_I^\eps ) \dr_0 - |\D_{\u_0} \dr_0|^2 \D_I^\eps \,,
\end{aligned}
\end{equation}
the term $\mathcal{Q}_{\dr}^4$ is
\begin{equation}
\begin{aligned}
\mathcal{Q}_{\dr}^4 = & ( |\nabla \D_I^\eps|^2 - \lambda_2 \A_0 : \D_I^\eps \otimes \D_I^\eps ) \dr_R^\eps - \lambda_2 ( \A_R^\eps : \D_I^\eps \otimes \D_I^\eps ) \dr_0 \\
+ & 2 ( \nabla \dr_R^\eps \cdot \nabla \D_I^\eps - \lambda_2 \A_0 : \D_I^\eps \otimes \dr_R^\eps - \lambda_2 \A_R^\eps : \D_I^\eps \otimes \dr_0 ) \D_I^\eps \\
- & \u_R^\eps \cdot \nabla \D_{\u_0} \D_I^\eps - 2 ( \D_{\u_0} \dr_0 \cdot \D_{\u_0} \D_I^\eps + \lambda_2 \A_R^\eps : \D_I^\eps \otimes \dr_R^\eps ) \dr_R^\eps \\
- & 2 ( \D_{\u_0} \dr_0 \cdot ( \u_R^\eps \cdot \nabla \D_I^\eps ) + ( \u_R^\eps \cdot \nabla \dr_0 + \D_{\u_0 + \sqrt{\eps} \u_R^\eps } \dr_R^\eps ) \cdot \D_{\u_0} \D_I^\eps ) \dr_0 \\
- & \big[ \lambda_2 \A_R^\eps : \dr_R^\eps \otimes \dr_R^\eps + ( \u_R^\eps \cdot \nabla \dr_0 + \D_{\u_0 + \sqrt{\eps} \u_R^\eps} \dr_R^\eps ) \cdot \D_{\u_0} \dr_0 \big] \D_I^\eps \\
- & 2 \big[ \D_{\u_0} \dr_0 \cdot ( \u_R^\eps \cdot \nabla \D_I^\eps ) + \D_{\u_0} \D_I^\eps \cdot ( \u_R^\eps \cdot \nabla \dr_0 + \D_{\u_0 + \sqrt{\eps} \u_R^\eps } \dr_R^\eps ) \big] \dr_0 \\
- & 2 \D_{\u_0} \dr_0 \cdot ( \u_R^\eps \cdot \nabla \dr_0 + \D_{\u_0 + \sqrt{\eps} \u_R^\eps } \dr_R^\eps ) \D_I^\eps - 2 ( \D_{\u_0} \dr_0 \cdot \D_{\u_0}  \D_I^\eps ) \dr_R^\eps \,,
\end{aligned}
\end{equation}
the term $\mathcal{Q}_{\dr}^5$ is
\begin{equation}
\begin{aligned}
\mathcal{Q}_{\dr}^5 = & ( |\nabla \D_I^\eps|^2 - \lambda_2 \A_0 : \D_I^\eps \otimes \D_I^\eps ) \D_I^\eps - |\D_{\u_0} \D_I^\eps|^2 \dr_0 - \lambda_2 ( \A_R^\eps : \D_I^\eps \otimes \D_I^\eps ) \dr_R^\eps \\
- & 2 ( \D_{\u_0} \dr_0 \cdot \D_{\u_0} \D_I^\eps ) \dr_0 + 2 ( \u_R^\eps \cdot \nabla \dr_0 + \D_{\u_0 + \sqrt{\eps} \u_R^\eps } \dr_R^\eps ) \cdot ( \u_R^\eps \cdot \nabla \D_I^\eps ) \dr_0 \\
- & 2 \big[ \D_{\u_0} \dr_0 \cdot ( \u_R^\eps \cdot \nabla \D_I^\eps ) + ( \u_R^\eps \cdot \nabla \dr_0 + \D_{\u_0 + \sqrt{\eps} \u_R^\eps } \dr_R^\eps ) \cdot \D_{\u_0} \D_I^\eps \big] \dr_R^\eps \\
- & | \u_R^\eps \cdot \nabla \dr_0 + \D_{\u_0 + \sqrt{\eps} \u_R^\eps } \dr_R^\eps  |^2 \D_I^\eps \,,
\end{aligned}
\end{equation}
the term $\mathcal{Q}_{\dr}^6$ is
\begin{equation}
\begin{aligned}
\mathcal{Q}_{\dr}^6 = & - \lambda_2 ( \A_R^\eps : \D_I^\eps \otimes \D_I^\eps ) \D_I^\eps - |\D_{\u_0} \D_I^\eps|^2 \dr_R^\eps - 2 \D_{\u_0} \D_I^\eps \cdot ( \u_R^\eps \cdot \nabla \D_I^\eps ) \dr_0 \\
- & 2 \D_{\u_0} \dr_0 \cdot ( \u_R^\eps \cdot \nabla \D_I^\eps ) \D_I^\eps - 2 \D_{\u_0} \D_I^\eps \cdot ( \u_R^\eps \cdot \nabla \dr_0 + \D_{\u_0 + \sqrt{\eps} \u_R^\eps } \dr_R^\eps ) \D_I^\eps - |\D_{\u_0} \D_I^\eps|^2 \dr_R^\eps \\
- & 2 \big[ \D_{\u_0} \dr_0 \cdot ( \u_R^\eps \cdot \nabla \D_I^\eps ) + \D_{\u_0} \D_I^\eps \cdot ( \u_R^\eps \cdot \nabla \dr_0 + \D_{\u_0 + \sqrt{\eps} \u_R^\eps } \dr_R^\eps ) \big] \D_I^\eps \\
- &  2 \D_{\u_0} \D_I^\eps \cdot ( \u_R^\eps \cdot \nabla \D_I^\eps ) \dr_0 - 2 ( \u_R^\eps \cdot \nabla \dr_0 ) \cdot ( \u_R^\eps \cdot \nabla \D_I^\eps ) \dr_R^\eps \\
- & 2 ( \u_R^\eps \cdot \nabla \dr_0 + \D_{\u_0 + \sqrt{\eps} \u_R^\eps } \dr_R^\eps ) \cdot ( \u_R^\eps \cdot \nabla \D_I^\eps ) \dr_R^\eps - 2 \D_{\u_0 + \sqrt{\eps} \u_R^\eps } \dr_R^\eps \cdot ( \u_R^\eps \cdot \nabla \D_I^\eps ) \dr_R^\eps \,,
\end{aligned}
\end{equation}
the term $\mathcal{Q}_{\dr}^7$ is
\begin{equation}
\begin{aligned}
\mathcal{Q}_{\dr}^7 = & - | \D_{\u_0} \D_I^\eps |^2 \D_I^\eps - |\u_R^\eps \cdot \nabla \D_I^\eps|^2 \dr_0 - 2 \D_{\u_0} \D_I^\eps \cdot ( \u_R^\eps \cdot \nabla \D_I^\eps ) \dr_R^\eps \\
& - 2 ( \u_R^\eps \cdot \nabla \D_I^\eps ) \cdot ( \u_R^\eps \cdot \nabla \dr_0 + \D_{\u_0 + \sqrt{\eps} \u_R^\eps } \dr_R^\eps ) \D_I^\eps
\end{aligned}
\end{equation}
and the term $\mathcal{Q}_{\dr}^8$ is
\begin{equation}
\begin{aligned}
\mathcal{Q}_{\dr}^8 = & - 2 \D_{\u_0} \D_I^\eps \cdot ( \u_R^\eps \cdot \nabla \D_I^\eps ) \D_I^\eps - |\u_R^\eps \cdot \nabla \D_I^\eps|^2 ( \dr_R^\eps + \sqrt{\eps} \D_I^\eps ) \,.
\end{aligned}
\end{equation}

%%%%%%%%%%%%%%%%%%%%%%%%%%%%%%%%%%%%%%%%%%%%%%%%%%%%%%%%%%%%%%%%%%%%%%%%%%%

\section*{Acknowledgment}

This work is supported by the grants from the National Natural Science Foundation of China under contract No. 11471181 and No. 11731008.

%%%%%%%%%%%%%%%%%%%%%%%%%%%%%%%%%%%%%%%%%%%%%%%%%%%%%%%%%%%%%%%%%%%%%%%%%%%
\bigskip
% \phantomsection
% \addcontentsline{toc}{section}{\refname}

%\bibliographystyle{unsrtnat}
%\nocite{*}
\bibliography{reference}

\end{document}